\documentclass[12pt,fleqn,leqno]{amsart}

\usepackage{microtype}
\usepackage{amsfonts,amssymb}
\usepackage{enumitem}
\usepackage{thmtools}
\usepackage[svgnames]{xcolor}
\usepackage[colorlinks,linkcolor=blue,citecolor=Green]{hyperref}
\usepackage{titlesec}
\usepackage{fourier}
\usepackage[nameinlink]{cleveref}
\usepackage{tikz-cd}
\usetikzlibrary{decorations.pathmorphing}
\usepackage{wrapfig}        
\usepackage{nccmath}

\usepackage[a4paper,centering]{geometry}

\titleformat{\section}[block]
 {\bfseries}
 {\thesection.}
 {\fontdimen2\font}
 {}

\setlist{noitemsep}
\setenumerate{labelindent=\parindent,label=\upshape{(\alph*)}}

\newtheorem{theorem}{Theorem}[section]
\newtheorem{corollary}[theorem]{Corollary}
\newtheorem{proposition}[theorem]{Proposition}
\newtheorem{lemma}[theorem]{Lemma}

\theoremstyle{definition}
\newtheorem{remark}[theorem]{Remark}
\newtheorem{example}[theorem]{Example}
\newtheorem{problem}{Problem}

\DeclareMathOperator{\N}{\mathbb{N}}
\DeclareMathOperator{\R}{\mathbb{R}}
\DeclareMathOperator{\s}{\mathbb{S}}
\DeclareMathOperator{\uhr}{\upharpoonright}
\DeclareMathOperator{\conv}{conv}
\DeclareMathOperator{\Z}{\mathbb{Z}}
\DeclareMathOperator{\lip}{Lip}
\DeclareMathOperator{\diam}{diam}
\DeclareMathOperator{\dom}{dom}

\DeclareFontFamily{OT1}{pzc}{}
\DeclareFontShape{OT1}{pzc}{m}{it}{<-> s * [1.150] pzcmi7t}{}
\DeclareMathAlphabet{\mathpzc}{OT1}{pzc}{m}{it}

\numberwithin{equation}{section}
\allowdisplaybreaks

\begin{document}

\author{Valentin Gutev}

\address{Institute of Mathematics and Informatics, Bulgarian Academy
  of Sciences, Acad. G. Bonchev Street, Block 8, 1113 Sofia, Bulgaria}

\email{\href{mailto:gutev@math.bas.bg}{gutev@math.bas.bg}}

\subjclass[2010]{15A63, 26A16, 30L05, 41A30, 46B85, 46C05, 47H09,
  52A20, 52A21, 52A40, 54C20, 54C30, 54E35, 54E40}

\keywords{Lipschitz map, nonexpansive map, H\"older continuous map,
  uniformly continuous map, extension, Euclidean space, Hilbert space,
  symmetric space, isometric embedding, quadratic form, concave
  function, affine function}

\title{Kirszbraun's Intersection Property and Nonexpansive Extensions}
\begin{abstract}
  This paper aims to present the historical development of the
  extension problem of nonexpansive maps in Euclidean and Hilbert
  spaces, and to serve as a comprehensive guide to this specific
  area. Such extensions are based on a geometric property of the
  intersection of closed balls in Euclidean spaces, known as the
  Kirszbraun intersection property. Subsequently, a number of authors
  have extended this property to Hilbert and other spaces. All these
  results are discussed in this paper, but the paper itself is not a
  mechanical compilation of already known proofs. The material has
  been developed with an emphasis on the natural connections between
  all these results and the corresponding concepts involved in them.
  Furthermore, most of these results are presented with new simplified
  proofs. Last but not least, the paper also contains some new results
  related to this extension problem.
\end{abstract}

\dedicatory{Dedicated to Professor Gerald Beer on the occasion of his
  80th birthday}

\date{\today}
\maketitle

\section{Introduction}

In this paper, all vector spaces are real vector spaces. In the
special case of the $n$-dimensional vector space $\R^n$, each element
$(x_1,\dots,x_n)\in \R^n$ will be denoted by
${\mathbf{x}=(x_1,\dots,x_n)}$, so that it carries the same letter as
its coordinates.  The space $\R^n$ equipped with any norm is often
called a \emph{Minkowski space}. In case $\R^n$ is endowed with the
\emph{Euclidean norm} $\|\mathbf{x}\|=\sqrt{x_1^2+\dots +x_n^2}$, we
refer to it as the \emph{$n$-dimensional Euclidean space}, or simply
as an \emph{Euclidean space}. In all our considerations, unless
otherwise stated, it is assumed that $\mathbb{R}^n$ is equipped with
the Euclidean norm.\medskip

For $K\geq 0$ and metric spaces $(X,d)$ and $(Y,\rho)$, a map
$f : X\to Y$ is \emph{$K$-Lipschitz} if $\rho(f(p),f(q))\leq K d(p,q)$
for every $p,q\in X$. Any such $K\geq0$ is referred to as a
\emph{Lipschitz constant} for the Lipschitz map $f$. The smallest
Lipschitz constant $K\geq0$ is sometimes called the \emph{best}
Lipschitz constant of $f$ and denoted by $\lip(f)$. More generally, to
each map $f:X\to Y$ we may associate the (possibly infinite) number
$\lip(f)$ defined by
\[
  \lip(f)=\sup_{p\neq q}\frac{\rho(f(p),f(q))}{d(p,q)}.
\]
This number can be regarded as the \emph{general} Lipschitz constant
of $f$. Namely, the case $\lip(f) =+\infty$ indicates that $f$ is not
Lipschitz, and $\lip(f)=0$ indicates that $f$ is constant. Thus, the
essential case of Lipschitz maps is when $0<\lip(f)<+\infty$.\medskip

The Lipschitz extension problem deals with conditions on the metric
spaces $(X,d)$ and $(Y,\rho)$ such that every $K$-Lipschitz map
$\varphi:A\to Y$ on a subset $A\subseteq X$ has a Lipschitz extension
to the whole of $X$ with a minimal loss in the Lipschitz constant
$K$. Some of the first solutions of the Lipschitz extension problem
were complementary to the Tietze extension theorem and dated back to
the works of Hahn and Banach in the 1920s, and McShane, Whitney and
Kirszbraun in the 1930s. The classical Hahn-Banach theorem is based on
a successive point-by-point procedure of extending bounded linear
functionals. In the setting of a general metric domain, the conditions
are less restrictive and the extension is only required to be
Lipschitz with the same Lipschitz constant. In this case, the
successive procedure can be replaced by a much simpler one. The
following theorem was obtained by McShane in \cite[Theorem
1]{MR1562984}, see also Whitney \cite[the footnote on
p.\,63]{MR1501735}, and is commonly called the \emph{McShane-Whitney
  extension theorem}.

\begin{theorem}
  \label{theorem-Lipschitz-ext:12}
  Let $(X,d)$ be a metric space and $A\subseteq X$. Then each
  $K$-Lipschitz function $\varphi:A\to \R$ can be extended to a
  $K$-Lipschitz function $f:X\to \R$.
\end{theorem}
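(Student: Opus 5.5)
The plan is to write down an explicit extension by the McShane formula and check its properties directly. If $A=\varnothing$ there is nothing to prove: take $f\equiv 0$. So assume $A\neq\varnothing$ and define
\[
  f(x)=\inf_{a\in A}\bigl(\varphi(a)+K\,d(x,a)\bigr),\qquad x\in X.
\]
Each function $x\mapsto \varphi(a)+K\,d(x,a)$ is $K$-Lipschitz on $X$. The idea is that $f$ is the infimum of this family of $K$-Lipschitz functions, that the infimum stays finite, and that $f$ agrees with $\varphi$ on $A$. I will verify these three facts in turn.

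\emph{Finiteness.} This is the only point needing a little care, since an infimum over a possibly infinite family could be $-\infty$. Fix $a_0\in A$. For every $a\in A$, the Lipschitz condition on $\varphi$ gives $\varphi(a)\geq \varphi(a_0)-K\,d(a,a_0)$. By the triangle inequality, $d(a,a_0)\le d(x,a)+d(x,a_0)$. Combining these,
\[
  \varphi(a)+K\,d(x,a)\geq \varphi(a_0)-K\,d(x,a_0).
\]
Hence $f(x)\geq \varphi(a_0)-K\,d(x,a_0)>-\infty$. In the other direction, $f(x)\le \varphi(a_0)+K\,d(x,a_0)<+\infty$, so $f$ is real-valued.

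\emph{Extension.} Let $x\in A$. Taking $a=x$ in the infimum gives $f(x)\le\varphi(x)$. For the reverse inequality, each $a\in A$ satisfies $\varphi(a)+K\,d(x,a)\ge\varphi(x)$, by the Lipschitz property of $\varphi$ on $A$. Taking the infimum over $a$ gives $f(x)\ge\varphi(x)$, so $f(x)=\varphi(x)$.

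\emph{Lipschitz bound.} Let $x,y\in X$ and $a\in A$. The triangle inequality gives
\[
  f(x)\le \varphi(a)+K\,d(x,a)\le \varphi(a)+K\,d(y,a)+K\,d(x,y).
\]
Taking the infimum over $a$ yields $f(x)\le f(y)+K\,d(x,y)$. Exchanging the roles of $x$ and $y$ gives $|f(x)-f(y)|\le K\,d(x,y)$.

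The main obstacle is only the finiteness step, which is exactly where the nonemptiness of $A$ and the Lipschitz property of $\varphi$ are both used. Alternatively, one could use the supremum formula $\sup_{a\in A}\bigl(\varphi(a)-K\,d(x,a)\bigr)$, and the same argument applies with the inequalities reversed.
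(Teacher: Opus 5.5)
Your proof is correct and complete, and it uses McShane's infimum formula, which is the explicit construction the paper means when it says the extension ``was defined by an explicit formula involving the function $\varphi$, the Lipschitz constant $K$ and the metric $d$.'' The paper itself does not write out the argument: it cites McShane and later reuses the same formula with $d$ replaced by $\rho=\omega\circ d$ for the uniformly continuous version. Your checks of finiteness, agreement with $\varphi$ on $A$, and the $K$-Lipschitz bound supply exactly the part the paper omits.
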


The extension in \Cref{theorem-Lipschitz-ext:12} was defined by an
explicit formula involving the function $\varphi$, the Lipschitz
constant $K$ and the metric $d$. For an extended discussion on this
theorem, the interested reader is referred to
\cite{Gutev2020,Gutev2025a}.\medskip

A map $f:X\to Y$ between metric spaces $(X,d)$ and $(Y,\rho)$ is
called \emph{nonexpansive} if $\lip(f)\leq 1$, i.e.\ when
$\rho(f(p),f(q))\leq d(p,q)$ for every $p,q\in X$.  If $(E,\|\cdot\|)$
is a linear normed space and $f:X\to E$ is a $K$-Lipschitz map for
some $K>0$, then the associated map $\frac1{K} f:X\to E$ is
nonexpansive. Thus, for a linear normed range $E$, the Lipschitz
extension problem is reduced to that of extending nonexpansive
maps. In the special case of the Euclidean space $E=\R^n$, the
McShane-Whithney extension theorem (\Cref{theorem-Lipschitz-ext:12})
implies the following consequence.

\begin{corollary}
  \label{corollary-Lipschitz-ext:2}
  Let $(X,d)$ be a metric space, $A\subseteq X$ and
  $\varphi:A\to \R^n$ be a nonexpansive map. Then $\varphi$ can be
  extended to a Lipschitz map $f:X\to \R^n$ such that\/
  $\lip(f)\leq \sqrt{n}$.
\end{corollary}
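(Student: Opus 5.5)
The plan is to work coordinatewise and apply \Cref{theorem-Lipschitz-ext:12} to each component of $\varphi$ separately, then reassemble the pieces. Write $\varphi=(\varphi_1,\dots,\varphi_n)$, where $\varphi_i:A\to\R$ is the $i$-th coordinate function, so that $\varphi(p)=(\varphi_1(p),\dots,\varphi_n(p))$ for $p\in A$.

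First I will check that each $\varphi_i$ is nonexpansive. For $p,q\in A$,
\[
  |\varphi_i(p)-\varphi_i(q)|\leq \|\varphi(p)-\varphi(q)\|\leq d(p,q),
\]
because a single coordinate is bounded by the Euclidean norm. Hence each $\varphi_i$ is $1$-Lipschitz. By \Cref{theorem-Lipschitz-ext:12}, it then extends to a $1$-Lipschitz function $f_i:X\to\R$.

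Next I will define $f=(f_1,\dots,f_n):X\to\R^n$. It extends $\varphi$, since $f_i$ agrees with $\varphi_i$ on $A$ for every $i$. For $p,q\in X$,
\[
  \|f(p)-f(q)\|^2=\sum_{i=1}^n |f_i(p)-f_i(q)|^2\leq n\, d(p,q)^2 .
\]
Taking square roots gives $\|f(p)-f(q)\|\leq\sqrt{n}\,d(p,q)$, that is, $\lip(f)\leq\sqrt n$.

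No step here presents a real obstacle. The only point needing care is that the loss of the factor $\sqrt n$ occurs in the reassembly step: each coordinate is controlled individually, and summing $n$ squared bounds produces the constant $\sqrt n$.
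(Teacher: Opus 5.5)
Your proof is correct and follows the paper's argument: each coordinate $\varphi_i$ is $1$-Lipschitz, so it extends by \Cref{theorem-Lipschitz-ext:12} to a $1$-Lipschitz $f_i:X\to\R$, and $f=(f_1,\dots,f_n)$ is the required extension. You also write out the estimate $\|f(p)-f(q)\|^2\leq n\,d(p,q)^2$, which the paper leaves implicit.
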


\begin{proof}
  Since each coordinate function $\varphi_i:A\to \R$, $i\leq n$, is
  $1$-Lipschitz being nonexpansive, by
  \Cref{theorem-Lipschitz-ext:12}, it can be extended to a
  $1$-Lipschitz function $f_i:X\to \R$. Then the resulting map
  $f=(f_1,\dots,f_n):X\to \R^n$ is as required.
\end{proof}

There are simple examples showing that for a general metric domain
$(X,d)$, the Lipschitz constant of the extension $f:X\to \R^n$ in
\Cref{corollary-Lipschitz-ext:2} is not necessarily equal to that of
$\varphi:A\to \R^n$. For instance, the following example was given in
\cite[2.10.44]{zbMATH03280855} and \cite[Example
4.2.1]{zbMATH07045619}.

\begin{example}
  \label{example-Lipschitz-ext:2}
  Let
  $\mathbf{u}_1=(1,-1), \mathbf{u}_2=(1,1),
  \mathbf{u}_3=\left(-1,1\right)\in \R^2$ and
  $\mathbf{0}=\left(0,0\right)\in\R^2$ be the origin. If
  $\|\mathbf{u}\|_\infty=\max\left\{|u_1|,|u_2|\right\}$ is the
  $\infty$-norm on $\R^2$, then
  \begin{equation}
    \label{eq:Lipschitz-ext:13}
    \|\mathbf{u}_1-\mathbf{u}_2\|_\infty=
    \|\mathbf{u}_2-\mathbf{u}_3\|_\infty =
    \|\mathbf{u}_3-\mathbf{u}_1\|_\infty= 2\ \ \text{and}\
    \ \|\mathbf{u}_i-\mathbf{0}\|_\infty=1,\ i\leq3.
  \end{equation}
  Hence, the set
  $A=\left\{\mathbf{u}_1,\mathbf{u}_2,\mathbf{u}_3\right\}$ represents
  the vertices of an ``equilateral triangle'' in $\R^2$ with respect
  to the $\infty$-norm. Take any equilateral triangle in the Euclidean
  plane $\R^2$ with a side of $2$, for instance $\mathbf{v}_1=(1,0)$,
  $\mathbf{v}_2=\left(0,\sqrt{3}\right)$ and
  $\mathbf{v}_3=(-1,0)$. Next, let
  $\mathbf{b}=\frac{\mathbf{v}_1+\mathbf{v}_2+\mathbf{v}_3}3=
  \left(0,\frac{\sqrt{3}}3\right)$ be its barycentre. Then all
  distances $\|\mathbf{b}-\mathbf{v}_i\|=\frac2{\sqrt{3}}$, $i\leq 3$,
  are equal.  Since $\mathbf{b}$ is also the Fermat point of this
  triangle\,---\,the point minimising the sum of distances to the
  triangle's vertices\,---\,it follows that
  \begin{equation}
    \label{eq:Lipschitz-ext:4}
    \max_{1\leq i\leq 3}\|\mathbf{y}-\mathbf{v}_i\|\geq
    \mfrac2{\sqrt{3}}\quad \text{for 
      every\, $\mathbf{y}\in \R^2$.}
  \end{equation}
  Next, define a map $\varphi:A\to \R^2$ by
  $\varphi(\mathbf{u}_i)=\mathbf{v}_i$, $i\leq 3$. Then $\varphi$ is
  an isometry as a map from the subset $A$ of
  $\left(\R^2,\|\cdot\|_\infty\right)$ to
  $\left(\R^2,\|\cdot\|\right)$. Hence, in particular, it is also
  nonexpansive. Let $f:\R^2\to \R^2$ be any extension of $g$. Then by
  \cref{eq:Lipschitz-ext:13,eq:Lipschitz-ext:4},
  \[
    \max_{1\leq i\leq 3}\|f(\mathbf{0})-f(\mathbf{u}_i)\|= \max_{1\leq
      i\leq 3}\|f(\mathbf{0})-\mathbf{v}_i\|\geq
    \mfrac2{\sqrt{3}}>1=\max_{1\leq i\leq
      3}\|\mathbf{0}-\mathbf{u}_i\|_\infty.
  \]
  Thus, $f$ cannot be nonexpansive.\qed
\end{example}

In contrast to \Cref{example-Lipschitz-ext:2}, the McShane-Whithney
extension theorem remains valid for maps between Euclidean spaces. The
following extension result was obtained by Kirszbraun
\cite{zbMATH02538492} and is commonly called the \emph{Kirszbraun
  extension theorem}.

\begin{theorem}
  \label{theorem-Lipschitz-ext:1}
  If $A\subseteq \R^m$, then each nonexpansive map
  $\varphi\colon A\to \R^n$ can be extended to a nonexpansive map
  $f:\R^m\to \R^n$.
\end{theorem}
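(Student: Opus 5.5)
The plan follows Kirszbraun's classical route. The geometric core is the \emph{intersection property}: if $\mathbf{x}_1,\dots,\mathbf{x}_k\in\R^m$, $\mathbf{y}_1,\dots,\mathbf{y}_k\in\R^n$ satisfy $\|\mathbf{y}_i-\mathbf{y}_j\|\leq\|\mathbf{x}_i-\mathbf{x}_j\|$ for all $i,j$, and $r_1,\dots,r_k>0$ are such that $\bigcap_i B(\mathbf{x}_i,r_i)\neq\emptyset$, then $\bigcap_i B(\mathbf{y}_i,r_i)\neq\emptyset$ as well (closed balls). Granting this, the extension is built one point at a time: for $\mathbf{p}\in\R^m\setminus A$ we must find $\mathbf{q}\in\R^n$ with $\|\mathbf{q}-\varphi(\mathbf{a})\|\leq\|\mathbf{p}-\mathbf{a}\|$ for all $\mathbf{a}\in A$. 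In other words, we need $\mathbf{q}\in\bigcap_{\mathbf{a}\in A}B(\varphi(\mathbf{a}),\|\mathbf{p}-\mathbf{a}\|)$. Every finite subfamily of these balls is nonempty by the intersection property, since $\mathbf{p}$ lies in all the corresponding $B(\mathbf{a},\|\mathbf{p}-\mathbf{a}\|)$. The balls are compact in $\R^n$, so the whole intersection is nonempty. When $\mathbf{p}$ coincides with some $\mathbf{a}$, the radius is zero; this is harmless because the finite statement extends to $r_i\geq0$ by letting $r_i\downarrow 0$ and using compactness again.

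To pass from one point to all of $\R^m$, use Zorn's lemma on nonexpansive extensions of $\varphi$ ordered by inclusion. A maximal element must have domain $\R^m$, by the one-point step. Alternatively, extend to a countable dense set by induction and then extend by uniform continuity, using completeness of $\R^n$.

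The main obstacle is proving the intersection property. I would argue as follows. Suppose $\bigcap B(\mathbf{y}_i,r_i)=\emptyset$. Consider $g(\mathbf{y})=\max_i\|\mathbf{y}-\mathbf{y}_i\|/r_i$. It attains its minimum $\lambda>1$ at some $\mathbf{y}_0$, and we may translate so that $\mathbf{y}_0=\mathbf{0}$. Let $I$ be the set of indices with $\|\mathbf{y}_i\|=\lambda r_i$. By minimality, $\mathbf{0}$ lies in the convex hull of $\{\mathbf{y}_i:i\in I\}$; otherwise a separating direction would decrease all active ratios. So $\sum_{i\in I}\mu_i\mathbf{y}_i=\mathbf{0}$ with $\mu_i\geq0$ and $\sum\mu_i=1$.

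Now let $\mathbf{x}$ be a common point of the $B(\mathbf{x}_i,r_i)$ and set $\mathbf{u}_i=\mathbf{x}_i-\mathbf{x}$, so that $\|\mathbf{u}_i\|\leq r_i$. The standard identity
\[
\sum_{i,j\in I}\mu_i\mu_j\|\mathbf{y}_i-\mathbf{y}_j\|^2=2\sum_{i}\mu_i\|\mathbf{y}_i\|^2-2\Bigl\|\sum_i\mu_i\mathbf{y}_i\Bigr\|^2=2\sum_i\mu_i\lambda^2r_i^2
\]
holds, while the same expression for the $\mathbf{u}_i$ gives
\[
\sum_{i,j}\mu_i\mu_j\|\mathbf{u}_i-\mathbf{u}_j\|^2\leq2\sum_i\mu_ir_i^2.
\]
Since $\|\mathbf{y}_i-\mathbf{y}_j\|\leq\|\mathbf{u}_i-\mathbf{u}_j\|$, we obtain $\lambda^2\leq1$, a contradiction.

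The delicate points are the existence of the minimiser, which follows from coercivity and continuity of $g$, and the convex hull claim. For the latter I would use the one-sided derivative of $g$ in a direction $\mathbf{v}$, namely $\max_{i\in I}\langle\mathbf{v},\mathbf{y}_i\rangle/(\lambda r_i^2)$ up to sign conventions, together with the separation theorem.
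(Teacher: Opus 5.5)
Your proposal is correct and follows essentially the same route as the paper: reduce to one-point extensions from finite sets via compactness of balls and Zorn's lemma (or the separable induction), take the minimal scaling factor (your $\lambda$, the paper's $\tau$ in \Cref{lemma-Lipschitz-ext:1}) with the extremal point lying in the convex hull of the active centres, and use the same quadratic identity to force $\lambda\leq 1$. The differences are cosmetic: you get the convex-hull property from a separating direction rather than the paper's closest-point (Kolmogorov condition) argument, and your remark about zero radii is unnecessary since $\mathbf{p}\notin A$ makes every radius positive.
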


\Cref{theorem-Lipschitz-ext:1} was rediscovered in a series of papers
by Valentine \cite{zbMATH03102004,zbMATH03102005,zbMATH03102006},
where he also generalised it to Hilbert and other classes of
spaces. Alternative proofs and further generalisations have been given
by a number of other authors, for instance by Mickle
\cite{zbMATH03049820}, Minty
\cite{zbMATH03180487,zbMATH03172151,zbMATH03304171}, Gr\"unbaum
\cite{zbMATH03210340}, Gr\"unbaum and Zarantonello
\cite{Gruenbaum1968}, Akopyan and Tarasov
\cite{zbMATH05941757}.\medskip

The proof of \Cref{theorem-Lipschitz-ext:1} is based on one-point
extensions, which are discussed in the next section. Briefly, in the
next section, we present two interpretations of the Kirszbraun
extension theorem, formulated solely in terms of the intersection of a
finite number of closed balls in Euclidean spaces\,---\,a property
known as the \emph{Kirszbraun intersection property} (see
\Cref{theorem-Lipschitz-ext:2,theorem-Lip-Ball-Intersc:1}). In
\Cref{sec:best-one-point}, we consider best one-point extensions in
Euclidean spaces from finite subsets of the domain, and show that the
Kirszbraun intersection property is achieved by points that are
\emph{closest} to the range (see \Cref{theorem-KIP-ver-15:1}).  The
section also contains an overview of all important concepts related to
Euclidean spaces that are relevant to this result, and leads to
\Cref{lemma-Lipschitz-ext:1}, which lays the groundwork for such
extensions. \Cref{sec:kirszbr-valent-exten} contains the proof of the
\emph{Kirszbraun-Valentine extension theorem}
(\Cref{theorem-Lipschitz-ext:6}) in Hilbert spaces as a consequence of
the Kirszbraun intersection property in Euclidean spaces. This
reduction is based on the finite intersection property of closed
bounded convex sets in Hilbert spaces
(\Cref{theorem-Lipschitz-ext:7}). \Cref{sec:scho-solut-meng} presents
a brief overview of Menger's embedding problem of finite metric spaces
into Euclidean spaces, and the Schoenberg solution of this problem
(\Cref{theorem-Lip-Ext-Embb:1}). Schoenberg's approach to the Menger
embedding problem underlies a relatively simple metric
condition\,---\,proposed by Gr\"unbaum and Zarantonello in
\cite{Gruenbaum1968}\,---\,for generalising the Kirszbraun-Valentine
extension theorem from a Hilbert domain to a metric one (see
\Cref{theorem-KIP-ver-17:1}). In particular, this condition gives a
very simple proof of the \emph{Mickle extension theorem} (see
\Cref{corollary-KIP-ver-24:1}). In \Cref{sec:foxs-ineq-hold}, we
consider the extension problem of H\"older continuous maps in Hilbert
spaces. The solution is based on an inequality credited to Fox, see
\cref{eq:Kirszbraun-Intersc-v3:1}, and the aforementioned
Gr\"unbaum-Zarantonello condition. Thus, we also obtain a very simple
proof of the \emph{Minty extension theorem} (see
\Cref{theorem-Kirszbraun-Intersc-v4:2}). \Cref{sec:conc-funct-affine}
is devoted to basic properties of concave functions. The main results
of this section are that each concave function can be characterised by
its affine majorants (\Cref{theorem-Rem-Lipschitz:1}) and that, under
certain conditions, any increasing function possesses a concave
majorant (\Cref{theorem-Uniform-Cont-Ext:1}). Finally, in
\Cref{sec:grunb-zarant-extens}, we give a very simple and
self-contained proof of the \emph{Gr\"unbaum-Zarantonello extension
  theorem} for uniformly continuous maps in Hilbert spaces (see
\Cref{theorem-Uni-Ball-Intersc:1}).

\section{The Kirszbraun Intersection Property}

The proof of \Cref{theorem-Lipschitz-ext:1} is based on the successive
procedure of point-by-point extensions, so the extension over the
entire space is essentially the same as that in proof of the
Hahn-Banach theorem. However, in the setting of
\Cref{theorem-Lipschitz-ext:1}, one-point extensions are not so
straightforward as those in the Hahn-Banach theorem. At the heart of
these one-point extensions lies a geometric property related to the
intersection of closed balls, which constitutes the most difficult
part of the proof. The property itself is based on the following
considerations.\medskip

Each Lipschitz map is uniformly continuous. Extensions of uniformly
continuous maps came naturally in the context of the completion
problem of metric spaces. The following result appeared in Felix
Hausdorff's 1914 monograph "Grundz\"uge der Mengenlehre"
\cite{hausdorff:14}.

\begin{theorem}
  \label{theorem-Lipschitz-ext:3}
  If\/ $\varphi$ is a uniformly continuous map into a complete metric
  space defined on a subspace $A$ of a metric space, then there exists
  a continuous extension of\/ $\varphi$ defined on the closure of $A$.
\end{theorem}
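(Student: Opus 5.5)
The plan is to define the extension pointwise by limits along sequences and then show it is well defined and continuous, with uniform continuity doing the work at each stage. Let $(X,d)$ be the ambient metric space, $(Y,\rho)$ the complete range, $A\subseteq X$, and $\varphi:A\to Y$ uniformly continuous. Write $\overline{A}$ for the closure of $A$ in $X$.

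\emph{Definition of $f$.} For $x\in\overline{A}$, pick a sequence $(a_k)$ in $A$ with $a_k\to x$. Then $(a_k)$ is Cauchy in $X$. Given $\varepsilon>0$, take $\delta>0$ from uniform continuity, i.e.\ $d(a,b)<\delta$ implies $\rho(\varphi(a),\varphi(b))<\varepsilon$. Hence $(\varphi(a_k))$ is Cauchy in $Y$, and by completeness it converges. We set $f(x)=\lim_k\varphi(a_k)$.

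\emph{Well-definedness.} If $(b_k)$ is another sequence in $A$ with $b_k\to x$, interleave the two sequences. The interleaved sequence still converges to $x$, so by the previous step its image converges. Therefore both image subsequences have the same limit. For $x\in A$, the constant sequence shows $f(x)=\varphi(x)$, so $f$ extends $\varphi$.

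\emph{Continuity.} I will in fact show $f$ is uniformly continuous, which is slightly stronger than what is asked. Given $\varepsilon>0$ with associated $\delta$, let $x,y\in\overline{A}$ with $d(x,y)<\delta$, and choose $a_k\to x$, $b_k\to y$ in $A$. Then for large $k$ we have $d(a_k,b_k)<\delta$, hence $\rho(\varphi(a_k),\varphi(b_k))<\varepsilon$. Passing to the limit gives $\rho(f(x),f(y))\le\varepsilon$.

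None of these steps is a real obstacle. The only point needing care is well-definedness, and the interleaving trick handles it by reducing it to the Cauchy argument already made.
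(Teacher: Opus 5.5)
Your argument is correct and complete. It is the standard proof, and the paper gives no proof of its own here; it simply quotes the statement from Hausdorff's 1914 monograph. Your observation that the extension is in fact uniformly continuous is exactly the strengthening the paper stresses right after the statement. The same limit argument, with $\rho(\varphi(a_k),\varphi(b_k))\le K\,d(a_k,b_k)$ in place of the $\varepsilon$--$\delta$ estimate, also gives the paper's \Cref{corollary-Lipschitz-ext:3} on $K$-Lipschitz extensions to $\overline{A}$.
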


However, the fact that the extension to the \emph{closure}
$\overline{A}$ of $A$ is also uniformly continuous was not stated
explicitly, and it took more than a decade to realise this. The
development of uniformly continuous extensions was gradual, the
interested reader is referred to \cite{Bentley_1998} which contains an
interesting overview on the history of this extension problem.\medskip

One can easily see that a continuous map between metric spaces is
uniformly continuous iff it is uniformly continuous on some (every)
dense subset of its domain. Similarly, a continuous map between metric
spaces is $K$-Lipschitz iff it is $K$-Lipschitz on some (every) dense
subset of its domain. Accordingly, in the setting of Lipschitz maps,
we have the following special case of \Cref{theorem-Lipschitz-ext:3}.

\begin{corollary}
  \label{corollary-Lipschitz-ext:3}
  Let $\varphi:A\to Y$ be a $K$-Lipschitz map from a subset
  $A\subseteq X$ of a metric space $(X,d)$ to a complete metric space
  $(Y,\rho)$. Then $\varphi$ has a $K$-Lipschitz extension
  $\psi:\overline{A}\to Y$ to the closure of $A$.
\end{corollary}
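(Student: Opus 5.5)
Since a $K$-Lipschitz map is uniformly continuous, \Cref{theorem-Lipschitz-ext:3} gives a continuous extension $\psi:\overline{A}\to Y$ of $\varphi$. It remains to check that $\psi$ is still $K$-Lipschitz, which is a density argument. To stay self-contained and see uniqueness, I would also describe $\psi$ explicitly. For $p\in\overline{A}$, choose a sequence $(a_k)\subseteq A$ with $a_k\to p$. Then
\[
\rho(\varphi(a_k),\varphi(a_l))\leq K\,d(a_k,a_l),
\]
so $(\varphi(a_k))$ is Cauchy in $Y$ and converges by completeness. Set $\psi(p)=\lim_k\varphi(a_k)$.

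\emph{Step 1: $\psi$ is well defined and extends $\varphi$.} Take two sequences $a_k\to p$ and $b_k\to p$ in $A$. Then
\[
\rho(\varphi(a_k),\varphi(b_k))\leq K\,d(a_k,b_k)\to0,
\]
so both sequences of images have the same limit. Using the constant sequence $a_k=p$ for $p\in A$ shows $\psi(p)=\varphi(p)$. This also agrees with the continuous extension from \Cref{theorem-Lipschitz-ext:3}, since a continuous map on $\overline{A}$ is determined by its values on the dense set $A$.

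\emph{Step 2: the Lipschitz estimate.} Let $p,q\in\overline{A}$, and take $a_k\to p$ and $b_k\to q$ in $A$. For each $k$,
\[
\rho(\varphi(a_k),\varphi(b_k))\leq K\,d(a_k,b_k).
\]
Both sides are continuous in their arguments: $\rho$ and $d$ are continuous, $\varphi(a_k)\to\psi(p)$ and $\varphi(b_k)\to\psi(q)$. Passing to the limit gives $\rho(\psi(p),\psi(q))\leq K\,d(p,q)$.

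No step is a real obstacle. The only point needing care is that completeness of $Y$ is used exactly once, to guarantee the limit in the definition of $\psi$ exists. The rest is the continuity of the metrics, which is the remark preceding the statement that the $K$-Lipschitz property passes from a dense subset to its closure.
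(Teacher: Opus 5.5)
Your proof is correct and follows the paper's route: the paper derives this corollary from Hausdorff's theorem (\Cref{theorem-Lipschitz-ext:3}) together with the remark that a continuous map is $K$-Lipschitz as soon as it is $K$-Lipschitz on a dense subset, and that remark is exactly your Step 2. Your explicit Cauchy-sequence construction just spells out the density argument, and in fact makes the appeal to \Cref{theorem-Lipschitz-ext:3} unnecessary.
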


Thus, for a complete metric range, the Lipschitz extension problem is
reduced only to Lipschitz maps defined on closed subsets.  Hence, in
this setting, it is a special continuous extension problem where the
continuity of the extension is controlled by the Lipschitz
constant. On the other hand, the Lipschitz extension problem is not a
topological problem. In fact, it is not only a metric problem, but
also the Lipschitz condition does not depend on the triangle
inequality of the given metrics. Ignoring this property of metrics,
Lipschitz maps can be defined in the following more general setting. A
function $d:X\times X\to [0,+\infty)$ on a set $X$ is called a
\emph{symmetric} on $X$ (in some sources also a \emph{semi-metric} on
$X$) if it is a symmetric function and $d(x,y)=0$ if and only if
$x=y$. In other words, $d$ satisfies all axioms of a metric except,
possibly, the triangular inequality. In what follows, each pair
$(X,d)$ consisting of a set $X$ and a symmetric function $d$ on it
will be referred to simply as a \emph{symmetric space}. Unlike
\emph{symmetrizable spaces} in topology, we will not be interested in
any topological structure on a symmetric space $(X,d)$ that is
associated with the symmetric $d$ on $X$.\medskip

Lipschitz maps between symmetric spaces are defined in the same way as
those between metric spaces. Namely, a map $f:X\to Y$ between
symmetric spaces $(X,d)$ and $(Y,\rho)$ is \emph{$K$-Lipschitz} for
some $K\geq 0$ if $\rho(f(p),f(q))\leq K d(p,q)$ for every $p,q\in
X$. In the special case when $K=1$, as before, we will say that such a
map $f:X\to Y$ is \emph{nonexpansive}. Let us explicitly remark that
this is not a formal generalisation per se, but will play an important
role in the extension problem of nonexpansive maps.

\begin{theorem}
  \label{theorem-Lipschitz-ext:9}
  Let $(X,d)$ and $(Y,\rho)$ be symmetric spaces and $K\geq 0$ be such
  that for every $S\subseteq X$ and $p\in X\setminus S$, every
  $K$-Lipschitz map $\varphi : S\to Y$ can be extended to a
  $K$-Lipschitz map\/ $\psi : S\cup\{p\}\to Y$. Then for every
  $A\subseteq X$, each $K$-Lipschitz map $\varphi:A\to Y$ can be
  extended to a $K$-Lipschitz map\, $\Phi:X\to Y$.
\end{theorem}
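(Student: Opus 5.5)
This is a Zorn's lemma argument in the spirit of the Hahn--Banach theorem. We will not use transfinite induction on a well-ordering of $X\setminus A$, although that would work equally well. Fix $A\subseteq X$ and a $K$-Lipschitz map $\varphi:A\to Y$. Let $\mathcal{P}$ be the family of all pairs $(S,\psi)$ such that $A\subseteq S\subseteq X$ and $\psi:S\to Y$ is a $K$-Lipschitz extension of $\varphi$. Order $\mathcal{P}$ by extension: $(S_1,\psi_1)\preceq(S_2,\psi_2)$ if $S_1\subseteq S_2$ and $\psi_2\uhr S_1=\psi_1$. This is a partial order, and $\mathcal{P}\neq\emptyset$ since $(A,\varphi)\in\mathcal{P}$.

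The key step is to check that every chain $\mathcal{C}\subseteq\mathcal{P}$ has an upper bound. Set $S^*=\bigcup\{S:(S,\psi)\in\mathcal{C}\}$ and define $\psi^*:S^*\to Y$ by $\psi^*(x)=\psi(x)$ whenever $x\in S$ and $(S,\psi)\in\mathcal{C}$. This is well defined because any two members of $\mathcal{C}$ are comparable, so they agree on their common domain.

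To see that $\psi^*$ is $K$-Lipschitz, take $p,q\in S^*$. Then $p\in S_1$ and $q\in S_2$ for some $(S_1,\psi_1),(S_2,\psi_2)\in\mathcal{C}$. By comparability both points lie in the larger of $S_1,S_2$, say in $S$ with corresponding $\psi$. Hence
\[
\rho(\psi^*(p),\psi^*(q))=\rho(\psi(p),\psi(q))\leq K d(p,q).
\]
This is exactly where we use that the Lipschitz condition only involves pairs of points: no triangle inequality is needed, so the argument works verbatim for symmetric spaces. Thus $(S^*,\psi^*)\in\mathcal{P}$ is an upper bound for $\mathcal{C}$. If $\mathcal{C}=\emptyset$, the pair $(A,\varphi)$ serves as an upper bound.

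By Zorn's lemma, $\mathcal{P}$ has a maximal element $(S,\Phi)$. It remains to show that $S=X$. If not, pick $p\in X\setminus S$. By hypothesis, $\Phi$ extends to a $K$-Lipschitz map $\psi:S\cup\{p\}\to Y$. Then $(S\cup\{p\},\psi)\in\mathcal{P}$ strictly dominates $(S,\Phi)$, which contradicts maximality. Hence $S=X$, and $\Phi:X\to Y$ is the required $K$-Lipschitz extension of $\varphi$.

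None of these steps is a real obstacle. The only point that needs care is the verification that $\psi^*$ is well defined and $K$-Lipschitz on the union of the chain, and that rests entirely on comparability within the chain.
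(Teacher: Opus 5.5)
Your proposal is correct and follows the paper's own argument essentially step for step. Both use the poset of partial $K$-Lipschitz extensions ordered by extension, take the union of a chain as an upper bound (the Lipschitz condition only involves pairs of points, and any two points lie in a common domain), and apply Zorn's lemma, with the one-point extension hypothesis ruling out a maximal element whose domain is a proper subset of $X$.
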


\Cref{theorem-Lipschitz-ext:9} has a standard proof by using a
suitable interpretation of the Axiom of Choice. For instance, for
$A\subseteq X$ and a $K$-Lipschitz map $\varphi:A\to Y$, consider the
collection $\Omega$ of all partial $K$-Lipschitz extensions of
$\varphi$, i.e.\ $\psi\in \Omega$ precisely when $A$ is a subset of
the domain $\dom(\psi)$ of $\psi$ and $\psi\uhr A=\varphi$. Then
$\Omega$ is partially ordered with respect to the relation
$\psi_1\leq \psi_2$ defined by $\dom(\psi_1)\subseteq \dom(\psi_2)$
and ${\psi_2\uhr \dom(\psi_1)=\psi_1}$. In terms of this partial
order, each nonempty chain $\mathcal{H}\subseteq \Omega$ has an upper
bound in $\Omega$. Indeed, set
$Z=\bigcup_{\psi\in \mathcal{H}}\dom(\psi)$ and define ${\eta:Z\to Y}$
by $\eta\uhr \dom(\psi)=\psi$, for $\psi\in \mathcal{H}$. Then $\eta$
is also $K$-Lipschitz because for each pair of points $p,q\in Z$ there
exists $\psi\in \mathcal{H}$ with $p,q\in \dom(\psi)$, and therefore
\[
  \rho(\eta(p),\eta(q))=\rho(\psi(p),\psi(q))\leq K d(p,q).
\]
Thus, $\eta\in \Omega$ and clearly $\psi\leq\eta$ for every
$\psi\in \mathcal{H}$. Hence, according to Zorn's lemma, the map
$\varphi$ has a maximal $K$-Lipschitz extension $\Phi\in \Omega$. We
must now have that $\dom(\Phi)=X$. Otherwise, if $\dom(\Phi)\neq X$,
we may apply the given property of one-point extensions to get a
contradiction with the maximality of $\Phi$.\medskip

Regarding the role of the Axiom of Choice in
\Cref{theorem-Lipschitz-ext:9}, let us point out the following special
case in which the use of this axiom can be avoided.

\begin{remark}
  \label{remark-Lip-Ball-Intersc:1}
  For a separable metric space $(X,d)$ and a complete metric space
  $(Y,\rho)$, there is a direct proof of
  \Cref{theorem-Lipschitz-ext:9} based on a usual induction. Namely,
  take a dense subset $\{p_n:n\in \N\}\subseteq X\setminus A$, and
  next set $S_0=A$ and $S_n=S_{n-1}\cup \{p_n\}$, $n\in\N$. If
  $\varphi_0=\varphi:A\to Y$ is a $K$-Lipschitz map, then by induction
  using the hypothesis in \Cref{theorem-Lipschitz-ext:9}, for each
  $n\in\N$ there exists a $K$-Lipschitz map $\varphi_n:S_n\to Y$ such
  that $\varphi_n\uhr S_{n-1}=\varphi_{n-1}$. Evidently,
  $\psi\uhr S_n=\varphi_n$, $n\in \N$, defines a $K$-Lipschitz map
  $\psi:S\to Y$ over the set $S=\bigcup_{n=0}^\infty S_n$. Since $S$
  is dense in $X$, according to \Cref{corollary-Lipschitz-ext:3},
  $\psi$ can be extended to a $K$-Lipschitz map $\Phi:X\to Y$. This
  $\Phi$ is as required.\qed
\end{remark}

For a symmetric space $(Y,\rho)$ and $\delta\geq 0$, we will use
$\mathbf{C}(p,\delta)=\left\{y\in Y: d(y,p)\leq \delta\right\}$ to
denote the \emph{closed $\delta$-ball centred at a point $p\in
  Y$}. Since it is not assumed a priori that there is a topological
structure on $Y$, the term ``closed'' reflects only the defining
property of this $\delta$-ball with respect to the linear order
``$\leq$''on the real line $\R$. In terms of these balls, the
following simple observation is complementary to
\Cref{theorem-Lipschitz-ext:9}.

\begin{proposition}
  \label{proposition-Lip-Ball-Intersc:4}
  Let $(X,d)$ and $(Y,\rho)$ be symmetric spaces, $S\subseteq X$ be a
  nonempty set, $\varphi:S\to Y$ be a nonexpansive map and
  $p\in X\setminus S$. Then an extension $\psi:S\cup\{p\}\to Y$ of
  $\varphi$ is nonexpansive if and only if\/
  $\psi(p)\in \bigcap_{x\in S} \mathbf{C}\left(\varphi(x),
    d(p,x)\right)$. In particular, $\varphi$ can be extended to a
  nonexpansive map over $S\cup \{p\}$ precisely when
  $\bigcap_{x\in S}\mathbf{C}\big(\varphi(x),d(p,x)\big)\neq
  \emptyset$.
\end{proposition}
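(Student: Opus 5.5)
The argument is a direct unwinding of the definitions, so I will simply verify both directions of the equivalence and then read off the ``in particular'' clause. Note first that the ball $\mathbf{C}(\varphi(x),d(p,x))$ in the statement is taken in $(Y,\rho)$, that is, it consists of all $y\in Y$ with $\rho(y,\varphi(x))\leq d(p,x)$.

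Fix an extension $\psi:S\cup\{p\}\to Y$ of $\varphi$. Nonexpansiveness of $\psi$ means $\rho(\psi(a),\psi(b))\leq d(a,b)$ for all $a,b\in S\cup\{p\}$. I would split the pairs $(a,b)$ into three types:
\begin{enumerate}
\item both $a,b\in S$;
\item $a=b=p$;
\item exactly one of $a,b$ equals $p$.
\end{enumerate}
For pairs of the first type, the inequality holds because $\psi\uhr S=\varphi$ and $\varphi$ is nonexpansive. For the pair $a=b=p$, both sides are $0$, since $d$ and $\rho$ are symmetrics. For pairs of the third type, symmetry of $d$ and $\rho$ lets me assume $a=p$ and $b=x\in S$. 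The required inequality then reads $\rho(\psi(p),\varphi(x))\leq d(p,x)$, which says exactly that $\psi(p)\in\mathbf{C}(\varphi(x),d(p,x))$. Hence $\psi$ is nonexpansive if and only if $\psi(p)$ lies in every such ball, i.e.\ in $\bigcap_{x\in S}\mathbf{C}(\varphi(x),d(p,x))$.

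For the ``in particular'' part, suppose $\varphi$ has a nonexpansive extension $\psi$ over $S\cup\{p\}$. Then $\psi(p)$ belongs to the intersection, so the intersection is nonempty. Conversely, if the intersection is nonempty, pick any $y$ in it and define $\psi(p)=y$ and $\psi\uhr S=\varphi$; this $\psi$ is well defined because $p\notin S$. By the first part, $\psi$ is nonexpansive.

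There is no genuine obstacle here. The only points needing care are that the triangle inequality is never used, so the argument is valid for symmetric spaces, and that symmetry of both $d$ and $\rho$ is what allows the reduction to ordered pairs with $a=p$. Nonemptiness of $S$ only ensures that the intersection is over a nonempty family and so is a subset of $Y$ in the usual sense.
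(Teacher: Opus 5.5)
Your proof is correct, and it is essentially the paper's approach: the paper states this proposition as a ``simple observation'' with no written proof, and your case split (pairs in $S$, the pair $(p,p)$, mixed pairs reduced by symmetry) is exactly the intended unwinding of the definition. You are also right to read $\mathbf{C}(\varphi(x),d(p,x))$ as a ball taken with respect to $\rho$ in $Y$, and to note that the triangle inequality is never used, which is why the statement holds for symmetric spaces.
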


A family $\mathcal{F}$ of subsets of a set $Y$ has the \emph{finite
  intersection property} if ${\bigcap \mathcal{S}\neq \emptyset}$ for
every nonempty finite subset $\mathcal{S}\subseteq \mathcal{F}$. In
the Euclidean space $\R^n$, each closed ball
$\mathbf{C}(\mathbf{v},\delta)=\left\{\mathbf{u}\in \R^n:
  \|\mathbf{u}-\mathbf{v}\|\leq \delta\right\}$, for
$\mathbf{v}\in \R^n$ and $\delta\geq 0$, is compact. Hence, taking
$Y=\R^n$ in \Cref{proposition-Lip-Ball-Intersc:4}, the family
$\left\{\mathbf{C}\big(\varphi({x}), d(p,x)\big): x\in S\right\}$
consists of compact sets and, consequently, it has a nonempty
intersection precisely when it has the finite intersection
property. Thus,
$\bigcap_{x\in S}\mathbf{C}\big(\varphi(x),d(p,x)\big)\neq \emptyset$
precisely when for each nonempty finite set $T\subseteq S$, each
nonexpansive map $\varphi:T\to \R^n$ can be extended to a nonexpansive
map $\psi:T\cup\{{p}\}\to \R^n$. Therefore, according to
\Cref{theorem-Lipschitz-ext:9} and
\Cref{proposition-Lip-Ball-Intersc:4}, this implies the following
general reduction of the extension problem of nonexpansive maps in
Euclidean spaces.

\begin{corollary}
  \label{corollary-Kirszbraun-IP-ver-01:1}
  Let $(X,d)$ be a symmetric space such that for every finite subset
  $T\subseteq X$ and $p\in X\setminus T$, each nonexpansive map
  $\varphi:T\to \R^n$ can be extended to a nonexpansive map
  $\psi:T\cup\{{p}\}\to \R^n$. Then for every $A\subseteq X$, each
  nonexpansive map $\varphi:A\to \R^n$ can be extended to a
  nonexpansive map $f:X\to \R^n$.
\end{corollary}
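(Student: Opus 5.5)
The plan is to derive the statement by combining \Cref{theorem-Lipschitz-ext:9} (with $K=1$ and $Y=\R^n$) and \Cref{proposition-Lip-Ball-Intersc:4}, with compactness of Euclidean closed balls doing the passage from finite to arbitrary subsets. By \Cref{theorem-Lipschitz-ext:9}, it suffices to check its one-point hypothesis. So fix an arbitrary set $S\subseteq X$, a point $p\in X\setminus S$ and a nonexpansive map $\varphi:S\to\R^n$. We must produce a nonexpansive extension of $\varphi$ to $S\cup\{p\}$. If $S=\emptyset$, any value of $\psi(p)$ works, so assume $S\neq\emptyset$.

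By \Cref{proposition-Lip-Ball-Intersc:4}, such an extension exists exactly when
\[
\bigcap_{x\in S}\mathbf{C}\big(\varphi(x),d(p,x)\big)\neq\emptyset .
\]
Each ball $\mathbf{C}\big(\varphi(x),d(p,x)\big)$ is compact in $\R^n$. Hence it is enough to show that the family of these balls has the finite intersection property.

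To check this, take a nonempty finite set $T\subseteq S$. The restriction $\varphi\uhr T:T\to\R^n$ is nonexpansive, and $p\in X\setminus T$. By the hypothesis of the corollary, $\varphi\uhr T$ extends to a nonexpansive map $\psi_T:T\cup\{p\}\to\R^n$. Applying \Cref{proposition-Lip-Ball-Intersc:4} again, now to $T$, the value $\psi_T(p)$ lies in $\bigcap_{x\in T}\mathbf{C}\big(\varphi(x),d(p,x)\big)$, so this finite intersection is nonempty.

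Compactness then gives the full intersection over $S$ nonempty. Fix one ball $\mathbf{C}_0$ of the family; the traces of the other balls on $\mathbf{C}_0$ are closed subsets of the compact set $\mathbf{C}_0$ with the finite intersection property. Choosing any point $\mathbf{y}$ in the full intersection and setting $\psi(p)=\mathbf{y}$ yields the required one-point extension. \Cref{theorem-Lipschitz-ext:9} then delivers a nonexpansive extension $f:X\to\R^n$ for every $A\subseteq X$.

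I see no real obstacle. The only point to check is that no triangle inequality is used anywhere, so the argument is valid on a symmetric space $(X,d)$. This holds because \Cref{theorem-Lipschitz-ext:9} (via Zorn's lemma) and \Cref{proposition-Lip-Ball-Intersc:4} are both stated for symmetric spaces, and the compactness argument takes place entirely in the metric space $\R^n$.
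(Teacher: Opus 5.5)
Your proposal is correct and is essentially the paper's own argument. You reduce via \Cref{theorem-Lipschitz-ext:9} to one-point extensions from arbitrary $S$, rephrase these via \Cref{proposition-Lip-Ball-Intersc:4} as nonemptiness of $\bigcap_{x\in S}\mathbf{C}(\varphi(x),d(p,x))$, and use compactness of closed balls in $\R^n$ to reduce this to the finite intersection property supplied by the hypothesis on finite sets. Your explicit treatment of $S=\emptyset$, where \Cref{proposition-Lip-Ball-Intersc:4} does not apply as stated, is a small detail the paper leaves implicit.
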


It is also obvious that when $(X,d)$ is a separable metric space, we
can apply \Cref{remark-Lip-Ball-Intersc:1} instead of the full version
of \Cref{theorem-Lipschitz-ext:9}. Therefore, such simple arguments
show that \Cref{theorem-Lipschitz-ext:1} is equivalent to the
following special case of one-point extensions.

\begin{theorem}
  \label{theorem-Lipschitz-ext:2}
  Let $T\subseteq \R^m$ be a finite set. Then for each point\/
  $\mathbf{x}\in \R^m\setminus T$, each nonexpansive map
  $\varphi:T\to \R^n$ can be extended to a nonexpansive map
  $\psi:T\cup\{\mathbf{x}\}\to \R^n$.
\end{theorem}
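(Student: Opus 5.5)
By \Cref{proposition-Lip-Ball-Intersc:4}, it suffices to find a point in $\bigcap_{i}\mathbf{C}(\mathbf{y}_i,r_i)$, where $T=\{\mathbf{x}_1,\dots,\mathbf{x}_k\}$, $\mathbf{y}_i=\varphi(\mathbf{x}_i)$ and $r_i=\|\mathbf{x}-\mathbf{x}_i\|>0$. I would use the standard minimax approach. Consider the continuous function
\[
  g(\mathbf{y})=\max_{i\leq k}\frac{\|\mathbf{y}-\mathbf{y}_i\|}{r_i},\qquad \mathbf{y}\in\R^n .
\]
Since $g$ is coercive, it attains its minimum $\lambda$ at some $\mathbf{y}_0$. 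The goal is to show $\lambda\leq 1$; then $\psi(\mathbf{x})=\mathbf{y}_0$ is the required extension. Suppose instead that $\lambda>1$, and let $I=\{i:\|\mathbf{y}_0-\mathbf{y}_i\|=\lambda r_i\}$ be the set of active indices.

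The first key step is to show that $\mathbf{y}_0\in\conv\{\mathbf{y}_i:i\in I\}$. If it were not, a separating direction $\mathbf{e}$ would satisfy $\langle \mathbf{e},\mathbf{y}_i-\mathbf{y}_0\rangle>0$ for all $i\in I$. Moving $\mathbf{y}_0$ slightly in direction $\mathbf{e}$ would then strictly decrease every active distance, while the inactive ones stay below $\lambda r_i$ by continuity. This contradicts minimality. Hence we can write $\mathbf{y}_0=\sum_{i\in I}\mu_i\mathbf{y}_i$ with $\mu_i\geq0$ and $\sum\mu_i=1$.

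The main step, which I expect to be the real obstacle, is an inner-product computation. Put $\mathbf{a}_i=\mathbf{x}_i-\mathbf{x}$ and $\mathbf{b}_i=\mathbf{y}_i-\mathbf{y}_0$, so that $\|\mathbf{b}_i\|=\lambda\|\mathbf{a}_i\|$ for $i\in I$. Nonexpansiveness gives $\|\mathbf{b}_i-\mathbf{b}_j\|\leq\|\mathbf{a}_i-\mathbf{a}_j\|$. Expanding the squares yields
\[
  2\langle\mathbf{b}_i,\mathbf{b}_j\rangle\geq \lambda^2\|\mathbf{a}_i\|^2+\lambda^2\|\mathbf{a}_j\|^2-\|\mathbf{a}_i-\mathbf{a}_j\|^2
  \geq 2\lambda^2\langle\mathbf{a}_i,\mathbf{a}_j\rangle+(\lambda^2-1)\|\mathbf{a}_i-\mathbf{a}_j\|^2 .
\]
Now multiply by $\mu_i\mu_j$ and sum over $i,j\in I$. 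The left side becomes $2\|\sum\mu_i\mathbf{b}_i\|^2=0$, because $\sum\mu_i\mathbf{b}_i=\mathbf{y}_0-\mathbf{y}_0=0$. The right side is $2\lambda^2\|\sum\mu_i\mathbf{a}_i\|^2+(\lambda^2-1)\sum\mu_i\mu_j\|\mathbf{a}_i-\mathbf{a}_j\|^2\geq0$. For this to equal zero, two things must hold, since $\lambda>1$:
\begin{enumerate}
\item every $\mathbf{a}_i$ with $\mu_i>0$ must coincide;
\item $\sum\mu_i\mathbf{a}_i=0$.
\end{enumerate}
Together these force $\mathbf{a}_i=0$ for some $i$, i.e.\ $\mathbf{x}=\mathbf{x}_i\in T$. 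This contradicts $\mathbf{x}\notin T$, so $\lambda\leq1$.

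The delicate points are the convex-hull (first-order optimality) step and keeping the algebra of the quadratic form correct. In particular, the identity $\lambda^2(\|\mathbf{a}_i\|^2+\|\mathbf{a}_j\|^2)-\|\mathbf{a}_i-\mathbf{a}_j\|^2=2\lambda^2\langle\mathbf{a}_i,\mathbf{a}_j\rangle+(\lambda^2-1)\|\mathbf{a}_i-\mathbf{a}_j\|^2$ must be checked carefully. The compactness of balls in $\R^n$ needed for the existence of a minimiser is immediate.
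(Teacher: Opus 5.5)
Your proof is correct and is essentially the paper's argument (the proof of \Cref{theorem-KIP-ver-15:1} via \Cref{lemma-Lipschitz-ext:1}). Your minimum value $\lambda=\min g$ is the paper's $\tau=\inf\{t\geq 0: Y_t\neq\emptyset\}$, and your separating-direction perturbation is the paper's nearest-point argument placing $\mathbf{y}$ in the convex hull of the active centres. The quadratic-form computation is the same, and your contradiction from $\lambda>1$ (forcing all $\mathbf{a}_i$ with $\mu_i>0$ to coincide and vanish) is slightly more careful than the paper's case split on $k$. The trivial case $T=\emptyset$ is left unmentioned.
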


Furthermore, suppose that
$T=\left\{\mathbf{u}_1,\dots, \mathbf{u}_k\right\}\subseteq \R^m$ and
$\mathbf{v}_i=\varphi(\mathbf{u}_i)$, $i\leq k$, for some map
$\varphi:T\to \R^n$. Then it follows trivially that the map
$\varphi:T\to \R^n$ is nonexpansive precisely when
$\|\mathbf{v}_i-\mathbf{v}_j\|\leq \|\mathbf{u}_i-\mathbf{u}_j\|$ for
each $i,j\leq k$. Accordingly, \Cref{theorem-Lipschitz-ext:2} can be
further reformulated without explicitly mentioning any nonexpansive
maps.

\begin{theorem}
  \label{theorem-Lip-Ball-Intersc:1}
  Let\, $\mathbf{u}_i\in \R^m$ and\, $\mathbf{v}_i\in \R^n$,
  $1\leq i\leq k$, be vectors such that
  \[
    \|\mathbf{v}_i-\mathbf{v}_j\|\leq \|\mathbf{u}_i-\mathbf{u}_j\|\quad
    \text{for every $i,j\leq k$.} 
  \]
  Then for every\, $\mathbf{x}\in \R^m$ there exists\,
  $\mathbf{y}\in \R^n$ such that
  $\|\mathbf{y}-\mathbf{v}_i\|\leq \|\mathbf{x}-\mathbf{u}_i\|$ for
  every $i\leq k$.
\end{theorem}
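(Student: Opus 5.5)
To prove \Cref{theorem-Lip-Ball-Intersc:1} we use a variational argument: pick the point $\mathbf{y}$ that minimises the worst ratio of distances, then use the first-order optimality condition together with the hypothesis $\|\mathbf{v}_i-\mathbf{v}_j\|\le\|\mathbf{u}_i-\mathbf{u}_j\|$ to show this ratio is at most $1$.

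\textbf{Trivial case.} If $\mathbf{x}=\mathbf{u}_j$ for some $j$, take $\mathbf{y}=\mathbf{v}_j$; the hypothesis gives the conclusion directly. So assume $\mathbf{x}\neq\mathbf{u}_i$ for all $i$, and set $r_i=\|\mathbf{x}-\mathbf{u}_i\|>0$.

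\textbf{Choice of $\mathbf{y}$.} Consider the continuous function
\[
F(\mathbf{y})=\max_{i\le k}\frac{\|\mathbf{y}-\mathbf{v}_i\|}{r_i}.
\]
It is coercive on $\R^n$, so it attains its minimum $\lambda$ at some point $\mathbf{y}$. It suffices to show $\lambda\le1$. Suppose instead that $\lambda>1$. Let $I=\{i:\|\mathbf{y}-\mathbf{v}_i\|=\lambda r_i\}$ be the set of active indices.

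\textbf{First-order condition.} We claim $\mathbf{y}\in\conv\{\mathbf{v}_i:i\in I\}$. If not, separate $\mathbf{y}$ from this convex hull and move $\mathbf{y}$ slightly towards it. Every active distance strictly decreases, and for small moves the inactive ratios stay below $\lambda$. This contradicts minimality. Hence
\[
\mathbf{y}=\sum_{i\in I}\alpha_i\mathbf{v}_i,\qquad \alpha_i\ge0,\ \sum_{i\in I}\alpha_i=1.
\]

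\textbf{Contradiction.} Write $\mathbf{a}_i=\mathbf{v}_i-\mathbf{y}$ and $\mathbf{b}_i=\mathbf{u}_i-\mathbf{x}$, so that $\|\mathbf{a}_i\|=\lambda\|\mathbf{b}_i\|$ for $i\in I$ and $\sum_{i\in I}\alpha_i\mathbf{a}_i=\mathbf{0}$. Expand
\[
\sum_{i,j\in I}\alpha_i\alpha_j\|\mathbf{a}_i-\mathbf{a}_j\|^2=2\sum_{i\in I}\alpha_i\|\mathbf{a}_i\|^2-2\Big\|\sum_{i\in I}\alpha_i\mathbf{a}_i\Big\|^2=2\lambda^2\sum_{i\in I}\alpha_i\|\mathbf{b}_i\|^2 .
\]
The same identity for the $\mathbf{b}_i$ gives
\[
\sum_{i,j\in I}\alpha_i\alpha_j\|\mathbf{b}_i-\mathbf{b}_j\|^2=2\sum_{i\in I}\alpha_i\|\mathbf{b}_i\|^2-2\Big\|\sum_{i\in I}\alpha_i\mathbf{b}_i\Big\|^2\le 2\sum_{i\in I}\alpha_i\|\mathbf{b}_i\|^2 .
\]
By hypothesis $\|\mathbf{a}_i-\mathbf{a}_j\|=\|\mathbf{v}_i-\mathbf{v}_j\|\le\|\mathbf{u}_i-\mathbf{u}_j\|=\|\mathbf{b}_i-\mathbf{b}_j\|$, so the left side of the first display is at most the left side of the second. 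Combining,
\[
2\lambda^2\sum_{i\in I}\alpha_i\|\mathbf{b}_i\|^2\le 2\sum_{i\in I}\alpha_i\|\mathbf{b}_i\|^2 .
\]
Since all $r_i>0$ and $\sum_{i\in I}\alpha_i=1$, the sum $\sum_{i\in I}\alpha_i\|\mathbf{b}_i\|^2$ is positive. Hence $\lambda^2\le1$, contradicting $\lambda>1$.

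\textbf{Main obstacle.} The delicate step is the first-order condition, that is, the perturbation argument placing the minimiser in the convex hull of the active centres. I would write it out explicitly. Choose a unit vector $\mathbf{e}$ with $\langle \mathbf{e},\mathbf{v}_i-\mathbf{y}\rangle>0$ for all $i\in I$, which exists by the separation theorem. Then for small $t>0$,
\[
\|\mathbf{y}+t\mathbf{e}-\mathbf{v}_i\|^2=\|\mathbf{y}-\mathbf{v}_i\|^2-2t\langle\mathbf{e},\mathbf{v}_i-\mathbf{y}\rangle+t^2<\|\mathbf{y}-\mathbf{v}_i\|^2\quad(i\in I),
\]
while by continuity the inactive ratios remain below $\lambda$. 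This gives $F(\mathbf{y}+t\mathbf{e})<\lambda$, contradicting minimality.
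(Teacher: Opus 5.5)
Your proof is correct and follows essentially the paper's argument. The minimum $\lambda$ of your $F$ is exactly the parameter $\tau=\inf\{t\ge 0:Y_t\neq\emptyset\}$ of \Cref{lemma-Lipschitz-ext:1}, and your first-order condition is that lemma's claim $\mathbf{y}\in\conv(\{\mathbf{v}_u:\|\mathbf{y}-\mathbf{v}_u\|=\tau\delta_u\})$, which the paper proves via the closest point of the hull and \Cref{corollary-Lipschitz-ext:1} rather than an explicit separating direction. Your final computation is the quadratic-form estimate from the proof of \Cref{theorem-KIP-ver-15:1}; writing it as $\lambda^2\sum_{i}\alpha_i\|\mathbf{b}_i\|^2\le\sum_{i}\alpha_i\|\mathbf{b}_i\|^2$, as the paper does in the proof of \Cref{theorem-KIP-ver-17:1}, is slightly cleaner than the case split on $k$ and needs neither distinct $\mathbf{u}_i$ nor a unique minimiser.
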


The property in either of the above two theorems is often called the
\emph{Kirszbraun intersection property}, it was established by
Kirszbraun in \cite{zbMATH02538492}.\medskip

A simple proof of \Cref{theorem-Lip-Ball-Intersc:1} was given by
Schoenberg \cite[Theorem 2]{zbMATH03084778}, and further simplified in
\cite[Lemma 2.10.40 and Theorem 2.10.43]{zbMATH03280855}. An important
role in all these proofs is played by properties of convex sets in the
Euclidean space $\R^n$. This proof and these properties of convex sets
are discussed in detail in the next section.

\section{Best One-Point Extensions in Euclidean Spaces}
\label{sec:best-one-point}

Here, we will slightly generalise \Cref{theorem-Lipschitz-ext:2} by
showing that not only does some nonexpansive extension exist, but also
that there exists a ``best'' such extension, see
\Cref{theorem-KIP-ver-15:1}. This will be based on the following two
interpretations of closest points in metric spaces.\medskip

Let $(X,d)$ be a metric space. The distance from a point $p\in X$ to a
nonempty subset $A\subseteq X$ is defined by
$d(p,A)=\inf\left\{d(p,x): x\in A\right\}$. This distance not only
determines a nonexpansive function $d(\cdot,A):X\to \R$, but is also a
tool for defining special points of $A$ and, in particular, special
subsets of $X$. Namely, a point $a\in A$ is called \emph{closest} (or
\emph{nearest}) to $p\in X$ if $d(p,a)=d(p,A)$, and $A\subseteq X$ is
called \emph{proximinal} if each $p\in X$ admits a closest point of
$A$. Evidently, $A\subseteq X$ is proximal precisely when
\begin{equation}
  \label{eq:KIP-ver-03:1}
  d(p,A)=\min\left\{d(p,x): x\in A\right\}\quad \text{for every $p\in
    X$.} 
\end{equation}
The term ``proximal'' was proposed by Killgrove and first used by
Phelps in \cite{MR87897}. Each proximal set $A\subseteq X$ is closed
because $A$ must contain all points at zero distance from $A$. The
converse is not necessarily true, and one can easily construct
nonempty closed sets in metric spaces that are not proximal. In
contrast, every nonempty compact subset of $X$ is proximal, which
follows easily from \cref{eq:KIP-ver-03:1} and the fact that the
metric $d:X\times X\to \R$ is continuous. By the Heine-Borel property,
every closed ball of $\R^n$ is compact. Therefore, every nonempty
closed subset of $\R^n$ is proximal.\medskip

For a metric space $(X,d)$ and a nonempty subset $A\subseteq X$,
another concept of ``closest'' points, now to subsets, was introduced
by Phelps in \cite{MR104139}.  Namely, a point $q\in X$ is called
\emph{pointwise closer} to $A$ than $p\in X$ \cite{MR104139} if
$d(q,x)<d(p,x)$ for every $x\in A$.  If a point $p\in X$ is such that
no other point of $X$ is pointwise closer to $A$ than is $p$, then it
is called a \emph{closest} point to $A$ \cite{MR104139}. These points
are naturally related to the intersection of open and closed balls in
metric spaces. For this purpose, for $\delta>0$, we will use
$\mathbf{O}(p,\delta)=\left\{x\in X: d(x,p)< \delta\right\}$ to denote
the \emph{open $\delta$-ball centred at a point $p\in X$}. It is now
obvious that a point $q\in X$ is pointwise closer to $A$ than a point
$p\in X$ precisely when
$q\in \bigcap_{x\in A}\mathbf{O}\left(x,d(p,x)\right)$. Accordingly,
$p\in X$ is a closest point to the subset $A$ if and only if
$\bigcap_{x\in A}\mathbf{O}\left(x,d(p,x)\right)=\emptyset$.  However,
we always have that
$p\in \bigcap_{x\in A}\mathbf{C}\left(x,d(p,x)\right)$. Hence, in view
\Cref{proposition-Lip-Ball-Intersc:4}, this is naturally related to
one-point extensions of nonexpansive maps. Indeed, in this section we
will prove the following theorem.

\begin{theorem}
  \label{theorem-KIP-ver-15:1}
  Let $T\subseteq \R^m$ be a nonempty finite set,
  $\mathbf{x}\in \R^m\setminus T$ and $\varphi:T\to \R^n$ be a
  nonexpansive map. Then $\varphi$ can be extended to a nonexpansive
  map $\psi:T\cup\{\mathbf{x}\}\to \R^n$ such that\,
  $\psi(\mathbf{x})\in \R^n$ is a closest point to the subset\/
  $\varphi(T)\subseteq \R^n$.
\end{theorem}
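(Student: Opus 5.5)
We follow Schoenberg's minimax approach. Write $T=\{\mathbf{u}_1,\dots,\mathbf{u}_k\}$, $\mathbf{v}_i=\varphi(\mathbf{u}_i)$, and $r_i=\|\mathbf{x}-\mathbf{u}_i\|>0$ (since $\mathbf{x}\notin T$). Consider the continuous function
\[
  F(\mathbf{y})=\max_{i\leq k}\frac{\|\mathbf{y}-\mathbf{v}_i\|}{r_i},\qquad \mathbf{y}\in\R^n .
\]
It tends to $+\infty$ as $\|\mathbf{y}\|\to\infty$, so it attains a minimum $\lambda$ at some $\mathbf{y}_0$; moreover $\lambda>0$ unless $k=1$, a trivial case in which we take $\mathbf{y}_0=\mathbf{v}_1$. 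By \Cref{proposition-Lip-Ball-Intersc:4}, setting $\psi(\mathbf{x})=\mathbf{y}_0$ gives a nonexpansive extension iff $\lambda\leq 1$. The plan is therefore to prove (i) $\lambda\leq1$ and (ii) $\mathbf{y}_0$ is a closest point to $\varphi(T)$ in the sense of Phelps.

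For (ii), let $I=\{i:\|\mathbf{y}_0-\mathbf{v}_i\|=\lambda r_i\}$ be the active indices. First we show that $\mathbf{y}_0\in\conv\{\mathbf{v}_i:i\in I\}$. If not, separate $\mathbf{y}_0$ strictly from this hull by a direction $\mathbf{e}$ with $\langle \mathbf{e},\mathbf{v}_i-\mathbf{y}_0\rangle>0$ for $i\in I$. Moving $\mathbf{y}_0$ slightly along $\mathbf{e}$ strictly decreases every active distance and keeps the inactive ones below $\lambda r_i$, which contradicts minimality. Now, if some $\mathbf{z}$ were pointwise closer, i.e.\ $\|\mathbf{z}-\mathbf{v}_i\|<\|\mathbf{y}_0-\mathbf{v}_i\|$ for all $i$, then expanding $\|\mathbf{z}-\mathbf{v}_i\|^2<\|\mathbf{y}_0-\mathbf{v}_i\|^2$ gives
\[
  \|\mathbf{z}-\mathbf{y}_0\|^2+2\langle \mathbf{z}-\mathbf{y}_0,\mathbf{y}_0-\mathbf{v}_i\rangle<0
\]
for every $i$. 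Averaging with the convex weights $\mathbf{y}_0=\sum_{i\in I}\mu_i\mathbf{v}_i$ makes the inner-product term vanish, leaving $\|\mathbf{z}-\mathbf{y}_0\|^2<0$, a contradiction. (If the pointwise-closer definition is applied only to the points of $\varphi(T)$, note that the strict inequality at the active indices already suffices.) Hence $\mathbf{y}_0$ is a closest point.

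For (i), which is the main obstacle and the genuine Kirszbraun step, we use the same convex weights $\mu_i\geq0$, $\sum_{i\in I}\mu_i=1$, $\sum_{i\in I}\mu_i\mathbf{v}_i=\mathbf{y}_0$. The standard identity for a weighted barycentre gives
\[
  \sum_{i,j\in I}\mu_i\mu_j\|\mathbf{v}_i-\mathbf{v}_j\|^2=2\sum_{i\in I}\mu_i\|\mathbf{v}_i-\mathbf{y}_0\|^2=2\lambda^2\sum_{i\in I}\mu_i r_i^2 .
\]
On the other side, for any point, and in particular for $\mathbf{x}$,
\[
  \sum_{i,j\in I}\mu_i\mu_j\|\mathbf{u}_i-\mathbf{u}_j\|^2=2\sum_{i\in I}\mu_i\|\mathbf{u}_i-\mathbf{x}\|^2-2\Bigl\|\mathbf{x}-\sum_{i\in I}\mu_i\mathbf{u}_i\Bigr\|^2\leq 2\sum_{i\in I}\mu_i r_i^2 .
\]
Since $\varphi$ is nonexpansive, $\|\mathbf{v}_i-\mathbf{v}_j\|\leq\|\mathbf{u}_i-\mathbf{u}_j\|$, so the first sum is at most the second. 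This yields $\lambda^2\sum_{i\in I}\mu_i r_i^2\leq\sum_{i\in I}\mu_i r_i^2$, and because all $r_i>0$ we conclude $\lambda\leq1$.

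The delicate points are the first-order perturbation argument establishing $\mathbf{y}_0\in\conv\{\mathbf{v}_i:i\in I\}$ and the bookkeeping of the two barycentric identities. Once these are in place, $\psi(\mathbf{x})=\mathbf{y}_0$ is the required extension, and it is simultaneously a closest point to $\varphi(T)$.
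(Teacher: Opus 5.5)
Your proof is correct and follows essentially the paper's route. Your minimum value $\lambda$ of $F$ is the paper's $\tau$ from \Cref{lemma-Lipschitz-ext:1}, and your first-order perturbation is the paper's contradiction argument via the nearest point of $\conv(\{\mathbf{v}_u:u\in C\})$ and \Cref{proposition-Lip-Ball-Intersc:3}. Your barycentric estimate is the same quadratic computation, written in the cleaner form (bounding by $\sum_{i}\mu_i r_i^2$) that the paper uses in the proof of \Cref{theorem-KIP-ver-17:1}, which avoids the case split on $k$.

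One slip, which does no harm: $\lambda=0$ exactly when $\varphi$ is constant on $T$, not only when $k=1$, but nothing in your argument uses $\lambda>0$. Also, the closest-point property follows at once from minimality, as in the paper, since a pointwise closer $\mathbf{z}$ would give $F(\mathbf{z})<\lambda$.
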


The extension in \Cref{theorem-KIP-ver-15:1} will be constructed in an
explicit way based on proximal subsets in Euclidean spaces, which may
also explain its uniqueness-like property. To prepare for this proof,
we will first give a brief account on some basic properties of the
Euclidean norm.\medskip

In contrast to general metric spaces, for a linear normed space
$(E,\|\cdot\|)$, a point $v\in E$ and $\delta>0$, we always have that
$\mathbf{C}(v,\delta)=\overline{\mathbf{O}(v,\delta)}$. Simply, if
$\|{u}-{v}\|=\delta$ for some $u\in E$, then
${(1-\lambda){u}+\lambda {v}\in \mathbf{O}({v},\delta)}$ for every
$0<\lambda\leq 1$. In fact, based on this idea, we have the following
related property about the closure of the intersection of open convex
subsets in linear normed spaces.

\begin{proposition}
  \label{proposition-Lip-Ball-Intersc:3}
  Let $\mathcal{U}$ be a family of open convex subsets of a linear
  normed space $(E,\|\cdot\|)$ such that\,
  $\bigcap \mathcal{U}\neq \emptyset$. Then\,
  $\overline{\bigcap \mathcal{U}}=\bigcap_{U\in
    \mathcal{U}}\overline{U}$.
\end{proposition}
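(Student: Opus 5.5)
One inclusion is immediate. Since $\bigcap\mathcal{U}\subseteq U$ for every $U\in\mathcal{U}$, we get $\overline{\bigcap\mathcal{U}}\subseteq\overline{U}$ for each $U$. Hence $\overline{\bigcap \mathcal{U}}\subseteq\bigcap_{U\in\mathcal{U}}\overline{U}$, and neither convexity nor openness is needed here.

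For the reverse inclusion we imitate the segment argument used above for balls. Fix a point $a\in\bigcap\mathcal{U}$, which exists by hypothesis, and take any $u\in\bigcap_{U\in\mathcal{U}}\overline{U}$. We claim that
\[
  (1-\lambda)u+\lambda a\in \bigcap\mathcal{U}\quad\text{for every $0<\lambda\leq 1$.}
\]
Granting this, letting $\lambda\to 0^+$ gives $(1-\lambda)u+\lambda a\to u$ in norm, so $u\in\overline{\bigcap\mathcal{U}}$. Note that the claim must hold for all $U$ simultaneously with the same $\lambda$. This is exactly why we use a single common point $a$ rather than any approximation depending on $U$.

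The main step is therefore a standard lemma. If $U$ is open and convex, $a\in U$, $u\in\overline{U}$ and $0<\lambda\leq1$, then $z=(1-\lambda)u+\lambda a\in U$.

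To prove it, first pick $\varepsilon>0$ with $\mathbf{O}(a,\varepsilon)\subseteq U$, using that $U$ is open. The case $\lambda=1$ is trivial, so assume $\lambda<1$. Since $u\in\overline{U}$, choose $w\in U$ with
\[
  \|u-w\|<\frac{\lambda\varepsilon}{1-\lambda}.
\]
Now write $z=(1-\lambda)w+\lambda a'$, where $a'=a+\frac{1-\lambda}{\lambda}(u-w)$. Then $\|a'-a\|<\varepsilon$, so $a'\in\mathbf{O}(a,\varepsilon)\subseteq U$. Convexity of $U$ now gives $z\in U$.

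This algebraic rewriting of $z$ as a convex combination of a point of $U$ and a point of the ball around $a$ is the only nontrivial point of the proof. Applying the lemma to each $U\in\mathcal{U}$ with the common $a$ and the same $\lambda$ establishes the claim, and hence the equality.
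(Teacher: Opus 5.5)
Your proposal is correct and follows essentially the same approach as the paper: fix a common point $a\in\bigcap\mathcal{U}$, show that $(1-\lambda)u+\lambda a$ lies in every $U\in\mathcal{U}$ for each $0<\lambda\leq1$, and let $\lambda\to0^+$. The only difference is that you actually prove this segment property for open convex sets, via the rewriting $z=(1-\lambda)w+\lambda a'$, and state the trivial inclusion, whereas the paper uses both without comment.
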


\begin{proof}
  If ${v}\in \bigcap \mathcal{U}$ and
  ${u}\in \bigcap_{U\in \mathcal{U}}\overline{U}$, then
  ${u}_n=\left(1-\frac1n\right) {u}+\frac1n{v}\in \bigcap\mathcal{U}$
  for every $n\in \N$. Accordingly,
  ${u}=\lim_{n\to\infty} {u}_n\in \overline{\bigcap \mathcal{U}}$.
\end{proof}

In the special case of the Euclidean space $E=\R^n$, another important
role will be played by the fact that the Euclidean norm is
\emph{strictly convex}, i.e.\ that the \emph{unit sphere}\,
$\s^{n-1}=\left\{\mathbf{x}\in \R^{n}:
  \|\mathbf{x}\|=1\right\}\subseteq \R^n$ does not contain any line
segment. The reason for this property lies in the fact that this norm
measures not only the distance between vectors in $\R^n$, but also the
angle between them. Indeed, the Euclidean norm is simply given by
$\|\mathbf{u}\|=\sqrt{\langle\mathbf{u},\mathbf{u}\rangle}$, where
$\langle\cdot,\cdot\rangle$ is the \emph{inner product} on $\R^n$
defined by
\[
  \langle\mathbf{u},\mathbf{v}\rangle=\sum_{i=1}^n u_iv_i,\quad
  {\mathbf{u}=(u_1,\dots, u_n),\mathbf{v}=(v_1,\dots, v_n)\in \R^n}.
\]
Accordingly, for nonzero vectors $\mathbf{u},\mathbf{v}\in \R^n$, the
Cauchy-Schwarz inequality implies that
${\frac{|\langle\mathbf{u},\mathbf{v}\rangle|}{\|\mathbf{u}\|\cdot
    \|\mathbf{v}\|}\leq 1}$. Therefore, there is a unique angle
$\alpha_{\mathbf{uv}}\in [0,\pi]$ with
$\cos(\alpha_{\mathbf{uv}})=\frac{\langle\mathbf{u},\mathbf{v}\rangle}
{\|\mathbf{u}\|\cdot \|\mathbf{v}\|}$, i.e.\ for which
$\langle\mathbf{u},\mathbf{v}\rangle= \|\mathbf{u}\|\cdot
\|\mathbf{v}\|\cos(\alpha_{\mathbf{uv}})$. The angle
$\alpha_{\mathbf{uv}}$ is usually called the \emph{unoriented} (or
\emph{undirected}) angle between these vectors and serves to reproduce
the well-known \emph{law of cosines} from elementary
trigonometry. This is now expressed in the following more general form
that
\[
  \|\mathbf{u}\pm\mathbf{v}\|^2= \|\mathbf{u}\|^2 + \|\mathbf{v}\|^2
  \pm 2\langle\mathbf{u},\mathbf{v}\rangle\quad \text{for every
    $\mathbf{u},\mathbf{v}\in \R^n$.}
\]
An alternative form of this equality is also familiar from elementary
geometry, it represents the well-known \emph{parallelogram law} that
\[
  \|\mathbf{u}+\mathbf{v}\|^2
  +\|\mathbf{u}-\mathbf{v}\|^2=2\|\mathbf{u}\|^2+2\|\mathbf{v}\|^2\quad
  \text{for every $\mathbf{u},\mathbf{v}\in \R^n$.}
\]

Strictly convex sets in Euclidean spaces can now be reformulated into
the following equivalent form only in terms of open and closed balls.

\begin{proposition}
  \label{proposition-Lip-Ball-Intersc:5}
  If\, $\mathbf{z}\in\R^n$ and $\delta>0$, then
  \begin{equation}
    \label{eq:Lip-Ball-Intersc:4}
    \mfrac{\mathbf{x}+\mathbf{y}}2\in
    \mathbf{O}(\mathbf{z},\delta)\quad \text{for 
      every\, $\mathbf{x},\mathbf{y}\in \mathbf{C}(\mathbf{z},\delta)$\/
      with\, $\mathbf{x}\neq 
      \mathbf{y}$.} 
  \end{equation}
\end{proposition}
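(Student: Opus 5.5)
The plan is to use the parallelogram law stated just before the proposition. First translate so that the centre is irrelevant: set $\mathbf{a}=\mathbf{x}-\mathbf{z}$ and $\mathbf{b}=\mathbf{y}-\mathbf{z}$. Then $\|\mathbf{a}\|\leq\delta$, $\|\mathbf{b}\|\leq\delta$ and $\mathbf{a}\neq\mathbf{b}$, and we must show
\[
  \left\|\mfrac{\mathbf{x}+\mathbf{y}}2-\mathbf{z}\right\|
  =\mfrac12\|\mathbf{a}+\mathbf{b}\|<\delta .
\]

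Apply the parallelogram law to $\mathbf{a},\mathbf{b}$. This gives
\[
  \|\mathbf{a}+\mathbf{b}\|^2=2\|\mathbf{a}\|^2+2\|\mathbf{b}\|^2-\|\mathbf{a}-\mathbf{b}\|^2\leq 4\delta^2-\|\mathbf{a}-\mathbf{b}\|^2 .
\]
Since $\mathbf{x}\neq\mathbf{y}$, we have $\|\mathbf{a}-\mathbf{b}\|^2>0$, and therefore $\|\mathbf{a}+\mathbf{b}\|^2<4\delta^2$. Taking square roots and dividing by $2$ gives the required strict inequality, that is, $\frac{\mathbf{x}+\mathbf{y}}2\in\mathbf{O}(\mathbf{z},\delta)$.

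There is no real obstacle here. The only point needing care is that strictness comes entirely from the term $\|\mathbf{a}-\mathbf{b}\|^2$, which is positive because $\mathbf{x}\neq\mathbf{y}$. This strictness is exactly where the Euclidean (inner-product) structure is used; in a norm such as $\|\cdot\|_\infty$ the analogous statement fails.
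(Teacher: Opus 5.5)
Your proof is correct and is the paper's argument. The paper applies the parallelogram law to $\frac{\mathbf{x}-\mathbf{z}}2$ and $\frac{\mathbf{y}-\mathbf{z}}2$, which is your computation with the vectors halved, and in both the strict inequality comes from $\|\mathbf{x}-\mathbf{y}\|^2>0$.
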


\begin{proof}
  Apply the parallelogram law with
  $\mathbf{u}=\frac{\mathbf{x}-\mathbf{z}}2$ and
  $\mathbf{v}=\frac{\mathbf{y}-\mathbf{z}}2$.
\end{proof}

For a metric space $(X,d)$, a proximal subset $A\subseteq X$ is called
\emph{uniquely proximal} if each $p\in X$ admits a unique closest
point of $A$. Nowadays, uniquely proximal sets are usually called
\emph{Chebyshev}, named after Pafnuty Chebyshev, see Efimov and
Ste\v{c}kin \cite{MR0095445}. In the setting of the Euclidean space
$\R^n$, each convex proximal subset $A\subseteq \R^n$ is Chebyshev,
which follows easily from the property in
\cref{eq:Lip-Ball-Intersc:4}. In fact, the Chebyshev subsets of $\R^n$
are precisely the nonempty closed convex subsets of $\R^n$. This is
known as the \emph{Bunt-Kritikos-Motzkin theorem}. It was obtained by
Bunt \cite{zbMATH03011214} in 1934 and, independently, by Kritikos
\cite{zbMATH02519033} in 1938. The special case of $n=2$ was obtained
by Motzkin \cite{zbMATH03017862} in 1935, while an alternative proof
was also given by Jessen \cite{MR0003721} in 1940.\medskip

The following characteristic property of closest points will play an
important role in the proof of \Cref{theorem-KIP-ver-15:1}. The
inequality in this property is known as the \emph{Kolmogorov
  condition}. Its prototype goes back to Kolmogorov's work in
\cite{MR0025609}. 

\begin{theorem}
  \label{theorem-Lipschitz-ext:4}
  Let $A\subseteq \R^n$ be a convex set and\/ $\mathbf{y}\in
  \R^n$. Then a point\/ $\mathbf{a}\in A$ is closest to\/ $\mathbf{y}$
  if and only if
  \begin{equation}
    \label{eq:Lipschitz-ext:10}
    \left\langle\mathbf{y}-\mathbf{a},\mathbf{x}-\mathbf{a}\right\rangle
    \leq 
    0\quad \text{for every\, $\mathbf{x}\in A$.}
  \end{equation}
\end{theorem}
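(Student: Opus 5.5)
The plan is to prove both implications directly from the law of cosines, using only that $A$ is convex. Fix $\mathbf{x}\in A$ and, for $t\in[0,1]$, set $\mathbf{a}_t=(1-t)\mathbf{a}+t\mathbf{x}=\mathbf{a}+t(\mathbf{x}-\mathbf{a})$. By convexity, $\mathbf{a}_t\in A$. The law of cosines with $\mathbf{u}=\mathbf{y}-\mathbf{a}$ and $\mathbf{v}=t(\mathbf{x}-\mathbf{a})$ then gives the basic identity
\[
  \|\mathbf{y}-\mathbf{a}_t\|^2=\|\mathbf{y}-\mathbf{a}\|^2
  -2t\left\langle\mathbf{y}-\mathbf{a},\mathbf{x}-\mathbf{a}\right\rangle
  +t^2\|\mathbf{x}-\mathbf{a}\|^2 .
\]
Both directions will be read off from this quadratic in $t$.

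For sufficiency, assume \cref{eq:Lipschitz-ext:10} holds. Take $t=1$, so that $\mathbf{a}_1=\mathbf{x}$. Both correction terms $-2\langle\mathbf{y}-\mathbf{a},\mathbf{x}-\mathbf{a}\rangle$ and $\|\mathbf{x}-\mathbf{a}\|^2$ are then nonnegative. Hence $\|\mathbf{y}-\mathbf{x}\|\geq\|\mathbf{y}-\mathbf{a}\|$ for every $\mathbf{x}\in A$, so $\mathbf{a}$ is closest to $\mathbf{y}$. Convexity is not needed in this direction.

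For necessity, suppose $\mathbf{a}$ is closest to $\mathbf{y}$ but $c=\langle\mathbf{y}-\mathbf{a},\mathbf{x}-\mathbf{a}\rangle>0$ for some $\mathbf{x}\in A$. This forces $\mathbf{x}\neq\mathbf{a}$. The identity gives
\[
  \|\mathbf{y}-\mathbf{a}_t\|^2-\|\mathbf{y}-\mathbf{a}\|^2=t\bigl(t\|\mathbf{x}-\mathbf{a}\|^2-2c\bigr),
\]
which is strictly negative for $0<t<\min\bigl\{1,2c/\|\mathbf{x}-\mathbf{a}\|^2\bigr\}$. For such $t$ the point $\mathbf{a}_t\in A$ is strictly closer to $\mathbf{y}$ than $\mathbf{a}$, a contradiction.

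The only point needing care is this choice of a small $t$ in the necessity direction, which is where convexity is genuinely used; everything else is routine expansion of inner products. As a remark, the same computation shows uniqueness of the closest point: if $\mathbf{a},\mathbf{a}'$ both satisfy \cref{eq:Lipschitz-ext:10}, adding the two inequalities gives $\|\mathbf{a}-\mathbf{a}'\|^2\leq0$. This is consistent with the Chebyshev property mentioned before the theorem.
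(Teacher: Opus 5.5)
Your proof is correct and follows essentially the same route as the paper. Both expand $\|\mathbf{y}-((1-t)\mathbf{a}+t\mathbf{x})\|^2$ and get sufficiency from the case $t=1$; for necessity the paper divides by $\lambda$ and lets $\lambda\to 0$, while you argue by contradiction with an explicit small $t$, and your closing uniqueness remark is exactly the argument the paper gives right after the theorem.
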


\begin{proof}
  Suppose that $\mathbf{a}\in A$ is a closest point to $\mathbf{y}$,
  and take $\mathbf{x}\in A$ and $0< \lambda< 1$. Then, since $A$ is
  convex, $(1-\lambda)\mathbf{a}+ \lambda \mathbf{x}\in A$ and it
  follows that
  \begin{align*}
    \|\mathbf{y}-\mathbf{a}\|^2
    &\leq
      \|\mathbf{y}-[(1-\lambda)\mathbf{a}
      +\lambda\mathbf{x})]\|^2\\
    &= \|(\mathbf{y}-\mathbf{a})-
      \lambda(\mathbf{x}-\mathbf{a})\|^2\\
    &=\|\mathbf{y}-\mathbf{a}\|^2
      -2\lambda \langle\mathbf{y}-
      \mathbf{a},\mathbf{x}-\mathbf{a}\rangle 
      +\lambda^2\left\|\mathbf{x}- \mathbf{a}\right\|^2.
  \end{align*}
  Hence,
  $2\langle\mathbf{y}-\mathbf{a},\mathbf{x}-
    \mathbf{a}\rangle \leq\lambda\|\mathbf{x}- \mathbf{a}\|^2$
  for each $0<\lambda<1$, which is the inequality in
  \cref{eq:Lipschitz-ext:10}.\medskip

  Conversely, suppose that $\mathbf{a}\in A$ is as in
  \cref{eq:Lipschitz-ext:10}, and take any point $\mathbf{x}\in
  A$. Then
  \begin{align*}
    \|\mathbf{y}-\mathbf{x}\|^2
    &= \|(\mathbf{y}-\mathbf{a})+
      (\mathbf{a}-\mathbf{x})\|^2 \\
    &=\|\mathbf{y}-\mathbf{a}\|^2
      +2\langle\mathbf{y}-\mathbf{a},
      \mathbf{a}-\mathbf{x}\rangle 
      +\|\mathbf{a}-\mathbf{x}\|^2\geq \|\mathbf{y}-\mathbf{a}\|^2,
  \end{align*}
  because
  $\langle\mathbf{y}-\mathbf{a},\mathbf{a}-\mathbf{x}\rangle \geq 0$.
\end{proof}

The inequality in \cref{eq:Lipschitz-ext:10} gives another proof that
each proximal convex subset $A\subseteq \R^n$ is Chebyshev. Indeed,
let $\mathbf{a}_1,\mathbf{a}_2\in A$ be closest points to some
$\mathbf{y}\in \R^n\setminus A$ for such a subset $A\subseteq
\R^n$. Then by \cref{eq:Lipschitz-ext:10},
$\langle\mathbf{y}-\mathbf{a}_1,\mathbf{a}_2-\mathbf{a}_1\rangle \leq
0$ and
$\langle\mathbf{y}-\mathbf{a}_2,\mathbf{a}_1-\mathbf{a}_2\rangle \leq
0$.  Since
$\langle\mathbf{a}_2-\mathbf{y},\mathbf{a}_2-\mathbf{a}_1\rangle =
\langle\mathbf{y}-\mathbf{a}_2,\mathbf{a}_1-\mathbf{a}_2\rangle$, we
get that
$\langle\mathbf{a}_2-\mathbf{a}_1,\mathbf{a}_2-\mathbf{a}_1\rangle
\leq 0$, i.e.\ that $\mathbf{a}_2=\mathbf{a}_1$.\smallskip

\begin{wrapfigure}[4]{l}{0.32\textwidth}
  \vspace{-5pt} \hspace{12pt}
  \begin{tikzpicture}[scale=0.8, every node/.style={scale=0.8}]
    \filldraw[black!10!white,line width=1pt] (3,1.5) .. controls (1,0)
    and (2,-1) .. (5.5,1.5);
    \draw[black!50!white,line width=1pt]
    (3,1.5) .. controls (1,0) and (2,-1) .. (5.5,1.5) (3.5,1)
    node[anchor=center] {$A$};
    \draw [darkgray, line width=.4pt,
    dashed] (1.4,.8) circle [radius=20pt];
    \filldraw [darkgray] (1.4,.8) circle [radius=.8pt]
    node[anchor=east] 
    {\small $\mathbf{y}$} (2,.41) circle [radius=.8pt]
    node[anchor=north west] {\small $\mathbf{a}$} (3,.8)
    circle [radius=.8pt]
    node[anchor=north west] {\small $\mathbf{x}$};
    \draw[darkgray,line width=.4pt] (1.4,.8) -- (1.96,.425);
    \draw[darkgray,line width=.4pt] (3,.8)-- (2.04,.41);
    \draw[darkgray,line width=.4pt, dashed] (1.4,.8)-- (3,.8);
  \end{tikzpicture}
\end{wrapfigure}
\noindent Furthermore, for $\mathbf{y}\neq \mathbf{a}\neq \mathbf{x}$,
the Kolmogorov condition in \cref{eq:Lipschitz-ext:10} represents the
property that the undirected angle between the vectors
$\mathbf{y}-\mathbf{a}$ and $\mathbf{x}-\mathbf{a}$ is obtuse. Hence,
$\mathbf{x}-\mathbf{y}=(\mathbf{x}-\mathbf{a})-(\mathbf{y}-\mathbf{a})$
is the longest side in the triangle formed by these vectors. Indeed,
we have the following consequence.

\begin{corollary}
  \label{corollary-Lipschitz-ext:1}
  Let $A\subseteq \R^n$ be a convex set and\/
  $\mathbf{y}\in \R^n\setminus A$. If a point\/ $\mathbf{a}\in A$ is
  closest to $\mathbf{y}$, then
  \begin{equation}
    \label{eq:Lipschitz-ext:11}
    \|\mathbf{x}-\mathbf{a}\|< \|\mathbf{x}-\mathbf{y}\|\quad
    \text{for every\, $\mathbf{x}\in A$.}
  \end{equation}
\end{corollary}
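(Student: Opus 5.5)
The plan is to derive the strict inequality \cref{eq:Lipschitz-ext:11} directly from the Kolmogorov condition in \Cref{theorem-Lipschitz-ext:4}, using the law of cosines for the decomposition
\[
\mathbf{x}-\mathbf{y}=(\mathbf{x}-\mathbf{a})-(\mathbf{y}-\mathbf{a}).
\]

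Fix $\mathbf{x}\in A$. Expanding with the law of cosines gives
\[
\|\mathbf{x}-\mathbf{y}\|^2=\|\mathbf{x}-\mathbf{a}\|^2+\|\mathbf{y}-\mathbf{a}\|^2-2\langle \mathbf{x}-\mathbf{a},\mathbf{y}-\mathbf{a}\rangle .
\]
Since $\mathbf{a}$ is a closest point of the convex set $A$ to $\mathbf{y}$, \Cref{theorem-Lipschitz-ext:4} yields $\langle\mathbf{y}-\mathbf{a},\mathbf{x}-\mathbf{a}\rangle\leq 0$. The last term is therefore nonnegative, and we obtain
\[
\|\mathbf{x}-\mathbf{y}\|^2\geq \|\mathbf{x}-\mathbf{a}\|^2+\|\mathbf{y}-\mathbf{a}\|^2 .
\]

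The only point needing care is strictness, which comes from the hypothesis $\mathbf{y}\notin A$. Because $\mathbf{a}\in A$, we have $\mathbf{y}\neq\mathbf{a}$, so $\|\mathbf{y}-\mathbf{a}\|^2>0$. Hence $\|\mathbf{x}-\mathbf{y}\|^2>\|\mathbf{x}-\mathbf{a}\|^2$, and taking square roots gives \cref{eq:Lipschitz-ext:11}.

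The case $\mathbf{x}=\mathbf{a}$ needs no separate treatment: it reduces to $0<\|\mathbf{a}-\mathbf{y}\|$, which is again immediate from $\mathbf{y}\neq\mathbf{a}$.
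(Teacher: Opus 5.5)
Your proof is correct and follows the paper's argument: you expand $\|\mathbf{x}-\mathbf{y}\|^2$ via the law of cosines, drop the cross term using the Kolmogorov condition of \Cref{theorem-Lipschitz-ext:4}, and get strictness from $\|\mathbf{y}-\mathbf{a}\|>0$. The one difference is that you spell out why $\mathbf{y}\neq\mathbf{a}$ (namely $\mathbf{a}\in A$ and $\mathbf{y}\notin A$), which the paper leaves implicit.
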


\begin{proof}
  If $\mathbf{a}\in A$ is a closest point to $\mathbf{y}$ and
  $\mathbf{x}\in A$, then \cref{eq:Lipschitz-ext:10} implies that
  \begin{align*}
    \|\mathbf{x}-\mathbf{y}\|^2
    &= \|(\mathbf{a}-\mathbf{y})
      +(\mathbf{x}-\mathbf{a})\|^2 \\
    &= \|\mathbf{a}-\mathbf{y}\|^2 +2\langle\mathbf{a}-\mathbf{y},
      \mathbf{x}-\mathbf{a}\rangle +\|\mathbf{x}-\mathbf{a}\|^2\\
    &\geq \|\mathbf{a}-\mathbf{y}\|^2
      +\|\mathbf{x}-\mathbf{a}\|^2>
      \|\mathbf{x}-\mathbf{a}\|^2.\qedhere
  \end{align*}
\end{proof}

The key construction in the proof of \Cref{theorem-KIP-ver-15:1} is
the following lemma, which is a simplified version of \cite[Lemma
2.10.40]{zbMATH03280855}, while its underlying idea can be traced back
to a work by Schoenberg\,---\,see \cite[Theorem 2]{zbMATH03084778}.
In this lemma, and what follows, $\conv(S)$ denotes the \emph{convex
  hull} of a subset $S \subseteq E$ of a vector space $E$.

\begin{lemma}
  \label{lemma-Lipschitz-ext:1}
  Let\/ $\mathbf{v}_u\in \R^n$ and $\delta_u>0$, $u\in T$, where $T$
  is a nonempty finite set. Then there exists\/ $\tau\geq 0$ and\,
  $\mathbf{y}\in \bigcap_{u\in
    T}\mathbf{C}(\mathbf{v}_u,\tau\delta_u)$ such that
  \begin{equation}
    \label{eq:Kirszbraun-Intersc-v1:1}
    \mathbf{y}\in\conv\left(\left\{\mathbf{v}_u: u\in T\ \text{and}\,\  
        \|\mathbf{y}-\mathbf{v}_u\|=\tau\delta_u\right\}\right). 
  \end{equation}
  Furthermore, $\mathbf{y}$ is a closest point to the set\/
  $\left\{\mathbf{v}_u:u\in T\right\}$.
\end{lemma}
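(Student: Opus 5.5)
Consider the function $g:\R^n\to[0,+\infty)$, $g(\mathbf{z})=\max_{u\in T}\frac{\|\mathbf{z}-\mathbf{v}_u\|}{\delta_u}$. It is continuous and $g(\mathbf{z})\to+\infty$ as $\|\mathbf{z}\|\to\infty$, so it attains its minimum. Let $\tau=\min g\geq0$ and let $\mathbf{y}$ be a minimiser. Then $\|\mathbf{y}-\mathbf{v}_u\|\leq\tau\delta_u$ for all $u$, i.e.\ $\mathbf{y}\in\bigcap_{u\in T}\mathbf{C}(\mathbf{v}_u,\tau\delta_u)$. Let $T_0=\{u\in T:\|\mathbf{y}-\mathbf{v}_u\|=\tau\delta_u\}$ be the set of active indices; it is nonempty because the maximum defining $g(\mathbf{y})$ is attained.

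If $\tau=0$, then $\mathbf{y}=\mathbf{v}_u$ for every $u$, so all $\mathbf{v}_u$ coincide with $\mathbf{y}$ and both claims are trivial. Assume $\tau>0$. To get \cref{eq:Kirszbraun-Intersc-v1:1}, set $K=\conv(\{\mathbf{v}_u:u\in T_0\})$. This is a compact convex set, hence proximal, so $\mathbf{y}$ has a closest point $\mathbf{a}\in K$. Suppose, for a contradiction, that $\mathbf{y}\notin K$. By \Cref{corollary-Lipschitz-ext:1}, $\|\mathbf{v}_u-\mathbf{a}\|<\|\mathbf{v}_u-\mathbf{y}\|$ for every $u\in T_0$. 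Now put $\mathbf{y}_\lambda=(1-\lambda)\mathbf{y}+\lambda\mathbf{a}$. For $u\in T_0$, convexity of the norm gives $\|\mathbf{y}_\lambda-\mathbf{v}_u\|\leq(1-\lambda)\|\mathbf{y}-\mathbf{v}_u\|+\lambda\|\mathbf{a}-\mathbf{v}_u\|<\tau\delta_u$ for all $0<\lambda\leq1$. For $u\notin T_0$ we have the strict inequality $\|\mathbf{y}-\mathbf{v}_u\|<\tau\delta_u$, and by continuity it persists for all small $\lambda>0$. Since $T$ is finite, some small $\lambda$ gives $g(\mathbf{y}_\lambda)<\tau$, contradicting minimality. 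Hence $\mathbf{y}\in K$. This step is where finiteness and strict convexity (through \Cref{corollary-Lipschitz-ext:1}) are essential, and it is the main point of the argument.

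For the closest-point claim, suppose some $\mathbf{q}$ is pointwise closer, i.e.\ $\|\mathbf{q}-\mathbf{v}_u\|<\|\mathbf{y}-\mathbf{v}_u\|$ for all $u\in T$. By \eqref{eq:Kirszbraun-Intersc-v1:1} we can write $\mathbf{y}=\sum_{u\in T_0}\lambda_u\mathbf{v}_u$ with $\lambda_u\geq0$ and $\sum\lambda_u=1$. Expand
\[
\|\mathbf{q}-\mathbf{v}_u\|^2=\|\mathbf{q}-\mathbf{y}\|^2+2\langle\mathbf{q}-\mathbf{y},\mathbf{y}-\mathbf{v}_u\rangle+\|\mathbf{y}-\mathbf{v}_u\|^2,
\]
multiply by $\lambda_u$ and sum over $T_0$. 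The middle term vanishes because $\sum_{u\in T_0}\lambda_u(\mathbf{y}-\mathbf{v}_u)=\mathbf{0}$. Therefore $\sum\lambda_u\|\mathbf{q}-\mathbf{v}_u\|^2\geq\sum\lambda_u\|\mathbf{y}-\mathbf{v}_u\|^2$. On the other hand, the hypothesis gives the strict reverse inequality, since some $\lambda_u>0$. This is a contradiction, so $\mathbf{y}$ is a closest point to $\{\mathbf{v}_u:u\in T\}$.
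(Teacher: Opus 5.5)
Your proof is correct, and its first half is essentially the paper's. Your $\tau=\min g$ coincides with the paper's $\tau=\inf\{t\geq0:\bigcap_{u\in T}\mathbf{C}(\mathbf{v}_u,t\delta_u)\neq\emptyset\}$, since the minimisers of $g$ are exactly the points of that intersection at $t=\tau$. Your proof of \cref{eq:Kirszbraun-Intersc-v1:1} is the paper's contradiction argument via the projection $\mathbf{a}$ and \Cref{corollary-Lipschitz-ext:1}; the only change is that you move along the segment from $\mathbf{y}$ to $\mathbf{a}$ by hand where the paper quotes \Cref{proposition-Lip-Ball-Intersc:3}.

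The real difference is in the closest-point claim. The paper gets it directly from the minimality of $\tau$. If $\mathbf{q}$ were pointwise closer to the $\mathbf{v}_u$ than $\mathbf{y}$, then $\|\mathbf{q}-\mathbf{v}_u\|<\|\mathbf{y}-\mathbf{v}_u\|\leq\tau\delta_u$ for every $u$, i.e.\ $g(\mathbf{q})<\tau$ in your notation, which is impossible. This uses neither \cref{eq:Kirszbraun-Intersc-v1:1} nor any computation. Your barycentric identity instead shows that \emph{any} point of $\conv(\{\mathbf{v}_u:u\in T_0\})$ is a closest point, which is in effect one half of \Cref{theorem-Lipschitz-ext:5}. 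That route is longer, but it shows the closest-point property follows from \cref{eq:Kirszbraun-Intersc-v1:1} alone, independently of how $\mathbf{y}$ was chosen.

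One small correction to your commentary: the key step rests on the inner-product inequality of \Cref{corollary-Lipschitz-ext:1}, which comes from the Kolmogorov condition, not on strict convexity. In the paper, strict convexity (\Cref{proposition-Lip-Ball-Intersc:5}) is used only to show that the minimiser $\mathbf{y}$ is unique, and the statement does not require uniqueness.
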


\begin{proof}
  For each $t\geq 0$, let $Y_t\subseteq \R^n$ be the closed convex set
  defined by
  \begin{equation}
    \label{eq:Lipschitz-ext:14}
    Y_t=
    \bigcap_{u\in
      T}\mathbf{C}\left(\mathbf{v}_u,t\delta_u\right). 
  \end{equation}
  Since $Y_t\neq \emptyset$ for some $t\geq 0$, for instance
  $\mathbf{0}\in Y_t$ for every
  $t\geq \max_{u\in T}\frac{\|\mathbf{v}_u\|}{\delta_u}$, the number
  $\tau=\inf\left\{t\geq 0: Y_t\neq \emptyset\right\}$ is correctly
  defined. Moreover, by \cref{eq:Lipschitz-ext:14},
  $Y_\tau\neq \emptyset$ because ${Y_\tau=\bigcap_{t>\tau} Y_t}$ and
  $\left\{Y_t: t>\tau\right\}$ is a decreasing family of nonempty
  compact subsets of $\R^n$. Furthermore, if
  $\mathbf{w}\in \bigcap_{u\in
    T}\mathbf{O}\left(\mathbf{v}_u,t\delta_u\right)$ for some $t>0$,
  then
  $\tilde{t}= \max_{u\in
    T}\frac{\|\mathbf{w}-\mathbf{v}_u\|}{\delta_u}<t$ and
  $\mathbf{w}\in \bigcap_{u\in
    T}\mathbf{C}\left(\mathbf{v}_u,\tilde{t}\delta_u\right)$. Therefore,
  $\tau$ also has the property that
  \begin{equation}
    \label{eq:Lipschitz-ext:7}
    \tau<t\quad\iff\quad
    \bigcap_{u\in T}\mathbf{O}\left(\mathbf{v}_u,t\delta_u\right)
    \neq\emptyset. 
  \end{equation}
  According to \cref{eq:Lip-Ball-Intersc:4}, this implies that
  $\bigcap_{u\in
    T}\mathbf{C}\left(\mathbf{v}_u,\tau\delta_u\right)=\{\mathbf{y}\}$
  for some ${\mathbf{y}\in \R^n}$. Since
  $\bigcap_{u\in T}\mathbf{O}\left(\mathbf{v}_u,\tau\delta_u\right)
  =\emptyset$, it also implies that the point $\mathbf{y}$ is closest
  to the set $\left\{\mathbf{v}_u:u\in T\right\}$.\medskip

  To show that the so constructed $\tau\geq 0$ and
  $\mathbf{y}\in \bigcap_{u\in
    T}\mathbf{C}(\mathbf{v}_u,\tau\delta_u)$ are as in
  \cref{eq:Kirszbraun-Intersc-v1:1}, let
  \[
    C=\left\{u\in T:
      \|\mathbf{y}-\mathbf{v}_u\|=\tau\delta_u\right\}\quad\text{and}\quad
    D=\left\{u\in T:
      \|\mathbf{y}-\mathbf{v}_u\|<\tau\delta_u\right\}=T\setminus C.
  \]  
  It now follows from \cref{eq:Lipschitz-ext:7} that
  $C\neq \emptyset$. For the sake of contradiction, let us assume that
  $\mathbf{y}\notin \conv(\left\{\mathbf{v}_u:u\in
    C\right\})=A$. Next, let ${\mathbf{a}\in A}$ be the closest point
  to $\mathbf{y}$. Then by \cref{eq:Lipschitz-ext:11},
  ${\|\mathbf{v}_u-\mathbf{a}\|<
    \|\mathbf{v}_u-\mathbf{y}\|=\tau\delta_u}$ for every $u\in
  C$. Hence,
  $\bigcap_{u\in C}\mathbf{O}(\mathbf{v}_u,\tau\delta_n)\neq
  \emptyset$ and from \cref{eq:Lipschitz-ext:7} we obtain that
  ${D\neq \emptyset}$, so that
  $\mathbf{y}\in \bigcap_{u\in D} \mathbf{O}(\mathbf{v}_u,
  \tau\delta_u)$. However, by \Cref{proposition-Lip-Ball-Intersc:3},
  we also have that
  $\mathbf{y}\in \bigcap_{u\in C} \mathbf{C}(\mathbf{v}_u,
  \tau\delta_u)=\overline{\bigcap_{u\in C} \mathbf{O}(\mathbf{v}_u,
    \tau\delta_u)}$. Thus, we finally get that
  $\bigcap_{u\in T} \mathbf{O}(\mathbf{v}_u, \tau\delta_u)\neq
  \emptyset$, which, according to \cref{eq:Lipschitz-ext:7}, is
  obviously impossible.
\end{proof}

The extension property of nonexpansive maps in
\Cref{theorem-KIP-ver-15:1} now follows as a simple application of
\Cref{lemma-Lipschitz-ext:1}.

\begin{proof}[Proof of\/ \Cref{theorem-KIP-ver-15:1}]
  Let $T\subseteq \R^m$ be a nonempty finite set, $\varphi:T\to \R^n$
  be a nonexpansive map and $\mathbf{x}\in \R^m\setminus T$. Next, let
  $\tau\geq 0$ and $\mathbf{y}\in \R^n$ be as in
  \Cref{lemma-Lipschitz-ext:1} applied with the set $T$, the points
  $\mathbf{v}_u=\varphi(\mathbf{u})$ and the positive numbers
  $\delta_u=\|\mathbf{x}-\mathbf{u}\|>0$, $\mathbf{u}\in T$.  Then by
  this lemma, the set
  $\left\{\mathbf{u}\in T:
    \|\mathbf{y}-\mathbf{v}_u\|=\tau\delta_u\right\}$ is
  nonempty. Hence, we can take an enumeration
  $\left\{\mathbf{u}_1,\dots, \mathbf{u}_k\right\}$ of this set and,
  for convenience, let us set $\mathbf{v}_i=\mathbf{v}_{u_i}$ for
  every $i\leq k$.  Thus, by \cref{eq:Kirszbraun-Intersc-v1:1},
  $\mathbf{y}\in \conv\left(\left\{\mathbf{v}_i:i\leq
      k\right\}\right)$ and we get that
  $\mathbf{y}=\sum_{i=1}^k \lambda_i \mathbf{v}_i$ for some
  coefficients $\lambda_i\geq 0$, $i\leq k$, with
  $\sum_{i=1}^k \lambda_i=1$. Accordingly, this now implies that
  \begin{align*}
    0&=2\left\|\sum_{i=1}^k \lambda_i(\mathbf{y}-\mathbf{v}_i)\right\|^2
       =\sum_{i,j=1}^k \lambda_i\lambda_j
       \left[2\left\langle\mathbf{y}-\mathbf{v}_i, 
       \mathbf{y}-\mathbf{v}_j\right\rangle\right]\\
     &= \sum_{i,j=1}^k
       \lambda_i\lambda_j\left[\|\mathbf{y}-\mathbf{v}_i\|^2 +
       \|\mathbf{y}-\mathbf{v}_j\|^2 - \|\mathbf{v}_i-\mathbf{v}_j\|^2
       \right]\\
     &\geq \sum_{i,j=1}^k
       \lambda_i\lambda_j\left[\tau^2\|\mathbf{x}-\mathbf{u}_i\|^2 +
       \tau^2\|\mathbf{x}-\mathbf{u}_j\|^2 -
       \|\mathbf{u}_i-\mathbf{u}_j\|^2
       \right]\\
     & =\sum_{i,j=1}^k
       \lambda_i\lambda_j\left[\tau^2\cdot 2\langle\mathbf{x}-\mathbf{u}_i,
       \mathbf{x}-\mathbf{u}_j\rangle +(\tau^2-1)
       \|\mathbf{u}_i-\mathbf{u}_j\|^2
       \right]\\
     &= 2\tau^2\left\| \sum_{i=1}^k
       \lambda_i(\mathbf{x}-\mathbf{u}_i)\right\|^2 +(\tau^2-1) \sum_{i,j=1}^k
       \lambda_i\lambda_j\|\mathbf{u}_i-\mathbf{u}_j\|^2.
  \end{align*}

  If $k=1$, then $\tau=0\leq 1$ because $\mathbf{x}\neq
  \mathbf{u}_1$. Otherwise, if $k>1$, then $\tau^2-1\leq0$ which is
  equivalent to $\tau\leq 1$. Thus, we get that
  $\|\mathbf{y}-\varphi(\mathbf{u})\|\leq \tau
  \|\mathbf{x}-\mathbf{u}\| \leq \|\mathbf{x}-\mathbf{u}\| $ for every
  $\mathbf{u}\in T$. Hence, we can define a nonexpansive extension
  $\psi:T\cup\{\mathbf{x}\}\to \R^n$ of $\varphi$ by
  $\psi(\mathbf{x})=\mathbf{y}$ and $\psi\uhr T=\varphi$. Since by
  \Cref{lemma-Lipschitz-ext:1} we also have that
  $\psi(\mathbf{x})=\mathbf{y}$ is a closest point to the subset
  $\varphi(T)$, the proof is complete.
\end{proof}

Going back to the special property of the extension in
\Cref{theorem-KIP-ver-15:1}, let us point out the following result,
obtained by Fej\'er \cite{zbMATH02604339} for the plane and extended
to all Hilbert spaces by Phelps \cite[Theorem 4]{MR104139}. Here, it
is stated for Euclidean spaces, but essentially the same proof is
valid for all Hilbert spaces.

\begin{theorem}
  \label{theorem-Lipschitz-ext:5}
  If $A\subseteq \R^n$ is a nonempty set, then the set of all closest
  points to $A$ is the closed convex hull\, $\overline{\conv(A)}$\/
  of\/ $A$.
\end{theorem}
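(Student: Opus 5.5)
We prove the two inclusions separately. Throughout, $\mathbf{p}\in\R^n$ is a closest point to $A$ exactly when no point is pointwise closer to $A$ than $\mathbf{p}$, that is, when $\bigcap_{\mathbf{x}\in A}\mathbf{O}(\mathbf{x},\|\mathbf{p}-\mathbf{x}\|)=\emptyset$. Set $K=\overline{\conv(A)}$. This is a nonempty closed convex subset of $\R^n$, hence proximal, and by the remarks following \Cref{theorem-Lipschitz-ext:4} it is Chebyshev.

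\emph{Step 1: points outside $K$ are not closest.} Take $\mathbf{p}\notin K$ and let $\mathbf{a}\in K$ be its closest point in $K$. Applying \Cref{corollary-Lipschitz-ext:1} to the convex set $K$ and the point $\mathbf{p}\in\R^n\setminus K$ gives $\|\mathbf{x}-\mathbf{a}\|<\|\mathbf{x}-\mathbf{p}\|$ for every $\mathbf{x}\in K$, and in particular for every $\mathbf{x}\in A$. Thus $\mathbf{a}$ is pointwise closer to $A$ than $\mathbf{p}$, so $\mathbf{p}$ is not a closest point. This shows that the set of closest points is contained in $K$.

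\emph{Step 2: points of $K$ are closest.} Take $\mathbf{p}\in K$ and suppose, for contradiction, that some $\mathbf{q}$ satisfies $\|\mathbf{q}-\mathbf{x}\|<\|\mathbf{p}-\mathbf{x}\|$ for all $\mathbf{x}\in A$. Then $\mathbf{q}\neq\mathbf{p}$. Expanding the squared norms turns this inequality into
\[
2\langle \mathbf{p}-\mathbf{q},\mathbf{x}\rangle<\|\mathbf{p}\|^2-\|\mathbf{q}\|^2,\qquad \mathbf{x}\in A.
\]
So $A$ lies in the open half-space $H=\{\mathbf{x}:\ell(\mathbf{x})<c\}$, where $\ell(\mathbf{x})=2\langle\mathbf{p}-\mathbf{q},\mathbf{x}\rangle$ and $c=\|\mathbf{p}\|^2-\|\mathbf{q}\|^2$. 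Since $H$ is convex, $\conv(A)\subseteq H$, and by continuity of $\ell$ we get $K\subseteq\{\ell\le c\}$. Hence $\ell(\mathbf{p})\le c$. Evaluating the linear side gives $2\|\mathbf{p}\|^2-2\langle\mathbf{q},\mathbf{p}\rangle\le \|\mathbf{p}\|^2-\|\mathbf{q}\|^2$, that is, $\|\mathbf{p}-\mathbf{q}\|^2\le 0$. This contradicts $\mathbf{q}\neq\mathbf{p}$.

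The main obstacle is Step 2. Passing to the closure only yields the non-strict inequality $\ell\le c$, so strictness seems to be lost. The computation above shows it is not needed: even the weak inequality at $\mathbf{p}$ forces $\mathbf{q}=\mathbf{p}$. If that handling fails, the fallback is to treat $\mathbf{p}\in\conv(A)$ first, by writing $\mathbf{p}=\sum\lambda_i\mathbf{x}_i$ and averaging the strict inequalities, and then to pass to limits.
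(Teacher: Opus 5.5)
Your proof is correct and is essentially the paper's argument. Step 1 is identical: you take the nearest point in $\overline{\conv(A)}$ and apply \cref{eq:Lipschitz-ext:11}. Your set $\{\ell\le c\}$ in Step 2 is exactly the paper's closed half-space $S=\{\mathbf{x}:\|\mathbf{q}-\mathbf{x}\|\le\|\mathbf{p}-\mathbf{x}\|\}$, which contains $A$, hence contains $\overline{\conv(A)}$, but does not contain $\mathbf{p}$. So the non-strict inequality suffices and no fallback is needed.
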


\begin{proof}
  Whenever $\mathbf{y}\notin \overline{\conv(A)}$, let
  $\mathbf{v}\in \overline{\conv(A)}$ be the closest point to
  $\mathbf{y}$. Then by \cref{eq:Lipschitz-ext:11}, the point
  $\mathbf{v}$ is pointwise closer to $A$ than
  $\mathbf{y}$. Conversely, suppose that $\mathbf{y}\in\R^n$ is not a
  closest point to $A$. Then there exists a point $\mathbf{v}\in \R^n$
  that is pointwise closer to $A$ than $\mathbf{y}$, i.e.\ with the
  property that $\|\mathbf{v}-\mathbf{a}\|< \|\mathbf{y}-\mathbf{a}\|$
  for every $\mathbf{a}\in A$. Consider the set
  $S=\left\{\mathbf{x}\in\R^n: \|\mathbf{v}-\mathbf{x}\|\leq
    \|\mathbf{y}-\mathbf{x}\|\right\}$, which is a closed half-space
  containing $A$. Indeed,
  $\mathbf{y}-\mathbf{x}=(\mathbf{y}-\mathbf{v})-
  (\mathbf{x}-\mathbf{v})$ and by the properties of the Euclidean
  norm,
  \[
    0\leq \|\mathbf{y}-\mathbf{x}\|^2-\|\mathbf{v}-\mathbf{x}\|^2=
    \|\mathbf{y}-\mathbf{v}\|^2 -2\langle
    \mathbf{y}-\mathbf{v},\mathbf{v}-\mathbf{x}\rangle.
  \]
  Accordingly,
  $S=\left\{\mathbf{x}\in \R^n: 2\langle\mathbf{y}-\mathbf{v},
    \mathbf{x}-\mathbf{v}\rangle \leq
    \|\mathbf{y}-\mathbf{v}\|^2\right\}$.  Thus,
  $\overline{\conv(A)}\subseteq S$ because $S$ is convex and
  $A\subseteq S$. Moreover, $\mathbf{y}\notin \overline{\conv(A)}$
  because $\mathbf{y}\notin S$. The proof is complete.
\end{proof}

\Cref{theorem-Lipschitz-ext:5} not only clarifies
\Cref{theorem-KIP-ver-15:1} but also provides a natural explanation
for the following refinement of the Kirszbraun extension theorem
(\Cref{theorem-Lipschitz-ext:1}).

\begin{corollary}
  \label{corollary-KIP-ver-16:2}
  If $A\subseteq \R^m$ is a nonempty set, then each nonexpansive map
  ${\varphi\colon A\to \R^n}$ can be extended to a nonexpansive map
  $f:\R^m\to \overline{\conv(\varphi(A))}$.
\end{corollary}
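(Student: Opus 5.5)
The plan is to rerun the proof of \Cref{theorem-Lipschitz-ext:1}, but restricted to the closed convex set $K=\overline{\conv(\varphi(A))}\subseteq\R^n$ as the range. \Cref{theorem-KIP-ver-15:1} and \Cref{theorem-Lipschitz-ext:5} together already give one-point extensions with values in $K$ for finite sets. Concretely, let $T\subseteq A'$ be finite for some partial extension $\psi:A'\to K$. By \Cref{theorem-KIP-ver-15:1}, each nonexpansive $\varphi:T\to\R^n$ and each $\mathbf{x}\notin T$ admit a nonexpansive extension whose new value $\mathbf{y}$ is a closest point to $\varphi(T)$. By \Cref{theorem-Lipschitz-ext:5}, $\mathbf{y}\in\overline{\conv(\varphi(T))}=\conv(\varphi(T))$, the hull of a finite set being compact.

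First I would handle a general (possibly infinite) $S$. Fix a nonexpansive $\psi:S\to K$ and $\mathbf{x}\notin S$. Consider the family of compact convex sets
\[
K\cap\mathbf{C}\big(\psi(\mathbf{u}),\|\mathbf{x}-\mathbf{u}\|\big),\qquad \mathbf{u}\in S,
\]
where compactness comes from the balls. For any finite $T\subseteq S$, the previous paragraph gives a point $\mathbf{y}\in\conv(\psi(T))\subseteq K$ lying in all the balls indexed by $T$, since $\psi(T)\subseteq K$ and $K$ is convex. So the family has the finite intersection property. By compactness its intersection is nonempty, and any point in it defines a nonexpansive extension $S\cup\{\mathbf{x}\}\to K$, exactly as in \Cref{proposition-Lip-Ball-Intersc:4}.

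With one-point extensions into $K$ available, I would apply \Cref{theorem-Lipschitz-ext:9} with $X=\R^m$, $Y=K$ carrying the Euclidean metric, and $K$-Lipschitz constant $1$. Starting from $\varphi:A\to\varphi(A)\subseteq K$, this produces a nonexpansive $f:\R^m\to K$ extending $\varphi$. Alternatively, since $\R^m$ is separable and $K$ is complete, \Cref{remark-Lip-Ball-Intersc:1} avoids Zorn's lemma.

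The only delicate point is the containment $\conv(\psi(T))\subseteq K$ for finite subsets of the \emph{already extended} domain. This is why the induction must keep every value inside $K$ at each stage, rather than only inside the hull of the current finite image. It is immediate once the inductive invariant is "values in $K$". Beyond that, everything is bookkeeping of compactness.
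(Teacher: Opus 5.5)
Your proof is correct, but it takes a different route from the paper's.

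The paper never reopens the finite-set construction. It takes an arbitrary nonexpansive one-point extension $\psi_0$ from \Cref{theorem-Lipschitz-ext:1}. If $\psi_0(\mathbf{x})\notin K=\overline{\conv(\varphi(A))}$, it replaces this value by its nearest point $\mathbf{v}\in K$. By \cref{eq:Lipschitz-ext:11}, $\mathbf{v}$ is strictly closer than $\psi_0(\mathbf{x})$ to every point of $K$, hence to every value already assigned, so nonexpansiveness survives. The induction of \Cref{remark-Lip-Ball-Intersc:1} then finishes the proof.

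You instead build the constraint into the finite step. By \Cref{theorem-KIP-ver-15:1} and \Cref{theorem-Lipschitz-ext:5}, the new value lies in $\conv(\psi(T))$. You then rerun the finite-intersection and compactness argument for the sets $K\cap\mathbf{C}\big(\psi(\mathbf{u}),\|\mathbf{x}-\mathbf{u}\|\big)$.

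The two approaches buy different things:
\begin{itemize}
\item The paper's projection step is shorter and treats Kirszbraun's theorem as a black box. It is a post-processing step that works for any extension, and it carries over unchanged to Hilbert ranges (\Cref{remark-KIP-ver-16:1}).
\item Your route costs a second compactness argument; for a Hilbert range you would need \Cref{theorem-Lipschitz-ext:7} at that point. In return it gives sharper local information: for finite $T$, the new value can be taken in $\conv(\psi(T))$ itself.
\end{itemize}

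Incidentally, that containment already follows from \cref{eq:Kirszbraun-Intersc-v1:1} in \Cref{lemma-Lipschitz-ext:1}, which is how the point in \Cref{theorem-KIP-ver-15:1} is produced. So \Cref{theorem-Lipschitz-ext:5} is not really needed.

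Your emphasis on keeping the invariant ``values in $K$'' through the induction is also what makes the paper's one-point step iterate. This works because $\overline{\conv(\psi(S))}=K$ whenever $\varphi(A)\subseteq\psi(S)\subseteq K$.
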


\begin{proof}
  Let $A\subseteq \R^m$ be a nonempty set and
  ${\varphi\colon A\to \R^n}$ be a nonexpansive map.  According to
  \Cref{remark-Lip-Ball-Intersc:1}, the proof reduces to extensions at
  a single point rather than the entire space. So, take a point
  $\mathbf{x}\in \R^n\setminus A$. Next, apply
  \Cref{theorem-Lipschitz-ext:1} to extend $\varphi$ to a nonexpansive
  map $\psi_0:A\cup\{\mathbf{x}\}\to \R^n$. If
  $\psi_0(\mathbf{x})\in \overline{\conv(\varphi(A))}$, we can simply
  take $\psi=\psi_0$. Otherwise, if
  $\psi_0(\mathbf{x})\notin \overline{\conv(\varphi(A))}$, let
  $\mathbf{v}\in \overline{\conv(\varphi(A))}$ be the closest point to
  $\psi_0(\mathbf{x})$. Then by \cref{eq:Lipschitz-ext:11}, just like
  in the proof of the \Cref{theorem-Lipschitz-ext:5}, the point
  $\mathbf{v}$ is pointwise closer to $\varphi(A)$ than
  $\psi_0(\mathbf{x})$. In other words,
  $\|\mathbf{v}-\varphi(\mathbf{u})\|< \|\psi_0(\mathbf{x})-
  \varphi(\mathbf{u})\|$ for every $\mathbf{u}\in A$. Hence, we can
  define a nonexpansive extension
  $\psi:A\cup\{\mathbf{x}\}\to \overline{\conv(\varphi(A))}$ of
  $\varphi$ by $\psi(\mathbf{x})=\mathbf{v}$ and $\psi\uhr A=\varphi$.
\end{proof}

\section{The Kirszbraun-Valentine Extension Theorem}
\label{sec:kirszbr-valent-exten}

By an \emph{inner product space} we mean a vector space endowed with
an inner product. If this inner product induces a complete norm, then
we call the corresponding normed space a \emph{Hilbert space}. In this
regard, let us remark that not every norm is derived by an inner
product. In fact, a norm $\|\cdot\|$ on a vector space $H$ is
generated by an inner product precisely when it satisfies the
well-known \emph{Jordan-von Neumann condition} \cite{Jordan1935},
i.e.\ when it satisfies the \emph{parallelogram identity}
\[
  \|u+v\|^2+ \|u-v\|^2=2\|u\|^2 + 2\|v\|^2\quad \text{for every
    $u,v\in H$.}
\]

The Kirszbraun extension theorem (\Cref{theorem-Lipschitz-ext:1}) is
also valid in the setting of Hilbert spaces. This was shown by
Valentine in \cite{zbMATH03102006}, and is sometimes called the
\emph{Kirszbraun-Valentine extension theorem}.

\begin{theorem}
  \label{theorem-Lipschitz-ext:6}
  Let $H$ be a Hilbert space, $L$ be an inner product space and
  $A\subseteq L$. Then each nonexpansive map $\varphi:A\to H$ can be
  extended to a nonexpansive map $f:L\to H$.
\end{theorem}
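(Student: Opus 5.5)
We reduce \Cref{theorem-Lipschitz-ext:6} to one-point extensions via \Cref{theorem-Lipschitz-ext:9}, regarding $L$ and $H$ as metric spaces. By \Cref{proposition-Lip-Ball-Intersc:4}, it suffices to show the following. For every nonempty $S\subseteq L$, every nonexpansive $\varphi:S\to H$ and every $p\in L\setminus S$, the family
\[
  \mathcal{B}=\left\{\mathbf{C}\big(\varphi(x),\|p-x\|\big): x\in S\right\}
\]
has nonempty intersection. The proof splits into two parts: establish the finite intersection property of $\mathcal{B}$ by reducing to the Euclidean case, then pass from finite to arbitrary intersections using completeness of $H$.

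\textbf{Finite intersections.} Take finitely many $x_1,\dots,x_k\in S$. The points $p,x_1,\dots,x_k$ span a finite-dimensional subspace of $L$. With the restricted inner product it is a finite-dimensional inner product space, hence isometric to some $\R^m$ via an orthonormal basis obtained by Gram--Schmidt. Similarly, $\varphi(x_1),\dots,\varphi(x_k)$ span a subspace $F\subseteq H$ isometric to $\R^n$. Transporting through these linear isometries, we apply \Cref{theorem-Lip-Ball-Intersc:1}. This yields a point $\mathbf{y}\in F$ with $\|\mathbf{y}-\varphi(x_i)\|\le\|p-x_i\|$ for all $i\le k$. Hence $\mathcal{B}$ has the finite intersection property.

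\textbf{From finite to infinite.} This is the main obstacle, since closed balls in an infinite-dimensional $H$ are not compact. I would prove that any family of nonempty closed bounded convex subsets of a Hilbert space with the finite intersection property has nonempty intersection; this is the announced \Cref{theorem-Lipschitz-ext:7}. The approach is to use weak compactness of closed balls, closed convex sets being weakly closed.

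To stay self-contained one can instead argue directly with the parallelogram law. First fix one set $B_0$ of the family. For each finite subfamily $\mathcal{S}$, consider the nonempty closed convex set $\bigcap\mathcal{S}\cap B_0$, and let $a_{\mathcal{S}}$ be its minimal-norm element. Such an element exists and is unique by completeness together with the parallelogram law, exactly as in the proof of uniqueness of closest points after \Cref{theorem-Lipschitz-ext:4}.

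The numbers $r_{\mathcal{S}}=\|a_{\mathcal{S}}\|$ increase along the directed set of finite subfamilies and are bounded, because the sets are bounded. Let $r=\sup_{\mathcal{S}} r_{\mathcal{S}}$. If $\mathcal{S}\subseteq\mathcal{S}'$, then $a_{\mathcal{S}'}$ and $a_{\mathcal{S}}$ both lie in the convex set $\bigcap\mathcal{S}\cap B_0$, so their midpoint does too and has norm at least $r_{\mathcal{S}}$. The parallelogram law then gives
\[
  \|a_{\mathcal{S}'}-a_{\mathcal{S}}\|^2\le 2r_{\mathcal{S}'}^2+2r_{\mathcal{S}}^2-4r_{\mathcal{S}}^2=2\big(r_{\mathcal{S}'}^2-r_{\mathcal{S}}^2\big).
\]
Hence the net $(a_{\mathcal{S}})$ is Cauchy and converges to some $a\in H$. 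For each member $B$ of the family, the points $a_{\mathcal{S}}$ with $B\in\mathcal{S}$ all lie in the closed set $B$, so $a\in B$. Therefore $a$ lies in the intersection of the whole family.

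The closed balls in $\mathcal{B}$ are closed, bounded and convex, so combining the two parts gives $\bigcap\mathcal{B}\neq\emptyset$. This is the required one-point extension, and \Cref{theorem-Lipschitz-ext:9} completes the proof.
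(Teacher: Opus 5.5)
Your proof is correct and is essentially the paper's: the reduction via \Cref{theorem-Lipschitz-ext:9} and \Cref{proposition-Lip-Ball-Intersc:4}, the finite case through finite-dimensional spans exactly as in \Cref{corollary-Lip-Ball-Intersc:1}, and the finite intersection property of closed bounded convex sets (\Cref{theorem-Lipschitz-ext:7}) proved via minimal norms and the parallelogram law. The one difference is in that last step: your Cauchy net of minimal-norm points over all finite subfamilies makes membership of the limit in each $B$ immediate, whereas the paper uses a sequence $\sigma_n$, the shrinking sets $K_n$ of \Cref{lemma-Lipschitz-ext:2} with the Cantor intersection theorem, and then a uniqueness argument to get $p\in F$; also, the paragraph you cite proves only uniqueness of closest points, so existence of $a_{\mathcal{S}}$ should instead come from your own parallelogram estimate applied to a minimizing sequence (or from \Cref{lemma-Lipschitz-ext:2} with a constant sequence).
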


Some alternative proofs and generalisations of
\Cref{theorem-Lipschitz-ext:6} were also obtained by other authors,
for instance by Mickle \cite{zbMATH03049820}, Minty
\cite{zbMATH03180487,zbMATH03304171} and Gr\"unbaum and Zarantonello
\cite{Gruenbaum1968}. They will be discussed in the next
sections.\medskip

Let us explicitly remark that in \cite{zbMATH03102006}, Valentine gave
a different proof of \Cref{theorem-Lipschitz-ext:1}, and next showed
how to extend this theorem to all Hilbert spaces. Here, we will
outline his idea of how to make the step from Euclidean to Hilbert
spaces. Crucial to the proof of \Cref{theorem-Lipschitz-ext:6} are, as
before, one-point extensions of nonexpansive maps defined on finite
sets. This will now be based on the classical result of {\v S}mulian
\cite{MR0002006} that a normed space $E$ is reflexive if and only if
every decreasing sequence of nonempty closed bounded convex subsets of
$E$ has a nonempty intersection. In the setting of a Hilbert space
$H$, we follow the simple proof of this result given in \cite[Lemma
2.1 and Theorem 2.2]{Granas1995}, see also \cite[Lemma (4.1) and
Theorem (4.2) in Chapter I]{MR1987179}.  In this proof, we will use
$\delta(C)=d(0,C)$ for the distance between the origin $0\in H$ and a
nonempty subset $C\subseteq H$, namely
\begin{equation}
  \label{eq:Lip-Ball-Intersc:8}
  \delta(C)   =\inf_{x\in C}\|x\|= \inf\left\{\delta>0:
    \mathbf{C}(0,\delta)\cap C\neq 
    \emptyset\right\}.
\end{equation}

\begin{lemma}
  \label{lemma-Lipschitz-ext:2}
  If\/ $\left\{C_n:n\in\N\right\}$ is a decreasing sequence of
  nonempty closed bounded convex subsets of a Hilbert space $H$, then
  there exists a unique point $p\in\bigcap_{n=1}^\infty C_n$ with the
  property that\/ $\|p\| = \sup_{n\in\N} \delta(C_n)$. 
\end{lemma}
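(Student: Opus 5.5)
Set $d=\sup_{n\in\N}\delta(C_n)$. This is finite because the sequence is decreasing: $C_n\subseteq C_1$ for every $n$, so $\delta(C_n)\leq\sup_{x\in C_1}\|x\|<\infty$ by boundedness of $C_1$. Also $\delta(C_n)$ is nondecreasing in $n$, again since $C_{n+1}\subseteq C_n$. The idea is to produce a Cauchy sequence of ``almost minimal norm'' points via the parallelogram law, exactly as in the classical proof that closed convex sets in Hilbert space have unique nearest points.

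\textbf{Step 1 (choice of points).} For each $n$ I will pick $x_n\in C_n$ with $\|x_n\|^2\leq \delta(C_n)^2+\frac1n$, which is possible by \cref{eq:Lip-Ball-Intersc:8}.

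\textbf{Step 2 (Cauchy property).} Take $m\geq n$. Then $x_m\in C_m\subseteq C_n$, and since $C_n$ is convex, $\frac{x_n+x_m}2\in C_n$. Hence $\|\frac{x_n+x_m}2\|\geq\delta(C_n)$. The parallelogram identity, applied to $u=\frac{x_n}{2}$ and $v=\frac{x_m}{2}$, gives
\[
\mfrac14\|x_n-x_m\|^2=\mfrac12\|x_n\|^2+\mfrac12\|x_m\|^2-\Big\|\mfrac{x_n+x_m}2\Big\|^2\leq \mfrac12\big(\delta(C_n)^2+\tfrac1n\big)+\mfrac12\big(\delta(C_m)^2+\tfrac1m\big)-\delta(C_n)^2 .
\]
Using $\delta(C_m)\leq d$, the right-hand side is at most $\frac12\big(d^2-\delta(C_n)^2\big)+\frac1n$, which tends to $0$ as $n\to\infty$ because $\delta(C_n)\uparrow d$. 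So $(x_n)$ is Cauchy. Since $H$ is complete, $x_n\to p$ for some $p\in H$.

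\textbf{Step 3 (properties of $p$).} For each fixed $k$, the tail $\{x_n:n\geq k\}$ lies in the closed set $C_k$, so $p\in C_k$. Thus $p\in\bigcap_n C_n$. This membership gives $\|p\|\geq\delta(C_k)$ for every $k$, so $\|p\|\geq d$. In the other direction, $\|p\|=\lim\|x_n\|\leq\lim\sqrt{\delta(C_n)^2+1/n}=d$. Hence $\|p\|=d$.

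\textbf{Step 4 (uniqueness).} Suppose $p,q\in\bigcap_n C_n$ with $\|p\|=\|q\|=d$. Then $\frac{p+q}2\in\bigcap_n C_n$, so $\|\frac{p+q}2\|\geq\delta(C_n)$ for every $n$, and therefore $\|\frac{p+q}2\|\geq d$. The parallelogram law now gives $\frac14\|p-q\|^2\leq\frac12 d^2+\frac12 d^2-d^2=0$, so $p=q$. Alternatively, one can use the strict convexity argument of \cref{eq:Lip-Ball-Intersc:4}, whose proof works verbatim in $H$.

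The only delicate step is Step 2. One must compare the midpoint with $\delta(C_n)$, the smaller of the two indices, and use the monotonicity $\delta(C_m)\leq d$; this is what makes the bound depend only on $n$.
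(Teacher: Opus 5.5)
Your proof is correct and takes essentially the paper's approach. The paper uses the same parallelogram-identity estimate, with the midpoint compared against $\delta(C_n)$, to show that the nested sets $K_n=\mathbf{C}\left(0,\delta+\frac1n\right)\cap C_n$ satisfy $\diam(K_n)\to 0$, and then invokes Cantor's intersection theorem, which gives existence and uniqueness at once; you instead unpack this into an explicit Cauchy sequence of near-minimisers and prove uniqueness by a separate parallelogram computation.
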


\begin{proof}
  The sequence $\left\{\delta(C_n):n\in\N\right\}$ is increasing
  because $\left\{C_n:n\in\N\right\}$ is decreasing. Moreover, since
  $\delta(C_n) \leq \delta(C_1) + \operatorname{diam}(C_1)$ for every
  $n \in \mathbb{N}$, it is also convergent. Let
  $\delta=\lim_{n\to\infty}\delta(C_n) =\sup_{n\in\N}\delta(C_n)$, and
  let us consider another decreasing sequence of nonempty closed
  bounded convex subsets $K_n\subseteq C_n$, $n\in \N$, defined by
  \begin{equation}
    \label{eq:Lipschitz-ext:15}
    K_n=\left\{x\in C_n: \|x\|\leq \delta+\mfrac1n\right\}=
    \mathbf{C}\left(0,\delta+\mfrac1n\right)\cap C_n.
  \end{equation}
  Then $\diam(K_n)\to 0$ as $n\to \infty$. Indeed, for each $n\in\N$,
  take points $x_n,y_n\in K_n$ with
  $\diam(K_n)\leq \|x_n-y_n\|+\frac1n$. Since
  $\frac{x_n+y_n}2\in K_n\subseteq C_n$, it follows from
  \cref{eq:Lip-Ball-Intersc:8,eq:Lipschitz-ext:15} that
  $\|x_n\|,\|y_n\|,\frac{\|x_n+y_n\|}2\in
  \left[\delta(C_n),\delta+\frac1n\right]$. Furthermore, since
  $\delta=\lim_{n\to \infty}\delta(C_n)$, we also have that
  $\lim_{n\to \infty}\left\|x_n\right\|=\lim_{n\to \infty}\|y_n\|=
  \lim_{n\to \infty}\frac{\|x_n+y_n\|}2=\delta$. Therefore, by the
  parallelogram identity,
  $\lim_{n\to \infty}\diam(K_n)\leq \lim_{n\to
    \infty}\|x_n-y_n\|+\frac1n= 0$ because
  \[
    \lim_{n\to \infty}\|x_n-y_n\|^2= \lim_{n\to
      \infty}\left(2\|x_n\|^2+ 2\|y_n\|^2 - \|x_n+y_n\|^2\right)=
    4\delta^2-4\delta^2=0.
  \]
  Finally, since $\left\{K_n:n\in\N\right\}$ is a decreasing sequence
  of nonempty closed subsets and $H$ is complete (being Hilbert), by
  the Cantor intersection theorem, $\bigcap_{n=1}^\infty K_n$ is a
  singleton, i.e.\ $\bigcap_{n=1}^\infty K_n=\{p\}$ for
  some $p\in H$. This $p$ is as required.
\end{proof}

\Cref{lemma-Lipschitz-ext:2} represents a generalisation of the fact
that each nonempty closed convex subset of a Hilbert space is
Chebyshev. It also constitutes the essential part of the following key
property of closed bounded convex subsets of Hilbert spaces.

\begin{theorem}
  \label{theorem-Lipschitz-ext:7}
  Let $\mathcal{F}$ be a nonempty family of closed bounded convex sets
  in a Hilbert space $H$. If\/ $\mathcal{F}$ has the finite
  intersection property, then\, $\bigcap \mathcal{F}\neq \emptyset$.
\end{theorem}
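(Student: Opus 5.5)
The plan is to reduce the general family $\mathcal{F}$ to a decreasing sequence, where \Cref{lemma-Lipschitz-ext:2} applies. The natural candidate for a point in $\bigcap\mathcal{F}$ is the point of "minimal norm" over all finite intersections.

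\textbf{Step 1.} Let $\mathcal{G}$ be the collection of all finite intersections of members of $\mathcal{F}$. By the finite intersection property, each $G\in\mathcal{G}$ is a nonempty closed bounded convex set. Set
\[
  \delta=\sup_{G\in\mathcal{G}}\delta(G).
\]
This is finite, since $\delta(G)\leq \delta(F_0)+\diam(F_0)$ for any fixed $F_0\in\mathcal{F}$ and every $G\in\mathcal{G}$ with $G\subseteq F_0$. Restricting to such $G$ does not change the supremum, because $G\cap F_0\in\mathcal{G}$ and $\delta(G\cap F_0)\geq\delta(G)$.

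\textbf{Step 2.} Choose $G_n\in\mathcal{G}$ with $\delta(G_n)\to\delta$. Replacing $G_n$ by $G_1\cap\dots\cap G_n$ makes the sequence decreasing, and it only increases each $\delta(G_n)$, which stays bounded by $\delta$. Then \Cref{lemma-Lipschitz-ext:2} yields a point $p\in\bigcap_n G_n$ with $\|p\|=\delta$.

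\textbf{Step 3.} Show that $p\in F$ for every $F\in\mathcal{F}$. Fix $F$ and consider the decreasing sequence $C_n=G_n\cap F\in\mathcal{G}$. We have $\delta(G_n)\leq\delta(C_n)\leq\delta$, so $\sup_n\delta(C_n)=\delta$. Applying \Cref{lemma-Lipschitz-ext:2} to $\{C_n\}$ gives a point $q\in\bigcap_n C_n\subseteq F\cap\bigcap_n G_n$ with $\|q\|=\delta$. Uniqueness then forces $q=p$, so $p\in F$.

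\textbf{Main obstacle.} The delicate point is that uniqueness in \Cref{lemma-Lipschitz-ext:2} is stated relative to a single sequence $\{C_n\}$, whereas here $p$ comes from $\{G_n\}$ and $q$ from $\{C_n\}$. To close this gap, I would prove directly that $\bigcap_n G_n$ contains at most one point of norm $\leq\delta$. Suppose $x,y\in\bigcap_n G_n$ with $\|x\|,\|y\|\leq\delta$. Their midpoint lies in each $G_n$ by convexity, so $\|\tfrac{x+y}{2}\|\geq\delta(G_n)$ for every $n$, and hence $\|\tfrac{x+y}{2}\|\geq\delta$. The parallelogram identity gives
\[
  \|x-y\|^2=2\|x\|^2+2\|y\|^2-\|x+y\|^2\leq 4\delta^2-4\delta^2=0,
\]
so $x=y$. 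Since $q\in\bigcap_n G_n$ and $\|q\|=\delta$, we get $q=p$. This proves $p\in\bigcap\mathcal{F}$, so $\bigcap\mathcal{F}\neq\emptyset$.
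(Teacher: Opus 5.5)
Your proof is correct and follows essentially the same route as the paper: take the supremum $\delta$ of $\delta(\cdot)$ over finite intersections, realise it by a decreasing sequence, then apply \Cref{lemma-Lipschitz-ext:2} both to that sequence and to its intersection with an arbitrary $F\in\mathcal{F}$, and conclude by uniqueness. Your ``main obstacle'' is not actually a gap. Since $q\in\bigcap_n G_n$ and $\|q\|=\delta=\sup_n\delta(G_n)$, the uniqueness clause of \Cref{lemma-Lipschitz-ext:2} applied to $\{G_n\}$ itself already gives $q=p$, which is exactly how the paper finishes. Your direct parallelogram argument is correct but redundant.
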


\begin{proof}
  Suppose that the family $\mathcal{F}$ has the finite intersection
  property and fix an element $S\in \mathcal{F}$. Next, consider the
  collection $\Sigma$ of all finite subsets
  $\sigma\subseteq \mathcal{F}$ with $S\in \sigma$, and for each
  $\sigma\in \Sigma$ set $C_\sigma=\bigcap \sigma$ and
  $\delta_\sigma=\delta(C_\sigma)$, see
  \cref{eq:Lip-Ball-Intersc:8}. Then
  $\delta_\sigma\leq \delta(S)+\diam(S)$ for every $\sigma\in \Sigma$,
  so $\delta=\sup_{\sigma\in \Sigma}\delta_\sigma<+\infty$. Moreover,
  $\delta_{\sigma_1}\leq \delta_{\sigma_2}$ whenever
  $\sigma_1,\sigma_2\in \Sigma$ with $\sigma_1\subseteq \sigma_2$
  because $C_{\sigma_2}\subseteq C_{\sigma_1}$. Hence, there exists an
  increasing sequence $\left\{\sigma_n\right\}\subseteq \Sigma$ such
  that $\delta-\frac1n\leq\delta_{\sigma_n}\leq \delta$,
  $n\in\N$. Accordingly, $\left\{C_{\sigma_n}\right\}$ is a decreasing
  sequence of nonempty closed bounded convex sets with
  $\delta=\sup_{n\in \N}\delta_{\sigma_n}$. Therefore, by
  \Cref{lemma-Lipschitz-ext:2}, there exists a unique point
  $p\in \bigcap_{n=1}^\infty C_{\sigma_n}$ with $\|p\|=\delta$.  To
  show that $p\in \bigcap \mathcal{F}$, take any $F\in \mathcal{F}$
  and set $\tilde{\sigma}_n=\sigma_n\cup\{F\}\in \Sigma$, $n\in
  \N$. The resulting sequence
  $\left\{\tilde{\sigma}_n\right\} \subseteq \Sigma$ is also
  increasing and $\sup_{n\in\N} \delta_{\tilde{\sigma}_n}=\delta$
  because
  $\delta_{\sigma_n}\leq \delta_{\tilde{\sigma}_n}\leq \delta$,
  $n\in\N$. Hence, just like before, \Cref{lemma-Lipschitz-ext:2}
  implies that $\|q\|=\delta$ for some a point
  $q\in \bigcap_{n=1}^\infty C_{\tilde{\sigma}_n}$. However,
  $q\in \bigcap_{n=1}^\infty C_{\tilde{\sigma}_n}\subseteq
  \bigcap_{n=1}^\infty C_{\sigma_n}$ and $\|q\|=\delta=\|p\|$, while
  $p$ is the unique point with this property. Thus, $p=q\in F$ and the
  proof is complete.
\end{proof}

According to \Cref{theorem-Lipschitz-ext:9,} and
\Cref{proposition-Lip-Ball-Intersc:4}, the finite intersection
property in \Cref{theorem-Lipschitz-ext:7} shows that
\Cref{corollary-Kirszbraun-IP-ver-01:1} remains valid for any Hilbert
range. A similar statement reflecting this fact was also established
in \cite[Lemma 1]{Gruenbaum1968}.

\begin{corollary}
  \label{corollary-KIP-ver-05:1}
  Let $(X,d)$ be a symmetric space and $H$ be a Hilbert space such
  that for every finite subset $T\subseteq X$ and $p\in X\setminus T$,
  each nonexpansive map $\varphi:T\to H$ can be extended to a
  nonexpansive map $\psi:T\cup\{{p}\}\to H$. Then for every
  $A\subseteq X$, each nonexpansive map $\varphi:A\to H$ can be
  extended to a nonexpansive map $f:X\to H$.
\end{corollary}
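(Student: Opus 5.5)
The plan is to reduce the statement to the general one-point extension principle of \Cref{theorem-Lipschitz-ext:9}, applied with $Y=H$ and $K=1$. The finite-to-arbitrary step is supplied by \Cref{theorem-Lipschitz-ext:7}. By \Cref{theorem-Lipschitz-ext:9}, it suffices to prove the following. For every subset $S\subseteq X$, every nonexpansive map $\varphi:S\to H$ and every $p\in X\setminus S$, the map $\varphi$ can be extended to a nonexpansive map on $S\cup\{p\}$.

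If $S=\emptyset$, we may take any value at $p$. So assume $S\neq\emptyset$. By \Cref{proposition-Lip-Ball-Intersc:4}, which holds for symmetric spaces, such an extension exists precisely when
\[
  \bigcap_{x\in S}\mathbf{C}\big(\varphi(x),d(p,x)\big)\neq\emptyset .
\]

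Each ball $\mathbf{C}(\varphi(x),d(p,x))$ in $H$ is closed, bounded and convex, and it is nonempty because it contains its centre. So by \Cref{theorem-Lipschitz-ext:7}, it suffices to check that the family
\[
  \mathcal{F}=\left\{\mathbf{C}\big(\varphi(x),d(p,x)\big):x\in S\right\}
\]
has the finite intersection property.

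To check this, take a nonempty finite set $T\subseteq S$. The restriction $\varphi\uhr T:T\to H$ is nonexpansive, and $p\in X\setminus T$. By hypothesis, $\varphi\uhr T$ extends to a nonexpansive map $\psi:T\cup\{p\}\to H$. Applying \Cref{proposition-Lip-Ball-Intersc:4} again, now to $T$, gives
\[
  \psi(p)\in\bigcap_{x\in T}\mathbf{C}\big(\varphi(x),d(p,x)\big).
\]
Hence every finite subfamily of $\mathcal{F}$ has nonempty intersection, and \Cref{theorem-Lipschitz-ext:7} yields a point $y\in\bigcap\mathcal{F}$. Setting $\psi(p)=y$ and $\psi\uhr S=\varphi$ gives the required one-point extension. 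Then \Cref{theorem-Lipschitz-ext:9} produces the extension $f:X\to H$.

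I expect no serious obstacle. The only care needed is that \Cref{theorem-Lipschitz-ext:9} requires one-point extensions from arbitrary, possibly infinite, sets $S$, whereas the hypothesis only concerns finite $T$. This gap is exactly what the weak-compactness substitute \Cref{theorem-Lipschitz-ext:7} bridges. No topology on $X$ or triangle inequality for $d$ is used, so the argument works for symmetric spaces.
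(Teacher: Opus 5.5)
Your proposal is correct and follows essentially the same route as the paper: reduce to one-point extensions via \Cref{theorem-Lipschitz-ext:9}, recast these through \Cref{proposition-Lip-Ball-Intersc:4} as nonemptiness of an intersection of closed balls in $H$, and get that nonemptiness from the finite hypothesis using the finite intersection property of \Cref{theorem-Lipschitz-ext:7}, just as compactness was used for \Cref{corollary-Kirszbraun-IP-ver-01:1}. One terminological quibble: the paper proves \Cref{theorem-Lipschitz-ext:7} via the parallelogram identity and the Cantor intersection theorem, not weak compactness, but this does not affect your argument.
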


The proof of the Kirszbraun-Valentine extension theorem
(\Cref{theorem-Lipschitz-ext:6}) now follows from
\Cref{corollary-KIP-ver-05:1} and the following consequence of
\Cref{theorem-Lipschitz-ext:2}.

\begin{corollary}
  \label{corollary-Lip-Ball-Intersc:1}
  Let $H$ be a Hilbert space, $L$ be an inner product space and
  $T\subseteq L$ be a nonempty finite set. Then for each point
  ${p\in L\setminus T}$, each nonexpansive map\/ $\varphi:T\to H$ can
  be extended to a nonexpansive map\/ $\psi:T\cup\{p\}\to H$.
\end{corollary}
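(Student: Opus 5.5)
The plan is to reduce the statement to the finite-dimensional \Cref{theorem-Lip-Ball-Intersc:1} by passing to finite-dimensional subspaces, which inherit Euclidean structure.

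Write $T=\{u_1,\dots,u_k\}$ and $v_i=\varphi(u_i)$. First, let $L_0\subseteq L$ be the linear span of $\{p,u_1,\dots,u_k\}$, and let $H_0\subseteq H$ be the linear span of $\{v_1,\dots,v_k\}$. Both are finite-dimensional inner product spaces, say of dimensions $m$ and $n$. By Gram--Schmidt, each admits an orthonormal basis. The corresponding coordinate maps $J:L_0\to\R^m$ and $I:H_0\to\R^n$ are linear and preserve inner products, hence are isometries onto Euclidean spaces. The degenerate cases are harmless: if $n=0$, then all $v_i=0$ and one takes $\psi(p)=0$.

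Next, set $\mathbf{u}_i=J(u_i)$, $\mathbf{x}=J(p)$ and $\mathbf{v}_i=I(v_i)$. Since $\varphi$ is nonexpansive and $I,J$ are isometries,
\[
\|\mathbf{v}_i-\mathbf{v}_j\|=\|v_i-v_j\|\leq \|u_i-u_j\|=\|\mathbf{u}_i-\mathbf{u}_j\|
\]
for all $i,j\leq k$. \Cref{theorem-Lip-Ball-Intersc:1} then gives $\mathbf{y}\in\R^n$ with $\|\mathbf{y}-\mathbf{v}_i\|\leq\|\mathbf{x}-\mathbf{u}_i\|$ for every $i\leq k$. Put $y=I^{-1}(\mathbf{y})\in H_0\subseteq H$. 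Then
\[
\|y-v_i\|=\|\mathbf{y}-\mathbf{v}_i\|\leq \|\mathbf{x}-\mathbf{u}_i\|=\|p-u_i\|.
\]
Defining $\psi(p)=y$ and $\psi\uhr T=\varphi$, \Cref{proposition-Lip-Ball-Intersc:4} shows that $\psi$ is nonexpansive.

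The main point needing care is that $J$ and $I$ really are isometric identifications. This is exactly where the inner product structure is used: distances are computed by $\|x\|^2=\langle x,x\rangle$, and orthonormal coordinates preserve $\langle\cdot,\cdot\rangle$. Completeness of $H$ plays no role here; it is needed only later, in \Cref{corollary-KIP-ver-05:1}. The argument therefore works verbatim for any inner product space as the target.
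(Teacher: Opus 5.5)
Your proposal is correct and is essentially the paper's proof: pass to the finite-dimensional spans of $T\cup\{p\}$ in $L$ and of $\varphi(T)$ in $H$, identify them isometrically with Euclidean spaces, and apply the Euclidean one-point extension (\Cref{theorem-Lipschitz-ext:2}, equivalently its restatement \Cref{theorem-Lip-Ball-Intersc:1}). Your explicit Gram--Schmidt coordinates, your handling of the degenerate case $\varphi(T)=\{0\}$, and your remark that completeness of $H$ is not used at this step are details the paper leaves implicit.
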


\begin{proof}
  Take a nonexpansive map $\varphi:T\to H$ and a point
  $p\in L\setminus T$. Next, let $E$ be the linear subspace of $L$
  spanned over $T\cup\{p\}$, and $V$\,---\,the one spanned over
  $\varphi(T)$. Then both $E$ and $V$ are isometric to some Euclidean
  spaces. Hence, by \Cref{theorem-Lipschitz-ext:2}, $\varphi:T\to V$
  can be extended to a nonexpansive map
  $\psi:T\cup\{p\}\to V\subseteq H$.
\end{proof}

We conclude this section with the following two remarks.

\begin{remark}
  \label{remark-KIP-ver-08:1}
  Let $L$ be an inner product space, $H$ be a Hilbert space and
  $f:L\to H$ be a nonexpansive map. Then $f$ is $1$-Lipschitz and by
  \Cref{corollary-Lipschitz-ext:3}, it can be extended to a
  nonexpansive map over the completion $E$ of $L$. Since $E$ is a
  Hilbert space, this shows that it makes no difference whether
  \Cref{theorem-Lipschitz-ext:6} and
  \Cref{corollary-Lip-Ball-Intersc:1} are formulated for an inner
  product domain or a Hilbert one.
\end{remark}

\begin{remark}
  \label{remark-KIP-ver-16:1}
  As in the previous section, the extension $f:L\to H$ in
  \Cref{theorem-Lipschitz-ext:6} can be constructed in such a way that
  $f(L)\subseteq \overline{\conv(\varphi(A))}$. This follows easily by
  applying the same arguments as in the proof of
  \Cref{corollary-KIP-ver-16:2}.
\end{remark}

\section{Schoenberg's Solution of Menger's Embedding Problem}
\label{sec:scho-solut-meng}

In 1928 and his subsequent refined paper in 1931, Karl Menger
\cite{MR1512479,zbMATH02561510} presented new axioms for geometry
based on the notion of distance, and provided necessary and sufficient
conditions for arbitrary sets to be ``congruent'' to subsets of
Euclidean spaces. This was achieved in the setting of a
\emph{congruence system} $(\mathfrak{S},\approx)$, where
$\mathfrak{S}$ is the collection of symmetric spaces, and the axioms
were defined in terms of a \emph{congruence relation}
$\left\{p,q\right\}\approx\left\{p',q'\right\}$ on the pairs of points
$p,q\in X$ and $p',q'\in Y$, for $X,Y\in\mathfrak{S}$.  In fact,
Menger defined a congruence system in a very abstract way, but the
essential example he provided was for symmetric spaces. Namely, for
such spaces $(X,d)$, $(Y,\rho)\in \mathfrak{S}$ and points $p,q\in X$
and $p',q'\in Y$, Menger's congruence relation
$\left\{p,q\right\}\approx\left\{p',q'\right\}$ was simply defined by
$d(p,q)=\rho(p',q')$. In the same setting, a \emph{congruent map}
$f:X\to Y$ between $(X,d)$, $(Y,\rho)\in \mathfrak{S}$ is now an
isometry. Thus, spaces $(X,d)$, $(Y,\rho)\in \mathfrak{S}$ are
\emph{congruent} if there exists a surjective isometry
$f:X\to Y$.\medskip

Menger's approach culminated into the concept of a \emph{congruence
  order}. Briefly, for a congruence system $\mathfrak{S}$, he defined
that a set $M\in \mathfrak{S}$ has a \emph{congruence order} $n$
provided that if $X\in \mathfrak{S}$ and every $n$ points of $X$ are
congruent to $n$ points of $M$, then $X$ itself is congruent to a
subset of $M$. The fundamental result proved by Menger is that the
Euclidean space $\R^n$ has a congruence order $n + 3$ but not $n + 2$
in the family $\mathfrak{S}$ of all symmetric spaces. The important
implication of this result is that in order to verify whether an
abstract symmetric space is congruent to a subset of $\R^n$, we only
need to verify congruence of each of its $n + 3$ point subsets. Based
on this, Menger completely resolved the embedding problem of symmetric
spaces in $\R^n$ by means of equations and inequalities involving
certain determinants composed by distances in such spaces. For an
extended discussion on Menger's congruence systems, the interested
reader is refereed to \cite{zbMATH06903590,zbMATH06641865}.\medskip

In a series of papers
\cite{Schoenberg1935,Schoenberg1937,Schoenberg1938}, Schoenberg
proposed an alternative approach to Menger's solution for embedding
finite sets into Euclidean spaces\,---\,one that is significantly
simpler, as it relies solely on definiteness of quadratic forms. His
starting point was the final section of Fr\'echet's paper
\cite{Frechet1935} on inner product spaces. Thus, in
\cite{Schoenberg1935}, he presented an elegant solution to the
following embedding problem.

\begin{problem}
  \label{problem-Lip-Ext-Embb:1}
  Let $\rho_{ij}=\rho_{ji}>0$, $i\neq j$ for $0\leq i,j\leq n$, be
  $\frac12 n(n+1)$ given positive numbers.  What are the necessary and
  sufficient conditions that they be the lengths of the edges of an
  $n$-simplex $\sigma=\{\mathbf{v}_0, \dots, \mathbf{v}_n\}$ in
  $\R^n$? More general, what are the conditions that they be the
  lengths of the edges of an $n$-``simplex''
  $\sigma=\{\mathbf{v}_0, \dots, \mathbf{v}_n\}$ lying in an Euclidean
  space $\R^m$ $(1\leq m\leq n)$ but not in $\R^{m-1}$?
\end{problem}

Here, an $n$-``simplex'' means a \emph{degenerate} simplex, i.e.\ a
set of $(n+1)$-points in $\R^m$ which is not necessarily affinely
independent. Below, we will briefly discuss Schoenberg's solution of
\Cref{problem-Lip-Ext-Embb:1}.\medskip

A \emph{quadratic form} in $\R^n$ is a function $Q:\R^n\to \R$ of the
form
\[
  Q(\mathbf{x})=Q(x_1,\dots,x_n)=\sum_{i=1}^n\sum_{j=1}^n
  a_{ij}x_ix_j,\quad\text{where $a_{ij}\in \R$, $1\leq i,j\leq n$.}
\]
Clearly, in the above expression, we can always assume that
$a_{ij} = a_{ji}$ by replacing $a_{ij}$ and $a_{ji}$ with
$\frac{a_{ij} + a_{ji}}{2}$, if necessary. In this way, a quadratic
$Q(\mathbf{x})$ is uniquely determined by a symmetric $n \times n$
matrix
\[
  \mathbf{A}=
  \begin{pmatrix}
    a_{11}& a_{12} &\dots& a_{1n}\\
    a_{21}& a_{22} &\dots & a_{2n}\\
    \vdots& \vdots&\ddots&\vdots\\
    a_{n1}& a_{n2}& \dots &a_{nn}
  \end{pmatrix}.
\]
Hence, with respect to this matrix $\mathbf{A}$, a quadratic form
$Q(\mathbf{x})$ can simply be written as
$Q(\mathbf{x}) =\langle\mathbf{A}\mathbf{x},\mathbf{x}\rangle$ for
each $\mathbf{x}\in\R^n$.\medskip

A quadratic form $Q(\mathbf{x})$ is \emph{positive definite} if
$Q(\mathbf{x})>0$ for every $\mathbf{x}\neq \mathbf{0}$, and it is
\emph{positive semidefinite} if $Q(\mathbf{x})\geq 0$ for every
$\mathbf{x}\in\R^n$. The \emph{rank} of a quadratic form $Q:\R^n\to\R$
is the rank of the unique symmetric $n\times n$ matrix $\mathbf{A}$
for which
$Q(\mathbf{x})=\langle\mathbf{A}\mathbf{x},\mathbf{x}\rangle$.
Alternatively, the rank of $Q(\mathbf{x})$ is $m\leq n$ if the maximal
linear subspace of $\R^n$ on which $Q(\mathbf{x})$ will vanish is of
dimension $n-m$.\medskip

Another useful interpretation follows from the fact that for each
quadratic form $Q(\mathbf{x})$ there exists a change of basis (i.e.\ a
linear isomorphism) $\Omega:\R^n\to \R^n$ and numbers
$\lambda_1,\dots, \lambda_m\in \{-1,1\}$, $m\leq n$, such that
\begin{equation}
  \label{eq:Lip-Ext-Embb:2}
  Q\left(\Omega\mathbf{y}\right)= \lambda_1y_1^2+\dots
  +\lambda_my_m^2\quad \text{for every\, $\mathbf{y}\in \R^n$.}
\end{equation}
Such a basis for $\R^n$ is called a \emph{canonical basis} for
$Q(\mathbf{x})$, and the expression in \cref{eq:Lip-Ext-Embb:2} is
called the \emph{canonical form} of $Q(\mathbf{x})$. In terms of this
canonical form, $m$ is the rank of $Q(\mathbf{x})$ and it is positive
semidefinite if all coefficients $\lambda_1,\dots, \lambda_m$ are
positive, i.e.\ equal to $1$. In particular, $Q(\mathbf{x})$ is
positive definite if it is positive semidefinite and $m=n$. \medskip

The following theorem was proved by Schoenberg in \cite[Theorem
1]{Schoenberg1935}.

\begin{theorem}
  \label{theorem-Lip-Ext-Embb:1}
  Let $(A,\rho)$ be a symmetric space consisting of $n+1$ points
  $a_0,a_1,\dots, a_n$, and $\rho_{ij}=\rho(a_i,a_j)$,
  $0\leq i,j\leq n$, be the distances between these points. Then
  $(A,\rho)$ can be isometrically embedded in\, $\R^{n}$ if and only
  if the quadratic form
  \begin{equation}
    \label{eq:Lip-Ext-Embb:9}
    \left\{
      \begin{aligned}
        Q(x_1,\dots,
        x_n)&=\mfrac12\sum_{i,j=1}^n\left(\rho_{0i}^2+\rho_{0j}^2-
          \rho_{ij}^2\right)x_ix_j\\
        &=\sum_{i=1}^n \rho_{0i}^2x_i^2 +\sum_{1\leq i<j\leq
          n}\left(\rho_{0i}^2+\rho_{0j}^2- \rho_{ij}^2\right)x_ix_j
      \end{aligned}
    \right.
  \end{equation}
  is positive semidefinite. Moreover, $(A,\rho)$ can be embedded
  isometrically in\, $\R^m$, but not in\, $\R^{m-1}$, if the rank of
  this quadratic form is $m$.
\end{theorem}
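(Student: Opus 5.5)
The plan is to identify $Q$ with a Gram matrix. Write $g_{ij}=\frac12\left(\rho_{0i}^2+\rho_{0j}^2-\rho_{ij}^2\right)$ for $1\leq i,j\leq n$, so that $Q(\mathbf{x})=\sum_{i,j}g_{ij}x_ix_j$ and $g_{ii}=\rho_{0i}^2$. The motivation is the law of cosines. If $f:A\to\R^n$ is an isometric embedding, we may translate so that $f(a_0)=\mathbf{0}$ and set $\mathbf{w}_i=f(a_i)$. Then $\|\mathbf{w}_i\|=\rho_{0i}$ and $\|\mathbf{w}_i-\mathbf{w}_j\|=\rho_{ij}$, and polarisation gives $\langle\mathbf{w}_i,\mathbf{w}_j\rangle=g_{ij}$.

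For necessity, it follows that
\[
Q(\mathbf{x})=\Big\|\sum_{i=1}^n x_i\mathbf{w}_i\Big\|^2\geq0,
\]
so $Q$ is positive semidefinite. Moreover, $Q(\mathbf{x})=0$ exactly when $\sum x_i\mathbf{w}_i=\mathbf{0}$. Hence the null space of $Q$ (the kernel of the Gram matrix $G=(g_{ij})$) is the kernel of the linear map $\mathbf{x}\mapsto\sum x_i\mathbf{w}_i$. Its dimension is therefore $n-\dim\operatorname{span}\{\mathbf{w}_i\}$, so the rank of $Q$ equals the dimension of the affine hull of $f(A)$.

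For sufficiency, assume $Q$ is positive semidefinite of rank $m$. By the canonical form \cref{eq:Lip-Ext-Embb:2}, there is a linear isomorphism $\Omega$ with $Q(\Omega\mathbf{y})=y_1^2+\dots+y_m^2$. Let $P:\R^n\to\R^m$ be the projection onto the first $m$ coordinates, and define the linear map $L=P\circ\Omega^{-1}:\R^n\to\R^m$. Then $Q(\mathbf{x})=\|L\mathbf{x}\|^2$ for all $\mathbf{x}$. Polarising this identity, valid since both sides are quadratic forms whose symmetric bilinear forms are $\langle G\mathbf{x},\mathbf{z}\rangle$ and $\langle L\mathbf{x},L\mathbf{z}\rangle$, gives $g_{ij}=\langle L\mathbf{e}_i,L\mathbf{e}_j\rangle$. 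Set $\mathbf{w}_0=\mathbf{0}$ and $\mathbf{w}_i=L\mathbf{e}_i\in\R^m\subseteq\R^n$. Then $\|\mathbf{w}_i\|^2=g_{ii}=\rho_{0i}^2$, and
\[
\|\mathbf{w}_i-\mathbf{w}_j\|^2=g_{ii}+g_{jj}-2g_{ij}=\rho_{ij}^2 .
\]
So $a_i\mapsto\mathbf{w}_i$ is an isometric embedding into $\R^m$, and in particular into $\R^n$.

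For the rank statement, the embedding just built lands in $\R^m$. Conversely, any embedding into $\R^{m-1}$, composed with a translation sending the image of $a_0$ to $\mathbf{0}$, would give vectors $\mathbf{w}_i$ spanning a subspace of dimension at most $m-1$. By the necessity computation, the rank of $Q$ would then be at most $m-1$, a contradiction.

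The main obstacle is justifying that "rank of $Q$" agrees with the rank of the Gram matrix $G$, and with the number of squares in the canonical form. Equivalently, one needs $\ker G=\{\mathbf{x}:Q(\mathbf{x})=0\}$ in the semidefinite case. I would prove this directly: if $Q(\mathbf{x})=0$ and $Q\geq0$, then $\mathbf{x}$ minimises $Q$, so the gradient $2G\mathbf{x}$ vanishes.
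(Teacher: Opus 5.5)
Your proposal is correct and follows essentially the same route as the paper: the Gram identity $Q(\mathbf{x})=\bigl\|\sum_i x_i\mathbf{w}_i\bigr\|^2$ gives necessity and the rank, and the canonical form composed with the coordinate projection, $\mathbf{w}_i=\Pi\Omega^{-1}\mathbf{e}_i$, gives sufficiency. You are a bit more careful than the paper on the rank clause: you prove $\ker G=\{Q=0\}$ for a semidefinite form and spell out why no embedding into $\R^{m-1}$ exists, whereas the paper relies on its stated alternative description of rank and leaves the $\R^{m-1}$ part implicit.
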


\begin{proof}
  Let
  $\sigma=\left\{\mathbf{v}_0, \dots, \mathbf{v}_n\right\}\subseteq
  \R^n$ and $\rho_{ij}=\|\mathbf{v}_i-\mathbf{v}_j\|$ for every
  $i,j\leq n$. Since any translation of $\R^n$ is an isometry, we can
  assume that $\mathbf{v}_0=\mathbf{0}$ is the origin of $\R^n$ so
  that
  $\rho_{i0}=\rho_{0i} =\left\|\mathbf{v}_i-\mathbf{v}_0\right\|
  =\|\mathbf{v}_i\| $ for every $1\leq i\leq n$. Next, define a linear
  map $\Lambda:\R^n\to \R^n$ by
  $\Lambda\mathbf{x}=x_1\mathbf{v}_1+\dots +x_n\mathbf{v}_n$ for every
  $\mathbf{x}=(x_1,\dots, x_n)\in \R^n$. If $Q(\mathbf{x})$ is as in
  \cref{eq:Lip-Ext-Embb:9}, then
  \[
    \|\Lambda \mathbf{x}\|^2 =
    \langle\Lambda\mathbf{x},\Lambda\mathbf{x}\rangle =\left\langle
      \sum_{i=1}^nx_i\mathbf{v}_i, \sum_{j=1}^nx_j\mathbf{v}_j
    \right\rangle
    =\sum_{i,j=1}^n\left\langle\mathbf{v}_i,\mathbf{v}_j\right\rangle
    x_ix_j =Q(\mathbf{x}),
  \]
  because for every $1\leq i,j\leq n$ we have that
  \[
    \rho_{ij}^2= \left\langle\mathbf{v}_i-\mathbf{v}_j,
    \mathbf{v}_i-\mathbf{v}_j\right\rangle =\left\langle
    \mathbf{v}_i,\mathbf{v}_i\right\rangle +\langle
    \mathbf{v}_j,\mathbf{v}_j\rangle
    -2\langle\mathbf{v}_i,\mathbf{v}_j\rangle =\rho_{0i}^2 +
    \rho_{0j}^2 - 2\langle\mathbf{v}_i,\mathbf{v}_j\rangle.
  \]
  Therefore, $Q(\mathbf{x})=\|\Lambda\mathbf{x}\|^2\geq 0$ is positive
  semidefinite. Moreover, since $\Lambda$ is linear, $\R^n$ is the
  direct sum of the \emph{image} $\Lambda\left(\R^n\right)$ of
  $\Lambda$ and its \emph{kernel}
  $\left\{\mathbf{x}\in \R^n:
    \Lambda\mathbf{x}=\mathbf{0}\right\}$. Hence, the rank of
  $Q(\mathbf{x})$ is the dimension of the linear subspace
  $\Lambda\left(\R^n\right)\subseteq\R^n$.\smallskip

  Conversely, let the quadratic form $Q(\mathbf{x})=Q(x_1,\dots, x_n)$
  in \cref{eq:Lip-Ext-Embb:9} be positive semidefinite, and its rank
  be $m\leq n$. Next, let $\Omega:\R^n\to \R^n$ be the linear
  isomorphism as in \cref{eq:Lip-Ext-Embb:2}, i.e.\ with the property
  that $Q\left(\Omega\mathbf{y}\right)= y_1^2+\dots +y_m^2$ for every
  $\mathbf{y}\in \R^n$. Also, for convenience, let $\Pi:\R^n\to \R^m$
  be the orthogonal projection $\Pi(\mathbf{y})=(y_1,\dots, y_m)$ for
  ${\mathbf{y}=(y_1,\dots, y_n)\in \R^n}$, so that the representation
  in \cref{eq:Lip-Ext-Embb:2} is now simply given by
  \begin{equation}
    \label{eq:Lip-Ext-Embb:5}
    Q(\Omega\mathbf{y})=y_1^2+\dots +y_m^2=
    \left\|\Pi\mathbf{y}\right\|^2\quad 
    \text{for every\, $\mathbf{y}=(y_1,\dots, y_n)\in \R^n$.}
  \end{equation}
  Finally, we define $\mathbf{y}_i=\Omega^{-1}\mathbf{e}_i$ for each
  $1\leq i\leq n$, where $\mathbf{e}_1,\dots, \mathbf{e}_n\in \R^n$
  are the standard basic unit vectors. Then, taking
  $\mathbf{v}_0=\mathbf{0}\in \R^m$ and
  $\mathbf{v}_i=\Pi\mathbf{y}_i$, $1\leq i\leq n$, it
  follows from \cref{eq:Lip-Ext-Embb:9,eq:Lip-Ext-Embb:5} that for
  each $i\leq n$ and $j\neq i$,
  \begin{align*}
    \|\mathbf{v}_i-\mathbf{v}_0\|^2
    &=\|\mathbf{v}_i\|^2=
      \left\|\Pi\mathbf{y}_i\right\|^2
      = Q(\Omega\mathbf{y}_i)=
      Q(\mathbf{e}_i)=\rho_{0i}^2,\qquad
      \text{and} \\
    \|\mathbf{v}_i-\mathbf{v}_j\|^2
    &= \left\|\Pi(\mathbf{y}_i-\mathbf{y}_j)\right\|^2
      =
      Q\left(\Omega(\mathbf{y}_i-\mathbf{y}_j)\right)
      =
      Q(\mathbf{e}_i-\mathbf{e}_j)=\rho_{ij}^2.
  \end{align*}
  Accordingly, the set
  $\sigma=\left\{\mathbf{v}_0,\dots ,\mathbf{v}_n\right\} \subseteq
  \R^m$ is as required.
\end{proof}

\Cref{theorem-Lip-Ext-Embb:1} gives a necessary and sufficient
condition for isometric embeddings of symmetric spaces into Hilbert
spaces, see \cite{Schoenberg1937,Schoenberg1938} and \cite[Lemma
2.3]{MR0461107}. Accordingly, the Kirszbraun-Valentine extension
theorem (\Cref{theorem-Lipschitz-ext:6}) is also valid for such
symmetric spaces. Here, we are interested in the quadratic form in
\cref{eq:Lip-Ext-Embb:9} and its role in the extension problem of
nonexpansive maps. In view of this, in some cases it will be more
convenient to represent this quadratic form not in terms of
coordinates, but by using the interpretation that $Y^X$ is the set of
all maps from $X$ to $Y$. Accordingly, if $A$ is as in
\Cref{theorem-Lip-Ext-Embb:1} and $T=A\setminus\{a_0\}$, then the
doubled quadratic form in \cref{eq:Lip-Ext-Embb:9} can be simply
written as
\[
  Q(\xi)=\sum_{(s,t)\in T^2}\left[\rho_{0s}^2+ \rho_{0t}^2-
    \rho_{st}^2\right]\xi(s)\xi(t)\quad \text{for every $\xi\in \R^T$.}
\]

To bring the above quadratic form $Q(\xi)$ into line with the
extension problem for nonexpansive maps, as in
\Cref{theorem-Lip-Ext-Embb:1}, for a symmetric space
$(X,\rho)$ we will use $\rho_{xy}$ to denote the distance
between points $x,y\in X$, i.e.\
\begin{equation}
  \label{eq:KIP-ver-17:2}
  \rho_{xy}=\rho(x,y)\quad \text{for every $x,y\in X$.}
\end{equation}
Furthermore, for a nonempty finite set $T\subseteq X$ and a point
$p\in X\setminus T$, we will denote the quadratic form $Q(\xi)$ as
$Q_{(p,T)}(\xi)$ to explicitly emphasise that it is associated
specifically to the pair $(p,T)$:
\[
  Q_{(p,T)}(\xi)=\sum_{(s,t)\in T^2}\left[\rho_{ps}^2+ \rho_{pt}^2-
    \rho_{st}^2\right]\xi(s)\xi(t)\quad \text{for every
    $\xi\in \R^T$.}
\]
Finally, for a nonempty (finite) set $T$, let $\R^T_+$ be the
\emph{positive cone} of $\R^T$, i.e.\ the subset of all functions
$\xi:T\to [0,+\infty)$. Essentially, in this context, Gr\"unbaum and
Zarantonello \cite{Gruenbaum1968} relaxed the condition in
\Cref{theorem-Lip-Ext-Embb:1} imposed on the quadratic form in
\cref{eq:Lip-Ext-Embb:9}. Namely, in view of the extension problem of
nonexpansive maps, they did not require $Q_{(p,T)}$ to be positive
semidefinite, but instead to have the property that
\begin{equation}
  \label{eq:KIP-ver-17:1}
  Q_{(p,T)}(\xi)=\sum_{(s,t)\in T^2}\left[\rho_{ps}^2+
    \rho_{pt}^2-
    \rho_{st}^2\right]\xi(s)\xi(t)\geq 0\quad \text{for every $\xi\in
    \R^T_+$.} 
\end{equation}

In a slightly different form, the following result was obtained in
\cite[Lemma 2]{Gruenbaum1968}.

\begin{theorem}
  \label{theorem-KIP-ver-17:1}
  Let $H$ be a Hilbert space and $(X,\rho)$ be a symmetric space such
  that for every nonempty finite set $T\subseteq X$ and
  $p\in X\setminus T$, the quadratic form $Q_{(p,T)}$ satisfies the
  condition in \cref{eq:KIP-ver-17:1}. Then for every $A\subseteq X$,
  each nonexpansive map $\varphi:A\to H$ can be extended to a
  nonexpansive map $f:X\to H$.
\end{theorem}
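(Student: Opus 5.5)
By \Cref{corollary-KIP-ver-05:1}, it suffices to treat one-point extensions from finite sets. So fix a nonempty finite set $T\subseteq X$, a point $p\in X\setminus T$, and a nonexpansive map $\varphi:T\to H$. Write $\mathbf{v}_s=\varphi(s)$ for $s\in T$. By \Cref{proposition-Lip-Ball-Intersc:4}, we must find a point $\mathbf{y}\in H$ with $\|\mathbf{y}-\mathbf{v}_s\|\leq\rho_{ps}$ for every $s\in T$.

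The points $\mathbf{v}_s$ span a finite-dimensional subspace $V$ of $H$, which is isometric to some $\R^n$, so we may work in $\R^n$. If $\rho_{ps}=0$ for some $s$, then $p=s$ since $\rho$ is a symmetric; this is excluded, so all $\delta_s=\rho_{ps}$ are positive. Apply \Cref{lemma-Lipschitz-ext:1} with these $\mathbf{v}_s$ and $\delta_s$. It yields $\tau\geq 0$ and $\mathbf{y}\in\R^n$ with $\|\mathbf{y}-\mathbf{v}_s\|\leq\tau\rho_{ps}$ for all $s\in T$. Moreover, $\mathbf{y}=\sum_{s\in C}\lambda_s\mathbf{v}_s$ with $\lambda_s\geq0$ and $\sum_{s\in C}\lambda_s=1$, where $C$ is the nonempty set of indices at which equality $\|\mathbf{y}-\mathbf{v}_s\|=\tau\rho_{ps}$ holds. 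It then suffices to show $\tau\leq 1$.

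The computation mirrors the proof of \Cref{theorem-KIP-ver-15:1}, with the Euclidean identity for $\mathbf{x}-\mathbf{u}_i$ replaced by hypothesis \cref{eq:KIP-ver-17:1}. Extend $\lambda$ by zero to get $\xi\in\R^T_+$, supported on $C$. Then
\[
0=2\Big\|\sum_{s}\xi(s)(\mathbf{y}-\mathbf{v}_s)\Big\|^2
=\sum_{s,t}\xi(s)\xi(t)\big[\|\mathbf{y}-\mathbf{v}_s\|^2+\|\mathbf{y}-\mathbf{v}_t\|^2-\|\mathbf{v}_s-\mathbf{v}_t\|^2\big].
\]
Use the equalities on $C$ together with $\|\mathbf{v}_s-\mathbf{v}_t\|\leq\rho_{st}$, which holds because $\varphi$ is nonexpansive. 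This gives
\[
0\geq\sum_{s,t\in C}\xi(s)\xi(t)\big[\tau^2\rho_{ps}^2+\tau^2\rho_{pt}^2-\rho_{st}^2\big]
=\tau^2 Q_{(p,C)}(\xi)+(\tau^2-1)\sum_{s,t\in C}\xi(s)\xi(t)\rho_{st}^2 .
\]
If $|C|=1$, say $C=\{s\}$, then $\mathbf{y}=\mathbf{v}_s$ and so $\tau\rho_{ps}=0$, i.e. $\tau=0\le1$. If $|C|\geq2$, all $\lambda_s$ could still concentrate on one point; in that case again $\mathbf{y}=\mathbf{v}_s$ for some $s\in C$, giving $\tau=0$.

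Otherwise at least two coefficients are positive. Then $\sum_{s,t}\xi(s)\xi(t)\rho_{st}^2>0$, because $\rho_{st}>0$ for $s\neq t$. Since $Q_{(p,C)}(\xi)\geq0$ by \cref{eq:KIP-ver-17:1} (applied to the finite set $C$ with $p\notin C$), we get $\tau^2-1\leq 0$, hence $\tau\leq1$. Therefore $\|\mathbf{y}-\varphi(s)\|\leq\rho_{ps}$ for every $s\in T$, which is the required one-point extension. \Cref{corollary-KIP-ver-05:1} then completes the proof.

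The main point requiring care is that, unlike the Euclidean domain case, no triangle inequality or linear structure is available in $X$. The positivity on the cone in \cref{eq:KIP-ver-17:1} must substitute exactly for the identity $2\langle\mathbf{x}-\mathbf{u}_i,\mathbf{x}-\mathbf{u}_j\rangle=\|\mathbf{x}-\mathbf{u}_i\|^2+\|\mathbf{x}-\mathbf{u}_j\|^2-\|\mathbf{u}_i-\mathbf{u}_j\|^2$. This works precisely because the convex coefficients from \Cref{lemma-Lipschitz-ext:1} are nonnegative, so only $\xi\in\R^T_+$ is needed.
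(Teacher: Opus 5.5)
Your proof is correct and follows the paper's argument: the same reduction via \Cref{corollary-KIP-ver-05:1}, the same application of \Cref{lemma-Lipschitz-ext:1} inside the span of $\varphi(T)$, and the same quadratic expansion with the convex coefficients fed into \cref{eq:KIP-ver-17:1}. The only difference is in the final rearrangement. The paper uses the hypothesis to bound $\sum\lambda_i\lambda_j\rho_{t_it_j}^2$ by $\sum\lambda_i\lambda_j(\rho_{pt_i}^2+\rho_{pt_j}^2)$, which gives $(\tau^2-1)\cdot 2\sum_i\lambda_i\rho_{pt_i}^2\le 0$ with an automatically positive factor, so your case split for a single nonzero coefficient becomes unnecessary.
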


\begin{proof}
  Let $T\subseteq X$ be a nonempty finite set, $p\in X\setminus T$ and
  $\varphi:T\to H$ be a nonexpansive map. Then the linear subspace
  $V\subseteq H$ spanned over $\varphi(T)$ is isometric to some
  Euclidean space $\R^n$ so that $v_t=\varphi(t)\in V\cong\R^n$ and
  $\rho_{pt}=\rho(p,t)>0$ for every $t\in T$. Therefore, as before,
  let $\tau\ge 0$ and
  $y\in \bigcap_{t\in T} \mathbf{C}\left(v_t,\tau\rho_{pt}\right)$ be
  as those in \Cref{lemma-Lipschitz-ext:1}. Next, by the same lemma,
  let $\left\{t_1,\dots, t_k\right\}$ be an enumeration of the set
  $\left\{t\in T: \|y-v_t\|=\tau\rho_{pt}\right\}$ and $v_i=v_{t_i}$
  for every $i\leq k$. Thus, by \cref{eq:Kirszbraun-Intersc-v1:1}, we
  now have that $y=\sum_{i=1}^k \lambda_i v_i$ for some coefficients
  $\lambda_i\geq 0$, $i\leq k$, with $\sum_{i=1}^k
  \lambda_i=1$. Accordingly, we can apply the fact that
  $\varphi:T\to H$ is nonexpansive and the condition in
  \cref{eq:KIP-ver-17:1} to conclude that
  \begin{align*}
    0&=2\left\|\sum_{i=1}^k \lambda_i(y-v_i)\right\|^2
       =\sum_{i,j=1}^k \lambda_i\lambda_j\cdot 2\left\langle y-v_i,
       y-v_j\right\rangle\\
     &= \sum_{i,j=1}^k
       \lambda_i\lambda_j\left[\|y-v_i\|^2 +
       \|y-v_j\|^2 - \|v_i-v_j\|^2
       \right]\\
     &\geq \sum_{i,j=1}^k
       \lambda_i\lambda_j\left[\tau^2\rho_{pt_i}^2 +
       \tau^2\rho_{pt_j}^2 - \rho_{t_it_j}^2
       \right]\\
     &\geq\sum_{i,j=1}^k
       \lambda_i\lambda_j\left[\tau^2\rho_{pt_i}^2 +
       \tau^2\rho_{pt_j}^2 - \rho_{pt_i}^2- \rho_{pt_j}^2
       \right]\\
      &=\left(\tau^2-1\right)\cdot\sum_{i,j=1}^k
       \lambda_i\lambda_j\left[\rho_{pt_i}^2 +
       \rho_{pt_j}^2\right].
  \end{align*}

  It is obvious that this is possible only when $\tau^2-1\leq0$, or in
  other words, when $\tau\leq 1$. Consequently,
  $\|y-\varphi(t)\|\leq \rho_{pt}=\rho(p,t)$ for every $t\in T$, and a
  nonexpansive extension $\psi:T\cup\{p\}\to V\subseteq H$ of
  $\varphi$ is simply defined by $\psi(p)=y$ and $\psi\uhr
  T=\varphi$. The rest of the proof follows from
  \Cref{corollary-KIP-ver-05:1}.
\end{proof}

We conclude this section with some special cases of
\Cref{theorem-KIP-ver-17:1}, which are naturally related to
\Cref{theorem-Lipschitz-ext:6,theorem-Lip-Ext-Embb:1}.  In
\cite{zbMATH03049820}, Mickle proposed a generalisation of the
Kirszbraun-Valentine extension theorem
(\Cref{theorem-Lipschitz-ext:6}) based on the following
considerations. For a metric space $(X,d)$, let $\mathpzc{P}(X)$ be
the set of all functions $\omega:[0,+\infty)\to [0,+\infty)$ such that
\addtocounter{equation}{1}
\begin{enumerate}[label=(\theequation\alph*)]
\item\label{item:KIP-ver-10:1} $\omega(t)=0$ precisely when $t=0$,\ \
  and\smallskip
\item\label{item:KIP-ver-10:2}
  $Q_{(p,T)}^\omega(\xi)=\medop\sum_{(s,t)\in
    T^2}\left[\omega^2(d_{ps})+ \omega^2(d_{pt})-
    \omega^2(d_{st})\right]\xi(s)\xi(t)\geq0$,
  $\xi\in \R^T_+$, for every nonempty finite set $T\subseteq X$ and
  $p\in X\setminus T$.
\end{enumerate}

In this setting, Mickle proved the following extension theorem in
\cite[Theorem]{zbMATH03049820}, which is now a simple consequence of
\Cref{theorem-KIP-ver-17:1}.

\begin{corollary}
  \label{corollary-KIP-ver-24:1}
  Let $H$ be a Hilbert space, $(X,d)$ be a metric space,
  $\omega\in \mathpzc{P}(X)$ and $A\subseteq X$. If\/ $\varphi:A\to H$
  is a map such that $\|\varphi(x)-\varphi(y)\|\leq \omega(d_{xy})$
  for every ${x,y\in A}$, then $\varphi$ can be extended to a map
  $f:X\to H$ such that $\|f(x)-f(y)\|\leq \omega(d_{xy})$
  for every $x,y\in X$.
\end{corollary}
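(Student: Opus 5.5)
The plan is to reduce the statement to \Cref{theorem-KIP-ver-17:1} by changing the symmetric on $X$. Define $\rho:X\times X\to[0,+\infty)$ by $\rho(x,y)=\omega(d_{xy})$, that is, $\rho_{xy}=\omega(d_{xy})$ in the notation of \cref{eq:KIP-ver-17:2}. First I will check that $(X,\rho)$ is a symmetric space. Symmetry of $\rho$ is inherited from that of $d$. The condition $\rho(x,y)=0\iff x=y$ follows from property \ref{item:KIP-ver-10:1} of $\omega$, together with the fact that $d(x,y)=0\iff x=y$. No triangle inequality is required, which is precisely why the paper works with symmetric spaces: $\omega$ need not be subadditive.

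Next, I will observe that the hypothesis on $\varphi$, namely $\|\varphi(x)-\varphi(y)\|\le\omega(d_{xy})=\rho(x,y)$ for $x,y\in A$, says exactly that $\varphi:(A,\rho)\to H$ is nonexpansive. Likewise, the desired conclusion for $f$ says exactly that $f:(X,\rho)\to H$ is nonexpansive.

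Then I will verify the hypothesis of \Cref{theorem-KIP-ver-17:1} for $(X,\rho)$. For a nonempty finite $T\subseteq X$ and $p\in X\setminus T$, substituting $\rho_{st}=\omega(d_{st})$ turns the quadratic form $Q_{(p,T)}$ into $Q^\omega_{(p,T)}$. Condition \ref{item:KIP-ver-10:2} then gives $Q_{(p,T)}(\xi)\ge0$ for every $\xi\in\R^T_+$, which is \cref{eq:KIP-ver-17:1}. Applying \Cref{theorem-KIP-ver-17:1} yields a nonexpansive extension $f:(X,\rho)\to H$ of $\varphi$, and this $f$ is as required.

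There is no real obstacle here; the only point needing care is that $\rho$ is a symmetric space, and in particular that it is positive off the diagonal. This is exactly what \ref{item:KIP-ver-10:1} provides, and it is needed because the proof of \Cref{theorem-KIP-ver-17:1} uses $\rho_{pt}>0$.
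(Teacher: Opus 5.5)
Your proposal is correct and follows the paper's proof exactly: you set $\rho(x,y)=\omega(d(x,y))$, use property \ref{item:KIP-ver-10:1} to see that $\rho$ is a symmetric, and identify $Q_{(p,T)}$ with $Q^\omega_{(p,T)}$. You then apply \Cref{theorem-KIP-ver-17:1} to the $\rho$-nonexpansive map $\varphi$. Your remark that positivity of $\rho$ off the diagonal is what makes $\rho_{pt}>0$ available in \Cref{lemma-Lipschitz-ext:1} is a nice touch that the paper leaves implicit.
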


\begin{proof}
  Consider the function $\rho:X\times X\to [0,+\infty)$ defined by
  $\rho(x,y)= \omega(d(x,y))$ for every $x,y\in X$. Then by
  \ref{item:KIP-ver-10:1}, $\rho$ is a symmetric on $X$. Moreover, not
  only is the quadratic form $Q_{(p,T)}$ with respect to $\rho$ the
  same as the quadratic form $Q_{(p,T)}^\omega$ with respect to $d$
  and the function $\omega\in\mathpzc{P}(X)$, but also the condition
  in \ref{item:KIP-ver-10:2} is the same as that in
  \cref{eq:KIP-ver-17:1}. Furthermore, if $\varphi:A\to H$ is a map
  such that
  ${\|\varphi(x)-\varphi(y)\|\leq \omega(d_{xy})}$ for
  every ${x,y\in A}$, then $\varphi$ is nonexpansive with respect to
  $\rho$. Hence, by \Cref{theorem-KIP-ver-17:1}, $\varphi$ can be
  extended to a map $f:X\to H$ that is also nonexpansive with respect
  to $\rho$. This $f$ is clearly as required because
  $\rho_{xy}=\omega(d_{xy})$ for every $x,y\in X$.
\end{proof}

The proof of \Cref{corollary-KIP-ver-24:1} in \cite{zbMATH03049820} is
technically demanding and follows Schoenberg's approach in
\cite[Theorem 2]{zbMATH03084778} to establish the Kirszbraun
intersection property in \Cref{theorem-Lip-Ball-Intersc:1}, see also
\Cref{lemma-Lipschitz-ext:1}. Furthermore, in \cite{zbMATH03049820}
this extension property is formulated for an Euclidean space and
$(X,d)$ is assumed to be separable. At the end of his paper, however,
Mickle added a remark that his theorem is valid for any metric domain
and any Hilbert range, which was pointed out to him by the referee. In
contrast, our proof of \Cref{corollary-KIP-ver-24:1} is not only
simpler, without repeating previous proofs, but also does not use the
hypothesis in \cite{zbMATH03049820} that $\omega\in \mathpzc{P}(X)$
must be a continuous function and the quadratic form
$Q_{(p,T)}^\omega$ must be positive semidefinite.\medskip

We proceed with another consequence of \Cref{theorem-KIP-ver-17:1}. To
this end, suppose that $(X,\rho)$ is a symmetric space such that
$\rho^2$ is a metric on $X$, i.e.\ that
\[
  \left[\rho(s,t)\right]^2\leq \left[\rho(s,p)\right]^2 +
  \left[\rho(p,t)\right]^2 \quad \text{for every $s,t,p\in X$.}
\]
In terms of our convention in \cref{eq:KIP-ver-17:2}, this means that
$\rho_{ps}^2+ \rho_{pt}^2- \rho_{st}^2\geq0$ for every ${s,t,p\in
  X}$. Accordingly, in this case the symmetric $\rho$ satisfies
\cref{eq:KIP-ver-17:1} and we obtain the following immediate
consequence.

\begin{corollary}
  \label{corollary-KIP-ver-17:1}
  Let $H$ be a Hilbert space and $(X,\rho)$ be a symmetric space such
  that $\rho^2$ is a metric on $X$. Then for every $A\subseteq X$,
  each nonexpansive map $\varphi:A\to H$ can be extended to a
  nonexpansive map $f:X\to H$.
\end{corollary}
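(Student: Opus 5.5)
The plan is to obtain the statement as a direct instance of \Cref{theorem-KIP-ver-17:1}. All that has to be checked is that the symmetric $\rho$ satisfies \cref{eq:KIP-ver-17:1} for every nonempty finite $T\subseteq X$ and every $p\in X\setminus T$.

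Fix such $T$ and $p$, and take $\xi\in\R^T_+$. We use the convention in \cref{eq:KIP-ver-17:2}. Since $\rho^2$ is a metric, the triangle inequality for $\rho^2$ applied to the triple $s,p,t$ gives $\rho_{st}^2\leq \rho_{sp}^2+\rho_{pt}^2$. Because $\rho$ is symmetric, $\rho_{sp}=\rho_{ps}$, so
\[
  \rho_{ps}^2+\rho_{pt}^2-\rho_{st}^2\geq 0\quad\text{for all $(s,t)\in T^2$.}
\]
This covers the diagonal case $s=t$ as well, where the bracket equals $2\rho_{ps}^2\geq0$.

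On the cone $\R^T_+$ every product $\xi(s)\xi(t)$ is nonnegative. Hence every summand of $Q_{(p,T)}(\xi)$ is a product of two nonnegative numbers, and therefore $Q_{(p,T)}(\xi)\geq 0$. This is exactly \cref{eq:KIP-ver-17:1}.

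With this condition verified for all admissible pairs $(p,T)$, \Cref{theorem-KIP-ver-17:1} applies directly. It yields, for every $A\subseteq X$, a nonexpansive extension $f:X\to H$ of each nonexpansive map $\varphi:A\to H$.

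There is no real obstacle here. The only point to keep in view is that the condition is required only on the positive cone, not as positive semidefiniteness, which is why the pointwise nonnegativity of the coefficients is enough.
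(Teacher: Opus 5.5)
Your proposal is correct and follows the paper's argument: the triangle inequality for $\rho^2$ makes every coefficient $\rho_{ps}^2+\rho_{pt}^2-\rho_{st}^2$ nonnegative, so $Q_{(p,T)}\geq 0$ on $\R^T_+$, and \Cref{theorem-KIP-ver-17:1} then gives the extension. The paper also mentions a separate route through \Cref{corollary-KIP-ver-24:2} and the Kirszbraun--Valentine theorem, but this corollary itself is derived exactly as you do.
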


Each Hilbert space is isomorphic to the space $\ell_2 (\mathcal{A})$
for some set $\mathcal{A}$, i.e.\ to the space of all functions
$y : \mathcal{A} \to \R$ with
${\sum _{\alpha \in \mathcal{A}} [y (\alpha)]^2 < \infty}$, where the
linear operations are defined pointwise and the norm is
$\| y \| = \sqrt{\sum _{\alpha \in \mathcal{A}} [y (\alpha)]^2}$. The
following result, based on a simple construction in \cite[Corollary
1.15]{Benyamini2000}, is complementary to
\Cref{corollary-KIP-ver-17:1} and shows that the extension property
there is also a consequence of the Kirszbraun-Valentine extension
theorem (\Cref{theorem-Lipschitz-ext:6}).

\begin{corollary}
  \label{corollary-KIP-ver-24:2}
  Let $H$ be a Hilbert space, $(X,\rho)$ be a symmetric space such
  that $\rho^2$ is a metric on $X$ and $A\subseteq X$ be a nonempty
  set. Then for each $p\in X\setminus A$ there exists an injective map
  $\eta_p:A\to \ell_2(A)$ such that
  \begin{equation}
    \label{eq:Lip-Ext-Embb:13}
    \|\eta_p(a)\|=\rho_{pa}\ \ \text{and}\ \ \rho_{ab}\leq
    \|\eta_p(a)-\eta_p(b)\|\quad\text{for every $a,b\in A$.} 
  \end{equation}
  In particular, each nonexpansive map $\varphi:A\to H$ can be
  extended to a nonexpansive map $f:X\to H$.
\end{corollary}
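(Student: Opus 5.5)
The plan is to build $\eta_p$ coordinatewise in $\ell_2(A)$, splitting $\rho_{pa}^2$ into a "private" part and a "shared" part. The model is \cite[Corollary 1.15]{Benyamini2000}. Put a mass at a single coordinate $a$, i.e.\ $\eta_p(a)=\rho_{pa}\,e_a$, where $\{e_a:a\in A\}$ is the standard orthonormal system of $\ell_2(A)$ ($e_a(\alpha)=1$ if $\alpha=a$ and $0$ otherwise).

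Then $\|\eta_p(a)\|=\rho_{pa}$ is immediate. For $a\neq b$, orthogonality gives
\[
\|\eta_p(a)-\eta_p(b)\|^2=\rho_{pa}^2+\rho_{pb}^2\geq \rho_{ab}^2 ,
\]
precisely because $\rho^2$ is a metric, i.e.\ satisfies the triangle inequality through $p$. For $a=b$ both sides of the inequality in \cref{eq:Lip-Ext-Embb:13} vanish. Injectivity follows since $\rho_{pa}>0$ for $a\in A$ (as $p\notin A$ and $\rho$ is a symmetric), so the vectors $\rho_{pa}e_a$ are pairwise distinct. This settles \cref{eq:Lip-Ext-Embb:13}.

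For the extension statement, reduce to one-point extensions from finite sets via \Cref{corollary-KIP-ver-05:1}. Take a nonempty finite $T\subseteq X$, a point $p\in X\setminus T$ and a nonexpansive $\varphi:T\to H$. Apply the first part with $A=T$ to get $\eta_p:T\to\ell_2(T)\cong\R^{|T|}$, and place it inside the Hilbert space $L=\ell_2(T)$. Define $\phi=\varphi\circ\eta_p^{-1}$ on $\eta_p(T)\subseteq L$. It is nonexpansive, since
\[
\|\varphi(a)-\varphi(b)\|\le\rho_{ab}\le\|\eta_p(a)-\eta_p(b)\|.
\]
By \Cref{theorem-Lipschitz-ext:6} (or directly \Cref{corollary-Lip-Ball-Intersc:1}), extend $\phi$ to the point $0\in L$. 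This point is not in $\eta_p(T)$, because every $\eta_p(a)$ has norm $\rho_{pa}>0$. Setting $\psi(p)=\phi(0)$ gives
\[
\|\psi(p)-\varphi(a)\|\le\|0-\eta_p(a)\|=\rho_{pa},
\]
so $\psi$ is a nonexpansive extension over $T\cup\{p\}$. \Cref{corollary-KIP-ver-05:1} then yields the global extension $f:X\to H$.

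There is no real obstacle here. The only point needing care is that the reduction to one-point extensions must be made first, since $\eta_p$ depends on $p$. One also has to check that $p$ corresponds to the origin, which lies outside the image of $\eta_p$, so that the hypothesis $p\in L\setminus T$ of the cited results is met.
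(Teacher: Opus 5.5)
Your proof is correct and is essentially the paper's: it uses the same embedding $\eta_p(a)=\rho_{pa}e_a$ into $\ell_2(A)$, transfers $\varphi$ to $\eta_p(A)$ in the same way, and extends the transferred map at the origin, which plays the role of $p$. The only difference is bookkeeping. You first reduce to finite $T$ via \Cref{corollary-KIP-ver-05:1} and then need only the finite one-point result \Cref{corollary-Lip-Ball-Intersc:1}, whereas the paper treats an arbitrary $A$, extends over all of $\ell_2(A)$ by \Cref{theorem-Lipschitz-ext:6}, and concludes with \Cref{theorem-Lipschitz-ext:9}; since \Cref{theorem-Lipschitz-ext:6} is itself derived from exactly your two ingredients, the routes coincide.
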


\begin{proof}
  We follow a construction in the proof of \cite[Corollary
  1.15]{Benyamini2000}. Namely, take a point $p\in X\setminus
  A$. Next, for each $a\in A$, define a function
  $\eta_p(a)[\cdot]:A\to \R$ by
  \[
    \eta_p(a)[x]=
    \begin{cases}
      \rho_{px} &\text{if $x=a$,\ \ and } \\
      0 &\text{if $x\neq a$.}
    \end{cases}
  \]
  Evidently, this defines an injective map $\eta_p:A\to \ell_2(A)$
  such that $\|\eta_p(a)\|=\rho_{pa}$ for every $a\in A$. Similarly,
  since $\rho^2$ is a metric on $X$, we also have that
  \[
    \rho_{ab}\leq \sqrt{\rho_{ap}^2 + \rho_{pb}^2}=
    \|\eta_p(a)-\eta_p(b)\|\quad \text{for every $a,b\in A$.}
  \]
  This shows that $\eta_p:A\to \ell_2(A)$ is as in
  \cref{eq:Lip-Ext-Embb:13}. To see that this implies the rest of the
  statement, let $\varphi:A\to H$ be a nonexpansive map and
  $p\in X\setminus A$. Next, for convenience, let
  $A_p=\eta_p(A)\subseteq \ell_2(A)$. Then, since $\eta_p:A\to A_p$ is
  an injective map, we can define a map $h:A_p\to H$ by
  $h\left(\eta_p(a)\right)=\varphi(a)$ for every $a\in A$, i.e.\ for
  which the following diagram is commutative:
  \begin{center}
    \vspace{-.5em}
    \begin{tikzcd}
      A \arrow[d, swap, "\eta_p"] \arrow[r, "\varphi"] &  H\\
      A_p \arrow[ur, dashed, swap, "h"] &
    \end{tikzcd}
  \end{center}
  Furthermore, it now follows from \cref{eq:Lip-Ext-Embb:13} that the
  map $h:A_p\to H$ is also nonexpansive because
  $\left\|h\left(\eta_p(a)\right)-
    h\left(\eta_p(b)\right)\right\|=\|\varphi(a)-\varphi(b)\|\leq
  \rho_{ab}\leq \|\eta_p(a)-\eta_p(b)\|$ for every $a,b\in
  A$. Accordingly, by \Cref{theorem-Lipschitz-ext:6}, $h$ can be
  extended to a nonexpansive map $g:\ell_2(A)\to H$. Finally, we can
  define an extension $\psi:A\cup\{p\}\to H$ of $\varphi$ by
  $\psi\uhr A=\varphi$ and $\psi(p)=g(0)$. By
  \cref{eq:Lip-Ext-Embb:13}, this extension is also nonexpansive
  because
  \[
    \|\psi(a)-\psi(p)\|= \left\|g\left(\eta_p(a)\right)-
      g(0)\right\|\leq \|\eta_p(a)-0\|\leq \|\eta_p(a)\|=\rho_{pa}.
  \]
  Hence, by \Cref{theorem-Lipschitz-ext:9}, the proof is complete.
\end{proof}

We conclude this section with the following remarks regarding the
Gr\"unbaum-Za\-rantonello condition in \cref{eq:KIP-ver-17:1}.

\begin{remark}
  \label{remark-KIP-ver-28:1}
  An \emph{ordered vector
    space} is a vector space $E$ endowed with a partial order $\leq$
  such that for every $u, v \in E$ with $u\leq v$,
  \begin{enumerate}[label=(\roman*)]
  \item $u+x\leq v+x$\quad for every $x\in E$,
  \item $\alpha \cdot u\leq \alpha \cdot v$\quad\ \ \ for every
    $\alpha>0$.
  \end{enumerate}
  In such a space $E$, the set $\{v\in E: v\geq 0\}$ is called the
  \emph{positive cone} of $E$ and denoted by $E^+$ or $E_+$.  An
  ordered vector space $(E,\leq)$ is a \emph{Riesz space}, or a
  \emph{vector lattice}, if it is also a lattice, i.e.\ if each pair
  of elements $u,v\in E$ has a supremum $u\vee v=\sup\{u,v\}\in E$ and
  an infimum $u\wedge v=\inf\{u,v\}\in E$. Finally, let us recall that
  a map $\Phi:E\to V$ between Riesz spaces is \emph{positive} if
  ${\Phi(E^+) \subseteq V^+}$.  The quadratic form
  $Q_{(p,T)}:\R^T\to \R$ in \cref{eq:KIP-ver-17:1} is a function that
  preserves the positive cone $\R^T_+$ of the vector space $\R^T$ in
  the sense that it maps it into the positive cone
  ${\R_+=[0,+\infty)}$ of the real line. Hence, in these terms, the
  Gr\"unbaum-Zarantonello condition in \cref{eq:KIP-ver-17:1} simply
  means that the quadratic form $Q_{(p,T)}:\R^T\to \R$ is a positive
  function between Riesz spaces.
\end{remark}

\begin{remark}
  \label{remark-KIP-draft-04:1}
  Let $(X,d)$ be a metric space and $\rho=\sqrt{d}$. Then $(X,\rho)$
  is a symmetric space such that $\rho^2=d$ is a metric on $X$. Hence,
  an interpretation of \Cref{corollary-KIP-ver-17:1} is that for a
  Hilbert space $H$, a metric space $(X,d)$ and $A\subseteq X$, each
  $\sqrt{d}$-nonexpansive map $\varphi:A\to H$ can be extended to a
  $\sqrt{d}$-nonexpansive map $f:X\to H$. Here, evidently, it is
  assumed from the context that ``\emph{$\sqrt{d}$-nonexpansive}''
  means a map which is nonexpansive with respect to the symmetric
  $\rho=\sqrt{d}$ on $X$. Similarly, the same interpretation applies
  to \Cref{corollary-KIP-ver-24:2} as well. 
\end{remark}

\section{Fox's Inequality and H\"older Continuity}
\label{sec:foxs-ineq-hold}

Let $(X,d)$ and $(Y,\rho)$ be metric spaces and $\alpha>0$. A map
$f:X\to Y$ is called \emph{$\alpha$-H\"older continuous}, or
\emph{H\"older continuous with exponent $\alpha$}, if there exists
$K\geq 0$ such that
\[
  \rho(f(p),f(q))\leq K \left[d(p,q)\right]^\alpha\quad \text{for
    every $p,q\in X$.}
\]
Here, the constant $K$ is often called the \emph{H\"older coefficient}
of the map $f:X\to Y$. In these terms, each $1$-H\"older continuous
map is $K$-Lipschitz. Another special case is when the H\"older
coefficient is $1$, i.e.\ when $\rho(f(p),f(q))\leq [d(p,q)]^\alpha$
for every $p,q\in X$. In this case, we will simply say that $f$ is
\emph{$\alpha$-nonexpansive}. Thus, if $(Y,\rho)$ is a linear normed
space $(E,\|\cdot\|)$, then a map $f:X\to E$ is $\alpha$-H\"older
continuous with a coefficient $K>0$ if and only if the map
$\frac1{K} f:X\to E$ is $\alpha$-nonexpansive.\medskip

Another natural interpretation of $\alpha$-nonexpansive maps is given
by the following construction. For a metric space $(X,d)$ and
$\alpha>0$, we will associate the function
${d^\alpha:X\times X\to \R}$, defined by
\begin{equation}
  \label{eq:KIP-ver-09:1}
  d^\alpha(p,q)=\left[d(p,q)\right]^\alpha\quad \text{for every $p,q\in X$.}
\end{equation}
Evidently, $d^\alpha$ is always a symmetric on $X$. Hence, as in
\Cref{remark-KIP-draft-04:1}, we have the following simple
observation. 

\begin{proposition}
  \label{proposition-KIP-draft-04:1}
  For $\alpha>0$ and metric spaces $(X,d)$ and $(Y,\rho)$, a map
  $f:X\to Y$ is $\alpha$-nonexpansive if and only if it is
  $d^\alpha$-nonexpansive.
\end{proposition}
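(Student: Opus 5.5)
This is a matter of unwinding definitions, and I would do it directly. By \cref{eq:KIP-ver-09:1}, $d^\alpha$ is a symmetric on $X$: it is symmetric because $d$ is, and $d^\alpha(p,q)=[d(p,q)]^\alpha=0$ exactly when $d(p,q)=0$, since $t\mapsto t^\alpha$ vanishes on $[0,+\infty)$ only at $t=0$ for $\alpha>0$. So it makes sense to speak of nonexpansive maps from the symmetric space $(X,d^\alpha)$ to $(Y,\rho)$, in the sense introduced before \Cref{theorem-Lipschitz-ext:9}. As in \Cref{remark-KIP-draft-04:1}, calling $f$ ``$d^\alpha$-nonexpansive'' means exactly that $f$ is nonexpansive as a map between these symmetric spaces.

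The equivalence then comes from comparing the two defining inequalities pointwise. The map $f$ is $d^\alpha$-nonexpansive iff
\[
  \rho(f(p),f(q))\leq d^\alpha(p,q)\quad\text{for every $p,q\in X$.}
\]
Substituting $d^\alpha(p,q)=[d(p,q)]^\alpha$ turns this into
\[
  \rho(f(p),f(q))\leq [d(p,q)]^\alpha\quad\text{for every $p,q\in X$,}
\]
which is the definition of $f$ being $\alpha$-nonexpansive, i.e.\ $\alpha$-H\"older continuous with H\"older coefficient $1$. Both implications therefore hold at once.

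There is no real obstacle here. The only point needing care is to confirm that $d^\alpha$ satisfies the axioms of a symmetric, so that the notion of a $d^\alpha$-nonexpansive map is legitimate and the later extension results, such as \Cref{theorem-KIP-ver-17:1}, can be applied to $(X,d^\alpha)$. The triangle inequality for $d^\alpha$ can fail when $\alpha>1$, but it is not needed, since symmetric spaces do not require it.
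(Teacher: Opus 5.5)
Your proposal is correct and matches the paper. The paper gives no separate proof: it notes that $d^\alpha$ is always a symmetric on $X$ and, as in \Cref{remark-KIP-draft-04:1}, treats the identity of the two defining inequalities $\rho(f(p),f(q))\leq d^\alpha(p,q)$ and $\rho(f(p),f(q))\leq [d(p,q)]^\alpha$ as evident, which is exactly your argument.
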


In general, the symmetric $d^\alpha:X\times X\to \R$ is not
necessarily a metric on $X$, but it is a metric in the special case
where $\alpha \leq 1$.  To consider this case, let us recall that a
function ${\omega:[0,+\infty)\to \R}$ is \emph{subadditive} if
\[
  \omega(s+t)\leq \omega(s)+ \omega(t)\quad \text{for every
    $s,t\geq 0$.}
\]
A function $\omega:[0,+\infty)\to [0,+\infty)$ is called
\emph{quasi-homogeneous} \cite{Rosenbaum1950} if
\begin{equation}
  \label{eq:Uniform-Cont-Ext:2}
  \omega(\lambda t)\leq \lambda\omega(t)\qquad \text{for every
    $\lambda\geq 1$ and $t\geq 0$.} 
\end{equation}

The quasi-homogeneous functions have the following natural
characterisation, see \cite[Theorem 1.4.3]{Rosenbaum1950}.

\begin{proposition}
  \label{proposition-Uniform-Cont-Ext:4}
  A function $\omega:[0,+\infty)\to [0,+\infty)$ is quasi-homogeneous
  if and only if the function\, $\frac{\omega(t)}{t}$, $t>0$, is
  decreasing.
\end{proposition}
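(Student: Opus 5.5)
We prove the two implications separately, working directly from \cref{eq:Uniform-Cont-Ext:2}. Throughout, ``decreasing'' means non-increasing, that is, $\frac{\omega(s)}{s}\geq \frac{\omega(t)}{t}$ whenever $0<s\leq t$.

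Suppose first that $\omega$ is quasi-homogeneous, and take $0<s\leq t$. Set $\lambda=\frac{t}{s}\geq 1$. Applying \cref{eq:Uniform-Cont-Ext:2} at the point $s$ gives
\[
\omega(t)=\omega(\lambda s)\leq \lambda\,\omega(s)=\mfrac{t}{s}\,\omega(s).
\]
Dividing by $t>0$ yields $\frac{\omega(t)}{t}\leq \frac{\omega(s)}{s}$, so $\frac{\omega(t)}{t}$ is decreasing on $(0,+\infty)$.

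Conversely, suppose that $\frac{\omega(t)}{t}$ is decreasing on $(0,+\infty)$, and take $\lambda\geq 1$ and $t\geq 0$. We distinguish two cases.

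\emph{Case $t>0$.} Then $\lambda t\geq t>0$, and monotonicity gives $\frac{\omega(\lambda t)}{\lambda t}\leq \frac{\omega(t)}{t}$. Multiplying by $\lambda t>0$ gives $\omega(\lambda t)\leq \lambda\,\omega(t)$.

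\emph{Case $t=0$.} Here \cref{eq:Uniform-Cont-Ext:2} reads $\omega(0)\leq \lambda\,\omega(0)$, which holds because $\omega(0)\geq 0$ and $\lambda\geq 1$.

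Hence $\omega$ is quasi-homogeneous. The only point that needs care is this boundary case $t=0$, where the monotonicity hypothesis says nothing; it is handled purely by the nonnegativity of $\omega$, which is part of the standing assumption $\omega:[0,+\infty)\to[0,+\infty)$.
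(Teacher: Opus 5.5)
Your proof is correct and follows the paper's argument: the paper simply observes that for $\lambda\geq 1$ and $t>0$ the inequality $\omega(\lambda t)\leq\lambda\omega(t)$ is equivalent to $\frac{\omega(\lambda t)}{\lambda t}\leq\frac{\omega(t)}{t}$, which is exactly what your two directions spell out. Your explicit treatment of the case $t=0$ via $\omega(0)\geq 0$ is a detail the paper leaves implicit there and only remarks on later, in Section~\ref{sec:conc-funct-affine}.
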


\begin{proof}
  Follows from the fact that for $\lambda\geq 1$ and $t>0$, the
  condition in \cref{eq:Uniform-Cont-Ext:2} is equivalent to
  $\frac{\omega(\lambda t)}{\lambda t}\leq \frac{\omega(t)}{t}$.
\end{proof}

Subadditivity is a natural generalisation of quasi-homogeneity, as the
following simple observation states.

\begin{proposition}
  \label{proposition-KIP-ver-18:1}
  Each quasi-homogeneous function $\omega:[0,+\infty)\to [0,+\infty)$
  is subadditive.
\end{proposition}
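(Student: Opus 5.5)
Let $s,t\geq 0$. If $s=0$ or $t=0$, subadditivity is immediate. Indeed, if, say, $s=0$, then $\omega(s+t)=\omega(t)\leq \omega(0)+\omega(t)$, because $\omega$ takes only nonnegative values.

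It therefore remains to treat the case $s,t>0$, so that $s+t>0$. Here the plan is to use the monotonicity characterisation in \Cref{proposition-Uniform-Cont-Ext:4}: the function $\frac{\omega(r)}{r}$, $r>0$, is decreasing. Since $s\leq s+t$ and $t\leq s+t$, this gives
\[
  \mfrac{\omega(s+t)}{s+t}\leq \mfrac{\omega(s)}{s}
  \quad\text{and}\quad
  \mfrac{\omega(s+t)}{s+t}\leq \mfrac{\omega(t)}{t}.
\]
Equivalently, one can apply \cref{eq:Uniform-Cont-Ext:2} directly with $\lambda=\frac{s+t}{s}\geq 1$ and with $\lambda=\frac{s+t}{t}\geq1$.

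Multiply the first inequality by $s$ and the second by $t$, then add them:
\[
  \omega(s+t)=\mfrac{s}{s+t}\,\omega(s+t)+\mfrac{t}{s+t}\,\omega(s+t)\leq \omega(s)+\omega(t).
\]
This is exactly subadditivity.

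No step poses a real obstacle. The only point requiring care is the degenerate case where one of the arguments is zero. That case is handled by nonnegativity of $\omega$ and does not need the quotient $\frac{\omega(r)}{r}$ at all.
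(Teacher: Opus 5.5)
Your proof is correct and is essentially the paper's argument: the paper also writes $\omega(s+t)=s\cdot\frac{\omega(s+t)}{s+t}+t\cdot\frac{\omega(s+t)}{s+t}$ and bounds each term using the decreasing ratio from \Cref{proposition-Uniform-Cont-Ext:4}. Your explicit treatment of the case $s=0$ or $t=0$ via nonnegativity is a small but useful addition, since the paper's proof only addresses $s,t>0$ and notes the degenerate case separately later on.
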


\begin{proof}
  If $\omega$ is quasi-homogeneous and $s,t> 0$, following the proof
  of \cite[Theorem 7.2.4]{MR0025077},
  \Cref{proposition-Uniform-Cont-Ext:4} implies that
  \[
    \omega(s+ t) = s\cdot\mfrac{\omega\left(s+t\right)}{s+t} +
    t\cdot\mfrac{\omega\left(s+t\right)}{s+t}\leq
    s\cdot\mfrac{\omega\left(s\right)}{s} +
    t\cdot\mfrac{\omega\left(t\right)}{t} =
    \omega(s)+\omega(t).\qedhere
  \]
\end{proof}

Regarding \Cref{proposition-KIP-ver-18:1}, let us remark that the
function $\omega(t)=|\sin t|$, $t\geq 0$, is subadditive but not
quasi-homogeneous.\medskip

Suppose that $0<\alpha\leq 1$. Then the function
$\frac{t^\alpha}t=t^{\alpha-1}:(0,+\infty)\to (0,+\infty)$ is clearly
decreasing because
$\left(t^{\alpha-1}\right)'= (\alpha-1) t^{\alpha-2}\leq 0$. Hence, by
\Cref{proposition-Uniform-Cont-Ext:4,proposition-KIP-ver-18:1}, the
function $t^\alpha:[0,+\infty)\to [0,+\infty)$ is subadditive, and we
thus obtain the following important property of the symmetric
$d^\alpha$.

\begin{proposition}
  \label{proposition-Uniform-Cont-Ext:5}
  For a metric space $(X,d)$ and\/ $0<\alpha\leq 1$, the function
  $d^\alpha:X\times X\to \R$, defined as in \cref{eq:KIP-ver-09:1}, is
  also a metric on $X$.
\end{proposition}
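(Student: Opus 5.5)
The plan is to check the metric axioms for $d^\alpha$ one at a time; only the triangle inequality needs any real work. Symmetry of $d^\alpha$ follows immediately from symmetry of $d$. Since $\alpha>0$, we have $t^\alpha=0$ exactly when $t=0$, so $d^\alpha(p,q)=0$ if and only if $d(p,q)=0$, that is, if and only if $p=q$. Thus $d^\alpha$ is a symmetric on $X$, as already noted after \cref{eq:KIP-ver-09:1}.

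For the triangle inequality, write $\omega(t)=t^\alpha$ for $t\geq 0$. Two properties of $\omega$ are needed.

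First, $\omega$ is increasing on $[0,+\infty)$, because $\alpha>0$.

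Second, $\omega$ is subadditive. This is exactly what was derived just before the statement. For $0<\alpha\leq1$, the function $\omega(t)/t=t^{\alpha-1}$ is decreasing on $(0,+\infty)$. Hence $\omega$ is quasi-homogeneous by \Cref{proposition-Uniform-Cont-Ext:4}, and therefore subadditive by \Cref{proposition-KIP-ver-18:1}. The case $s=0$ or $t=0$ is trivial since $\omega(0)=0$.

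Now take any $p,q,z\in X$. The triangle inequality for $d$ together with monotonicity of $\omega$ gives
\[
  d^\alpha(p,q)=\omega(d(p,q))\leq \omega\big(d(p,z)+d(z,q)\big).
\]
Subadditivity then bounds the right-hand side by
\[
  \omega(d(p,z))+\omega(d(z,q))=d^\alpha(p,z)+d^\alpha(z,q).
\]

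There is no real obstacle here. The only point needing care is that subadditivity by itself is not enough: monotonicity of $t^\alpha$ is also required in order to pass from $d(p,q)\leq d(p,z)+d(z,q)$ to the corresponding inequality between the $\alpha$-powers.
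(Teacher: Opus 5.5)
Your proof is correct and follows the paper's route: the paper obtains the statement directly from the preceding discussion, where $t^{\alpha-1}$ is decreasing, so $t^\alpha$ is quasi-homogeneous and therefore subadditive. You also make explicit the monotonicity of $t^\alpha$, which is needed to pass from the triangle inequality for $d$ to the one for $d^\alpha$; the paper leaves that step implicit.
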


The resulting metric space $\left(X,d^\alpha\right)$ is called a
\emph{H\"older metric space} of order $\alpha$. Evidently, in this
case, the $\alpha$-H\"older continuous maps from $(X,d)$ to a metric
space $(Y,\rho)$ are precisely the Lipschitz map from $(X,d^\alpha)$
to $(Y,\rho)$.\medskip 

An immediate consequence of \Cref{proposition-Uniform-Cont-Ext:5} is
that \Cref{theorem-Lipschitz-ext:12} actually encompasses the
following result, obtained by McShane in \cite[Corollary
1]{MR1562984}.

\begin{corollary}
  \label{corollary-KIP-ver-07:1}
  Let $(X,d)$ be a metric space, $A\subseteq X$ and $0<\alpha\leq
  1$. Then each $\alpha$-H\"older continuous function
  $\varphi:A\to \R$ with a H\"older coefficient $K\geq 0$ can be
  extended to an $\alpha$-H\"older continuous function $f:X\to \R$
  with the same H\"older coefficient.
\end{corollary}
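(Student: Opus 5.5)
The plan is to reduce \Cref{corollary-KIP-ver-07:1} to the McShane-Whitney extension theorem (\Cref{theorem-Lipschitz-ext:12}) applied to the H\"older metric space $(X,d^\alpha)$.

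First, by \Cref{proposition-Uniform-Cont-Ext:5}, since $0<\alpha\leq1$, the symmetric $d^\alpha$ defined in \cref{eq:KIP-ver-09:1} is a genuine metric on $X$. The triangle inequality is exactly what \Cref{theorem-Lipschitz-ext:12} needs, since McShane's formula $f(x)=\inf_{a\in A}\left[\varphi(a)+K d^\alpha(x,a)\right]$ relies on it. Restricting this metric to $A$ gives the metric space $(A,d^\alpha)$ as a subspace of $(X,d^\alpha)$.

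Next, the hypothesis that $\varphi:A\to\R$ is $\alpha$-H\"older continuous with coefficient $K$ reads
\[
  |\varphi(p)-\varphi(q)|\leq K[d(p,q)]^\alpha=K\,d^\alpha(p,q)\quad\text{for all } p,q\in A.
\]
In other words, $\varphi$ is $K$-Lipschitz as a function on the subset $A$ of the metric space $(X,d^\alpha)$; this is the same observation as \Cref{proposition-KIP-draft-04:1}, scaled by $K$. Applying \Cref{theorem-Lipschitz-ext:12} to $(X,d^\alpha)$, $A$ and $\varphi$ then gives a $K$-Lipschitz extension $f:X\to\R$ with respect to $d^\alpha$. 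Translating back, this means $|f(p)-f(q)|\leq K[d(p,q)]^\alpha$ for all $p,q\in X$, so $f$ is the required $\alpha$-H\"older extension with the same coefficient.

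The only substantive step is verifying that $d^\alpha$ is a metric, which is already supplied by \Cref{proposition-Uniform-Cont-Ext:5}. It rests on the subadditivity of $t^\alpha$ together with monotonicity of $t\mapsto t^\alpha$: from $d(p,q)\leq d(p,z)+d(z,q)$ one gets $d(p,q)^\alpha\leq (d(p,z)+d(z,q))^\alpha\leq d(p,z)^\alpha+d(z,q)^\alpha$. The rest is a direct rewriting, so I do not expect any real obstacle. The case $K=0$ is trivial, since then $\varphi$ is constant and the constant extension works.
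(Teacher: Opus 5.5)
Your proposal is correct and is exactly the paper's argument. By \Cref{proposition-Uniform-Cont-Ext:5}, $d^\alpha$ is a metric for $0<\alpha\leq 1$, so an $\alpha$-H\"older function with coefficient $K$ is simply a $K$-Lipschitz function on the subset $A$ of $(X,d^\alpha)$, and \Cref{theorem-Lipschitz-ext:12} applied in $(X,d^\alpha)$ yields the required extension.
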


In this section, we will show that \Cref{corollary-KIP-ver-07:1}
remains valid for $\alpha$-H\"older continuous maps between Hilbert
spaces, which is a result obtained by Minty in \cite[Theorem
1]{zbMATH03304171}.  In fact, we will show that this result is a
consequence of the considerations in the previous section and the
following inequality attributed to C. M. Fox, see
\cite{zbMATH03304171}.

\begin{lemma}
  \label{lemma-Kirszbraun-Intersc-v3:1}
  If\/ $\mathbf{u}_1,\dots, \mathbf{u}_k\in \R^m$ and\,
  $\lambda_1,\dots, \lambda_k\geq 0$, then
  \begin{equation}
    \label{eq:Kirszbraun-Intersc-v3:1}
    \sum_{i,j=1}^k
    \lambda_i\lambda_j\|\mathbf{u}_i-\mathbf{u}_j\|^{2\alpha} \leq
    \sum_{i,j=1}^k \lambda_i\lambda_j\left(\|\mathbf{u}_i\|^2+
      \|\mathbf{u}_j\|^2\right)^\alpha\quad
    \text{for every\, $0<\alpha\leq 1$.} 
  \end{equation}
\end{lemma}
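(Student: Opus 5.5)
The plan is to reduce \cref{eq:Kirszbraun-Intersc-v3:1} to an inequality for Gaussian kernels by means of the Bernstein-type integral representation of $t\mapsto t^\alpha$.

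For $\alpha=1$ no reduction is needed. The law of cosines gives $\|\mathbf{u}_i-\mathbf{u}_j\|^2=\|\mathbf{u}_i\|^2+\|\mathbf{u}_j\|^2-2\langle\mathbf{u}_i,\mathbf{u}_j\rangle$. The difference between the right- and left-hand sides is therefore
\[
2\sum_{i,j}\lambda_i\lambda_j\langle\mathbf{u}_i,\mathbf{u}_j\rangle=2\Bigl\|\sum_i\lambda_i\mathbf{u}_i\Bigr\|^2\geq 0.
\]
(For $\alpha=1$ this even holds for arbitrary real $\lambda_i$.)

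For $0<\alpha<1$, I would use the elementary identity
\[
t^\alpha=c_\alpha\int_0^\infty\bigl(1-e^{-st}\bigr)s^{-1-\alpha}\,ds,\qquad t\geq 0,
\]
with $c_\alpha^{-1}=\int_0^\infty(1-e^{-r})r^{-1-\alpha}\,dr\in(0,\infty)$. This follows from the substitution $r=st$; convergence holds at $0$ because $1-e^{-r}\leq r$ and $\alpha<1$, and at $\infty$ because $\alpha>0$. Apply the identity with $t=\|\mathbf{u}_i-\mathbf{u}_j\|^2$ and with $t=\|\mathbf{u}_i\|^2+\|\mathbf{u}_j\|^2$, multiply by $\lambda_i\lambda_j\geq0$, and sum; the finite sums can be interchanged with the integral. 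The inequality then reduces to showing, for each fixed $s>0$,
\[
\sum_{i,j}\lambda_i\lambda_j e^{-s(\|\mathbf{u}_i\|^2+\|\mathbf{u}_j\|^2)}\leq\sum_{i,j}\lambda_i\lambda_j e^{-s\|\mathbf{u}_i-\mathbf{u}_j\|^2}.
\]
The constant terms $\sum_{i,j}\lambda_i\lambda_j$ coming from the $1$ in $1-e^{-st}$ cancel, and the integrand in $s$ is then pointwise nonnegative, so integrating against the positive weight $s^{-1-\alpha}$ gives the claim.

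To prove this pointwise inequality, expand $e^{-s\|\mathbf{u}_i-\mathbf{u}_j\|^2}=e^{-s\|\mathbf{u}_i\|^2}e^{-s\|\mathbf{u}_j\|^2}e^{2s\langle\mathbf{u}_i,\mathbf{u}_j\rangle}$ and set $\mu_i=\lambda_ie^{-s\|\mathbf{u}_i\|^2}$. The required inequality becomes
\[
\sum_{i,j}\mu_i\mu_j\bigl(e^{2s\langle\mathbf{u}_i,\mathbf{u}_j\rangle}-1\bigr)\geq0.
\]
Expanding the exponential as a power series, it suffices to show that $\sum_{i,j}\mu_i\mu_j\langle\mathbf{u}_i,\mathbf{u}_j\rangle^n\geq0$ for every $n\geq1$. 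This follows from
\[
\langle\mathbf{u}_i,\mathbf{u}_j\rangle^n=\sum_{k_1,\dots,k_n}(u_{i,k_1}\cdots u_{i,k_n})(u_{j,k_1}\cdots u_{j,k_n}),
\]
which gives
\[
\sum_{i,j}\mu_i\mu_j\langle\mathbf{u}_i,\mathbf{u}_j\rangle^n=\sum_{k_1,\dots,k_n}\Bigl(\sum_i\mu_iu_{i,k_1}\cdots u_{i,k_n}\Bigr)^2\geq0.
\]
This is Schur's product theorem in its simplest form.

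The main obstacle is finding the right reduction: the integral representation of $t^\alpha$ together with the positive definiteness of the exponential kernel $e^{2s\langle\mathbf{u},\mathbf{v}\rangle}$. Once that reduction is in place, the only remaining care is in justifying the interchange of sum and integral and the termwise treatment of the power series, both of which are routine here since all sums are finite and the exponential series converges absolutely.
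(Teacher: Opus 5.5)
Your argument is correct, and it takes a genuinely different route from the paper's.

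The paper never represents $t^\alpha$ itself. It writes $\|\mathbf{u}_i-\mathbf{u}_j\|^{2\alpha}=(\|\mathbf{u}_i\|^2+\|\mathbf{u}_j\|^2)^\alpha[1-x_{ij}]^\alpha$ with $x_{ij}=2\langle\mathbf{u}_i,\mathbf{u}_j\rangle/(\|\mathbf{u}_i\|^2+\|\mathbf{u}_j\|^2)$, and linearises by Bernoulli's inequality $(1-x)^\alpha\le 1-\alpha x$. It is then left only with showing that the first-order term $\sum_{i,j}\langle\lambda_i\mathbf{u}_i,\lambda_j\mathbf{u}_j\rangle(\|\mathbf{u}_i\|^2+\|\mathbf{u}_j\|^2)^{\alpha-1}$ is nonnegative. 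It gets this from $\Gamma(\beta)(a+b)^{-\beta}=\int_0^\infty e^{-at}e^{-bt}t^{\beta-1}\,dt$ with $\beta=1-\alpha$, which turns that sum into $\frac1{\Gamma(\beta)}\int_0^\infty\bigl\|\sum_i e^{-\|\mathbf{u}_i\|^2t}\lambda_i\mathbf{u}_i\bigr\|^2t^{\beta-1}\,dt$.

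So the paper needs only the first power of $\langle\mathbf{u}_i,\mathbf{u}_j\rangle$, which is a Gram-type positivity. You need positivity of all powers $\langle\mathbf{u}_i,\mathbf{u}_j\rangle^n$, which is your Schur-product step. In exchange, the paper relies on an inequality rather than an identity. The Bernoulli bound has to be multiplied by $\lambda_i\lambda_j$, which is exactly where $\lambda_i\ge 0$ enters. The division also forces a removal of zero vectors, which is trivial and left tacit in the paper.

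Your exact representation $t^\alpha=c_\alpha\int_0^\infty(1-e^{-st})s^{-1-\alpha}\,ds$ avoids both issues. In fact, despite your phrase ``multiply by $\lambda_i\lambda_j\geq 0$'', your proof never uses the sign of the $\lambda_i$. Multiplying by arbitrary real constants commutes with the integral, and $\sum_{i,j}\mu_i\mu_j(e^{2s\langle\mathbf{u}_i,\mathbf{u}_j\rangle}-1)\ge 0$ holds for all real $\mu_i$.

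Hence your argument proves something stronger. The kernel $(\|\mathbf{u}\|^2+\|\mathbf{v}\|^2)^\alpha-\|\mathbf{u}-\mathbf{v}\|^{2\alpha}$ is positive semidefinite, so the inequality holds for arbitrary real $\lambda_1,\dots,\lambda_k$ and every $0<\alpha\le 1$. This goes beyond the positive cone, which is all the Gr\"unbaum--Zarantonello condition requires.
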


\begin{proof}
  For $\alpha=1$, as in the proof of \Cref{theorem-Lip-Ext-Embb:1},
  the inequality holds for any choice of the coefficients
  $\lambda_1, \dots, \lambda_k$. For $\alpha < 1$, J. Moser and
  G. J. Minty \cite{zbMATH03304171} gave a simple proof of
  \cref{eq:Kirszbraun-Intersc-v3:1} by using the fact that the gamma
  function $\Gamma(\beta)$ can be represented in the form
  $\frac{\Gamma(\beta)}{(a+b)^\beta}=\medint\int_{0}^\infty
  e^{-at}e^{-bt}t^{\beta-1}d\!t$ for $a,b>0$ and $\beta>0$.  Thus,
  taking vectors $\mathbf{x}_1,\dots, \mathbf{x}_k\in \R^m$ and
  $a_1,\dots, a_k>0$, we obtain that
  \begin{equation}
    \label{eq:Kirszbraun-Intersc-v3:2}
    \sum_{i,j=1}^k\mfrac{\langle\mathbf{x}_i,\mathbf{x}_j\rangle}
    {(a_i+a_j)^\beta} 
    =\mfrac1{\Gamma(\beta)}\int_0^\infty
    \left\|\sum_{i=1}^k e^{-a_it}\mathbf{x}_i\right\|^2
    t^{\beta-1}d\!t\geq 0.
  \end{equation}
  Moreover, for nonzero vectors
  $\mathbf{u}_1, \dots, \mathbf{u}_k \in \mathbb{R}^n$ and
  $\lambda_1,\dots, \lambda_k\geq 0$, we also have that
  \begin{equation}
    \label{eq:Kirszbraun-Intersc-v3:3}
    \sum_{i,j=1}^k
    \lambda_i\lambda_j\|\mathbf{u}_i-\mathbf{u}_j\|^{2\alpha}=\sum_{i,j=1}^k
    \lambda_i\lambda_j\left(\|\mathbf{u}_i\|^2+
      \|\mathbf{u}_j\|^2\right)^\alpha\left[1-
      \mfrac{2\langle\mathbf{u}_i,\mathbf{u}_j\rangle}{\|\mathbf{u}_i\|^2+
        \|\mathbf{u}_j\|^2} \right]^\alpha.
  \end{equation}
  Since by Bernoulli's inequality,
  $\left[1-
    \frac{2\langle\mathbf{u}_i,\mathbf{u}_j\rangle}{\|\mathbf{u}_i\|^2+
      \|\mathbf{u}_j\|^2} \right]^\alpha \leq
  1-\frac{2\alpha\langle\mathbf{u}_i,\mathbf{u}_j\rangle}{\|\mathbf{u}_i\|^2+
    \|\mathbf{u}_j\|^2}$, finally we get that 
  \begin{align*}
    \sum_{i,j=1}^k
    \lambda_i\lambda_j
    &\left(\|\mathbf{u}_i\|^2+
      \|\mathbf{u}_j\|^2\right)^\alpha\left[1-
      \mfrac{2\langle\mathbf{u}_i,\mathbf{u}_j\rangle}{\|\mathbf{u}_i\|^2+
      \|\mathbf{u}_j\|^2} \right]^\alpha\\[-3pt]
    &\leq \sum_{i,j=1}^k
      \lambda_i\lambda_j
      \left(\|\mathbf{u}_i\|^2+
      \|\mathbf{u}_j\|^2\right)^\alpha
      -2\alpha
      \sum_{i,j=1}^k\mfrac{\langle\lambda_i\mathbf{u}_i,\lambda_j
      \mathbf{u}_j\rangle}{\left(\|\mathbf{u}_i\|^2+ 
      \|\mathbf{u}_j\|^2\right)^{1-\alpha}}.
  \end{align*}
  According to
  \cref{eq:Kirszbraun-Intersc-v3:2,eq:Kirszbraun-Intersc-v3:3}, this
  shows the inequality in \cref{eq:Kirszbraun-Intersc-v3:1}.
\end{proof}

Since for $<\alpha\leq 1$, the function $t^\alpha:[0,+\infty)\to \R$
is subadditive, we have the following immediate consequence of
\Cref{lemma-Kirszbraun-Intersc-v3:1}.

\begin{corollary}
  \label{corollary-Kirszbraun-Intersc-v3:1}
  If\/ $\mathbf{u}_1,\dots, \mathbf{u}_k\in \R^m$ and\,
  $\lambda_1,\dots, \lambda_k\geq 0$, then
  \begin{equation}
    \label{eq:Kirszbraun-Intersc-v3:4}
    \sum_{i,j=1}^k
    \lambda_i\lambda_j\|\mathbf{u}_i-\mathbf{u}_j\|^{2\alpha} \leq
    \sum_{i,j=1}^k 
    \lambda_i\lambda_j\left(\|\mathbf{u}_i\|^{2\alpha}+
      \|\mathbf{u}_j\|^{2\alpha}\right)\quad\text{for every\, $0<\alpha\leq 1$.} 
  \end{equation}
\end{corollary}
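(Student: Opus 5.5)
The plan is to deduce \cref{eq:Kirszbraun-Intersc-v3:4} from Fox's inequality \cref{eq:Kirszbraun-Intersc-v3:1} by comparing the right-hand sides term by term. Since the left-hand sides of the two inequalities coincide, it suffices to show that
\[
  \sum_{i,j=1}^k \lambda_i\lambda_j\left(\|\mathbf{u}_i\|^2+
    \|\mathbf{u}_j\|^2\right)^\alpha\leq \sum_{i,j=1}^k
  \lambda_i\lambda_j\left(\|\mathbf{u}_i\|^{2\alpha}+
    \|\mathbf{u}_j\|^{2\alpha}\right).
\]
Because all coefficients $\lambda_i\lambda_j$ are nonnegative, this follows once we have the pointwise bound
\[
  \left(\|\mathbf{u}_i\|^2+\|\mathbf{u}_j\|^2\right)^\alpha\leq
  \|\mathbf{u}_i\|^{2\alpha}+\|\mathbf{u}_j\|^{2\alpha}
\]
for every pair $i,j$.

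For the pointwise bound, set $s=\|\mathbf{u}_i\|^2\geq0$ and $t=\|\mathbf{u}_j\|^2\geq 0$. The inequality then reads $(s+t)^\alpha\leq s^\alpha+t^\alpha$. This is exactly subadditivity of $t\mapsto t^\alpha$ on $[0,+\infty)$ for $0<\alpha\leq1$, which was obtained just before \Cref{proposition-Uniform-Cont-Ext:5}. The argument there combines the fact that $t^{\alpha-1}$ is decreasing with \Cref{proposition-Uniform-Cont-Ext:4} and \Cref{proposition-KIP-ver-18:1}. The degenerate cases $s=0$ or $t=0$ are trivial.

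Combining the two displays with \Cref{lemma-Kirszbraun-Intersc-v3:1} gives \cref{eq:Kirszbraun-Intersc-v3:4}. There is no real obstacle here. The only point to check is that the weights $\lambda_i\lambda_j$ are nonnegative, so that the termwise inequality is preserved under summation, and this is guaranteed by the hypothesis $\lambda_i\geq0$.
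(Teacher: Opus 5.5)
Your proposal is correct and is essentially the paper's own argument. The paper also derives the corollary in one line from Fox's inequality in \Cref{lemma-Kirszbraun-Intersc-v3:1}, using subadditivity of $t\mapsto t^\alpha$ for $0<\alpha\leq 1$ to bound $\left(\|\mathbf{u}_i\|^2+\|\mathbf{u}_j\|^2\right)^\alpha\leq \|\mathbf{u}_i\|^{2\alpha}+\|\mathbf{u}_j\|^{2\alpha}$ termwise, with the nonnegative weights $\lambda_i\lambda_j$ preserving the inequality under summation.
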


The above inequality significantly simplifies the proof of the
following extension theorem for $\alpha$-H\"older continuous maps,
established by Minty in \cite[Theorem 1]{zbMATH03304171}.

\begin{theorem}
  \label{theorem-Kirszbraun-Intersc-v4:2}
  Let $H$ be a Hilbert space, $(X,d)$ be a metric space,
  $A\subseteq X$ and\/ $0<\alpha\leq 1$. Then an $\alpha$-nonexpansive
  map $\varphi:A\to H$ can be extended to an $\alpha$-nonexpansive map
  $f:X\to H$ if either $\alpha\leq\frac 12$ or $X$ is an inner product
  space and $d$ is the metric generated by the inner product.
\end{theorem}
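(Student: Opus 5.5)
By \Cref{proposition-KIP-draft-04:1}, an $\alpha$-nonexpansive map is the same thing as a map that is nonexpansive with respect to the symmetric $\rho=d^\alpha$. So the plan is to apply \Cref{theorem-KIP-ver-17:1} to the symmetric space $(X,d^\alpha)$. It then suffices to verify the Gr\"unbaum-Zarantonello condition \cref{eq:KIP-ver-17:1} for $\rho=d^\alpha$ in each of the two cases. Concretely, for every nonempty finite $T\subseteq X$, every $p\in X\setminus T$ and every $\xi\in\R^T_+$ we need
\[
  \sum_{(s,t)\in T^2}\left[d_{ps}^{2\alpha}+d_{pt}^{2\alpha}-d_{st}^{2\alpha}\right]\xi(s)\xi(t)\geq 0 .
\]
Once this holds, \Cref{theorem-KIP-ver-17:1} gives the extension $f:X\to H$ that is $d^\alpha$-nonexpansive, i.e.\ $\alpha$-nonexpansive.

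\emph{Case $\alpha\leq\frac12$.} Here $2\alpha\leq1$, so by \Cref{proposition-Uniform-Cont-Ext:5} the function $\rho^2=d^{2\alpha}$ is a metric on $X$. Its triangle inequality gives $d_{st}^{2\alpha}\leq d_{sp}^{2\alpha}+d_{pt}^{2\alpha}$, so every bracket above is nonnegative. Hence the condition holds termwise. Equivalently, the conclusion follows directly from \Cref{corollary-KIP-ver-17:1}, and no further work is needed in this case.

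\emph{Case $X$ an inner product space with $0<\alpha\leq1$.} Fix $T$, $p$ and $\xi\in\R^T_+$, and enumerate $T=\{t_1,\dots,t_k\}$. All points lie in the finite-dimensional subspace spanned by $T\cup\{p\}$, which is isometric to some $\R^m$. Translate so that $p$ is the origin and put $\mathbf{u}_i=t_i-p$ and $\lambda_i=\xi(t_i)\geq0$. Then $d_{pt_i}=\|\mathbf{u}_i\|$ and $d_{t_it_j}=\|\mathbf{u}_i-\mathbf{u}_j\|$. The required inequality becomes exactly \cref{eq:Kirszbraun-Intersc-v3:4} of \Cref{corollary-Kirszbraun-Intersc-v3:1}, after rearranging the sum.

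The main substantive step is therefore Fox's inequality, which is already available from \Cref{lemma-Kirszbraun-Intersc-v3:1} together with subadditivity of $t^\alpha$ applied to $(\|\mathbf{u}_i\|^2+\|\mathbf{u}_j\|^2)^\alpha$. The only care needed is bookkeeping: the vectors $\mathbf{u}_i$ are nonzero because $p\notin T$, which matches the assumption used in the proof of \Cref{lemma-Kirszbraun-Intersc-v3:1}. We should also remark that, by \Cref{remark-KIP-ver-08:1}, completeness of $X$ plays no role.
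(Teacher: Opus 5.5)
Your proposal is correct and follows the paper's proof essentially step for step. For $\alpha\le\frac12$ you use \Cref{corollary-KIP-ver-17:1}, since $d^{2\alpha}$ is a metric. In the inner product case you verify the Gr\"unbaum--Zarantonello condition for $\rho=d^\alpha$ via \cref{eq:Kirszbraun-Intersc-v3:4} and then invoke \Cref{theorem-KIP-ver-17:1}; your translation $\mathbf{u}_i=t_i-p$ into a finite-dimensional subspace just spells out what the paper leaves implicit. One small point: the appeal to \Cref{remark-KIP-ver-08:1} is unnecessary, because \Cref{theorem-KIP-ver-17:1} already allows an arbitrary symmetric domain and only needs $H$ to be complete.
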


\begin{proof}
  Let us first assume that $\alpha\leq 1/2$. Next, let
  $d^\alpha,d^{2\alpha}:X\times X\to \R$ be defined as in
  \cref{eq:KIP-ver-09:1}. Since $<\alpha<2\alpha\leq 1$, by
  \Cref{proposition-Uniform-Cont-Ext:5}, both $\rho=d^\alpha$ and
  $\rho^2=d^{2\alpha}$ are metrics on $X$. Accordingly, it follows
  from \Cref{corollary-KIP-ver-17:1} (see also
  \Cref{corollary-KIP-ver-24:2}) that each $\rho$-nonexpansive map
  $\varphi:A\to H$ can be extended to a $\rho$-nonexpansive map
  ${f:X\to H}$. Since $\rho=d^\alpha$, by
  \Cref{proposition-KIP-draft-04:1}, this is equivalent to the fact
  that each $\alpha$-nonexpansive map $\varphi:A\to H$ can be extended
  to an $\alpha$-nonexpansive map $f:X\to H$.\smallskip

  Suppose now that $X$ is an inner product space, $d$ is the metric
  generated by the inner product, and there is no restriction on
  $0<\alpha\leq 1$.  Then according to
  \Cref{proposition-Uniform-Cont-Ext:5} and the inequality in
  \cref{eq:Kirszbraun-Intersc-v3:4}, the metric $\rho=d^\alpha$ on $X$
  satisfies the Gr\"unbaum-Zarantonello condition in
  \cref{eq:KIP-ver-17:1}. Therefore, \Cref{theorem-KIP-ver-17:1} now
  implies that any $\rho$-nonexpansive map $\varphi: A \to H$ can be
  extended to a $\rho$-nonexpansive map ${f: X \to H}$ or, in other
  words, that any $\alpha$-nonexpansive map $\varphi: A \to H$ can be
  extended to an $\alpha$-nonexpansive map $f: X \to H$.
\end{proof}

Let us remark that the proof of \Cref{theorem-Kirszbraun-Intersc-v4:2}
given in \cite{zbMATH03304171} is based on
``$K$-functions''. Moreover, the case in this theorem when $(X,d)$ is
an inner product is a direct consequence of the Kirszbraun-Valentine
extension theorem (\Cref{theorem-Lipschitz-ext:6}) and some results on
isometric embeddings in Hilbert spaces due to John von Neumann, see
\cite[Theorem 3]{Schoenberg1937} and \cite[Corollary
1]{Schoenberg1938}.\medskip

We conclude this section with some remarks regarding the role of the
condition that $\alpha\leq 1$. The following simple observation is
well known.

\begin{proposition}
  \label{proposition-Lipschitz-ext:6}
  Let $\Delta\subseteq \R$ be an interval and $\alpha>1$. Then each
  $\alpha$-nonexpansive function $f:\Delta\to \R$\/ is constant.
\end{proposition}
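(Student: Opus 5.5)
The plan is to show that $f$ has zero derivative at every point of $\Delta$, and then apply the mean value theorem.

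Let $f:\Delta\to\R$ be $\alpha$-nonexpansive with $\alpha>1$. Fix $x\in\Delta$. For every $y\in\Delta$ with $y\neq x$, the hypothesis $|f(y)-f(x)|\le |y-x|^\alpha$ gives
\[
  \left|\mfrac{f(y)-f(x)}{y-x}\right|\leq |y-x|^{\alpha-1}.
\]
Since $\alpha-1>0$, the right-hand side tends to $0$ as $y\to x$ within $\Delta$. Hence $f$ is differentiable at $x$ with $f'(x)=0$, where at an endpoint of $\Delta$ this is the corresponding one-sided derivative. In particular $f$ is continuous on $\Delta$.

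Now take $a<b$ in $\Delta$. Since $\Delta$ is an interval, $[a,b]\subseteq\Delta$. The function $f$ is continuous on $[a,b]$ and differentiable on $(a,b)$ with derivative $0$. By the mean value theorem, $f(b)-f(a)=f'(c)(b-a)=0$ for some $c\in(a,b)$. Thus $f$ is constant on $\Delta$.

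Should one prefer to avoid differentiability, there is an equally direct alternative. Split $[a,b]$ into $N$ equal parts with points $x_k=a+k\frac{b-a}{N}$, $k=0,\dots,N$. The triangle inequality gives
\[
  |f(b)-f(a)|\le\sum_{k=1}^N |f(x_k)-f(x_{k-1})|\le N\Big(\mfrac{b-a}{N}\Big)^\alpha=(b-a)^\alpha N^{1-\alpha},
\]
and the right-hand side tends to $0$ as $N\to\infty$.

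There is no real obstacle here. The only point needing care is that every pair of points of $\Delta$ is joined by a segment lying in $\Delta$, and this is exactly where the assumption that $\Delta$ is an interval is used.
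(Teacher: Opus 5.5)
Your argument is correct and is essentially the paper's own. The paper likewise bounds the difference quotient by $|t-s|^{\alpha-1}$ to get $f'(s)=0$ at interior points, then concludes from continuity on the interval, which is exactly the mean value theorem step you spell out. Your derivative-free telescoping bound $(b-a)^\alpha N^{1-\alpha}$ is a worthwhile extra: it uses only the triangle inequality in the range and equally spaced intermediate points in the domain, so the same computation shows that $\alpha$-nonexpansive maps with $\alpha>1$ from any metrically convex space into any metric space are constant.
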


\begin{proof}
  Suppose that $f:\Delta\to \R$ is $\alpha$-nonexpansive and take an
  interior point $s\in \Delta$. Then for $t\in \Delta$ with $t\neq s$,
  it follows that
  $0\leq \frac{|f(t)-f(s)|}{|t-s|}\leq
  |t-s|^{\alpha-1}\xrightarrow[t\to s]{} 0$ because $\alpha>1$. Hence,
  $f'(s)=0$ which implies that $f$ is a constant function being
  continuous on the interval $\Delta$.
\end{proof}

Based on this, one can easily construct $\alpha$-H\"older continuous
functions for $\alpha > 1$ that cannot be extended to
$\alpha$-H\"older continuous functions.

\begin{example}
  \label{example-Lipschitz-ext:1}
  Take any $\alpha>1$ and define a function $\varphi:\{0,2\}\to \R$ by
  $\varphi(0)=2^{\alpha-1}$ and $\varphi(2)=2^\alpha$. Then $\varphi$
  is $\alpha$-nonexpansive because
  \[
    |\varphi(2)-\varphi(0)|= \left|2^\alpha-2^{\alpha-1}\right|=
    2^{\alpha-1}< 2^\alpha=|2-0|^\alpha.
  \]
  However, $\alpha>1$ and $\varphi$ is not constant. Therefore, by
  \Cref{proposition-Lipschitz-ext:6}, $\varphi$ cannot be extended to
  an $\alpha$-nonexpansive function $f:\R\to \R$.\qed
\end{example}

\section{Concave Functions and Affine Majorants}
\label{sec:conc-funct-affine}

A function $\omega:\Delta\to \R$, defined on an interval
$\Delta\subseteq \R$, is \emph{concave} if the line segment between
any two points on the graph of $\omega$ lies below or on the graph of
$\omega$. In other words, $\omega$ is concave when for every
$s,t\in \Delta$ and $\lambda\in [0,1]$,
\[
  \omega\big((1-\lambda)s+\lambda t\big)\geq (1-\lambda) \omega(s) +\lambda
  \omega(t).
\]
Concave functions are also called \emph{concave downwards} or
\emph{convex upwards}.\medskip

Concave functions are naturally related to both quasi-homogeneous
functions and subadditive ones. Namely, according to the defining
property in \cref{eq:Uniform-Cont-Ext:2}, a function
$\omega:[0,+\infty)\to [0,+\infty)$ is quasi-homogeneous if
$\omega(\lambda t)\leq \lambda\omega(t)$ for every $\lambda\geq 1$ and
$t\geq 0$. Evidently, for $0<\lambda\leq1$, this is equivalent to the
property that
$\omega(t)=\omega\left(\mfrac \lambda\lambda t\right)\leq
\mfrac1\lambda \omega(\lambda t)$, i.e.\ that
$\lambda\omega(t)\leq \omega(\lambda t)$. Based on this, we have that
each concave function is quasi-homogeneous. For the special case that
each concave function is subadditive, see e.g.\ \cite[Theorem
6.2.4]{MR0025077}.

\begin{proposition}
  \label{proposition-Lipschitz-ext:4}
  Each concave function $\omega:[0,+\infty)\to [0,+\infty)$ is
  quasi-homogeneous. In particular, each concave function
  $\omega:[0,+\infty)\to [0,+\infty)$ is also subadditive.
\end{proposition}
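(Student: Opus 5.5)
The plan is to show that a concave $\omega:[0,+\infty)\to[0,+\infty)$ satisfies $\lambda\omega(t)\leq\omega(\lambda t)$ for every $0<\lambda\leq 1$ and $t\geq 0$. By the observation just before the statement, this is equivalent to quasi-homogeneity in the sense of \cref{eq:Uniform-Cont-Ext:2}. Subadditivity then follows at once from \Cref{proposition-KIP-ver-18:1}.

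For the main inequality, fix $t\geq 0$ and $\lambda\in(0,1]$, and write $\lambda t=(1-\lambda)\cdot 0+\lambda\cdot t$ as a convex combination of the points $0$ and $t$ of the domain. Concavity applied with $s=0$ gives
\[
  \omega(\lambda t)\geq (1-\lambda)\omega(0)+\lambda\omega(t)\geq \lambda\omega(t).
\]
The last step uses only that $\omega(0)\geq0$, which holds because $\omega$ takes values in $[0,+\infty)$.

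To pass to $\mu\geq 1$ as in \cref{eq:Uniform-Cont-Ext:2}, apply this inequality with $\lambda=1/\mu$ and with $\mu t$ in place of $t$. This gives $\frac1\mu\omega(\mu t)\leq\omega(t)$, that is, $\omega(\mu t)\leq\mu\omega(t)$. Hence $\omega$ is quasi-homogeneous.

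Equivalently, one could check via \Cref{proposition-Uniform-Cont-Ext:4} that $\omega(t)/t$ is decreasing, but the direct computation above avoids this.

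There is no genuine obstacle. The only points needing care are to include the endpoint $0$ in the convex combination, which is where nonnegativity of $\omega(0)$ enters, and to treat $t=0$, where the inequality reduces to $\lambda\omega(0)\leq\omega(0)$ and holds trivially.
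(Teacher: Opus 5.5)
Your proposal is correct and follows essentially the same route as the paper: apply concavity to $\lambda t=(1-\lambda)\cdot 0+\lambda t$, drop the term $(1-\lambda)\omega(0)\geq 0$ to get $\omega(\lambda t)\geq\lambda\omega(t)$, and then deduce subadditivity from \Cref{proposition-KIP-ver-18:1}. Your explicit substitution $\lambda=1/\mu$ just spells out the equivalence with \cref{eq:Uniform-Cont-Ext:2} that the paper notes immediately before the statement.
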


\begin{proof}
  If $\omega$ is concave, $t\geq 0$ and $\lambda\in [0,1]$, then
  \[
    \omega(\lambda t)= \omega\big((1-\lambda)0+\lambda t\big)\geq
    (1-\lambda) \omega(0)+\lambda \omega(t)\geq
    \lambda\omega(t).
  \]
  Hence, $\omega$ is quasi-homogeneous and, by
  \Cref{proposition-KIP-ver-18:1}, it is also subadditive.
\end{proof}

Regarding \Cref{proposition-Lipschitz-ext:4}, let us remark the
function $\omega(0)=0$ and $\omega(t)=\frac1t$, for $t>0$, is clearly
quasi-homogeneous but not concave.\medskip

Another interesting property of concave functions is that they are
increasing. This was remarked in the proof of \cite[Theorem
2]{MR1562984} and shown in \cite[Proposition 3]{zbMATH07212343}.

\begin{proposition}
  \label{proposition-Uniform-Cont-Ext:1}
  For every $r\geq 0$, each concave function
  $\omega:[r,+\infty)\to [0,+\infty)$ is increasing.
\end{proposition}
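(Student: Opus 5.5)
We argue by contradiction. Fix $r\geq 0$ and a concave $\omega:[r,+\infty)\to[0,+\infty)$, and suppose $\omega$ is not increasing. Then there are points $r\leq s<t$ with $\omega(s)>\omega(t)$. The idea is that concavity forces the graph to the right of $t$ to lie below the secant line through $(s,\omega(s))$ and $(t,\omega(t))$. Since this line has negative slope, it eventually becomes negative, which contradicts $\omega\geq 0$.

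The first step makes the "graph below the secant" claim precise. Take any $u>t$. Write $t$ as a convex combination of $s$ and $u$: set $\lambda=\frac{t-s}{u-s}\in(0,1)$, so that $t=(1-\lambda)s+\lambda u$. Concavity then gives
\[
  \omega(t)\geq (1-\lambda)\omega(s)+\lambda\omega(u).
\]
Solving for $\omega(u)$ yields
\[
  \omega(u)\leq \omega(s)+\mfrac{u-s}{t-s}\bigl(\omega(t)-\omega(s)\bigr).
\]
This is exactly the statement that $\omega(u)$ is bounded above by the value at $u$ of the secant line.

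The second step exploits the negative slope. Because $\omega(t)-\omega(s)<0$, the right-hand side tends to $-\infty$ as $u\to+\infty$. In particular we may pick $u>t$ with
\[
  u-s>\mfrac{(t-s)\,\omega(s)}{\omega(s)-\omega(t)},
\]
and for this $u$ we get $\omega(u)<0$. This contradicts the assumption that $\omega$ takes values in $[0,+\infty)$, so $\omega$ must be increasing.

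There is no real obstacle here. The only point needing care is to choose the convex combination with $t$ in the interior, so that the inequality runs in the useful direction. The positivity of $\lambda$ (guaranteed since $t>s$) is what allows the division when solving for $\omega(u)$.
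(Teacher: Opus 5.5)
Your argument is correct and is essentially the paper's: the paper also writes $t$ as a convex combination of $s$ and a far point to the right, namely $t=\left(1-\frac1n\right)s+\frac1n t_n$ with $t_n=s+n(t-s)$ (your $u$ with $\lambda=\frac1n$). It then drops the term $\frac1n\omega(t_n)\geq 0$ to get $\omega(t)-\omega(s)\geq -\frac1n\omega(s)$ and lets $n\to\infty$, which is the direct, limiting form of your contradiction.
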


\begin{proof}
  Take points $r\leq s<t$ and set $t_n=s+n(t-s)$, $n\in \N$. Then
  $t=\left(1-\frac1n\right) s+ \frac1nt_n$ for every $n\in \N$. Since
  $\omega$ is concave, this implies that
  $\omega(t)\geq \left(1-\frac1n\right) \omega(s)+
  \frac1n\omega(t_n)$, $n\in \N$. Accordingly,
  $\omega(s)\leq \omega(t)$ because
  \[
    \omega(t)-\omega(s)\geq -\mfrac1n \omega(s)+ \mfrac1n\omega(t_n)
    \geq -\mfrac1n \omega(s)\xrightarrow[n\to\infty]{}0.\qedhere
  \]
\end{proof}

It was shown in \cite[Theorem 11.A]{Roberts1973} that a concave
function $\omega:[0,+\infty)\to [0,+\infty)$ is not only continuous on
$(0,+\infty)$, but also Lipschitz on each closed bounded interval
$\Delta\subseteq (0,+\infty)$. In the next proposition, we slightly
generalise this fact.

\begin{proposition}
  \label{proposition-Rem-Lipschitz:2}
  If $\omega:(0,+\infty)\to [0,+\infty)$ is a concave function, then
  $\omega\uhr [a,+\infty)$ is Lipschitz for each $a>0$. Consequently,
  $\omega$ is also continuous.
\end{proposition}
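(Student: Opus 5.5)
The plan is to combine the monotonicity of concave functions (\Cref{proposition-Uniform-Cont-Ext:1}) with the classical three-slope (chord) inequality, and use nonnegativity of $\omega$ to bound the slopes of chords on $[a,+\infty)$ from both sides.

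\textbf{Step 1: monotonicity.} For $0<r$, the restriction $\omega\uhr[r,+\infty)$ is a concave function into $[0,+\infty)$. By \Cref{proposition-Uniform-Cont-Ext:1} it is increasing, and since $r>0$ is arbitrary, $\omega$ is increasing on $(0,+\infty)$. Hence for $a\leq s<t$ we get $\omega(t)-\omega(s)\geq 0$, so every chord slope is nonnegative.

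\textbf{Step 2: upper bound on chord slopes.} Fix $a>0$ and set $b=a/2$. For $a\leq s<t$, the point $s$ lies strictly between $b$ and $t$, with $s=(1-\lambda)b+\lambda t$ where $\lambda=\frac{s-b}{t-b}$. Concavity gives $\omega(s)\geq(1-\lambda)\omega(b)+\lambda\omega(t)$. Rearranging yields the standard chord inequality
\[
  \mfrac{\omega(t)-\omega(s)}{t-s}\leq \mfrac{\omega(s)-\omega(b)}{s-b}.
\]
Now bound the right-hand side using a fixed reference point. Apply concavity with $b<a\leq s$ to get
\[
  \mfrac{\omega(s)-\omega(b)}{s-b}\leq \mfrac{\omega(a)-\omega(b)}{a-b}
\]
whenever $s>a$; when $s=a$ this is an equality. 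Therefore
\[
  0\leq\omega(t)-\omega(s)\leq K(t-s)\qquad\text{with}\qquad K=\mfrac{2\,(\omega(a)-\omega(a/2))}{a}.
\]
Here $K\geq0$ by Step 1 (alternatively, one could simply use the cruder bound $K=\frac{2\omega(a)}{a}$, which needs only $\omega(b)\geq0$). This shows $\omega\uhr[a,+\infty)$ is $K$-Lipschitz.

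\textbf{Step 3: continuity.} Every point $x>0$ has a neighbourhood contained in $[x/2,+\infty)$, on which $\omega$ is Lipschitz by Step 2. Hence $\omega$ is continuous on $(0,+\infty)$.

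The main obstacle is Step 2, namely deriving the two slope comparisons cleanly from the definition of concavity: expressing the middle point as a convex combination and then rearranging. The ordering of the points and the case $s=a$ also need some care. Everything else follows directly from earlier results.
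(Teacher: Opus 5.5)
Your proof is correct and follows essentially the same route as the paper: monotonicity from \Cref{proposition-Uniform-Cont-Ext:1} gives the lower bound $\omega(t)-\omega(s)\geq 0$, and a slope estimate anchored at an auxiliary point $b<a$ gives the upper bound. The only difference is that the paper gets its slope estimate from quasi-homogeneity of the translate $\eta(s)=\omega(b+s)$ (\Cref{proposition-Lipschitz-ext:4,proposition-Uniform-Cont-Ext:4}), yielding the constant $\omega(a)/(a-b)$, which is exactly your cruder constant $2\omega(a)/a$ when $b=a/2$; your direct three-slope inequality gives the slightly sharper constant $2(\omega(a)-\omega(a/2))/a$.
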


\begin{proof}
  Take $a> b>0$ and consider the auxiliary function
  $\eta:[0,+\infty)\to [0,+\infty)$ defined by $\eta(s)=\omega(b+s)$
  for every $s\geq0$. It is trivial to see that $\eta$ is also concave
  because it has the same values as $\omega\uhr [b,+\infty)$. Hence,
  according to
  \Cref{proposition-Lipschitz-ext:4,proposition-Uniform-Cont-Ext:1},
  the function $\eta$ is both quasi-homogeneous and increasing. Then
  for points ${a-b\leq s<r<t}$, these properties of $\eta$ imply that
  $ \frac s{r}\eta(r)\leq \eta(s)\leq \eta(t)\leq \frac t{r}\eta(r)$.
  Therefore, by \Cref{proposition-Uniform-Cont-Ext:4}, this also
  implies that
  $0\leq \eta(t)-\eta(s)\leq \frac{\eta(r)}{r} (t-s)\leq
  \frac{\eta(a-b)}{a-b} (t-s)$. Thus, $\eta\uhr [a-b,+\infty)$ is
  Lipschitz and so is $\omega\uhr[a,+\infty)$ because
  $\omega(t)=\eta(t-b)$ and the translation $t-b$, $t\geq 0$, is an
  isometry.
\end{proof}

\Cref{proposition-Rem-Lipschitz:2} implies the following
characteristic property of continuity of concave functions.

\begin{corollary}
  \label{corollary-KIP-ver-18:1}
  A concave function $\omega:[0,+\infty)\to [0,+\infty)$ is continuous
  if and only if $\omega(0)= \inf_{s>0}\omega(s)$.
\end{corollary}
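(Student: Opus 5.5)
The statement is an equivalence, and the two directions use different tools: necessity comes from monotonicity, sufficiency from \Cref{proposition-Rem-Lipschitz:2}. Throughout, the key facts are that $\omega$ is increasing on $[0,+\infty)$, by \Cref{proposition-Uniform-Cont-Ext:1} with $r=0$, and that $\omega\uhr(0,+\infty)$ is continuous.

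For necessity, suppose $\omega$ is continuous at $0$. Since $\omega$ is increasing, the infimum $\inf_{s>0}\omega(s)$ equals the right-hand limit $\lim_{s\to0^+}\omega(s)$, and this limit exists by monotonicity. Continuity at $0$ forces this limit to equal $\omega(0)$, which gives $\omega(0)=\inf_{s>0}\omega(s)$.

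For sufficiency, assume $\omega(0)=\inf_{s>0}\omega(s)$. The restriction $\omega\uhr(0,+\infty)$ is a concave function from $(0,+\infty)$ to $[0,+\infty)$, so by \Cref{proposition-Rem-Lipschitz:2} it is continuous at every point of $(0,+\infty)$. Hence $\omega$ is continuous at each $t>0$, because $(0,+\infty)$ is an open neighbourhood of $t$ in $[0,+\infty)$. It remains to treat the point $0$. Given $\varepsilon>0$, the definition of infimum gives some $s_0>0$ with $\omega(s_0)<\omega(0)+\varepsilon$. Then for every $0\le s\le s_0$, monotonicity yields
\[
\omega(0)\le\omega(s)\le\omega(s_0)<\omega(0)+\varepsilon,
\]
so $|\omega(s)-\omega(0)|<\varepsilon$, and $\omega$ is continuous at $0$.

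There is no real obstacle in this argument. The one step needing care is to check that \Cref{proposition-Rem-Lipschitz:2} applies, i.e.\ that the restriction of $\omega$ to $(0,+\infty)$ is still concave and nonnegative. This is immediate, since concavity and nonnegativity are inherited by restrictions to subintervals.
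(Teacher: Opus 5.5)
Your proof is correct and takes essentially the same approach as the paper: both rest on monotonicity (\Cref{proposition-Uniform-Cont-Ext:1}) and continuity on $(0,+\infty)$ (\Cref{proposition-Rem-Lipschitz:2}). The only difference is that the paper finishes by citing the elementary fact that an increasing function is continuous if and only if its image is an interval, whereas you check continuity at $0$ directly with an $\varepsilon$-argument.
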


\begin{proof}
  The set $\Delta=\omega\left((0,+\infty)\right)\subseteq \R$ is an
  interval because, according to \Cref{proposition-Rem-Lipschitz:2},
  $\omega\uhr (0,+\infty)$ is continuous. Moreover, by
  \Cref{proposition-Uniform-Cont-Ext:1}, $\omega$ is
  increasing. Hence, by an elementary fact of real analysis, the
  increasing function $\omega$ is continuous precisely when
  $\Delta\cup\{\omega(0)\}\subseteq \R$ is an interval, i.e.\ when
  $\omega(0)= \inf_{s>0}\omega(s)$.
\end{proof}

A quasi-homogeneous function $\omega:[0,+\infty)\to [0,+\infty)$ does
not depend on its value at ${t=0}$ because
${0\leq \omega(\lambda 0)=\omega(0)\leq \lambda\omega(0)}$ for every
$\lambda\geq 1$.  The same is valid for each subadditive function
$\omega:[0,+\infty)\to [0,+\infty)$ because
${\omega(0+t)=\omega(t)\leq \omega(0)+\omega(t)}$ for every
$t>0$. Thus, a function $\omega:[0,+\infty)\to [0,+\infty)$ is
quasi-homogeneous or subadditive precisely when so is the
restriction $\omega\uhr (0,+\infty)$.  Concave functions have a
similar property.

\begin{proposition}
  \label{proposition-Uniform-Cont-Ext:8}
  Let $\omega:[0,+\infty)\to [0,+\infty)$ be a function such that
  $\omega\uhr (0,+\infty)$ is concave. Then $\omega$ is concave if and
  only if\/ $\omega(0)\leq \inf_{s>0}\omega(s)$.
\end{proposition}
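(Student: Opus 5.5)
The argument has two directions. Necessity is handled by \Cref{proposition-Uniform-Cont-Ext:1}, which tells us that a concave $\omega$ is increasing. Sufficiency is a direct check of the concavity inequality, with the only new case being a segment that ends at $0$.

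\emph{Necessity.} Suppose $\omega$ is concave on $[0,+\infty)$. By \Cref{proposition-Uniform-Cont-Ext:1} with $r=0$, $\omega$ is increasing. Hence $\omega(0)\leq\omega(s)$ for every $s>0$, and so $\omega(0)\leq\inf_{s>0}\omega(s)$.

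\emph{Sufficiency.} Assume $\omega(0)\leq\inf_{s>0}\omega(s)$. Take $s,t\geq0$ and $\lambda\in[0,1]$; we must show
\[
\omega\big((1-\lambda)s+\lambda t\big)\geq(1-\lambda)\omega(s)+\lambda\omega(t).
\]
If $s,t>0$, this is exactly the concavity of $\omega\uhr(0,+\infty)$. If $\lambda\in\{0,1\}$ or $s=t$, it holds trivially. By symmetry, the remaining case is $s=0<t$ and $0<\lambda<1$, where the point $\lambda t$ is positive. We need
\[
\omega(\lambda t)\geq(1-\lambda)\omega(0)+\lambda\omega(t).
\]

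This is the only step that needs an idea. I would approach $0$ from inside $(0,+\infty)$. For small $\varepsilon>0$ with $\varepsilon<\lambda t$, write
\[
\lambda t=(1-\mu)\varepsilon+\mu t,\qquad \mu=\frac{\lambda t-\varepsilon}{t-\varepsilon}\in(0,1).
\]
Concavity on $(0,+\infty)$ then gives
\[
\omega(\lambda t)\geq(1-\mu)\omega(\varepsilon)+\mu\omega(t)\geq(1-\mu)\omega(0)+\mu\omega(t),
\]
where the second inequality uses the hypothesis $\omega(\varepsilon)\geq\omega(0)$. As $\varepsilon\to0^+$ we have $\mu\to\lambda$. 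The right-hand side is affine in $\mu$ and does not involve $\omega(\varepsilon)$, so passing to the limit yields the required inequality. This removes any need for continuity of $\omega$ at $0$.

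Note that the hypothesis is used only through the pointwise bound $\omega(\varepsilon)\geq\omega(0)$, and the whole limiting argument takes place inside $(0,+\infty)$. I expect no further obstacles beyond checking that $\mu\in(0,1)$ and $\mu\to\lambda$.
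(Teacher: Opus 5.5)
Your proof is correct. The necessity direction is the same as the paper's (via \Cref{proposition-Uniform-Cont-Ext:1}), but at the one nontrivial step of sufficiency you take a genuinely different route.

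The paper fixes $\lambda$ and writes $\omega\big((1-\lambda)s+\lambda t\big)\geq(1-\lambda)\omega(s)+\lambda\omega(t)\geq(1-\lambda)\omega(0)+\lambda\omega(t)$ for $0<s<t$, then lets $s\to0$. Because the evaluation point on the left moves, it must invoke the continuity of $\omega\uhr(0,+\infty)$, i.e.\ \Cref{proposition-Rem-Lipschitz:2}. That result in turn rests on the monotonicity and quasi-homogeneity of concave functions, and hence on $\omega\geq0$.

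You instead freeze the evaluation point $\lambda t$ and move the convex weight $\mu=\frac{\lambda t-\varepsilon}{t-\varepsilon}\in(0,1)$. The limit is then taken only on the right-hand side, which is affine in $\mu$, so no regularity of $\omega$ is needed. As a result, your sufficiency argument is self-contained: it uses only the concavity inequality on $(0,+\infty)$ and the pointwise bound $\omega(\varepsilon)\geq\omega(0)$, and not even the nonnegativity of $\omega$. The paper's version is slightly shorter only because it reuses an already established continuity result.
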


\begin{proof}
  Assume that $\omega$ is concave. Then by Proposition
  \ref{proposition-Uniform-Cont-Ext:1}, $\omega$ is increasing and
  therefore $\omega(0)\leq \omega(s)$ for every $s\geq0$.  Conversely,
  assume that $\omega(0)\leq \inf_{s>0} \omega(s)$ and take $t>0$ and
  $0<\lambda<1$. Then for every $0<s<t$, it follows that
  \[
    \omega\left((1-\lambda)s+\lambda t\right)\geq
    (1-\lambda)\omega(s)+ \lambda\omega(t)\geq
    (1-\lambda)\omega(0)+\lambda \omega(t).
  \]
  However, by \Cref{proposition-Rem-Lipschitz:2},
  $\omega\uhr (0,+\infty)$ is continuous. Therefore,
  \[
    \omega\left((1-\lambda)0+\lambda t\right) =\lim_{s\to 0}
    \omega\left((1-\lambda)s+\lambda t\right)\geq
    (1-\lambda)\omega(0)+\lambda \omega(t).\qedhere
  \]
\end{proof}

Another interpretation of the characteristic property in
\Cref{proposition-Uniform-Cont-Ext:8} is that each concave function
${\omega:(0,+\infty)\to [0,+\infty)}$ can be extended to a concave
function on the interval ${[0,+\infty)}$. Based on this
interpretation, we will study properties of concave functions
${\omega:(0,+\infty)\to [0,+\infty)}$ by using corresponding
properties of the concave functions
$\omega:[0,+\infty)\to [0,+\infty)$.\medskip

A special case of concave functions is given by the so called affine
functions. Let us recall that a function $\alpha:\R\to \R$ is
\emph{affine} if
\[
  \alpha((1-\lambda)s +\lambda t)=(1-\lambda)\alpha(s)+\lambda
  \alpha(t)\quad \text{for every $s,t\in \R$ and $0\leq \lambda\leq 1$.} 
\]
It is a simple exercise that a function $\alpha:\R\to \R$ is affine
precisely when it is changing linearly, i.e.\ when $\alpha(t)=at+b$
for some constants $a,b\in \R$. The relation between affine function
and concave functions is simple. The following theorem is implicitly
present in \cite[Theorem 12.D]{Roberts1973}.

\begin{theorem}
  \label{theorem-Rem-Lipschitz:1}
  A function $\omega:(0,+\infty)\to [0,+\infty)$ is concave if and
  only if there exists a family $\mathcal{A}$ of affine functions
  $\alpha:\R\to \R$ such that
  \begin{equation}
    \label{eq:Rem-Lipschitz:1}
    \omega(t)=\min_{\alpha\in \mathcal{A}}\alpha(t)\quad \text{for 
      every\/ $t> 0$.}   
  \end{equation}
\end{theorem}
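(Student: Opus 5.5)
The easy direction is the ``if'' part. Suppose $\omega(t)=\min_{\alpha\in\mathcal{A}}\alpha(t)$ for every $t>0$. Fix $s,t>0$ and $\lambda\in[0,1]$, and set $r=(1-\lambda)s+\lambda t>0$. By \cref{eq:Rem-Lipschitz:1} the minimum at $r$ is attained, so we may pick $\alpha_0\in\mathcal{A}$ with $\omega(r)=\alpha_0(r)$. Since $\alpha_0$ is affine and $\alpha_0\geq\omega$ on $(0,+\infty)$,
\[
  \omega(r)=\alpha_0(r)=(1-\lambda)\alpha_0(s)+\lambda\alpha_0(t)\geq(1-\lambda)\omega(s)+\lambda\omega(t).
\]
This is exactly concavity. (The hypothesis that $\omega$ takes values in $[0,+\infty)$ is given in the statement and is not needed for this step.)

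For the ``only if'' part, let $\omega$ be concave. The plan is to take for $\mathcal{A}$ the family of all affine majorants of $\omega$ on $(0,+\infty)$ that touch the graph at some point. Because each such function lies above $\omega$, it suffices to produce, for every $r>0$, an affine $\alpha_r$ with $\alpha_r(r)=\omega(r)$ and $\alpha_r\geq\omega$ on $(0,+\infty)$; the minimum in \cref{eq:Rem-Lipschitz:1} is then attained at $\alpha_r$. The natural candidate is a supporting line at $r$, namely $\alpha_r(t)=\omega(r)+m(t-r)$ with $m$ a slope between the one-sided derivatives.

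To find $m$, I would use the standard three-chord inequality for concave functions. For $0<s<r<t$, write $r=(1-\lambda)s+\lambda t$ with $\lambda=\frac{r-s}{t-s}$ and apply concavity. Rearranging gives
\[
  \frac{\omega(r)-\omega(s)}{r-s}\geq\frac{\omega(t)-\omega(r)}{t-r}.
\]
Set $m=\inf_{0<s<r}\frac{\omega(r)-\omega(s)}{r-s}$. Two things must be checked. First, $m$ is finite: by the inequality above, every left slope dominates any fixed right slope, so the infimum is bounded below. Second, the line $t\mapsto\omega(r)+m(t-r)$ majorises $\omega$ on both sides of $r$. For $t>r$ this holds because $m\geq\frac{\omega(t)-\omega(r)}{t-r}$. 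For $s<r$ it holds because $\frac{\omega(r)-\omega(s)}{r-s}\geq m$, which rearranges to $\omega(s)\leq\omega(r)+m(s-r)$.

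The main obstacle is this second part of the converse: establishing the chord-slope inequality cleanly and confirming that $m$ is finite. The monotonicity facts already available, \Cref{proposition-Uniform-Cont-Ext:1} (which gives $m\geq0$) and the Lipschitz property from \Cref{proposition-Rem-Lipschitz:2}, could replace that argument. That proposition gives a finite bound on the left slopes at $r$, since $\omega$ is Lipschitz on $[a,+\infty)$ for every $a<r$. Either route shows the supremum and infimum involved are finite, and the construction then goes through.
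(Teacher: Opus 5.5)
Your proposal is correct and follows essentially the same route as the paper. The ``if'' direction is the paper's min-of-affine-functions-is-concave argument, and for the converse you build supporting lines $\omega(r)+m(t-r)$ from the three-chord slope inequality, just as the paper does with $\alpha_t(x)=\mu_t(x-t)+\omega(t)$. Your slope $m=\inf_{0<s<r}\frac{\omega(r)-\omega(s)}{r-s}$ is exactly $\omega'_-(r)$, so you only skip the paper's intermediate step of showing that both one-sided derivatives exist and then choosing a slope in $[\omega'_+(r),\omega'_-(r)]$.
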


\begin{proof}
  Let $\mathcal{A}$ be a family of affine functions satisfying the
  condition in \cref{eq:Rem-Lipschitz:1}. Then each function
  $\alpha\in \mathcal{A}$ is also concave. Hence, by
  \cref{eq:Rem-Lipschitz:1}, so is the function $\omega(t)$ because
  for $s,t> 0$ and $\lambda\in [0,1]$, we get that
  \begin{align*}
    (1-\lambda) \omega(s) +\lambda \omega(t)
    &= (1-\lambda)
      \min_{\alpha\in \mathcal{A}}
      \alpha(s) +\lambda
      \min_{\alpha\in \mathcal{A}}
      \alpha(t)\\
    &\leq \inf_{\alpha\in \mathcal{A}}\left[ (1-\lambda)
      \alpha(s) +\lambda
      \alpha(t)\right]\\
    &= \inf_{\alpha\in \mathcal{A}} \alpha\left((1-\lambda) s +\lambda
      t\right)
      = \omega\left((1-\lambda) s +\lambda
      t\right).  
  \end{align*}

  Conversely, assume that $\omega:(0,+\infty)\to [0,+\infty)$ is a
  concave function. The essential element in the proof is now the fact
  that $\omega(t)$ has one-sided derivatives at each point
  $t\in (0,+\infty)$. In the setting of convex functions, this was
  actually obtained in \cite[Theorem 11.B]{Roberts1973}. We follow the
  arguments there, slightly clarifying the proof of this fact. To this
  end, for points $0<x<y$, let
  $\mu_{xy}=\frac{\omega(y)-\omega(x)}{y-x}$ be the slope of the
  straight line passing through the points $(x,\omega(x))$ and
  $(y,\omega(y))$ of the graph of the function $\omega(t)$. Then,
  since $\omega(t)$ is concave, it follows that
  \begin{equation}
    \label{eq:KIP-ver-12:1}
    \mu_{xy}\geq \mu_{xz}\geq \mu_{yz}\quad \text{whenever $0<x<y<z$.}
  \end{equation}

  \begin{center}
    \begin{tikzpicture}
      \draw[white!50!black,->] (-.7,0) -- (5,0);
      \draw[white!50!black,->] (0,-.3) -- (0,2.5);
      \draw[gray,dashed,line width=0.5pt,-] (.6,0) -- (.6,.8) -- (0,.8);
      \draw[gray,dashed,line width=0.5pt,-] (3.525,0) -- (3.525,2.13)
      -- (0,2.13);
      \draw[gray,dashed,line width=0.5pt,-]
      (1.5,0) -- (1.5,1.52) -- (0,1.52);
      \draw[black!50!white,line width=.7pt] (.2,.365) .. controls
      (.6,.78) 
      and (1.5,2.22) .. (4.3,2.14) (4.3,2.14) node[right] {\small $\omega(t)$};
      \filldraw [black] (3.527,2.12) circle [radius=.8pt] (0,2.13)
      circle [radius=.5pt] node[left] {\tiny $\omega(z)$};
      \filldraw [black] (.6,.801) circle [radius=.8pt] (0,.8) circle
      [radius=.5pt] node[left] {\tiny $\omega(x)$};
      \filldraw [black] (1.5,1.52) circle [radius=.8pt] (0,1.52)
      circle [radius=.5pt] node[left] {\tiny $\omega(y)$};
      \filldraw [black] (.6,0) circle [black, radius=.5pt] node[below]
      {\tiny $x$} (1.5,0) circle [black, radius=.5pt] node[below]
      {\tiny $y$} (3.52,0) circle [black, radius=.5pt] node[below]
      {\tiny $z$};
      \draw[black,line width=0.7pt,-] (.6,.8) -- (3.55,2.12) --
      (1.5,1.52) -- (.6,.8);
      \filldraw [white!90!black] (3.527,2.12) circle [radius=.5pt]
      (.6,.801) circle [radius=.5pt] (1.5,1.52) circle [radius=.5pt];
    \end{tikzpicture}
  \end{center}
  This easily follows from the fact that the intermediate point $y$ is
  an affine combination of the points $x$ and $z$ with positive
  coefficients, namely, that
  $y=\frac{z-y}{z-x}\cdot x+\frac{y-x}{z-x}\cdot z$. Thus taking
  $\lambda=\frac{y-x}{z-x}>0$ and $1-\lambda=\frac{z-y}{z-x}>0$, the
  concavity of $\omega(t)$ implies that
  \begin{align*}
    \mu_{xy}&=\frac{\omega\left((1-\lambda)x+\lambda z\right)-
              \omega(x)}{y-x} \\
            &\geq \frac{(1-\lambda)\omega(x)+\lambda \omega(z)-
              \omega(x)}{y-x} 
              = \lambda\cdot \frac{\omega(z)-\omega(x)}{y-x}=
              \frac{\omega(z)-\omega(x)}{z-x}=\mu_{xz}. 
  \end{align*}
  A similar argument shows that $\mu_{xz}\geq \mu_{yz}$ as well,
  therefore \cref{eq:KIP-ver-12:1} holds.\smallskip

  We can now proceed as in the proof of \cite[Theorem
  11.B]{Roberts1973}. For this purpose, fix a point $t>0$ and take
  other points $0<s<p<t<q<r$. Then from \cref{eq:KIP-ver-12:1} we
  obtain that $\mu_{st}\geq \mu_{pt}\geq \mu_{tq}\geq
  \mu_{tr}$. Accordingly, $\mu_{st}$, $s<t$, is a decreasing bounded
  function and there exists the limit
  $\omega'_-(t)=\lim_{s\to t^-}\mu_{st}$. In fact, $\omega_-'(t)$ is
  the \emph{left derivative} at the point $t$. For the same reason,
  the \emph{right derivative}
  $\omega'_+(t)= \lim_{r\to t^+}\mu_{tr}$ also exists, and have that
  \begin{equation}
    \label{eq:KIP-ver-12:2}
    \mu_{st}\geq \omega'_-(t)\geq \omega'_+(t)\geq \mu_{tr}\quad
    \text{whenever $0<s<t<r$.}
  \end{equation}
  Based on this, we can finalise the proof as in \cite[Theorem
  12.D]{Roberts1973}. Briefly, take any
  $\mu_t\in [\omega_+'(t),\omega_-'(t)]$ and consider the affine
  function defined by $\alpha_t(x)= \mu_t(x-t)+\omega(t)$. Then
  $\alpha_t(t)=\omega(t)$ and according to \cref{eq:KIP-ver-12:2}, for
  $0<s<t$ it follows that
  \[
    \omega(t)-\omega(s)=\mu_{st}\cdot(t-s)\geq
    \omega_-'(t)(t-s)\geq \mu_t(t-s).
  \]
  This shows that $\alpha_t(s)= \mu_t(s-t)+\omega(t)\geq
  \omega(s)$. Similarly, we also have that
  ${\alpha_t(r)\geq \omega(r)}$ for every $r>t$. Therefore, the family
  $\mathcal{A}=\left\{\alpha_t:t>0\right\}$ is as required in
  \cref{eq:Rem-Lipschitz:1}.
\end{proof}

Complementary to \Cref{theorem-Rem-Lipschitz:1} is the following
result, obtained by Aronszajn and Panitchpakdi in \cite[Theorem
1]{MR84762}. A special case of this result was also obtained by
McShane in \cite[Lemma]{MR1562984}.

\begin{theorem}
  \label{theorem-Uniform-Cont-Ext:1}
  An increasing function $\eta:[0,+\infty)\to [0,+\infty)$ admits a
  concave function $\omega:[0,+\infty)\to [0,+\infty)$ with\/
  $\eta\leq \omega$ and\,
  $\lim_{t\to 0}\omega(t)=\lim_{t\to 0}\eta(t)$, if and only if
  \begin{equation}
    \label{eq:Uniform-Cont-Ext:1}
    \varlimsup_{t\to
      +\infty}\mfrac{\eta(t)}t<+\infty.
  \end{equation}
\end{theorem}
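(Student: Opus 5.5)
For the necessity, I would assume that $\omega:[0,+\infty)\to[0,+\infty)$ is concave with $\eta\leq\omega$. By \Cref{proposition-Lipschitz-ext:4}, $\omega$ is quasi-homogeneous. Hence, by \Cref{proposition-Uniform-Cont-Ext:4}, the quotient $\frac{\omega(t)}{t}$ is decreasing on $(0,+\infty)$. Then for $t\geq1$ we get $\frac{\eta(t)}{t}\leq\frac{\omega(t)}{t}\leq\omega(1)$, which is exactly \cref{eq:Uniform-Cont-Ext:1}. The limit condition plays no role in this direction.

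For the sufficiency, I would define $\omega$ as the least concave majorant, described through affine majorants in the spirit of \Cref{theorem-Rem-Lipschitz:1}. Set $\eta_0=\lim_{t\to0}\eta(t)=\inf_{t>0}\eta(t)$, which exists because $\eta$ is increasing. By \cref{eq:Uniform-Cont-Ext:1} there are $M>0$ and $t_0>0$ with $\eta(t)\leq Mt$ for $t\geq t_0$. Since $\eta$ is increasing, $\eta(t)\leq\eta(t_0)$ on $[0,t_0]$, so $\eta(t)\leq \eta(t_0)+Mt$ for all $t\geq0$. The family $\mathcal{A}$ of affine functions $\alpha(t)=at+b$ with $\alpha\geq\eta$ on $[0,+\infty)$ is therefore nonempty. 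I would define $\omega(t)=\inf_{\alpha\in\mathcal{A}}\alpha(t)$ for $t>0$ and $\omega(0)=\eta_0$.

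Then $\eta\leq\omega$ on $(0,+\infty)$, and $\omega$ is finite and nonnegative there, since $\omega\geq\eta\geq0$. The first half of the proof of \Cref{theorem-Rem-Lipschitz:1} works with $\inf$ in place of $\min$, so $\omega\uhr(0,+\infty)$ is concave. By \Cref{proposition-Uniform-Cont-Ext:8}, $\omega$ is concave on $[0,+\infty)$ provided $\eta_0\leq\inf_{s>0}\omega(s)$, and this holds since $\omega(s)\geq\eta(s)\geq\eta_0$. Also $\eta(0)\leq\eta_0=\omega(0)$. The remaining requirement is $\lim_{t\to0}\omega(t)=\eta_0$, which by \Cref{corollary-KIP-ver-18:1} amounts to continuity of $\omega$ at $0$.

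The main obstacle is this last step, showing $\inf_{t>0}\omega(t)\leq\eta_0$. For $\varepsilon>0$, monotonicity of $\eta$ gives $\delta>0$ with $\eta(t)\leq\eta_0+\varepsilon$ for $t\leq\delta$. On $[\delta,+\infty)$ we still have $\eta(t)\leq\eta(t_0)+Mt$. I would then look for a single affine majorant $\alpha(t)=\eta_0+\varepsilon+Lt$ with $L$ large. It dominates $\eta$ on $[0,\delta]$ automatically. For $t\geq\delta$ it suffices that $Lt\geq \eta(t_0)+Mt$, and taking $L=M+\eta(t_0)/\delta$ does this, since $t\geq\delta$. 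Thus $\alpha\in\mathcal{A}$, so $\omega(t)\leq\eta_0+\varepsilon+Lt$, which tends to $\eta_0+\varepsilon$ as $t\to0$. Letting $\varepsilon\to0$ gives $\lim_{t\to0}\omega(t)=\eta_0=\lim_{t\to0}\eta(t)$.
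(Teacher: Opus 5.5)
Your proof is correct and follows essentially the same route as the paper. Necessity uses quasi-homogeneity to get $\eta(t)/t\le\omega(t)/t\le\omega(1)$ for $t\ge1$, and sufficiency takes $\omega$ as an infimum of affine majorants of $\eta$ whose intercepts approach $\lim_{t\to0}\eta(t)$ while their slopes blow up. The only difference is cosmetic: the paper uses the explicit subfamily $\alpha_s(t)=\eta(s)+\frac{\tau M}{s}t$, $0<s<\tau$, and evaluates at $s=\sqrt{t}$, whereas you take all affine majorants (so your $\omega$ is the smallest concave majorant of $\eta$ on $(0,+\infty)$) and use a single majorant $\eta_0+\varepsilon+Lt$ for each $\varepsilon>0$.
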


Each concave function ${\omega:[0,+\infty)\to [0,+\infty)}$ satisfies
\cref{eq:Uniform-Cont-Ext:1}, since, according to
\Cref{proposition-Lipschitz-ext:4}, it is quasi-homogeneous and
therefore
\[
  \varlimsup_{t\to +\infty} \mfrac{\omega(t)}t\leq \sup_{t\geq
    1}\mfrac{\omega(t)}t=\sup_{t\geq 1}\mfrac{\omega(t\cdot1)}t\leq
  \omega(1)<+\infty.
\]
Thus, the proof of \Cref{theorem-Uniform-Cont-Ext:1} is reduced to the
construction of the corresponding concave majorant
$\omega:[0,+\infty)\to [0,+\infty)$. This construction is essentially
the same as that in the proof of \cite[Theorem 1]{MR84762}, and is
based on the following property.

\begin{proposition}
  \label{proposition-Lipschitz-ext:9}
  If\/ $\eta:[0,+\infty)\to [0,+\infty)$\/ is an increasing function
  which satisfies \cref{eq:Uniform-Cont-Ext:1}, then there are\/
  $\tau,M>0$ such that
  \begin{equation}
    \label{eq:Lipschitz-ext:26}
    \eta(t)\leq \eta(s)+ \mfrac{\tau M}s t\quad \text{for every\/
      $0<s<\tau$ and $t\geq 0$.} 
  \end{equation}
\end{proposition}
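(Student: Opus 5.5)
The plan is to pick $\tau$ and $M$ straight from the growth condition \cref{eq:Uniform-Cont-Ext:1}, and then split into two cases according to whether $t$ is small or large compared with $s$.

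By \cref{eq:Uniform-Cont-Ext:1}, there are constants $L>0$ and $R>0$ such that $\eta(t)\le Lt$ for every $t\ge R$. I will take $\tau=R$ and $M=\max\{L,\ \eta(R)/R\}$; the second entry will absorb what happens on $[0,R]$. Since $\eta$ is increasing, this choice gives, for every $t\ge 0$,
\[
  \eta(t)\le \eta(\max\{t,R\})\le L\max\{t,R\}\le M\max\{t,\tau\}.
\]
For $t\ge R$ this is the choice of $L$. For $t<R$ it follows from $\eta(t)\le\eta(R)\le LR$, or alternatively from the term $\eta(R)/R$.

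Next fix $0<s<\tau$ and $t\ge0$, and consider the two cases.

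\emph{Case $t\le s$.} Monotonicity gives $\eta(t)\le\eta(s)$, and the extra term $\frac{\tau M}{s}t$ is nonnegative. So \cref{eq:Lipschitz-ext:26} holds.

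\emph{Case $t>s$.} Here I drop $\eta(s)\ge0$ and bound $\eta(t)$ by $M\max\{t,\tau\}$. It remains to check $M\max\{t,\tau\}\le \frac{\tau M}{s}t$.
\begin{itemize}
\item If $t\ge\tau$: then $Mt\le \frac{\tau}{s}Mt$, because $\tau/s>1$.
\item If $s<t<\tau$: then $M\tau\le \frac{\tau M}{s}t$, because $t/s>1$.
\end{itemize}
In both sub-cases the required inequality follows.

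I expect no real obstacle. The only step needing care is extracting $\tau$ and $M$ from the $\varlimsup$ condition correctly: a finite $\varlimsup$ gives a linear bound only for large $t$. Monotonicity and the constant $\eta(R)$ cover the bounded range $[0,R]$. Positivity of $\tau$ and $M$ is arranged by taking $R,L>0$.
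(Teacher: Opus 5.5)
Your proof is correct and follows the paper's argument essentially verbatim. You obtain $\tau,M>0$ with $\eta(t)\le Mt$ for $t\ge\tau$ from the $\varlimsup$ condition, then split into the cases $t\le s$, $s<t<\tau$ (via $\eta(t)\le\eta(\tau)\le M\tau$) and $t\ge\tau$; the extra entry $\eta(R)/R$ in your choice of $M$ is harmless but redundant, since $\eta(R)\le LR$ already forces $M=L$.
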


\begin{proof}
  Since
  $\varlimsup_{t\to
    +\infty}\frac{\eta(t)}t=\lim_{t\to+\infty}\left(\sup_{s\geq t}
    \frac{\eta(s)}s\right)<+\infty$, there are positive numbers
  $\tau, M>0$ such that $\eta(t)\leq Mt$ for every $t\geq \tau$. To
  see that these $\tau$ and $M$ are as in \cref{eq:Lipschitz-ext:26},
  take $0<s<\tau$ and $t\geq 0$.  If $t\leq s$, then
  $\eta(t)\leq \eta(s)\leq \eta(s)+ \frac{\tau M}s t$ because $\eta$
  is increasing. Similarly, if $t>\tau$, then
  $\eta(t)\leq M t\leq \frac\tau s Mt\leq \eta(s)+ \frac{\tau M}s t$
  because $\eta(s)\geq 0$ and $\frac\tau s>1$. Finally, for
  $s < t\leq \tau$, we have that $\frac ts > 1$ and for the same
  reason as before we obtain that
  $\eta(t)\leq \eta(\tau)\leq M\tau < \frac\tau s Mt \leq \eta(s) +
  \frac{\tau M}s t$. Thus, \cref{eq:Lipschitz-ext:26} holds.
\end{proof}

The following application of \Cref{proposition-Lipschitz-ext:9}
completes the proof of \Cref{theorem-Uniform-Cont-Ext:1}.

\begin{proposition}
  \label{proposition-Uniform-Cont-Ext:6}
  If\/ $\eta:[0,+\infty)\to [0,+\infty)$\/ is an increasing function
  which satisfies \cref{eq:Uniform-Cont-Ext:1}, then there is a family
  $\mathcal{A}$ of affine functions $\alpha:\R\to \R$ such that
  \begin{equation}
    \label{eq:Uniform-Cont-Ext:6}
    \eta(t)\leq \inf_{\alpha\in \mathcal{A}}\alpha(t)\ \ \text{for
      every $t\geq0$,\quad and}\quad \lim_{t\to
      0^+}\inf_{\alpha\in \mathcal{A}}\alpha(t)=\lim_{t\to 0}\eta(t).  
  \end{equation}
  In particular, the function $\omega:[0,+\infty)\to [0,+\infty)$
  defined by $\omega(t)=\inf_{\alpha\in\mathcal{A}}\alpha(t)$,
  $t\geq 0$, is a concave majorant of\/ $\eta$ such that\/
  $\lim_{t\to 0}\omega(t)=\lim_{t\to 0}\eta(t)$.
\end{proposition}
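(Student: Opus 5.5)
Using \Cref{proposition-Lipschitz-ext:9}, we fix $\tau,M>0$ with $\eta(t)\leq \eta(s)+\frac{\tau M}{s}t$ for all $0<s<\tau$ and $t\geq 0$. We then define the family
\[
  \mathcal{A}=\left\{\alpha_s: 0<s<\tau\right\},\qquad
  \alpha_s(t)=\eta(s)+\mfrac{\tau M}{s}\,t,\quad t\in\R .
\]
Each $\alpha_s$ is affine, and \cref{eq:Lipschitz-ext:26} says precisely that $\eta(t)\leq \alpha_s(t)$ for every $t\geq 0$. Hence the first part of \cref{eq:Uniform-Cont-Ext:6} holds: $\eta(t)\leq\inf_{\alpha\in\mathcal{A}}\alpha(t)$.

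For the limit, let $L=\lim_{t\to 0^+}\eta(t)=\inf_{t>0}\eta(t)$; it exists because $\eta$ is increasing and nonnegative. Put $\omega(t)=\inf_{s}\alpha_s(t)$. The lower bound $\omega(t)\geq\eta(t)\geq L$ for $t>0$ is immediate. For the upper bound, given $\varepsilon>0$ we choose $0<s<\tau$ with $\eta(s)<L+\varepsilon$. For $0<t<\varepsilon s/(\tau M)$ we get $\omega(t)\leq\alpha_s(t)<L+2\varepsilon$. Therefore $\lim_{t\to0^+}\omega(t)=L$.

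For the final assertion, note that $\omega(t)\geq\eta(t)\geq0$, so $\omega$ takes values in $[0,+\infty)$. It is finite and concave as an infimum of affine functions, by the same computation as in the first half of the proof of \Cref{theorem-Rem-Lipschitz:1}; that argument works verbatim with $\inf$ in place of $\min$ and on $[0,+\infty)$.

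The main point needing care is the interpretation of $\lim_{t\to0}\eta(t)$ and $\lim_{t\to0}\omega(t)$ as one-sided limits, together with the value at $0$. We have $\omega(0)=\inf_s\eta(s)=L$, so $\omega$ is even continuous at $0$, and $\omega(0)\geq\eta(0)$ holds since $\eta(0)\leq\eta(s)$ for all $s>0$. Everything else is routine.
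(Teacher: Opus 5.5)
Your proof is correct and is essentially the paper's proof. It uses the same family $\alpha_s(t)=\eta(s)+\frac{\tau M}{s}t$, $0<s<\tau$, from \Cref{proposition-Lipschitz-ext:9}, and gets concavity the same way, as an infimum of affine functions; the only difference is cosmetic, since for the limit the paper takes $s=\sqrt{t}$ to get $\eta(t)\leq\omega(t)\leq\eta(\sqrt{t})+\tau M\sqrt{t}$ for $0<t<\tau^2$, where you use an equivalent $\varepsilon$-argument (and you also note that $\omega(0)=\lim_{t\to0^+}\eta(t)$).
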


\begin{proof}
  Let ${\tau,M>0}$ be as \cref{eq:Lipschitz-ext:26}, applied to the
  given the function $\eta$. Next, for each $0<s<\tau$, define an
  affine function $\alpha_s:\R\to \R$ by
  \begin{equation}
    \label{eq:Uniform-Cont-Ext:8}
    \alpha_s(t)=\eta(s)+ \mfrac{\tau M}s t\quad \text{for every $t\in
      \R$.} 
  \end{equation}
  Then by \cref{eq:Lipschitz-ext:26}, the family
  $\mathcal{A}=\left\{\alpha_s: 0<s<\tau\right\}$ satisfies the
  inequality in \cref{eq:Uniform-Cont-Ext:6}, so we can define
  $\omega:[0,+\infty)\to [0,+\infty)$ by
  $\omega(t)= \inf_{0<s<\tau}\alpha_s(t)$ for every $t\geq 0$. In this
  setting, if $0<t<\tau^2$, then $s=\sqrt{t}<\tau$ and it follows from
  \cref{eq:Uniform-Cont-Ext:8} that
  \[
    \lim_{t\to 0}\eta(t)\xleftarrow[0\gets t]{}
    \eta(t)\leq\omega(t)\leq \alpha_{\sqrt{t}}(t) =
    \eta\left(\sqrt{t}\right) +\tau M\sqrt{t}\xrightarrow[t\to 0]{}
    \lim_{t\to 0}\eta(t).
  \]
  Thus, $\lim_{t\to 0} \omega(t)=\lim_{t\to 0}\eta(t)$. Finally, since
  each function $\alpha_s(t)$, $0<s<\tau$, is affine, as in the proof
  of \Cref{theorem-Rem-Lipschitz:1}, the function $\omega(t)$ is
  concave. Indeed, for $x,y\geq 0$ and $\lambda\in [0,1]$, we get that
  \begin{align*}
    (1-\lambda) \omega(x) +\lambda \omega(y)
    &= (1-\lambda)
      \inf_{0<s<\tau}
      \alpha_s(x) +\lambda
      \inf_{0<s<\tau}
      \alpha_s(y)\\
    &\leq \inf_{0<s<\tau}\big[ (1-\lambda)
      \alpha_s(x) +\lambda
      \alpha_s(y)\big]\\
    &= \inf_{0<s<\tau} \alpha_s\big((1-\lambda) x +\lambda
      y\big)\\
    &= \omega\big((1-\lambda) x +\lambda
      y\big). \qedhere
  \end{align*}
\end{proof}

\section{Gr\"unbaum-Zarantonello's Extension Theorem}
\label{sec:grunb-zarant-extens}

The \emph{least modulus of continuity} of a map $f:X\to Y$ between
metric spaces $(X,d)$ and $(Y,\rho)$ is the function
$\omega_f:[0,+\infty)\to [0,+\infty]$ defined by
\begin{equation}
  \label{eq:Lipschitz-ext:16}
  \omega_f (t) = \sup_{d(p,q)\leq t} \rho (f(p), f(q)),\quad t\geq 0.
\end{equation}
In general, $\omega_f$ is an increasing extended function with
$\omega_f(0)=0$. Moreover, it is the smallest function
$\omega:[0,+\infty)\to [0,+\infty]$ with the property that
\begin{equation}
  \label{eq:Lipschitz-ext:17}
  \rho(f(p),f(q))\leq \omega(d(p,q))\quad \text{for every $p,q\in X$.}
\end{equation}
Each \emph{increasing function} $\omega:[0,+\infty)\to [0,+\infty]$
which satisfies \cref{eq:Lipschitz-ext:17} is termed a \emph{modulus
  of continuity for $f$}.  Thus, the function $\omega_f$ is the
\emph{least modulus of continuity} that $f$ obeys. Each modulus of
continuity for a map $f:X\to Y$ still represents some measure of how
fast this function can change. For instance, $f:X\to Y$ is
$K$-Lipschitz, $K\geq 0$, precisely when $\omega(t)=Kt$, $t\geq 0$, is
a modulus of continuity for $f$, i.e.\ when $f$ has a \emph{linear}
modulus of continuity. Similarly, $f:X\to Y$ is $\alpha$-H\"older
continuous with a coefficient $K\geq0$ if and only if
$\omega(t)=K t^\alpha$, $t\geq 0$, is a modulus of continuity for
$f$.\medskip

In the presence of subadditivity, each modulus of continuity does not
take infinite values and is either a constant function or is $0$ only
at $t=0$.

\begin{proposition}
  \label{proposition-Uniform-Cont-Ext:3}
  Let $\omega:[0,+\infty)\to [0,+\infty]$ be an extended increasing
  subadditive function. If\/ $\omega(s)<+\infty$ \textup{(}in
  particular, $\omega(s)=0$\textup{)} for some $s>0$, then
  $\omega(t)<+\infty$ \textup{(}respectively, $\omega(t)=0$\textup{)}
  for every $t\geq0$.
\end{proposition}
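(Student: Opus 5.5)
The plan is to combine monotonicity with subadditivity and reach every $t\geq 0$ from the single point $s>0$ by finitely many doublings. Fix $s>0$ with $\omega(s)<+\infty$.

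First I will show by induction on $n\in\N$ that
\[
  \omega(ns)\leq n\,\omega(s)\quad\text{for every } n\in\N.
\]
The case $n=1$ is trivial. For the inductive step, subadditivity gives $\omega((n+1)s)\leq \omega(ns)+\omega(s)\leq (n+1)\omega(s)$. Since both summands are finite, no ambiguity with $+\infty$ arises. Here I use the usual convention that the subadditivity inequality $\omega(a+b)\leq\omega(a)+\omega(b)$ is read in $[0,+\infty]$.

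Next, take any $t\geq 0$. By the Archimedean property there is $n\in\N$ with $t\leq ns$. Because $\omega$ is increasing,
\[
  \omega(t)\leq \omega(ns)\leq n\,\omega(s).
\]
If $\omega(s)<+\infty$, the right-hand side is finite, so $\omega(t)<+\infty$. If $\omega(s)=0$, the right-hand side is $0$, and since $\omega$ takes values in $[0,+\infty]$, we get $\omega(t)=0$.

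There is no genuine obstacle. The only point needing care is arithmetic in the extended reals. Every quantity we add is bounded by a finite multiple of $\omega(s)$, so we never need expressions like $\infty-\infty$, and monotonicity handles the non-integer multiples of $s$.
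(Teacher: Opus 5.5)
Your proof is correct and is essentially the paper's argument: the paper also derives $\omega(ns)\leq n\,\omega(s)$ from subadditivity and then concludes using that $\omega$ is increasing. The only slip is wording: your opening mentions ``doublings'', but you actually use the integer multiples $ns$, and the argument is unaffected.
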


\begin{proof}
  Since $\omega$ is subadditive, it follows that
  $\omega(ns)\leq n\omega(s)$ for every $n\in\N$. Hence, the property
  is evident from the fact that $\omega$ is increasing.
\end{proof}

It is well known that a map $f:X\to Y$ between metric spaces is
\emph{uniformly continuous} precisely when it possesses a modulus of
continuity $\omega$ with the property that $\lim_{t\to
  0}\omega(t)=0$. In fact, in many sources, the condition
$\lim_{t\to 0}\omega(t)=0$ is explicitly included in the definition of
a modulus of continuity, and such a modulus is sometimes referred to
as a \emph{modulus of uniform continuity} (see, e.g.,
\cite{MR84762}). In the rest of this paper, we will adopt this
convention to emphasise the condition $\lim_{t\to 0}\omega(t)=0$. In
this context, the significance of subadditivity becomes evident as a
key element in the following extension theorem for uniformly
continuous functions, obtained by McShane in \cite[Theorem
2]{MR1562984}, see also Valentine \cite{zbMATH03102006}. 

\begin{theorem}
  \label{theorem-Lipschitz-ext:8}
  If\/ $(X,d)$ is a metric space, $A\subseteq X$ and $\varphi:A\to \R$
  has a subadditive modulus of uniform continuity
  $\omega:[0,+\infty)\to [0,+\infty]$, then $\varphi$ can be extended
  to a function $f:X\to \R$ preserving the same modulus of continuity
  $\omega$.
\end{theorem}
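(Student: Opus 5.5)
The plan is to reduce the statement to the McShane-Whitney theorem (\Cref{theorem-Lipschitz-ext:12}) by replacing the metric $d$ with the composite function $\rho=\omega\circ d$. Since $\lim_{t\to0}\omega(t)=0$, the function $\omega$ is finite near $0$. Since $\omega$ is increasing and subadditive, \Cref{proposition-Uniform-Cont-Ext:3} then gives $\omega(t)<+\infty$ for all $t\geq0$. Moreover, \Cref{proposition-Uniform-Cont-Ext:3} shows that if $\omega(s)=0$ for some $s>0$, then $\omega\equiv0$. In that degenerate case $\varphi$ is constant, because $|\varphi(p)-\varphi(q)|\leq\omega(d(p,q))=0$, and the constant extension works. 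So we may assume $\omega(t)>0$ for all $t>0$; we may also assume $\omega(0)=0$, since the value at $0$ plays no role in \cref{eq:Lipschitz-ext:17}.

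Define $\rho(p,q)=\omega(d(p,q))$ for $p,q\in X$. This $\rho$ is symmetric and vanishes exactly on the diagonal. It also satisfies the triangle inequality: because $\omega$ is increasing and subadditive,
\[
\rho(p,r)=\omega(d(p,r))\leq\omega\big(d(p,q)+d(q,r)\big)\leq\omega(d(p,q))+\omega(d(q,r))=\rho(p,q)+\rho(q,r).
\]
Hence $\rho$ is a genuine metric on $X$. The hypothesis on $\varphi$ says exactly that $\varphi:A\to\R$ is $1$-Lipschitz with respect to $\rho$.

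Next, apply \Cref{theorem-Lipschitz-ext:12} to the metric space $(X,\rho)$. This gives a $1$-Lipschitz extension $f:X\to\R$, for instance $f(x)=\inf_{a\in A}\left[\varphi(a)+\omega(d(x,a))\right]$, which is finite when $A\neq\emptyset$. The Lipschitz property of $f$ with respect to $\rho$ is precisely $|f(p)-f(q)|\leq\omega(d(p,q))$ for all $p,q\in X$. Thus $\omega$ remains a modulus of continuity for $f$, and since $\lim_{t\to0}\omega(t)=0$ it is still a modulus of uniform continuity.

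The only genuinely delicate step is confirming that $\omega\circ d$ is a metric. This needs the monotonicity of $\omega$ together with the two degenerate issues already handled: infinite values and vanishing at positive arguments. Both are settled by \Cref{proposition-Uniform-Cont-Ext:3}. Note that the McShane formula does not actually use that $\rho(p,q)=0$ forces $p=q$, so even the degenerate case could be absorbed by working with the pseudometric $\rho$ directly.
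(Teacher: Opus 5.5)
Your proposal is correct and matches the paper's proof: you use \Cref{proposition-Uniform-Cont-Ext:3} to rule out infinite values and the degenerate case $\omega(s)=0$ for some $s>0$, then apply the McShane--Whitney theorem (\Cref{theorem-Lipschitz-ext:12}) to the metric $\rho=\omega\circ d$, for which $\varphi$ is nonexpansive. The only difference is that you check the triangle inequality for $\rho$ explicitly (using monotonicity and then subadditivity) and mention the $\omega(0)$ and empty-$A$ technicalities, all of which the paper leaves implicit.
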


\begin{proof}
  Since $\lim_{t\to 0}\omega(t)=0$, by
  \Cref{proposition-Uniform-Cont-Ext:3}, $\omega$ does not take
  infinite values. For the same reason, since the extension property
  is trivial for constant functions, we can assume that $\omega(t)>0$
  for every $t>0$.  Hence, we can define another metric $\rho$ on $X$
  by $\rho(p,q)=\omega(d(p,q))$ for every $p, q\in X$. Then by
  \cref{eq:Lipschitz-ext:17}, $\varphi$ is nonexpansive (i.e.\
  $1$-Lipschitz) with respect to $\rho$ because $\omega$ is a modulus
  of continuity for $\varphi$. Accordingly, by the McShane-Whithney
  Extension Theorem (\Cref{theorem-Lipschitz-ext:12}), $\varphi$ can
  be extended to a nonexpansive function $f:X\to \R$ on the metric
  space $(X,\rho)$. Finally, since
  ${|f(p)- f(q)|\leq \rho(p,q)=\omega(d(p,q))}$ for every $p,q\in X$,
  it follows from \cref{eq:Lipschitz-ext:17} that $\omega$ is also a
  modulus of continuity for $f$ with respect to the metric $d$ on $X$.
\end{proof}

In other words, \Cref{theorem-Lipschitz-ext:8} states that if
$\varphi:A\to \R$ is uniformly continuous and has a subadditive
modulus of uniform continuity, then it can be extended to a uniformly
continuous function preserving the same modulus of uniform
continuity. In the original proof of this theorem, McShane gave an
explicit construction for the extension. Namely, he constructed this
extension exactly as in \Cref{theorem-Lipschitz-ext:12}, but now using
the metric $\rho$ on $X$ instead of the original metric $d$. Another
notable difference is that McShane formulated his \cite[Theorem
2]{MR1562984} for a concave modulus of uniform continuity, rather than
a subadditive one.\medskip

In \cite[Corollary 2]{MR1562984}, based on
\Cref{theorem-Lipschitz-ext:8}, McShane obtained the following further
result regarding the extension problem of uniformly continuous
functions.

\begin{corollary}
  \label{corollary-KIP-ver-21:1}
  If\/ $(X,d)$ is a metric space, $A\subseteq X$ is a nonempty set and
  $\varphi:A\to \R$ is bounded and uniformly continuous, then it can
  be extended to the whole of\/ $X$ preserving the uniform continuity
  and the bounds.
\end{corollary}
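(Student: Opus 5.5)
The plan is to reduce to \Cref{theorem-Lipschitz-ext:8} by producing a concave, hence subadditive, modulus of uniform continuity for $\varphi$, and then to truncate the resulting extension so that it keeps the original bounds. Let $a=\inf_{x\in A}\varphi(x)$ and $b=\sup_{x\in A}\varphi(x)$, which are finite because $\varphi$ is bounded.

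First, consider the least modulus of continuity $\eta=\omega_\varphi$ of $\varphi$ on the metric space $(A,d)$, defined as in \cref{eq:Lipschitz-ext:16}. It is increasing and satisfies $\eta(t)\leq b-a<+\infty$ for every $t\geq 0$. Since $\varphi$ is uniformly continuous, $\lim_{t\to 0}\eta(t)=0$. Boundedness of $\eta$ gives $\varlimsup_{t\to+\infty}\eta(t)/t=0<+\infty$, which is exactly condition \cref{eq:Uniform-Cont-Ext:1}.

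Next, by \Cref{theorem-Uniform-Cont-Ext:1} (concretely, via \Cref{proposition-Uniform-Cont-Ext:6}), there is a concave $\omega:[0,+\infty)\to[0,+\infty)$ with $\eta\leq\omega$ and $\lim_{t\to0}\omega(t)=0$. By \Cref{proposition-Lipschitz-ext:4}, $\omega$ is subadditive, and by \Cref{proposition-Uniform-Cont-Ext:1} it is increasing. It is therefore a subadditive modulus of uniform continuity for $\varphi$, because $|\varphi(p)-\varphi(q)|\leq\eta(d(p,q))\leq\omega(d(p,q))$. \Cref{theorem-Lipschitz-ext:8} then gives an extension $g:X\to\R$ of $\varphi$ with $|g(p)-g(q)|\leq\omega(d(p,q))$ for all $p,q\in X$. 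In particular $g$ is uniformly continuous, since $\omega(t)\to0$ as $t\to 0$.

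Finally, set $f=\max\{a,\min\{g,b\}\}$. The map $r(s)=\max\{a,\min\{s,b\}\}$ is a nonexpansive retraction of $\R$ onto $[a,b]$, so $|f(p)-f(q)|\leq|g(p)-g(q)|\leq\omega(d(p,q))$. On $A$ the values of $\varphi$ already lie in $[a,b]$, hence $f\uhr A=\varphi$. Thus $f$ is a uniformly continuous extension with $a\leq f\leq b$, as required.

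The main point requiring care is the existence of a subadditive modulus. The naive least modulus $\eta$ need not be subadditive, and that is why we pass through the concave majorant; its availability rests on boundedness, which yields \cref{eq:Uniform-Cont-Ext:1}. One must also check that the majorant keeps $\lim_{t\to0}\omega(t)=0$, which is guaranteed by \Cref{theorem-Uniform-Cont-Ext:1}. The degenerate case where $\omega$ vanishes somewhere is harmless: by \Cref{proposition-Uniform-Cont-Ext:3}, $\omega$ is then identically zero, so $\varphi$ is constant and extends trivially, a case already handled inside \Cref{theorem-Lipschitz-ext:8}.
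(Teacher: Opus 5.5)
Your proposal is correct and follows the paper's proof: you pass from the bounded least modulus $\omega_\varphi$ to a concave (hence subadditive and increasing) majorant via \Cref{theorem-Uniform-Cont-Ext:1}, extend with \Cref{theorem-Lipschitz-ext:8}, and truncate to $[a,b]$. Your observation that the truncation $s\mapsto\max\{a,\min\{s,b\}\}$ is nonexpansive, and therefore preserves the modulus $\omega$, spells out a step the paper asserts without comment.
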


\begin{proof}
  Let $a=\inf \varphi(A)$ and $b=\sup \varphi(A)$ so that
  $\varphi:A\to [a,b]$. Also, let $\eta=\omega_\varphi$ be the least
  modulus of continuity for $\varphi$. Then $\eta$ is a bounded
  increasing function with $\lim_{t\to 0}\eta(t)=0$. Accordingly,
  $\eta$ satisfies \cref{eq:Uniform-Cont-Ext:1} because
  $\lim_{t\to\infty} \frac{\eta(t)}t=0$. Hence, by
  \Cref{theorem-Uniform-Cont-Ext:1} (see also
  \Cref{proposition-Lipschitz-ext:4,proposition-Uniform-Cont-Ext:1}),
  $\varphi$ admits a subadditive modulus of continuity $\omega$ with
  $\lim_{t\to 0}\omega(t)=\lim_{t\to 0}\eta(t)=0$. Therefore, by
  \Cref{theorem-Lipschitz-ext:8}, $\varphi$ can be extended to a
  uniformly continuous function $\Phi:X\to \R$.  Finally, define
  $f:X\to [a,b]$ by $f(x)=a$ if $\Phi(x)<a$, $f(x)=\Phi(x)$ if
  $a\leq \Phi(x)\leq b$ and $f(x)=b$ if $\Phi(x)>b$. Then $f$ remains
  an extension of $\varphi$ with the same modulus of uniform
  continuity $\omega$.
\end{proof}

In 1990, Mandelkern \cite{MR1076066} presented an alternative proof of
\Cref{corollary-KIP-ver-21:1} by showing that a construction for
continuous extensions\,---\,similar to the one given by Tietze in
\cite{Tietze1914a} but proposed by Dieudonn\'e in
\cite{MR0120319}\,---\,actually preserves uniform continuity as well.
Namely, he proved the following theorem.

\begin{theorem}
  \label{theorem-Mandelkern:1}
  For a metric space\/ $(X,d)$, a nonempty closed set $A\subseteq X$
  and $\varphi:A\to [1,2]$, define a function $f:X\to \R$ by
  $f\uhr A=\varphi$ and
  \[
    f(p)= \inf_{a\in A}\varphi(a)\cdot\mfrac{d(p,a)}{d(p,A)}\quad
    \text{for every $p\in X\setminus A$.}
  \]
  Then $f$ is uniformly continuous whenever so is $\varphi$.
\end{theorem}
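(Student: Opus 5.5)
The plan is to fix a modulus of uniform continuity $\omega$ of $\varphi$ and, given $\varepsilon>0$, find $\delta>0$ with $|f(p)-f(q)|\le\varepsilon$ whenever $d(p,q)<\delta$. This splits into three regimes: both points in $A$, one point in $A$, and both points outside $A$. First, two preliminary observations.
\begin{enumerate}[label=(\roman*)]
\item $1\le f\le 2$. For $p\notin A$ every ratio $d(p,a)/d(p,A)$ is at least $1$. Choosing $a$ with $d(p,a)\le(1+\theta)d(p,A)$ gives $f(p)\le 2(1+\theta)$ for every $\theta>0$.
\item For $p\notin A$, every term with $d(p,a)>2d(p,A)$ exceeds $2\ge f(p)$. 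Hence $f(p)$ equals the infimum of the terms over the set $A_p=\{a\in A: d(p,a)\le 2d(p,A)\}$.
\end{enumerate}
The case $p,q\in A$ is just the uniform continuity of $\varphi$.

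\emph{Mixed case} ($p\notin A$, $q\in A$, $d(p,q)<\delta$). Fix $\eta>0$ and take $\delta\le\eta/3$.
\begin{enumerate}[label=(\roman*)]
\item Lower bound. If $d(a,q)\le\eta$, the term at $a$ is at least $\varphi(a)\ge\varphi(q)-\omega(\eta)$, since the ratio is at least $1$. If $d(a,q)>\eta$, then $d(p,a)>\eta-\delta\ge2\delta>2d(p,A)$, so the term exceeds $2\ge\varphi(q)$. Hence $f(p)\ge\varphi(q)-\omega(\eta)$.
\item Upper bound. Pick $a$ with $d(p,a)\le(1+\theta)d(p,A)\le(1+\theta)\delta$. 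Then $d(a,q)\le(2+\theta)\delta$, so $f(p)\le(1+\theta)\bigl(\varphi(q)+\omega(3\delta)\bigr)$. Letting $\theta\to0$ gives $f(p)\le\varphi(q)+\omega(3\delta)$.
\end{enumerate}
Thus $|f(p)-\varphi(q)|\le\omega(\eta)$ whenever $d(p,q)<\eta/3$. I will use this estimate again with larger separations.

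\emph{Both points outside $A$.} Fix a threshold $r>0$.
\begin{enumerate}[label=(\roman*)]
\item If $d(p,A)<r$ and $d(q,A)<r$, choose $b\in A$ with $d(p,b)<r$. Then $d(q,b)<r+\delta\le 2r$. Applying the mixed case to the pairs $(p,b)$ and $(q,b)$ with $\eta=6r$ gives $|f(p)-f(q)|\le 2\omega(6r)$. I choose $r$ so that this is less than $\varepsilon$.
\item Otherwise, say $d(p,A)\ge r$, and take $\delta\le r/2$, so that $d(q,A)\ge r/2$. For $a\in A_p$ the difference of ratios is bounded by
\[
\frac{|d(p,a)-d(q,a)|\,d(q,A)+d(q,a)\,|d(q,A)-d(p,A)|}{d(p,A)\,d(q,A)}\le\frac{\delta}{d(p,A)}\Bigl(1+\frac{2d(p,A)+\delta}{d(q,A)}\Bigr).
\]
This is of order $\delta/r$. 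Multiplying by $\varphi(a)\le2$, each term at $q$ is within $C\delta/r$ of the corresponding term at $p$, for a numerical constant $C$. Hence $f(q)\le f(p)+C\delta/r$, and by symmetry (using $A_q$) the reverse inequality also holds.
\end{enumerate}
Choosing $\delta$ small relative to $r$ then finishes this case.

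The main obstacle is the regime where both points are near $A$. There the ratios are not uniformly controlled, and the device of passing through a nearby point $b\in A$, together with the mixed-case estimate at scale comparable to $r$, is what makes it work. The order of choices matters: first $\eta$ and hence $r$ from $\omega$ and $\varepsilon$, then $\delta$ from $r$. The mixed case must be established before the last case, since the near-$A$ subcase relies on it.
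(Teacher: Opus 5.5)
The paper gives no proof of this theorem; it only quotes it from Mandelkern \cite{MR1076066}, so there is no in-paper argument to compare yours with. Your direct $\varepsilon$--$\delta$ proof is correct. Each step checks out:
\begin{itemize}
\item the bounds $1\le f\le 2$;
\item the mixed-case estimate $|f(p)-\varphi(q)|\le\omega(\eta)$ for $d(p,q)<\eta/3$, whose lower bound uses both ends of the range $[1,2]$, since terms with $d(p,a)>2d(p,A)$ exceed $2\ge\varphi(q)$;
\item the detour through a point $b\in A$ when both points are near $A$;
\item the $O(\delta/r)$ estimate away from $A$.
\end{itemize}
The order of choices, $r$ from $\varepsilon$ and then $\delta$ from $r$, is also right.

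Three points deserve an explicit line when you write this up.
\begin{itemize}
\item The localisation $f(p)=\inf_{A_p}$ needs, in the case $f(p)=2$, that the near-minimisers with $d(p,a)\le(1+\theta)d(p,A)$, $\theta\le 1$, belong to $A_p$.
\item In case (ii), cancel the $2d(p,A)$ in the numerator against the prefactor $\delta/d(p,A)$. The bound $d(q,A)\ge r/2$ alone does not control $(2d(p,A)+\delta)/d(q,A)$. Expanding gives
\[
\frac{\delta}{d(p,A)}+\frac{2\delta}{d(q,A)}+\frac{\delta^2}{d(p,A)\,d(q,A)}\le\frac{8\delta}{r},
\]
using only $d(p,A),d(q,A)\ge r/2$ and $\delta\le r/2$. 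This is what justifies ``by symmetry'', since your hypotheses on $p$ and $q$ are not literally symmetric.
\item Take $\omega=\omega_\varphi\le 1$, so that $\omega(3\delta)$ is finite.
\end{itemize}

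For context, the paper reaches the application, \Cref{corollary-KIP-ver-21:1}, by an entirely different route. It passes to a concave majorant $\omega$ of $\omega_\varphi$ (\Cref{theorem-Uniform-Cont-Ext:1}), extends by McShane's construction in the metric $\omega\circ d$ (\Cref{theorem-Lipschitz-ext:8}), and then truncates. The resulting extension keeps that concave modulus. The Dieudonn\'e-type formula comes with no such guarantee, but your argument quantifies what it does give. Taking $\delta=t$ and $r=\sqrt{t}$ for $d(p,q)<t\le\frac14$ yields
\[
|f(p)-f(q)|\le\max\bigl\{2\omega(6\sqrt{t}),\,16\sqrt{t}\bigr\}.
\]
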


In fact, as shown in \cite{Gutev2022}, all known Tietze-type
constructions for continuous extensions preserve uniform
continuity. Finally, going back to McShane's result in
\Cref{corollary-KIP-ver-21:1}, let us also remark that recently
G. Beer \cite{zbMATH07212343} showed that a bounded uniformly
continuous function on a subset $A\subseteq X$ is actually Lipschitz
with respect to a metric on $X$ that is uniformly equivalent to
$d$.\medskip

The extension property in \Cref{corollary-KIP-ver-21:1} does not hold
for unbounded functions ${\varphi:A\to \R}$, a fact naturally linked
to convexity in metric spaces. A metric space $(X,d)$ is called
\emph{Menger convex} (or \emph{M-convex}) Menger \cite{MR1512479} if,
for every $x,y\in X$ with $x\neq y$, there exists a point
$z\in X\setminus\{x,y\}$ with $d(x,y)=d(x,z)+ d(z,y)$. In this case,
$z$ is said to \emph{lie between $x$ and $y$}.  The relation with
convexity is that any two points of a complete Menger convex space can
be joined by a metric segment. Namely, the following theorem was
proved by Menger \cite{MR1512479}.

\begin{theorem}
  \label{theorem-Lipschitz-ext:15}
  If\/ $(X,d)$ is Menger convex and complete, $x,y\in X$ and
  $\ell=d(x,y)$, then there exists an isometry\/
  $\gamma:[0,\ell]\to X$ such that $\gamma(0)=x$ and $\gamma(\ell)=y$.
\end{theorem}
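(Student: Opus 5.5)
We prove Menger's theorem by first building the isometry on the dyadic rationals of $[0,\ell]$ and then extending it to the whole interval by completeness. Everything rests on one lemma: \emph{in a complete Menger convex space, any two distinct points $x,y$ have a midpoint}, i.e.\ a point $m$ with $d(x,m)=d(m,y)=\tfrac12 d(x,y)$.

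For the midpoint lemma, fix $x\neq y$ and let $B$ be the set of points lying metrically between them, $B=\{z\in X: d(x,z)+d(z,y)=d(x,y)\}$. This set is closed, hence complete. Consider $\theta(z)=\bigl|d(x,z)-\tfrac12 d(x,y)\bigr|$ on $B$. The aim is to show $\inf_B\theta=0$ and that this infimum is attained.

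To do this, build a transfinite (or countable, via a greedy choice) sequence. Start with $z_0=x$. Given $z_n\in B$ with $d(x,z_n)<\tfrac12 d(x,y)$, apply Menger convexity to $z_n$ and $y$, choosing a point between them that advances as far as possible: take a point whose distance from $x$ is at least half of the supremum of achievable advances, while not overshooting $\tfrac12 d(x,y)$. Two facts make this work. First, betweenness is transitive in the required sense: if $z$ is between $x$ and $y$, and $w$ is between $z$ and $y$, then by the triangle inequality $w$ is between $x$ and $y$, and $z$ is between $x$ and $w$. Second, along the sequence we have $d(z_n,z_m)=|d(x,z_n)-d(x,z_m)|$, so the $z_n$ form a Cauchy sequence and converge to some $z^*\in B$.

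If $d(x,z^*)<\tfrac12 d(x,y)$, Menger convexity applied to $z^*$ and $y$ gives a further advance. Since the chosen steps were at least half of the maximal possible ones, the greedy choice would have captured such an advance, which yields a contradiction.

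A cleaner way to organise this is an Ekeland/Brézis–Browder type maximality argument on $B\cap\{d(x,\cdot)\le \tfrac12 d(x,y)\}$, partially ordered by ``$z\preceq w$ iff $w$ lies between $z$ and $y$''. Chains in this order converge by completeness, so a maximal element exists. Menger convexity together with the overshoot issue then forces the maximal element to sit at distance exactly $\tfrac12 d(x,y)$ from $x$.

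Overshoot is the main obstacle. The point supplied by Menger convexity may jump past the half-way distance. So I would target an arbitrary $r\in(0,d(x,y))$ rather than exactly the midpoint. I would use the maximality argument on $\{z\in B: d(x,z)\le r\}$ to get $d(x,z)=r$, and handle the jump by symmetrically running the same argument from the $y$ side on a segment that overshoots.

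Once midpoints exist, define $\gamma$ on dyadic rationals by induction: $\gamma(0)=x$, $\gamma(\ell)=y$, and $\gamma$ of each new dyadic point is a midpoint of the images of its two neighbours. The triangle inequality forces consecutive dyadic images at level $k$ to be at distance exactly $\ell/2^k$. Moreover, the images of any three ordered dyadic points satisfy the betweenness equality, since otherwise summing along the chain $x\to\cdots\to y$ would give $d(x,y)<\ell$. Hence $\gamma$ is an isometry on the dyadics. Since $X$ is complete, \Cref{corollary-Lipschitz-ext:3} extends $\gamma$ to $[0,\ell]$, and by continuity the extension is still an isometry.
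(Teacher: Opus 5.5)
\textbf{Verdict: genuine gap.} You locate the obstacle correctly but do not overcome it.

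The outer layer of your proof is correct. Once every pair of distinct points has a midpoint, your dyadic construction gives $d\bigl(\gamma(i\ell/2^k),\gamma(j\ell/2^k)\bigr)=(j-i)\ell/2^k$ by exactly the sandwich argument you describe. Then \Cref{corollary-Lipschitz-ext:3} plus continuity finishes. (The paper gives no proof of this theorem; it only refers to Menger's transfinite induction and to Aronszajn's proof in Blumenthal.)

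The gap is the midpoint lemma, which carries the whole content of the theorem. Your argument stops exactly where the difficulty lies. The greedy step presupposes that some point strictly between $z_n$ and $y$ does not overshoot, and Menger convexity does not provide this. The one-sided maximality argument only produces a $\preceq$-maximal $z^*$ in $B_r=B\cap\{d(x,\cdot)\le r\}$. After that, a Menger point strictly between $z^*$ and $y$ gives you nothing if it lies at distance more than $r$ from $x$. The claim that Menger convexity ``forces the maximal element to sit at distance exactly $\tfrac12 d(x,y)$'' is precisely what has to be proved. ``Run the same argument from the $y$ side'' names a strategy but omits the step that makes it work.

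Here is that step.
\begin{enumerate}
\item Obtain $z^*$ as you propose. The greedy version is fine if the supremum at stage $n$ is taken over all $w\in B_r$ with $z_n\preceq w$. That set contains $z_n$, so no convexity is needed to run it.
\item Consider the points $v$ between $z^*$ and $y$ with $d(x,v)\ge r$; this is a closed set containing $y$. Order it by $v\le' v'$ when $v'$ lies between $v$ and $z^*$, and take $v^*$ maximal. Chains converge for the same reason as before.
\item Suppose $v^*\neq z^*$. Menger convexity gives $u\notin\{z^*,v^*\}$ between them. By transitivity of betweenness, $u$ lies between $z^*$ and $y$. If $d(x,u)\le r$, this contradicts the maximality of $z^*$; if $d(x,u)\ge r$, it contradicts the maximality of $v^*$.
\item Hence $v^*=z^*$, which forces $d(x,z^*)=r$.
\end{enumerate}
This works for every $r$, including $r=\tfrac12 d(x,y)$ directly, so the preliminary overshooting point is unnecessary. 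It uses only countably many dependent choices, so it avoids transfinite induction.

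You could instead avoid midpoints and dyadics altogether.
\begin{enumerate}
\item By Zorn's lemma, take a maximal set $L\ni x,y$ with $d(a,b)=|d(x,a)-d(x,b)|$ for all $a,b\in L$.
\item Its closure has the same property, so $L$ is closed. Hence $\{d(x,a):a\in L\}$ is a closed subset of $[0,\ell]$ containing $0$ and $\ell$.
\item If this set is not all of $[0,\ell]$, take a gap with endpoints $d(x,a)<d(x,b)$. A Menger point $c$ between $a$ and $b$ satisfies $d(x,c)+d(c,y)\le d(x,a)+d(a,b)+d(b,y)=\ell$.
\item This inequality forces $L\cup\{c\}$ to satisfy the same identity, with $d(x,c)$ inside the gap, contradicting maximality.
\end{enumerate}
Filling gaps between points already obtained never overshoots. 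The price is Zorn's lemma, which the two-sided greedy argument above avoids.
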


Menger's original proof was based on a transfinite induction.  An
alternative proof avoiding transfinite induction was given in
Blumenthal \cite[Theorem 14.1]{MR0268781} and credited to
Aronszajn.\medskip

For a metric space $(X,d)$ and $x,y\in X$, an isometry
$\gamma:[0,\ell]\to X$ with $\gamma(0)=x$ and $\gamma(\ell)=y$ is
usually called a \emph{geodesic line} (or \emph{geodesic}) from $x$ to
$y$. Evidently, such an isometry can be represented as the path
$\alpha:[0,1]\to X$ such that $\alpha(0)=x$, $\alpha(1)=y$ and
$d(\alpha(s),\alpha(t))=d(x,y)|s-t|$ for every $s,t\in [0,1]$. In
particular, for every $\lambda\in [0,1]$, the point
$z=\alpha(\lambda)$ has the property that $d(x,z)=\lambda d(x,y)$ and
$d(z,y)=(1-\lambda) d(x,y)$. This property reflects another
interpretation of convexity in metric spaces. Namely, a metric space
$(X,d)$ is called \emph{metrically convex}, see \cite{MR0461107}, if
for every $x,y\in X$ and $0<\lambda<1$, there exists a point $z\in X$
with $d(x,z)=\lambda d(x,y)$ and $d(z,y)=(1-\lambda)
d(x,y)$. Equivalently, $(X,d)$ is metrically convex if for every
$s,t\geq 0$ and points $x,y\in X$ with $d(x,y)\leq s+t$, there exists
a point $z\in X$ with $d(x,z)\leq s$ and $d(z,y)\leq t$. Let us
explicitly remark that in some sources, metrically convex is used for
Menger convex, while in other sources the above property is used to
define Menger convexity. According to \Cref{theorem-Lipschitz-ext:15},
both properties are equivalent for complete metric spaces.\medskip

The following stronger property of the least modulus of continuity is
valid in the setting of a metrically convex domain.

\begin{proposition}
  \label{proposition-Lipschitz-ext:10}
  Let $(X,d)$ and $(Y,\rho)$ be metric spaces. If\/ $(X,d)$ is
  metrically convex and $f:X\to Y$, then the least modulus of
  continuity $\omega_f$ for $f$ is subadditive.
\end{proposition}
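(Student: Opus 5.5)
The plan is to verify subadditivity straight from the definition in \cref{eq:Lipschitz-ext:16}. We must show $\omega_f(s+t)\leq \omega_f(s)+\omega_f(t)$ for all $s,t\geq 0$. If the right-hand side is $+\infty$ there is nothing to prove, so the inequality is meaningful as a statement about extended reals. Since $\omega_f(s+t)$ is a supremum, it suffices to prove that $\rho(f(p),f(q))\leq \omega_f(s)+\omega_f(t)$ for every pair $p,q\in X$ with $d(p,q)\leq s+t$.

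So fix such $p,q$. We use the equivalent form of metric convexity recorded just before the statement: whenever $d(p,q)\leq s+t$, there is a point $z\in X$ with $d(p,z)\leq s$ and $d(z,q)\leq t$. The degenerate cases are harmless. If $p=q$, we may take $z=p$. If $s=0$ or $t=0$, we may take $z=p$ or $z=q$ respectively.

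With this $z$ in hand, the triangle inequality in $(Y,\rho)$ gives
\[
  \rho(f(p),f(q))\leq \rho(f(p),f(z))+\rho(f(z),f(q)).
\]
By the definition of $\omega_f$, the first term is at most $\omega_f(s)$ because $d(p,z)\leq s$. Likewise, the second term is at most $\omega_f(t)$ because $d(z,q)\leq t$. Taking the supremum over all admissible $p,q$ then yields $\omega_f(s+t)\leq\omega_f(s)+\omega_f(t)$.

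The only point needing care is the equivalence between the two formulations of metric convexity, which the paper asserts without proof. If I had to justify it, I would argue as follows. Given $d(p,q)\leq s+t$ with $p\neq q$, set $\lambda=\min\{1,s/d(p,q)\}$. If $\lambda=1$, take $z=q$. Otherwise $0<\lambda<1$, and the defining property provides $z$ with $d(p,z)=\lambda d(p,q)=s$ and
\[
  d(z,q)=(1-\lambda)d(p,q)=d(p,q)-s\leq t.
\]
The case $\lambda=0$ cannot occur unless $s=0$, and that case is covered by taking $z=p$. I expect this small case analysis to be the only mild obstacle; the rest is immediate.
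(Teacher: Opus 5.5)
Your proof is correct and is the paper's argument: use metric convexity to pick $z$ with $d(p,z)\leq s$ and $d(z,q)\leq t$, then apply the triangle inequality in $(Y,\rho)$ and take the supremum. Your case analysis deriving this ``$\leq s$, $\leq t$'' form from the defining property (the paper asserts the equivalence without proof) is also correct.
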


\begin{proof}
  Take $s,t\geq 0$ and points $x_s,x_t\in X$ with
  $d(x_s,x_t)\leq s+t$. Since $(X,d)$ is metrically convex, there
  exists $z\in X$ such that $d(x_s,z)\leq s$ and $d(z,x_t)\leq
  t$. According to \cref{eq:Lipschitz-ext:16}, this implies that
  $\omega_f(s+t)\leq \omega_f(s)+\omega_f(t)$ because
  \[
    \rho(f(x_s),f(x_t))\leq \rho(f(x_s),f(z))+\rho(f(z),f(x_t))\leq
    \omega_f(s) +\omega_f(t).\qedhere
  \]
\end{proof}

The above property implies the following consequence of
\Cref{theorem-Lipschitz-ext:8}, which is complementary to
\Cref{corollary-KIP-ver-21:1}. In this consequence, a subset
$A\subseteq X$ of a metric space $(X,d)$ is \emph{metrically convex}
if it is so as a subspace of $(X,d)$, i.e.\ with respect to the metric
$d\uhr A\times A$.

\begin{corollary}
  \label{corollary-Lipschitz-ext:10}
  If\/ $(X,d)$ is a metric space and $A\subseteq X$ is a metrically
  convex subset, then each uniformly continuous function
  $\varphi:A\to \R$ can be extended to a uniformly continuous function
  $f:X\to \R$.
\end{corollary}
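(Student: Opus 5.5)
The strategy is to reduce the statement to \Cref{theorem-Lipschitz-ext:8}. Let $\varphi:A\to\R$ be uniformly continuous and let $\eta=\omega_\varphi$ be its least modulus of continuity, computed in the metric space $(A,d\uhr A\times A)$. Since $A$ is metrically convex, \Cref{proposition-Lipschitz-ext:10} applied to $\varphi:A\to\R$ shows that $\eta$ is subadditive. Uniform continuity of $\varphi$ means that some modulus $\omega$ satisfies $\lim_{t\to0}\omega(t)=0$. Because $\eta\leq\omega$ is the least modulus, we also get $\lim_{t\to0}\eta(t)=0$. In particular $\eta(s)<+\infty$ for some $s>0$, so by \Cref{proposition-Uniform-Cont-Ext:3} the function $\eta$ is finite everywhere. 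Hence $\eta$ is an increasing, subadditive, finite modulus of uniform continuity for $\varphi$ on $A$.

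Next we must make sure that $\eta$ is a modulus of continuity for $\varphi$ with respect to $d$ restricted to $A$. This holds by definition, since $|\varphi(p)-\varphi(q)|\leq\eta(d(p,q))$ for all $p,q\in A$. We then apply \Cref{theorem-Lipschitz-ext:8} with the subadditive modulus of uniform continuity $\eta$. This gives an extension $f:X\to\R$ with $|f(p)-f(q)|\leq\eta(d(p,q))$ for all $p,q\in X$. Since $\lim_{t\to0}\eta(t)=0$, the function $f$ is uniformly continuous.

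One point needs care: the proof of \Cref{theorem-Lipschitz-ext:8} uses $\rho=\eta\circ d$ as a metric on $X$, and this requires $\eta(t)>0$ for $t>0$. That case split is already handled there. If $\eta(s)=0$ for some $s>0$, then \Cref{proposition-Uniform-Cont-Ext:3} gives $\eta\equiv0$. So $\varphi$ is constant and extends constantly. Otherwise $\eta$ is positive on $(0,+\infty)$, and $\rho$ is a metric by subadditivity and monotonicity.

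I expect the only genuine point to check is that subadditivity comes from metric convexity of $A$ itself, and not of $X$. \Cref{proposition-Lipschitz-ext:10} needs the intermediate point $z$ to lie in the domain of $\varphi$, and this is exactly why the hypothesis is placed on $A$. Once this is noted, everything else is a direct application of earlier results.
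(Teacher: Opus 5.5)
Your proof is correct and follows essentially the same route as the paper: take the least modulus $\omega_\varphi$ computed on $A$, get its subadditivity from \Cref{proposition-Lipschitz-ext:10} using the metric convexity of $A$ itself, note that $\lim_{t\to0}\omega_\varphi(t)=0$, and apply \Cref{theorem-Lipschitz-ext:8}. The extra checks you add (finiteness via \Cref{proposition-Uniform-Cont-Ext:3}, the constant case, and positivity so that $\eta\circ d$ is a metric) are already handled inside the proof of \Cref{theorem-Lipschitz-ext:8}, so they are harmless but redundant.
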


\begin{proof}
  Let $\varphi:A\to \R$ be uniformly continuous and
  $\omega=\omega_\varphi$ be the least modulus of continuity for
  $\varphi$. Then $\lim_{t\to 0}\omega(t)=0$ because $\varphi$ is
  uniformly continuous. Moreover, by
  \Cref{proposition-Lipschitz-ext:10}, $\omega$ is also subadditive
  because $A$ is metrically convex. Thus, by
  \Cref{theorem-Lipschitz-ext:8}, $\varphi$ can be extended to a
  function $f:X\to \R$ preserving the modulus of continuity $\omega$,
  i.e.\ to a uniformly continuous function.
\end{proof}

\Cref{proposition-Lipschitz-ext:10} is also very useful in
constructing counterexamples regarding the subtle difference between
\Cref{corollary-KIP-ver-21:1} and \Cref{corollary-Lipschitz-ext:10}.

\begin{example}
  \label{example-Lipschitz-ext:4}
  Define $\varphi:\N\to \R$ by $\varphi(n)=n^2$ for every
  $n\in\N$. Evidently, $\varphi$ is uniformly continuous. In fact, one
  can easily see that the least modulus of continuity $\omega_\varphi$
  for $\varphi$ is the extended function
  \begin{equation}
    \label{eq:Lipschitz-ext:22}
    \omega_\varphi(t)=
    \begin{cases}
      0 &\text{if $0\leq t<1$, and}\\
      +\infty &\text{if $1\leq t$.}
    \end{cases}
  \end{equation}
  However, $\varphi$ cannot be extended to a uniformly continuous
  function on $\R$. Indeed, suppose that $f:\R\to \R$ is a uniformly
  continuous function with $f\uhr \N=\varphi$. Then by
  \Cref{proposition-Lipschitz-ext:10}, the least modulus of continuity
  $\omega_f$ for $f$ is a subadditive function with
  $\lim_{t\to 0}\omega_f(t)=0$. Since $\omega_f$ is also increasing,
  \Cref{proposition-Uniform-Cont-Ext:3} implies that
  ${\omega_f(t)<+\infty}$ for every $t\geq 0$. However, we also have
  that ${\omega_\varphi\leq \omega_f}$ because ${f\uhr
    \N=\varphi}$. According to \cref{eq:Lipschitz-ext:22}, this is
  impossible.\qed
\end{example}

McShane's extension theorem for uniformly continuous functions
(\Cref{theorem-Lipschitz-ext:8}) was also generalised to Hilbert
spaces. The following theorem was obtained by Gr\"unbaum and
Zarantonello in \cite[Theorem 2]{Gruenbaum1968}.

\begin{theorem}
  \label{theorem-Uni-Ball-Intersc:1}
  Let $H$ be a Hilbert space, $L$ be an inner product space and
  $A\subseteq L$. Then a uniformly continuous map $\varphi:A\to H$ has
  a uniformly continuous extension $f:L\to H$ if and only if\/
  $\varphi$ has a subadditive modulus of uniform continuity.
\end{theorem}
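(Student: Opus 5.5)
The necessity is the easy direction. If $f:L\to H$ is a uniformly continuous extension of $\varphi$, then $L$, being a normed space, is metrically convex: for $x,y\in L$ and $0<\lambda<1$ take $z=(1-\lambda)x+\lambda y$. Hence, by \Cref{proposition-Lipschitz-ext:10}, the least modulus of continuity $\omega_f$ is subadditive. It also satisfies $\lim_{t\to0}\omega_f(t)=0$ because $f$ is uniformly continuous, and by \Cref{proposition-Uniform-Cont-Ext:3} it takes only finite values. Since $f\uhr A=\varphi$, we have $\omega_\varphi\le\omega_f$. Therefore $\omega_f$ is a subadditive modulus of uniform continuity for $\varphi$.

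For sufficiency, let $\eta$ be a subadditive modulus of uniform continuity for $\varphi$. By \Cref{proposition-Uniform-Cont-Ext:3}, $\eta$ is finite, and if $\eta$ vanishes at some $t>0$ then $\varphi$ is constant and the statement is trivial; so we may assume $\eta(t)>0$ for $t>0$. Subadditivity together with monotonicity gives $\eta(t)\le (n+1)\eta(1)$ for $t\le n+1$, so \cref{eq:Uniform-Cont-Ext:1} holds. By \Cref{theorem-Uniform-Cont-Ext:1} (via \Cref{proposition-Uniform-Cont-Ext:6}) there is a concave majorant $\omega\ge\eta$ with $\lim_{t\to0}\omega(t)=0$. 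Replacing $\omega(0)$ by $0$ keeps concavity by \Cref{proposition-Uniform-Cont-Ext:8}. Then $\omega$ is increasing by \Cref{proposition-Uniform-Cont-Ext:1}, continuous by \Cref{corollary-KIP-ver-18:1}, and positive on $(0,+\infty)$ since it dominates $\eta$. Thus $\omega$ is a concave modulus of uniform continuity for $\varphi$, and it suffices to extend $\varphi$ preserving a modulus $\omega_1$ which still satisfies $\lim_{t\to0}\omega_1(t)=0$.

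The key step is to pass from $\omega$ to a modulus of the form $\omega_1(t)=\sqrt{\theta(t^2)}$, where $\theta$ is concave. Then $\rho=\omega_1\circ d$ satisfies $\rho^2=\theta(d^2)$, and I will verify the Gr\"unbaum--Zarantonello condition \cref{eq:KIP-ver-17:1} for $\rho$ on $L$ and conclude by \Cref{theorem-KIP-ver-17:1}. To do this, I will set $\theta(s)=\omega^2(\sqrt s)$ and look for a concave majorant of it. Since $\omega$ is concave with $\omega(0)=0$, it satisfies $\omega(t)\le a+bt$ for suitable $a,b\ge0$ (\Cref{theorem-Rem-Lipschitz:1}), so $\theta(s)\le 2a^2+2b^2s$. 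Hence \Cref{theorem-Uniform-Cont-Ext:1} applies to the increasing function $\theta$ and gives a concave $\bar\theta\ge\theta$ with $\bar\theta(0^+)=0$. Put $\omega_1=\sqrt{\bar\theta(t^2)}\ge\omega$; it is still a modulus of uniform continuity for $\varphi$.

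The main obstacle is verifying \cref{eq:KIP-ver-17:1} for $\rho^2=\bar\theta(\|\cdot\|^2)$ on an inner product space. By \Cref{theorem-Rem-Lipschitz:1}, a concave $\bar\theta$ is an infimum of affine functions, but positivity of a quadratic form on the cone is not preserved by infima, so I would instead use a L\'evy--Khintchine type representation. Write the increasing concave $\bar\theta$ as a mixture of $\min(s,c)$'s plus a linear part: $\bar\theta(s)=\beta s+\int_{(0,\infty)}\min(s,c)\,d\mu(c)+\gamma$, with $\gamma\ge 0$ handling the jump at $0$. The linear part satisfies \cref{eq:KIP-ver-17:1} by the case $\alpha=1$ of \Cref{lemma-Kirszbraun-Intersc-v3:1}. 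Condition \cref{eq:KIP-ver-17:1} is additive and stable under integrals with nonnegative weights, and constants contribute $\gamma\sum\xi\xi\ge0$. So it reduces to checking that $\sigma_c(x,y)=\min(\|x-y\|^2,c)$ satisfies \cref{eq:KIP-ver-17:1} for each $c>0$. Here $\min(a,c)\le\min(b_1,c)+\min(b_2,c)$ whenever $a\le b_1+b_2$, and each bracket of the form $\min(\|x_i\|^2,c)+\min(\|x_j\|^2,c)-\min(\|x_i-x_j\|^2,c)$ must be controlled. I expect this last positivity check to be the delicate point. If it fails, I would fall back on Fox-type arguments as in \Cref{lemma-Kirszbraun-Intersc-v3:1}, using the integral representation $\bar\theta(s)=\int(1-e^{-\lambda s})\,d\nu(\lambda)$ for Bernstein-type majorants, together with the positive definiteness of Gaussian kernels in the spirit of \cref{eq:Kirszbraun-Intersc-v3:2}.
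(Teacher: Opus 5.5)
Your necessity argument is the paper's. The reduction of sufficiency to checking \cref{eq:KIP-ver-17:1} for $\rho^2=\bar\theta(\|x-y\|^2)$ with $\bar\theta$ concave is also sound. The detour through a concave majorant $\bar\theta$ is harmless but not needed: $\omega^2(\sqrt{s})$ is itself concave, being the infimum of the concave functions $(a\sqrt{s}+b)^2$ over the affine majorants $at+b$, $a,b\geq 0$, of $\omega$.

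\textbf{The gap.} The proposal never proves the one inequality that carries the theorem. After splitting $\bar\theta$ into a linear part and truncations $\min(s,c)$, you leave open\,---\,and even doubt\,---\,whether $\min(\|x-y\|^2,c)$ satisfies \cref{eq:KIP-ver-17:1}. The Bernstein/Gaussian-kernel fallback is only a sketch: it would need Schoenberg's theory of negative definite kernels and a Bernstein majorant vanishing at $0^+$, neither of which you supply. Your remark that $\min(a,c)\leq\min(b_1,c)+\min(b_2,c)$ whenever $a\leq b_1+b_2$ cannot settle it term by term, because single brackets can be negative. For example, for $u_j=-u_i$ with $4\|u_i\|^2<c$ the bracket equals $-2\|u_i\|^2$.

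\textbf{How to close it.} The missing check is short; it is exactly \Cref{proposition-KIP-draft-05:1} applied to the identity function. Put $u_s=s-p$ for $s\in T$, and let $J=\{s\in T:\|u_s\|^2\geq c\}$ and $K=T\setminus J$.
\begin{enumerate}
\item If $t\in J$, then $\min(\|u_t\|^2,c)=c\geq\min(\|u_s-u_t\|^2,c)$. So the $(s,t)$ bracket is at least $\min(\|u_s\|^2,c)\geq 0$, and symmetrically when $s\in J$.
\item If $s,t\in K$, the bracket is at least $\|u_s\|^2+\|u_t\|^2-\|u_s-u_t\|^2=2\langle u_s,u_t\rangle$.
\end{enumerate}
Hence, for this $\rho^2$ and every $\xi\in\R^T_+$, we get $Q_{(p,T)}(\xi)\geq 2\|\sum_{s\in K}\xi(s)u_s\|^2\geq 0$.

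With this in place your argument is complete, and it is the continuous counterpart of the paper's proof. \Cref{proposition-KIP-vgg-draft-07:1} writes a piecewise-linear concave function as $b_1+\sum_i(a_i-a_{i+1})[t\wedge r_i]+a_{k+1}t$. \Cref{proposition-KIP-ver-21:1} then avoids both your measure-theoretic representation and your worry that positivity is not preserved under infima. For a fixed finite configuration, it replaces the concave function by the minimum of finitely many supporting lines at the points $\|u_i\|^2$. This minimum agrees with the function at $\|u_i\|^2$ and dominates it at $\|u_i-u_j\|^2$, which is exactly the direction the inequality needs.
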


In the above theorem, taking $A\subseteq L$ to be convex, it follows
from \Cref{proposition-Lipschitz-ext:10} that the least modulus of
continuity $\omega_\varphi$ of a uniformly continuous map
${\varphi:A\to H}$ is subadditive and
$\lim_{t\to 0}\omega_\varphi(t)=0$. Hence,
\Cref{theorem-Uni-Ball-Intersc:1} implies the following immediate
consequence, it was obtained in \cite[Corollary]{Gruenbaum1968}.

\begin{corollary}
  \label{corollary-Uni-Ball-Intersc:1}
  Let $H$ be a Hilbert space, $L$ be an inner product space and
  $A\subseteq L$ be a convex subset. Then each uniformly continuous
  map $\varphi:A\to H$ has a uniformly continuous extension
  $f:L\to H$.
\end{corollary}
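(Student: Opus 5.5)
The plan is to obtain \Cref{corollary-Uni-Ball-Intersc:1} directly from \Cref{theorem-Uni-Ball-Intersc:1}. All of the work goes into showing that, because $A$ is convex, a uniformly continuous $\varphi:A\to H$ automatically has a subadditive modulus of uniform continuity. The natural candidate is the least modulus of continuity $\omega_\varphi$ from \cref{eq:Lipschitz-ext:16}, computed with respect to the metric of $L$ restricted to $A$.

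First, I would check that a convex subset $A$ of the inner product space $L$ is metrically convex as a subspace. Take $x,y\in A$ and $0<\lambda<1$, and set $z=(1-\lambda)x+\lambda y$. Then $z\in A$ by convexity, and
\[
  \|x-z\|=\lambda\|x-y\|,\qquad \|z-y\|=(1-\lambda)\|x-y\|.
\]
This is exactly the definition of metric convexity for $(A,d\uhr A\times A)$.

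Second, I would apply \Cref{proposition-Lipschitz-ext:10} to the map $\varphi:A\to H$ on the metrically convex domain $A$. This gives that $\omega_\varphi$ is subadditive. Since $\varphi$ is uniformly continuous, we also have $\lim_{t\to0}\omega_\varphi(t)=0$, so $\omega_\varphi$ is a modulus of uniform continuity in the paper's convention. By \Cref{proposition-Uniform-Cont-Ext:3}, subadditivity together with finiteness near $0$ forces $\omega_\varphi(t)<+\infty$ for every $t\geq 0$. This step matters, because \Cref{example-Lipschitz-ext:4} shows that infinite values can genuinely occur on nonconvex domains.

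Third, with a subadditive modulus of uniform continuity in hand, the ``if'' direction of \Cref{theorem-Uni-Ball-Intersc:1} yields a uniformly continuous extension $f:L\to H$ of $\varphi$. There is no real obstacle in this argument. The only point needing care is to apply \Cref{proposition-Lipschitz-ext:10} to the subspace $A$ rather than to $L$, since the supremum in $\omega_\varphi$ ranges only over pairs of points in $A$. The witness $z$ in the metric convexity argument must therefore lie in $A$, and convexity of $A$ is exactly what guarantees this.
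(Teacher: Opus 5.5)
Your proposal is correct and follows essentially the same route as the paper: convexity of $A$ makes it metrically convex, \Cref{proposition-Lipschitz-ext:10} then shows that $\omega_\varphi$ is a subadditive modulus of uniform continuity, and \Cref{theorem-Uni-Ball-Intersc:1} supplies the extension. Your explicit use of \Cref{proposition-Uniform-Cont-Ext:3} to get finiteness of $\omega_\varphi$ is a useful detail that the paper leaves implicit.
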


In fact, in the one direction, \Cref{theorem-Uni-Ball-Intersc:1} is
also based on \Cref{proposition-Lipschitz-ext:10}. Namely, if
$f:L\to H$ is a uniformly continuous extension of $\varphi:A\to H$,
then by this proposition, the least modulus of continuity $\omega_f$
of $f$ is subadditive and $\lim_{t\to 0}\omega_f(t)=0$. Moreover,
$\omega_f$ is also a modulus of continuity for $\varphi$ because
$\varphi=f\uhr A$.\medskip

In the other direction, the proof is based on the fact that each
increasing subadditive function $\eta:[0,+\infty)\to [0,+\infty)$ has
a concave majorant. For such a function $\eta$, it was shown in
\cite[Lemma 5]{Gruenbaum1968}, see also \cite{Gruenbaum1966}, that
$\eta\leq \omega\leq 2\eta$ for some concave function
$\omega:[0,+\infty)\to [0,+\infty)$. Below, we provide a simplified
version of this fact based on the following observation in the proof of
\cite[Theorem 1]{MR84762}.

\begin{proposition}
  \label{proposition-Uni-Ball-Intersc:1}
  If $\eta:[0,+\infty)\to [0,+\infty)$ is an increasing subadditive
  function, then
  \begin{equation}
    \label{eq:Uniform-Cont-Ext:3}
    \varlimsup_{t\to +\infty}\mfrac{\eta(t)}t\leq \eta(1).
  \end{equation}
  In particular, such a function $\eta$ admits a concave function
  $\omega:[0,+\infty)\to [0,+\infty)$ with\/ $\eta\leq \omega$ and\,
  $\lim_{t\to 0}\omega(t)=\lim_{t\to 0}\eta(t)$.
\end{proposition}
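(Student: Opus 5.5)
The plan is to prove the inequality in \cref{eq:Uniform-Cont-Ext:3} directly from subadditivity and monotonicity, and then to deduce the "in particular" part from \Cref{theorem-Uniform-Cont-Ext:1}, or more concretely from \Cref{proposition-Uniform-Cont-Ext:6}.

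For the inequality, fix $t\geq 1$ and let $n=\lceil t\rceil\in\N$, so that $n-1<t\leq n$. Since $\eta$ is increasing and subadditive, $\eta(t)\leq \eta(n)\leq n\,\eta(1)$; the second inequality follows by induction from $\eta((k+1)\cdot 1)\leq \eta(k)+\eta(1)$, exactly as in the proof of \Cref{proposition-Uniform-Cont-Ext:3}. Hence
\[
  \mfrac{\eta(t)}t\leq \mfrac{n}{t}\,\eta(1)<\mfrac{t+1}{t}\,\eta(1)
  \quad\text{for every $t\geq 1$.}
\]
Because $\frac{t+1}{t}\to 1$ as $t\to+\infty$, this gives $\varlimsup_{t\to+\infty}\frac{\eta(t)}{t}\leq \eta(1)$, which is \cref{eq:Uniform-Cont-Ext:3}. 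The function $\eta$ takes finite values, so $\eta(1)<+\infty$, and therefore \cref{eq:Uniform-Cont-Ext:1} holds.

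For the second part, $\eta$ is increasing, nonnegative and satisfies \cref{eq:Uniform-Cont-Ext:1}. \Cref{theorem-Uniform-Cont-Ext:1}, via \Cref{proposition-Uniform-Cont-Ext:6}, then gives a concave $\omega:[0,+\infty)\to[0,+\infty)$ with $\eta\leq\omega$ and $\lim_{t\to0}\omega(t)=\lim_{t\to0}\eta(t)$. This limit exists because $\eta$ is increasing and bounded below by $0$. The function $\omega$ is nonnegative since $\omega\geq\eta\geq0$, so it indeed maps into $[0,+\infty)$.

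There is no real obstacle here. The only point needing care is the rounding $n=\lceil t\rceil$, which requires $t\geq1$; restricting to such $t$ loses nothing, since only the behaviour as $t\to+\infty$ matters.
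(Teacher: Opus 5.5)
Your proof is correct and is essentially the paper's argument: the paper bounds $\eta(t)\le\eta(\lfloor t\rfloor+1)\le(\lfloor t\rfloor+1)\,\eta(1)$ where you use $\lceil t\rceil$, and then invokes \Cref{theorem-Uniform-Cont-Ext:1} exactly as you do. One cosmetic point: the strict inequality $\frac{n}{t}\,\eta(1)<\frac{t+1}{t}\,\eta(1)$ should be written with $\leq$, since it fails when $\eta(1)=0$; this does not affect the conclusion.
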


\begin{proof}
  For each $t> 0$, let $\lfloor t\rfloor$ be the greatest integer that
  is less than or equal to $t$. The resulting function
  $\lfloor\cdot\rfloor:(0,+\infty)\to \Z$ is commonly called the
  \emph{floor function}, or the \emph{greatest integer
    function}. Since $t\leq \lfloor t\rfloor +1$ and $\omega$ is
  increasing and subadditive, it follows that
  $\frac{\omega(t)}t \leq \frac{\omega\left(\lfloor t\rfloor
      +1\right)}t\leq \frac{\lfloor t\rfloor +1}t \cdot
  \omega(1)$. Accordingly,
  \[
    \varlimsup_{t\to +\infty} \mfrac{\omega(t)}t\leq \varlimsup_{t\to
      +\infty} \mfrac{\lfloor t\rfloor +1}t \cdot \omega(1)=\omega(1).
  \]
  The rest of the proof follows from
  \Cref{theorem-Uniform-Cont-Ext:1}.
\end{proof}

Thus, in our setting, if $\varphi:A\to H$ has a subadditive modulus of
uniform continuity $\eta$, then by
\Cref{proposition-Uni-Ball-Intersc:1}, $\varphi$ also has a concave
modulus of uniform continuity $\omega$. For such a concave function
$\omega:[0,+\infty)\to [0,+\infty)$, Gr\"unbaum and Zarantonello
\cite{Gruenbaum1968} showed the following property which constitutes
the most difficult part of the proof of
\Cref{theorem-Lipschitz-ext:8}.

\begin{theorem}
  \label{theorem-Uni-Ball-Intersc:2}
  Let $\omega:[0,+\infty)\to [0,+\infty)$ be a concave function with
  $\omega(0)=0$. Then for arbitrary vectors\,
  $\mathbf{u}_1,\dots, \mathbf{u}_k\in \R^n$ and\,
  $\lambda_1,\dots, \lambda_k\geq 0$,
  \begin{equation}
    \label{eq:Kirszbraun-Intersc-v12:1}
    \sum_{i,j=1}^k\left[\omega^2\left(\|\mathbf{u}_i\|\right)
      +\omega^2\left(\|\mathbf{u}_j\|\right)- 
      \omega^2\left(\|\mathbf{u}_i-\mathbf{u}_j\|\right)\right]
    \lambda_i\lambda_j 
    \geq 0.
  \end{equation}
\end{theorem}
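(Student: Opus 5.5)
The plan is to realise the quadratic form in \cref{eq:Kirszbraun-Intersc-v12:1} as a Gram-type expression in an auxiliary Hilbert space. The model is the case $\omega(t)=t$: there, as in the proof of \Cref{theorem-Lip-Ext-Embb:1}, the left-hand side equals $2\big\|\sum_i\lambda_i\mathbf{u}_i\big\|^2\geq 0$. So I will look for vectors $w_1,\dots,w_k$ in some $\ell_2(\mathcal{A})$ such that
\[
  \|w_i\|=\omega(\|\mathbf{u}_i\|)\quad\text{and}\quad
  \omega(\|\mathbf{u}_i-\mathbf{u}_j\|)\leq \|w_i-w_j\|
  \quad\text{for all } i,j\leq k.
\]
Given such vectors, $\omega^2(\|\mathbf{u}_i\|)+\omega^2(\|\mathbf{u}_j\|)-\omega^2(\|\mathbf{u}_i-\mathbf{u}_j\|)\geq 2\langle w_i,w_j\rangle$. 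Because every $\lambda_i\lambda_j\geq0$, this yields the left-hand side $\geq 2\big\|\sum_i\lambda_iw_i\big\|^2\geq0$. This is the same mechanism as in \Cref{corollary-KIP-ver-24:2}, and the positivity of the $\lambda_i$ is used exactly at this point.

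Two extreme constructions show what is needed. First, the private-coordinate embedding $w_i=\omega(\|\mathbf{u}_i\|)\,\mathbf{e}_i$, with orthonormal $\mathbf{e}_i$, as in \Cref{corollary-KIP-ver-24:2}. It works whenever $\omega^2(d_{ij})\leq\omega^2(r_i)+\omega^2(r_j)$, where $r_i=\|\mathbf{u}_i\|$ and $d_{ij}=\|\mathbf{u}_i-\mathbf{u}_j\|$. This holds, for instance, when $\omega$ is constant on $(0,+\infty)$. Second, the linear embedding $w_i=c\,\mathbf{u}_i$ handles $\omega(t)=ct$. A general concave $\omega$ behaves partly like each, so I would combine them:
\[
  w_i=\big(c_i\mathbf{u}_i\big)\oplus\beta_i\mathbf{e}_i,
  \qquad c_i^2r_i^2+\beta_i^2=\omega^2(r_i).
\]
The coefficients come from \Cref{theorem-Rem-Lipschitz:1}: take the supporting affine majorant $\alpha_{r_i}(t)=\mu_it+b_i$ of $\omega$ at $r_i$. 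Since $\omega$ is increasing (\Cref{proposition-Uniform-Cont-Ext:1}) and $\omega(0)=0$, we have $\mu_i\geq0$ and $b_i\geq0$. The idea is to let $c_i$ be governed by the slope $\mu_i$ and $\beta_i$ by the intercept $b_i$. The required inequality $\omega(d_{ij})\leq\min(\alpha_{r_i},\alpha_{r_j})(d_{ij})$ would then be compared with $\|w_i-w_j\|^2$.

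The main obstacle is that the $c_i$ differ from point to point. Then $\|c_i\mathbf{u}_i-c_j\mathbf{u}_j\|$ is not controlled by $d_{ij}$; the collinear case $\mathbf{u}_j=t\mathbf{u}_i$ already shows the naive radial rescaling fails. If this cannot be repaired directly, my first fallback is a decomposition. Using \Cref{theorem-Rem-Lipschitz:1} and the monotonicity of slopes in \cref{eq:KIP-ver-12:2}, write $\omega$ as a positive mixture $\omega(t)=ct+\int\min(t,s)\,d\nu(s)$ of truncations. Next prove \cref{eq:Kirszbraun-Intersc-v12:1} for each truncation $\min(t,s)$; this is a two-regime version of the above in which $c_i\in\{0,1\}$. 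Finally pass to the mixture.

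The difficulty is that $\omega^2$ is not additive in $\omega$. So the decomposition step would itself need an embedding argument: realise each summand by vectors $w_i^{(s)}$ as above, then take the direct sum $\bigoplus_s w_i^{(s)}$ weighted by $d\nu$. For this to work, cross terms must be controlled by the Cauchy–Schwarz inequality in the direct sum. I expect this last step, which makes the construction uniform across scales, to be the genuinely hard part of the proof.
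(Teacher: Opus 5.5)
Your reduction is sound: vectors $w_i$ with $\|w_i\|=\omega(\|\mathbf{u}_i\|)$ and $\|w_i-w_j\|\geq\omega(\|\mathbf{u}_i-\mathbf{u}_j\|)$ do yield \cref{eq:Kirszbraun-Intersc-v12:1}. Your two-regime embedding also correctly handles a single truncation $\omega(t)=t\wedge s$. But neither construction is completed, and the fallback fails exactly at the step you flag, for a structural reason. You decompose $\omega$, whereas both the inequality and direct sums are additive in \emph{squares}. For $w_i=\bigoplus_s w_i^{(s)}$ you get $\|w_i\|^2=\sum_s\omega_s^2(\|\mathbf{u}_i\|)$ instead of $\bigl(\sum_s\omega_s(\|\mathbf{u}_i\|)\bigr)^2$, so the norm condition already fails. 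The missing cross terms $2\omega_s\omega_{s'}$ would have to be recovered as \emph{lower} bounds on $\|w_i-w_j\|^2$, while Cauchy--Schwarz only bounds inner products from above.

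The missing idea is to pass to $\eta(t)=\omega^2(\sqrt{t})$. By \Cref{theorem-Rem-Lipschitz:1}, $\omega=\min_{\alpha}\alpha$ on $(0,+\infty)$ with $\alpha(t)=at+b$ and $a,b\geq0$. Hence $\eta(t)=\min_{\alpha}\bigl(a^2t+2ab\sqrt{t}+b^2\bigr)$ is concave on $(0,+\infty)$, and, since $\eta(0)=0$, concave on $[0,+\infty)$ by \Cref{proposition-Uniform-Cont-Ext:8}. The target is then \cref{eq:Kirszbraun-Intersc-v6:1} for $\eta$, which is \emph{linear} in $\eta$. Zero vectors contribute only zero terms, so assume all $\mathbf{u}_i\neq\mathbf{0}$. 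Let $\gamma$ be the minimum of the supporting lines of $\eta$ at the points $\|\mathbf{u}_i\|^2$. Then $\gamma$ majorises $\eta$, agrees with it at these points, and equals $b+at+\sum_m c_m(t\wedge s_m)$ with $a,b,c_m\geq 0$ (\Cref{proposition-KIP-vgg-draft-07:1}). The functions $1$, $t$ (which gives $2\|\sum_i\lambda_i\mathbf{u}_i\|^2$) and each $t\wedge s$ (\Cref{proposition-KIP-draft-05:1}) satisfy the inequality, so $\gamma$ does, and hence $\eta$ does.

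This is the paper's proof, and it also repairs your own picture. Take orthonormal $\mathbf{e}_i,\mathbf{e}'_i$ in fresh coordinates and set $w_i=\sqrt{b}\,\mathbf{e}'_i\oplus\sqrt{a}\,\mathbf{u}_i\oplus\bigoplus_m\sqrt{c_m}\,w_i^{(s_m)}$, where $w_i^{(s)}=\mathbf{u}_i$ if $\|\mathbf{u}_i\|^2\leq s$ and $w_i^{(s)}=\sqrt{s}\,\mathbf{e}_i$ otherwise. Then $\|w_i\|^2=\gamma(\|\mathbf{u}_i\|^2)=\omega^2(\|\mathbf{u}_i\|)$, and for $i\neq j$, $\|w_i-w_j\|^2\geq\gamma(\|\mathbf{u}_i-\mathbf{u}_j\|^2)\geq\omega^2(\|\mathbf{u}_i-\mathbf{u}_j\|)$. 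This works because squared norms now add exactly as the summands of $\gamma$ do.
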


The inequality in \cref{eq:Kirszbraun-Intersc-v12:1} is actually
identical to the Gr\"unbaum-Zarantonello condition in
\cref{eq:KIP-ver-17:1}, which shows that
\Cref{theorem-Uni-Ball-Intersc:2} provides a direct reduction of
\Cref{theorem-Uni-Ball-Intersc:1} to the general result in
\Cref{theorem-KIP-ver-17:1}.

\begin{proof}[Proof that \Cref{theorem-Uni-Ball-Intersc:2}
  implies \Cref{theorem-Uni-Ball-Intersc:1}]
  Assume that $H$, $L$ and $A\subseteq L$ are as in
  \Cref{theorem-Uni-Ball-Intersc:1}, and $\varphi:A\to H$ is a
  uniformly continuous map that has a subadditive modulus of uniform
  continuity $\eta:[0,+\infty)\to [0,+\infty)$. Then, according to
  \Cref{proposition-Uni-Ball-Intersc:1}, $\varphi$ also has a concave
  modulus of uniform continuity $\omega:[0,+\infty)\to
  [0,+\infty)$. The nontrivial case is when $\omega$ is not a constant
  function. In this case, by
  \Cref{proposition-Lipschitz-ext:4,proposition-Uniform-Cont-Ext:3},
  $\omega(t)>0$ for every $t>0$. Therefore, we may define a metric
  $\rho$ on $L$ by $\rho_{xy}=\rho(x,y)= \omega(\|x-y\|)$ for every
  $x,y\in L$. Next, take a nonempty finite set $T\subseteq L$ and a
  point $p\in L\setminus T$. It now follows from
  \cref{eq:Kirszbraun-Intersc-v12:1} that the quadratic form
  $Q_{(p,T)}$ satisfies the condition in \cref{eq:KIP-ver-17:1} 
  because $\xi\in \R^T_+$ implies that
  \begin{align*}
    Q_{(p,T)}(\xi)&=\sum_{(s,t)\in T^2}\left[\rho_{ps}^2+
                    \rho_{pt}^2-
                    \rho_{st}^2\right]\xi(s)\xi(t) \\
                  &=\sum_{(s,t)\in T^2}\left[\omega^2(\|s-p\|)+
                    \omega^2(\|t-p\|)-
                    \omega^2(\|s-t\|)\right]\xi(s)\xi(t)\geq 0.
  \end{align*}
  Furthermore, $\varphi:A\to H$ is a nonexpansive map with respect to
  the metric $\rho$ on $L$. Hence, by \Cref{theorem-KIP-ver-17:1},
  $\varphi$ can be extended to a nonexpansive map $f:L\to H$ with
  respect to this metric $\rho$ on $L$. This is equivalent to the fact
  that $\omega$ is also a modulus of continuity for $f:L\to H$, i.e.\
  that $f$ is a uniformly continuous extension of $\varphi$.
\end{proof}

The proof of \Cref{theorem-Uni-Ball-Intersc:2} in \cite{Gruenbaum1968}
takes up most of this paper and is based on several approximation
results of concave functions with twice differentiable such
functions. The theory of mollifiers, introduced by Sobolev in
\cite{Soboleff1938} and Friedrichs in \cite{Friedrichs1944}, plays a
significant role in this. Essentially the same proof is reproduced in
\cite{MR0461107} and in less detail also in
\cite{Benyamini2000}. Below we will present a very simple and
self-contained proof of \Cref{theorem-Uni-Ball-Intersc:2}, based only
on the elementary concepts used in \Cref{theorem-Rem-Lipschitz:1}. A
key role in it will be played by the property of concave functions
${\eta:[0,+\infty)\to [0,+\infty)}$ that for arbitrary vectors
$\mathbf{u}_1, \dots, \mathbf{u}_k \in \mathbb{R}^n$ and
$\lambda_1, \dots, \lambda_k \geq 0$, the following inequality holds.
\begin{equation}
  \label{eq:Kirszbraun-Intersc-v6:1}
  \sum_{i,j=1}^k
  \left[\eta\left(\|\mathbf{u}_i\|^2\right) 
    +\eta\left(\|\mathbf{u}_j\|^2\right) -
    \eta\left(\|\mathbf{u}_i-\mathbf{u}_j\|^2\right) 
  \right] \lambda_i\lambda_j. 
\end{equation}
In this inequality, if $\mathbf{u}_i\in \R^n$ is a zero vector for
some $i\leq k$, then
\[
  \eta\left(\|\mathbf{u}_i\|^2\right)
  +\eta\left(\|\mathbf{u}_j\|^2\right) -
  \eta\left(\|\mathbf{u}_i-\mathbf{u}_j\|^2\right) =
  \eta(0)+\eta\left(\|\mathbf{u}_j\|^2\right)
  -\eta\left(\|\mathbf{u}_j\|^2\right)= \eta(0)\geq0.
\]
Hence, \cref{eq:Kirszbraun-Intersc-v6:1} is essentially reduced to
nonzero vectors $\mathbf{u}_1, \dots, \mathbf{u}_k \in
\mathbb{R}^n$.\medskip  

To establish this property of concave functions, we need two simple
observations, the first of which essentially coincides with
\cite[Lemma 3]{Gruenbaum1968}.

\begin{proposition}
  \label{proposition-KIP-draft-05:1}
  Let $\alpha:[0,+\infty)\to [0,+\infty)$ be a function which
  satisfies \cref{eq:Kirszbraun-Intersc-v6:1} and ${r>0}$. Then the
  pointwise minimum\, $\alpha_r=\alpha\wedge r$ also satisfies
  \cref{eq:Kirszbraun-Intersc-v6:1}.
\end{proposition}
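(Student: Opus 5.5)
The plan is to split the index set according to whether the truncation is active at $\|\mathbf{u}_i\|^2$. Everything then reduces to the hypothesis for $\alpha$ applied to a sub-family of the vectors. Fix $\mathbf{u}_1,\dots,\mathbf{u}_k\in\R^n$ and $\lambda_1,\dots,\lambda_k\geq0$. I abbreviate
\[
  a_i=\alpha_r\left(\|\mathbf{u}_i\|^2\right)\quad\text{and}\quad c_{ij}=\alpha_r\left(\|\mathbf{u}_i-\mathbf{u}_j\|^2\right),
\]
so the quantity to be shown nonnegative is $\sum_{i,j=1}^k\left[a_i+a_j-c_{ij}\right]\lambda_i\lambda_j$. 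Two facts are used throughout. First, $0\leq\alpha_r\leq r$, since $\alpha\geq0$. Second, $\alpha_r\leq\alpha$, so that $-c_{ij}\geq-\alpha\left(\|\mathbf{u}_i-\mathbf{u}_j\|^2\right)$ and $-c_{ij}\geq-r$.

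Let $I=\left\{i\leq k:\alpha\left(\|\mathbf{u}_i\|^2\right)\geq r\right\}$, so $a_i=r$ for $i\in I$. Let $J=\{1,\dots,k\}\setminus I$, so $a_j=\alpha\left(\|\mathbf{u}_j\|^2\right)<r$ for $j\in J$. I then bound each bracket $a_i+a_j-c_{ij}$ according to where $i$ and $j$ lie.
\begin{enumerate}[label=(\roman*)]
\item If $i,j\in I$, then $a_i+a_j-c_{ij}=2r-c_{ij}\geq r\geq0$.
\item If exactly one of $i,j$ lies in $I$, say $i\in I$, then $a_i+a_j-c_{ij}=r-c_{ij}+a_j\geq a_j\geq0$.
\item If $i,j\in J$, then $a_i+a_j-c_{ij}\geq\alpha\left(\|\mathbf{u}_i\|^2\right)+\alpha\left(\|\mathbf{u}_j\|^2\right)-\alpha\left(\|\mathbf{u}_i-\mathbf{u}_j\|^2\right)$.
\end{enumerate}
Since $\lambda_i\lambda_j\geq0$, I can drop the pairs of types (i) and (ii). This gives
\[
  \sum_{i,j=1}^k\left[a_i+a_j-c_{ij}\right]\lambda_i\lambda_j\geq\sum_{i,j\in J}\left[\alpha\left(\|\mathbf{u}_i\|^2\right)+\alpha\left(\|\mathbf{u}_j\|^2\right)-\alpha\left(\|\mathbf{u}_i-\mathbf{u}_j\|^2\right)\right]\lambda_i\lambda_j.
\]
The right-hand side is exactly the expression in \cref{eq:Kirszbraun-Intersc-v6:1} for $\alpha$, applied to the sub-family $\{\mathbf{u}_j:j\in J\}$ with coefficients $\{\lambda_j:j\in J\}$. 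By hypothesis it is nonnegative. If $J=\emptyset$, the right-hand side is simply $0$, and the conclusion still holds.

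There is no real obstacle here. The one point to get right is the choice of splitting: it must be made by the values $\alpha\left(\|\mathbf{u}_i\|^2\right)$ at the single vectors, not by the values at the differences. The mixed and $I\times I$ terms are absorbed only because the cap $r$ bounds $c_{ij}$ from above while $a_i=r$. Also, the inequality must be applied to a sub-family of the vectors, which is legitimate because the hypothesis is assumed for arbitrary finite families. Zero vectors need no separate treatment, since the argument never divides by norms.
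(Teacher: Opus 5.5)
Your proof is correct and follows the paper's argument. Both split the indices by whether $\alpha(\|\mathbf{u}_i\|^2)$ reaches the cap $r$, note that every bracket involving a capped index is nonnegative, and apply the hypothesis for $\alpha$ to the uncapped block using $\alpha_r\leq\alpha$ on the differences; your disjoint split ($\geq r$ versus $<r$) is slightly tidier than the paper's overlapping sets.
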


\begin{proof}
  Let $\mathbf{u}_1, \dots, \mathbf{u}_k \in \mathbb{R}^n$ and
  $\lambda_1, \dots, \lambda_k \geq 0$. If
  $\alpha\left(\|\mathbf{u}_j\|^2\right)\geq r$ for some $j\leq k$,
  then for every $i\leq k$ we have that
  $\alpha_r\left(\|\mathbf{u}_j\|^2\right)= r\geq
  \alpha_r\left(\|\mathbf{u}_i-\mathbf{u}_j\|^2\right)$. Accordingly,
  in this case,
  \[
    \sum_{i=1}^k \left[\alpha_r\left(\|\mathbf{u}_i\|^2\right)
    +\alpha_r\left(\|\mathbf{u}_j\|^2\right)
    -\alpha_r\left(\|\mathbf{u}_i-\mathbf{u}_j\|^2\right)\right] \geq
  \sum_{i=1}^k \alpha_r\left(\|\mathbf{u}_i\|^2\right)\geq 0. 
  \]
  Regarding the rest of the terms of the sum in
  \cref{eq:Kirszbraun-Intersc-v6:1}, let
  $K=\left\{i\leq k: \alpha\left(\|\mathbf{u}_i\|^2\right) \leq
    r\right\}$. If $K\neq \emptyset$, since $\alpha$ satisfies
  \cref{eq:Kirszbraun-Intersc-v6:1} and
  $\alpha_r\left(\|\mathbf{u}_i-\mathbf{u}_j\|^2\right)\leq
  \alpha\left(\|\mathbf{u}_i-\mathbf{u}_j\|^2\right)$ for every
  $i,j\leq k$, we obtain that
  \begin{align*}
    \sum_{(i,j)\in K^2} \big[\alpha_r\left(\|\mathbf{u}_i\|^2\right)
    +\alpha_ r\left(\|\mathbf{u}_j\|^2\right)-
    \alpha_ r\left(\|\mathbf{u}_i-\mathbf{u}_j\|^2\right)
    \big]\lambda_i\lambda_j&\geq\\
    \sum_{(i,j)\in K^2} \big[\alpha\left(\|\mathbf{u}_i\|^2\right)
    +\alpha\left(\|\mathbf{u}_j\|^2\right)-
    \alpha\left(\|\mathbf{u}_i-\mathbf{u}_j\|^2\right)
    \big]\lambda_i\lambda_j&\geq 0.\qedhere
  \end{align*}
\end{proof}

The second observation follows an idea in the proof of \cite[Lemma
1.14]{Benyamini2000}.

\begin{proposition}
  \label{proposition-KIP-vgg-draft-07:1}
  Let $a_1\geq \dots \geq a_{k+1}\geq 0$, $b_1,\dots, b_{k+1}\geq 0$
  and\/ $0<r_1\leq\dots\leq r_k$ be such that
  \begin{equation}
    \label{eq:KIP-VGG-draft:1}
    a_ir_i+b_i= a_{i+1} r_i+ b_{i+1}\quad \text{for every $i\leq k$.}
  \end{equation}
  Then the function $\gamma(t)=\bigwedge_{i=1}^{k+1}(a_it+b_i)$ for
  $t\geq 0$, i.e.\ the pointwise minimum of these affine functions on
  the interval\/ $[0, +\infty)$, satisfies
  \cref{eq:Kirszbraun-Intersc-v6:1}.
\end{proposition}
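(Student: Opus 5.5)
The plan is to write $\gamma$ as a nonnegative combination of three kinds of elementary functions, each of which already satisfies \cref{eq:Kirszbraun-Intersc-v6:1}. The point is that the expression in \cref{eq:Kirszbraun-Intersc-v6:1} is linear in the function $\eta$, so the functions satisfying it (i.e.\ making the sum nonnegative for all vectors and all $\lambda_i\geq0$) form a convex cone. The three kinds are: the constants $c\geq 0$, the identity $t\mapsto t$, and the truncations $t\mapsto t\wedge r$ with $r>0$. Following the idea of \cite[Lemma 1.14]{Benyamini2000}, the decomposition I would aim for is
\[
  \gamma(t)=b_1+a_{k+1}\,t+\sum_{i=1}^k (a_i-a_{i+1})\,(t\wedge r_i),\qquad t\geq 0,
\]
whose coefficients $b_1$, $a_{k+1}$ and $a_i-a_{i+1}$ are all nonnegative by hypothesis.

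The first step, and the only one requiring care, is to verify this identity. Call the right-hand side $\tilde\gamma$. For $0\le t\le r_1$ every truncation equals $t$, so the sum telescopes and $\tilde\gamma(t)=b_1+a_1t$. On each interval $[r_j,r_{j+1}]$, with $r_{k+1}=+\infty$, the function $\tilde\gamma$ is affine with slope $a_{j+1}$, since every term with $i\le j$ has frozen at $r_i$. Both $\tilde\gamma$ and $\gamma$ are continuous. Condition \cref{eq:KIP-VGG-draft:1} says precisely that consecutive affine pieces $a_it+b_i$ and $a_{i+1}t+b_{i+1}$ meet at $r_i$. By induction on $j$, $\tilde\gamma(t)=a_{j+1}t+b_{j+1}$ on $[r_j,r_{j+1}]$. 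Because the slopes decrease and the breakpoints $r_i$ increase, this piecewise-affine function is the minimum of the $k+1$ affine functions: each $a_it+b_i$ lies above the current piece away from its own interval, by the concavity of a piecewise-linear function with decreasing slopes. Hence $\tilde\gamma=\gamma$. Equal breakpoints $r_i=r_{i+1}$ cause no problem, since they just merge terms.

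The second step is to check that the three building blocks satisfy \cref{eq:Kirszbraun-Intersc-v6:1}. For a constant $c\geq0$ the sum equals $c\sum_{i,j}\lambda_i\lambda_j\ge0$. For the identity, the law of cosines gives $\|\mathbf u_i\|^2+\|\mathbf u_j\|^2-\|\mathbf u_i-\mathbf u_j\|^2=2\langle\mathbf u_i,\mathbf u_j\rangle$, so the sum equals $2\|\sum_i\lambda_i\mathbf u_i\|^2\geq0$, exactly as in the proof of \Cref{theorem-Lip-Ext-Embb:1}. For $t\wedge r_i$, apply \Cref{proposition-KIP-draft-05:1} to $\alpha(t)=t$, which has just been shown to satisfy \cref{eq:Kirszbraun-Intersc-v6:1}.

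Finally, multiply these inequalities by the nonnegative coefficients of the decomposition and add them. By linearity in $\eta$ this gives \cref{eq:Kirszbraun-Intersc-v6:1} for $\gamma$. I expect the main obstacle to be only the bookkeeping in the first step, namely confirming that \cref{eq:KIP-VGG-draft:1} together with the monotonicity of $a_i$ and $r_i$ forces the minimum of the affine functions to coincide with the telescoped sum. Everything else is immediate from results already in the paper.
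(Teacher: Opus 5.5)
Your proposal is correct and follows essentially the same route as the paper. The paper also writes $\gamma$ as $\beta(t)=b_1+\sum_{i=1}^k(a_i-a_{i+1})[t\wedge r_i]+a_{k+1}t$, gets the inequality for $\beta$ from the identity function, \Cref{proposition-KIP-draft-05:1} and nonnegativity of the coefficients, and then checks $\gamma=\beta$ piece by piece. The only difference is that the paper identifies the minimum on each $[r_{i-1},r_i]$ by comparing consecutive lines at $r_i$, whereas you match the pieces inductively and appeal to concavity.
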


\begin{proof}
  For $1\leq i\leq k$, by condition, the affine function
  $(a_i-a_{i+1})t+ (b_i-b_{i+1})$ is increasing. Moreover, by
  \cref{eq:KIP-VGG-draft:1}, this function is zero at
  $t=r_i$. Therefore, we have that $a_{i}t+b_i\leq a_{i+1}t+b_{i+1}$
  for all $0\leq t\leq r_i$ and $a_{i+1}t+b_{i+1}\leq a_{i}t+b_i$ for
  all $t\geq r_i$. Thus, for $t\geq 0$, the function
  ${\gamma(t)=\bigwedge_{i=1}^{k+1}(a_it+b_i)}$ is simply given by
  \begin{equation}
    \label{eq:KIP-VGG-draft:3}
    \gamma(t)=
    \begin{cases}
      a_1t+b_1 &\text{if $0\leq t\leq r_1$,} \\
      a_{i}t+b_{i} &\text{if
        $r_{i-1}\leq t\leq r_{i}$ for some $1< i\leq k$, and} \\
      a_{k+1}t+b_{k+1} &\text{if $t\geq r_{k+1}$.}
    \end{cases}
  \end{equation}
  In order to show that $\gamma(t)$ satisfies
  \cref{eq:Kirszbraun-Intersc-v6:1}, we will use that the identity
  function $t:[0,+\infty)\to [0,+\infty)$ clearly satisfies the
  inequality in \cref{eq:Kirszbraun-Intersc-v6:1}. Hence, by
  \Cref{proposition-KIP-draft-05:1}, so does each pointwise minimum
  $t\wedge r_i$ for $1\leq i\leq k$. Accordingly, since $b_1\geq 0$
  and $a_1\geq \dots \geq a_{k+1}\geq 0$, the function
  \[
    \beta(t)=b_1+\sum_{i=1}^k \left(a_{i}-a_{i+1}\right) [t\wedge
    r_i] +a_{k+1}t,\quad t\geq 0,
  \]
  also satisfies \cref{eq:Kirszbraun-Intersc-v6:1}. The proof now
  reduces to showing that $\gamma = \beta$, which relies on the
  following representation of the ``residual parts'' of the sum by
  which $\beta(t)$ is defined and on the property from
  \cref{eq:KIP-VGG-draft:1}. Namely, for every ${1\leq i\leq k}$ and
  $0\leq t\leq r_i$, we have that
  \begin{equation}
    \label{eq:KIP-VGG-draft:2}
\left\{
      \begin{aligned}
        a_it&=\medop\sum_{j=i}^{k}\left(a_{j}-a_{j+1}\right)
        \left[t\wedge r_j\right] +a_{k+1}t,\quad
        \text{
          and} \\[-5pt]
        b_{i+1}&=b_1+\medop\sum_{j=1}^i\left(a_j-a_{j+1}\right)r_j.
    \end{aligned}
  \right.
\end{equation}
The first equality in \cref{eq:KIP-VGG-draft:2} follows easily from
the fact that $t\wedge r_j=t$ for every ${t\leq r_i\leq r_j}$. The
second equality is an immediate consequence of
\cref{eq:KIP-VGG-draft:1} because
\[
  b_1+\sum_{j=1}^i\left(a_j-a_{j+1}\right)r_j= \sum_{j=1}^i
  \left[\left(a_jr_j+b_j\right)
    -\left(a_{j+1}r_j+b_{j+1}\right)\right] +b_{i+1}=b_{i+1}.
\]
    
The representations in \cref{eq:KIP-VGG-draft:3,eq:KIP-VGG-draft:2}
directly show that $\beta(t)=b_1 +a_1t=\gamma(t)$ for
$0\leq t\leq r_1$.  Similarly, for the same reason,
$\beta(t)= a_{i}t+b_{i}= \gamma(t)$ for $1<i\leq k$ and
$r_{i-1}\leq t\leq r_{i}$ because
  \[
    \beta(t)=b_1+\sum_{j=1}^{i-1} \left(a_{j}-a_{j+1}\right)
    r_j +\sum_{j=i}^{k}\left(a_{j}-a_{j+1}\right)
        \left[t\wedge r_j\right] +a_{k+1}t=
    b_{i}+a_{i}t=\gamma(t). 
  \]
  Finally, \cref{eq:KIP-VGG-draft:3,eq:KIP-VGG-draft:2} also imply
  that $\gamma(t)=\beta(t)$ for $t\geq r_k$ because
  \[
    \beta(t)=b_1+\sum_{i=1}^k \left(a_{i}-a_{i+1}\right)
    r_i +a_{k+1} t=
    b_{k+1}+a_{k+1}t=\gamma(t).\qedhere 
  \]
\end{proof}
  
\begin{proposition}
  \label{proposition-KIP-ver-21:1}
  If\/ $\eta:[0,+\infty)\to [0,+\infty)$ is a concave function with
  $\eta(0)=0$, then it satisfies \cref{eq:Kirszbraun-Intersc-v6:1}.
\end{proposition}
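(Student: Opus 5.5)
The plan is to reduce to the piecewise-affine functions of \Cref{proposition-KIP-vgg-draft-07:1} and then pass to the limit. Fix vectors $\mathbf{u}_1,\dots,\mathbf{u}_k\in\R^n$ and $\lambda_1,\dots,\lambda_k\geq0$. The inequality in \cref{eq:Kirszbraun-Intersc-v6:1} only involves the values of $\eta$ at the finitely many points of the set
\[
F=\left\{\|\mathbf{u}_i\|^2,\ \|\mathbf{u}_i-\mathbf{u}_j\|^2: i,j\leq k\right\}\cup\{0\}\subseteq[0,+\infty).
\]
It therefore suffices to find a function $\gamma$ of the form in \Cref{proposition-KIP-vgg-draft-07:1} with $\gamma=\eta$ on $F$, or at least $\gamma$ arbitrarily close to $\eta$ on $F$. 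Since the sum depends continuously on finitely many values, approximate agreement is enough.

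Enumerate the positive points of $F$ as $0<s_1<\dots<s_m$. A first attempt is to take the piecewise linear interpolant of $\eta$ through the nodes $(0,\eta(0))=(0,0),(s_1,\eta(s_1)),\dots,(s_m,\eta(s_m))$, extended to the right of $s_m$ by some affine piece. The slopes of consecutive chords are decreasing by \cref{eq:KIP-ver-12:1}, suitably extended to include the left endpoint $0$ using concavity of $\eta$ on $[0,+\infty)$. The slopes are nonnegative because $\eta$ is increasing (\Cref{proposition-Uniform-Cont-Ext:1}), and the intercepts are nonnegative by construction.

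Concretely, set $r_i=s_i$ for $i\leq m$. Let $a_i$ be the slope of the chord over $[s_{i-1},s_i]$, with $s_0=0$, and let $a_{m+1}$ be any number in $[0,a_m]$, for instance $a_{m+1}=0$ or a slope bounded by $\eta'_+(s_m)$. Put $b_1=\eta(0)=0$, and define the $b_{i+1}$ recursively by \cref{eq:KIP-VGG-draft:1}. Then $a_1\geq\dots\geq a_{m+1}\geq0$, and each line $a_it+b_i$ passes through the two nodes $(s_{i-1},\eta(s_{i-1}))$ and $(s_i,\eta(s_i))$.

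The remaining hypothesis is $b_i\geq0$. This holds because $b_i$ is the value at $t=0$ of a chord line of a concave function $\eta\geq0$ on $[0,+\infty)$ with $\eta(0)\geq0$: the chord of a concave function lies above the graph outside the chord interval, so $b_i\geq\eta(0)\geq0$. This is exactly the argument of \Cref{theorem-Rem-Lipschitz:1}. The representation \cref{eq:KIP-VGG-draft:3} then shows $\gamma(s_i)=\eta(s_i)$ for every $i$, and $\gamma(0)=0=\eta(0)$. Hence \Cref{proposition-KIP-vgg-draft-07:1} gives \cref{eq:Kirszbraun-Intersc-v6:1} for $\gamma$, and since $\gamma=\eta$ on $F$, the same inequality holds for $\eta$.

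The main obstacle is technical rather than conceptual. \Cref{proposition-KIP-vgg-draft-07:1} requires $r_1>0$ and the ordering conditions, and the degenerate cases need care: all $\mathbf{u}_i=\mathbf{0}$, which is trivial; $m=0$; and repeated values, which are handled by the enumeration. One must also check that \Cref{proposition-KIP-vgg-draft-07:1} is applied with the indices in the right range. Here $k$ there equals $m$, with $m+1$ affine pieces, the last of which is the extension beyond $s_m$. No approximation is then needed, since the interpolant agrees exactly with $\eta$ on $F$.
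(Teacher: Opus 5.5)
Your argument is correct. It reaches \Cref{proposition-KIP-vgg-draft-07:1} through a different construction of the piecewise affine comparison function than the one the paper uses.

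The paper uses \Cref{theorem-Rem-Lipschitz:1} to take supporting (tangent) lines $\alpha_s$ of $\eta$, but only at the points $s\in\{\|\mathbf{u}_i\|^2: i\leq k\}$. Their minimum $\gamma$ agrees with $\eta$ at these points and majorizes $\eta$ everywhere. The values at $\|\mathbf{u}_i-\mathbf{u}_j\|^2$ enter the sum only with a minus sign and nonnegative weights $\lambda_i\lambda_j$, so the inequality for $\gamma$ passes to $\eta$.

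You instead interpolate $\eta$ by chords at every argument that actually occurs, including the squared differences and $0$. This gives $\gamma=\eta$ on all of $F$, so no majorization is needed.

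What your route buys is elementarity. It needs only the three-chord inequality \cref{eq:KIP-ver-12:1} (extended to the endpoint $0$ by concavity on $[0,+\infty)$) and monotonicity from \Cref{proposition-Uniform-Cont-Ext:1}. It avoids one-sided derivatives and \Cref{theorem-Rem-Lipschitz:1}, and needs no separate treatment of $\eta\equiv 0$, of positivity of $\eta$ on $(0,+\infty)$, or of zero vectors. The paper's route uses only the $k$ nodes $\|\mathbf{u}_i\|^2$ and exploits the sign structure of the sum instead of matching $\eta$ everywhere.

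One small slip: the last intercept $b_{m+1}$ is not the intercept of a chord, so your chord argument does not literally cover it. The recursion $b_{i+1}=b_i+(a_i-a_{i+1})r_i\geq b_i$ with $b_1=0$ gives $b_i\geq 0$ for every $i$ at once.
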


\begin{proof}
  Obviously, the property is trivial for a constant
  function. Otherwise, by
  \Cref{proposition-Lipschitz-ext:4,proposition-Uniform-Cont-Ext:1,proposition-Uniform-Cont-Ext:3},
  we get that $\eta(t)>0$ for every $t>0$. In this case, the
  inequality in \cref{eq:Kirszbraun-Intersc-v6:1} follows from the
  representation in \cref{eq:Rem-Lipschitz:1}. Namely, according to
  \Cref{theorem-Rem-Lipschitz:1}, there exists a family $\mathcal{A}$
  of affine functions such that
  $\eta(t)=\min_{\alpha\in \mathcal{A}}\alpha(t)$ for every
  $t>0$. Therefore, if $\alpha\in \mathcal{A}$, then $\alpha(t)=at +b$
  for some $a,b\geq 0$ because $\alpha(t)>0$ for every $t>0$. Take now
  vectors
  $\mathbf{u}_1, \dots, \mathbf{u}_k \in
  \mathbb{R}^n\setminus\{\mathbf{0}\}$ and
  $\lambda_1, \dots, \lambda_k \geq 0$, and next set
  $ S=\left\{\|\mathbf{u}\|_1^2,\dots, \|\mathbf{u}\|_k^2\right\}$.
  Then $S\subseteq (0,+\infty)$, and by the property of the family
  $\mathcal{A}$ in \cref{eq:Rem-Lipschitz:1}, for every $s\in S$ there
  exists $\alpha_s\in \mathcal{A}$ such that $\alpha_s(s)=\eta(s)$. If
  $s, t \in S$ are such that $s < t$ and there is no other element of
  $S$ between $s$ and $t$, then $\alpha_s(s) \leq \alpha_t(s)$ and
  $\alpha_t(t) \leq \alpha_s(t)$. Hence, since the functions
  $\alpha_s, \alpha_t \in \mathcal{A}$ represent straight lines, there
  exists $s\leq r\leq t$ such that $\alpha_s(x) \leq \alpha_t(x)$ for
  all $x \leq r$ and $\alpha_t(x) \leq \alpha_s(x)$ for all
  $x \geq r$.
  \begin{center}
    \begin{tikzpicture}
      \draw[white!50!black,->] (-.7,0) -- (5,0);
      \draw[white!50!black,->] (0,-.3) -- (0,2.65);
      \draw[gray,dashed,line width=0.5pt,-] (.6,0) -- (.6,.81) --
      (0,.81); \draw[gray,dashed,line width=0.5pt,-] (3.525,0) --
      (3.525,2.13) -- (0,2.13); \draw[gray,dashed,line width=0.5pt,-]
      (1.7,0) -- (1.7,1.95); \draw[black!50!white,line width=1pt]
      (.2,.365) .. controls (.6,.78) and (1.5,2.22) .. (4.3,2.14)
      (4.3,2.14) node[below] {\small $\eta$}; \filldraw [black]
      (3.527,2.12) circle [radius=.8pt] (0,2.13) circle [radius=.5pt]
      node[left] {\tiny $\alpha_t(t)$}; \filldraw [black] (.6,.81)
      circle [radius=.8pt] (0,.81) circle [radius=.5pt] node[left]
      {\tiny $\alpha_s(s)$}; \filldraw [black] (1.7,1.95) circle
      [radius=.8pt] ; \filldraw [black] (.6,0) circle [black,
      radius=.5pt] node[below] {\tiny $s$} (1.7,0) circle [black,
      radius=.5pt] node[below] {\tiny $r$} (3.52,0) circle [black,
      radius=.5pt] node[below] {\tiny $t$}; \draw[black,line
      width=0.4pt,-] (.08,.3) -- (2.3,2.58) node[below] {\tiny
        \quad$\alpha_s$}; \draw[black,line width=0.4pt,-] (1,1.88) --
      (4.2,2.21) (1,1.88) node[below] {\tiny $\alpha_t$}; \filldraw
      [white!90!black] (3.527,2.12) circle [radius=.5pt] (.6,.81)
      circle [radius=.5pt] (1.7,1.95) circle [radius=.5pt];
    \end{tikzpicture}
  \end{center}
  In other words, the slope of $\alpha_t(x)$ does not exceed that of
  $\alpha_s(x)$. Accordingly, by
  \Cref{proposition-KIP-vgg-draft-07:1}, the pointwise minimum
  $\gamma(x)=\bigwedge_{s\in S}\alpha_s(x)$, $x\geq 0$, of these
  affine functions on the interval $[0, +\infty)$ satisfies
  \cref{eq:Kirszbraun-Intersc-v6:1}, and therefore
  \[
    \sum_{i,j=1}^k \left[\gamma\left(\|\mathbf{u}_i\|^2\right)
      +\gamma\left(\|\mathbf{u}_j\|^2\right)-
      \gamma\left(\|\mathbf{u}_i-\mathbf{u}_j\|^2\right)
    \right]\lambda_i\lambda_j\geq 0.
  \]
  Since
  $\eta\left(\|\mathbf{u}_i\|^2\right)=\gamma\left(\|\mathbf{u}_i\|^2\right)$
  for $i\leq k$, and
  $\eta\left(\|\mathbf{u}_i-\mathbf{u}_j\|^2\right)\leq
  \gamma\left(\|\mathbf{u}_i-\mathbf{u}_j\|^2\right)$ for $i,j\leq k$,
  this shows that we also have
  $\sum_{i,j=1}^k \left[\eta\left(\|\mathbf{u}_i\|^2\right)
    +\eta\left(\|\mathbf{u}_j\|^2\right)-
    \eta\left(\|\mathbf{u}_i-\mathbf{u}_j\|^2\right)
  \right]\lambda_i\lambda_j\geq 0$.
\end{proof}

Based on \Cref{proposition-KIP-ver-21:1}, we can apply
\Cref{theorem-Rem-Lipschitz:1} once more to complete the proof of
\Cref{theorem-Uni-Ball-Intersc:2}.

\begin{proof}[Proof of \Cref{theorem-Uni-Ball-Intersc:2}]
  Let $\omega:[0,+\infty)\to [0,+\infty)$ be as in
  \Cref{theorem-Uni-Ball-Intersc:2}, and let $\mathcal{A}$ be the
  corresponding family of affine functions, as in
  \cref{eq:Rem-Lipschitz:1}, applied to this concave function. If
  $\alpha\in \mathcal{A}$, then $\alpha(t)=at +b$ for some $a,b\geq 0$
  because $\alpha\geq \omega$. Accordingly, to each
  $\alpha(t)=at+b\in \mathcal{A}$ we can associate the concave
  function ${\beta_\alpha:(0,+\infty)\to [0,+\infty)}$ defined by
  $\beta_\alpha(t)=a^2t+ 2ab\sqrt{t}+ b^2=
  \alpha^2\left(\sqrt{t}\right)$. Finally, we can define another
  function $\eta:[0,+\infty)\to [0,+\infty)$ by $\eta(0)=0$ and
  $\eta(t)=\inf_{\alpha\in \mathcal{A}}\beta_\alpha(t)$ for every
  $t>0$. Evidently, $\eta\uhr(0,+\infty)$ is concave as an infimum of
  concave functions. Therefore, by
  \Cref{proposition-Uniform-Cont-Ext:8}, it is itself concave. Thus,
  according to \Cref{proposition-KIP-ver-21:1}, $\eta$ satisfies
  \cref{eq:Kirszbraun-Intersc-v6:1}. However,
  $\eta(t)=\omega^2\left(\sqrt{t}\right)$ for every $t\geq 0$ because
  $\omega(t)=\min_{\alpha\in \mathcal{A}}\alpha(t)$ and
  $\eta(t)=\inf_{\alpha\in \mathcal{A}}\beta_\alpha(t)=
  \min_{\alpha\in \mathcal{A}}\alpha^2\left(\sqrt{t}\right)$ for
  $t>0$. In other words, this means that the function $\omega(t)$
  satisfies the inequality in \cref{eq:Kirszbraun-Intersc-v12:1}, and
  the proof is complete.
\end{proof}

In conclusion, let us remark that the function $\omega(t)=t^\alpha$ is
concave for $0<\alpha\leq1$. Therefore, in this case, by taking
$\omega(t)=t^\alpha$ in \Cref{theorem-Uni-Ball-Intersc:2}, we obtain
an alternative proof of the inequality in
\cref{eq:Kirszbraun-Intersc-v3:4}, and thus of
\Cref{theorem-Kirszbraun-Intersc-v4:2} as well.


\end{document}